\newcommand\bs{\backslash}
\newtheorem{theorem}{Theorem}[section]
\newtheorem{proposition}[theorem]{Proposition}
\newtheorem{corollary}[theorem]{Corollary}
\newtheorem{lemma}[theorem]{Lemma}
\theoremstyle{definition}
\newtheorem{definition}[theorem]{Definition}
\numberwithin{equation}{section}
\theoremstyle{definition}
\newtheorem{remark}[theorem]{Remark}
\newcommand{\into}{\hookrightarrow}
\newcommand{\Cc}{\mathcal{C}}
\newcommand{\Cs}{\mathscr{C}}
\newcommand{\calE}{\mathcal{E}}
\newcommand{\Os}{\mathscr{O}}
\newcommand{\Xs}{\mathscr{X}}
\newcommand{\calM}{\mathcal{M}}
\newcommand{\calB}{\mathcal{B}}
\newcommand{\calMc}{\overline{\mathcal{M}}}
\newcommand{\Tc}{\mathcal{T}}
\newcommand{\Oc}{\mathcal{O}}
\newcommand{\Pc}{\mathcal{P}}
\newcommand{\Cb}{\mathbb{C}}
\newcommand{\C}{\mathbb{C}}
\newcommand{\F}{\mathbb{F}}
\newcommand{\Pb}{\mathbb{P}}
\newcommand{\Q}{\mathbb{Q}}
\newcommand{\Rb}{\mathbb{R}}
\newcommand{\Z}{\mathbb{Z}}
\newcommand{\sgn}{\textrm{sgn}}
\newcommand{\Sf}{\mathfrak{S}}
\newcommand{\Af}{\mathfrak{A}}
\newcommand{\Cf}{\mathfrak{C}}
\newcommand{\Df}{\mathfrak{D}}
\newcommand{\one}{\mathbf{1}}
\newcommand{\Aut}{\rm Aut}
\newcommand{\beq}{\begin{eqnarray}}
\newcommand{\eeq}{\end{eqnarray}}
\newcommand{\SL}{\textrm{SL}}
\newcommand\ssm{\smallsetminus}
\newcommand{\Lcal}{\mathcal{L}}
\newcommand{\ir}{{ir}}
\DeclareMathOperator{\Pic}{Pic}
\DeclareMathOperator{\PGL}{PGL}
\DeclareMathOperator{\GL}{GL}
\DeclareMathOperator{\Sym}{Sym}
\DeclareMathOperator{\ev}{ev}
\DeclareMathOperator{\PU}{PU}
\DeclareMathOperator{\SU}{SU}
\DeclareMathOperator{\SO}{SO}
\DeclareMathOperator{\la}{\langle}
\DeclareMathOperator{\ra}{\rangle}
\DeclareMathOperator{\Sing}{Sing}
\DeclareMathOperator{\odd}{odd}
\title[Geometry of the Wiman-Edge pencil, I]{Geometry of the Wiman-Edge pencil, I:  \\
algebro-geometric aspects\\
\bigskip
{\em In memory of W.L. Edge}}
\author{Igor Dolgachev}
\address{Department of Mathematics, University of Michigan} 
\author{Benson Farb}
\address{Department of Mathematics, University of Chicago} 
\author{Eduard Looijenga}
\address{Yau Mathematical Sciences Center, Tsinghua University, Beijing}
\begin{document}
\maketitle

\begin{abstract}
In 1981 W.L. Edge \cite{Edge1,Edge2} discovered and studied a pencil $\Cs$ of highly symmetric genus $6$ projective curves with remarkable properties.   Edge's work was based on an 1895 paper \cite{Wiman1} of A. Wiman.  Both papers were written in the satisfying style of 19th century algebraic geometry.  In this paper and its sequel \cite{FL}, we consider $\Cs$ from a more modern, conceptual perspective, whereby explicit equations are reincarnated as geometric objects.  
\end{abstract}
\tableofcontents

\section{Introduction}

The purpose of this paper is to explore what we call the {\em Wiman-Edge  pencil} $\Cs$, a pencil of highly symmetric genus $6$ projective curves with remarkable properties.  The smooth members of $\Cs$ can be characterized (see Theorem \ref{thm:onaquinticDelPezzo}) as those smooth genus $6$ curves admitting a nontrivial action of alternating group $\Af_5$.   This mathematical gem was discovered 
in 1981 by  W.L. Edge \cite{Edge1,Edge2},  who based his work on a discovery in 1895 by A. Wiman of a non-hyperelliptic curve $C_0$ of genus 6 (now called the {\em Wiman curve}) with automorphism group isomorphic to the symmetric group $\Sf_5$. The results of both Wiman and Edge are in the satisfying style of 19th century mathematics, with results in terms of explicit equations.  

In this paper and its sequel \cite{FL}, we consider $\Cs$ from a more modern, conceptual perspective, whereby explicit equations are reincarnated as geometric objects.  In particular we will view $\Cs$ through a variety of lenses: from algebraic geometry to representation theory to hyperbolic and conformal geometry.  We will address both modular and arithmetic aspects of $\Cs$, partly through Hodge theory.  Given the richness and variety 
of structures supported by $\Cs$, we can say in hindsight that the Wiman-Edge pencil deserved such a treatment, and indeed it seems odd that this had not happened previously.

In the introduction of his {\em Lectures on the Icosahedron} \cite{Klein}, Klein writes:  ``A special difficulty, which presented itself in the execution of my plan, lay in the great variety of mathematical  methods  entering in the theory of the Icosahedron.''  We believe that this is still very much true 
today, in ways Klein probably never anticipated. This, by the way,  is followed by the sentence: ``On this account it seemed advisable to take granted no specific knowledge in any direction, but rather to introduce,  where necessary, such explanations $\ldots$''  While we are writing only about one aspect of Klein's book, in this paper we have tried to take Klein's advice seriously.

\subsection*{The Wiman-Edge pencil} As mentioned above, the story starts with A. Wiman \cite{Wiman1}, who, while classifying algebraic curves of genus $g = 4,5$ and $6$ whose automorphisms group contains a simple group,  discovered  a curve $C_0$ of genus 6 with automorphism group isomorphic to the symmetric group $\Sf_5$. On the last page of his paper, Wiman gives the equation of a $4$-nodal plane sextic birationally isomorphic to $W$ :
$$2\sum x^4yz+2\sum x^3y^3-2\sum x^4y^2+\sum x^3y^2z+\cdots)-6x^2y^2z^2 = 0,$$
 He reproduces this equation on p. 208 of his later paper \cite{Wiman}, related to the classification of finite subgroups of the plane Cremona group.  Wiman states there that the group of birational automorphisms of $C_0$ is generated by a group of projective transformations isomorphic to the symmetric group $\Sf_4$ together with the standard quadratic birational involution with fundamental points at its three singular points.   
 
Wiman erroneously claims that his curve is the unique non-hyperelliptic curve of genus 6 whose automorphism group contains a non-cyclic simple group, the alternating group $\Af_5$ in his case. This mistake was corrected almost a hundred years later by W. Edge \cite{Edge1,Edge2}, who placed $W$ inside a pencil 
\[\lambda P(x,y,z)+\mu Q(x,y,z) = 0\] of  four-nodal plane sextics each of whose  members  admits a group of automorphisms isomorphic to $\Af_5$. In projective coordinates different from the one chosen by Wiman, the pencil is generated by the curves defined by the homogeneous equations :

\[P(x,y,z)=(x^2-y^2)(y^2-z^2)(z^2-x^2) = 0\]
and 
\[Q(x,y,z)=x^6+y^6+z^6+(x^2+y^2+z^2)(x^4+y^4+z^4)-12x^2y^2z^2 = 0,\]
where the Wiman curve $W$ corresponds to the parameters $(\lambda:\mu)= (0:1)$. 

Edge showed that it is more natural to view the pencil as a pencil $\Cs$ of curves on a quintic del Pezzo surface $S$ obtained by blowing up the four base points of the pencil.  In this picture, the lift $C_0$ to $S$ of Wiman's curve $W$ is, as Edge discovered, ``a uniquely special canonical curve of genus $6$'' on $S$.    That is, the standard action of $\Sf_5$ on the quintic del Pezzo surface $S$ permutes the $1$-parameter family of smooth genus $6$ curves on $S$ but leaves invariant a unique such curve, namely the Wiman sextic $C_0$.  

The action of $\Sf_5$ on $S$ induces an action of $\Sf_5$ on $\Cs$, whereby the subgroup $\Af_5$ leaves each member of the pencil $\Cs$ invariant, and acts faithfully on each curve by automorphisms.  In addition to $C_0$, the pencil $\Cs$ has precisely one other $\Sf_5$-fixed member, a reducible curve that is a union of $10$ lines intersecting in the pattern of the Petersen graph (see Figure \ref{peterseng}), with the $\Sf_5$ action on this union inducing the standard $\Sf_5$ action on the Petersen graph.  The other four singular members of $\Cs$ come in pairs, the curves in each pair being switched by any odd permutation.   A schematic of $\Cs$, and the $\Sf_5$ action on it, is given (with explanation) in Figure \ref{figure:schematic1}.  

\begin{figure}
\centerline{\qquad\qquad\includegraphics[scale=0.4]{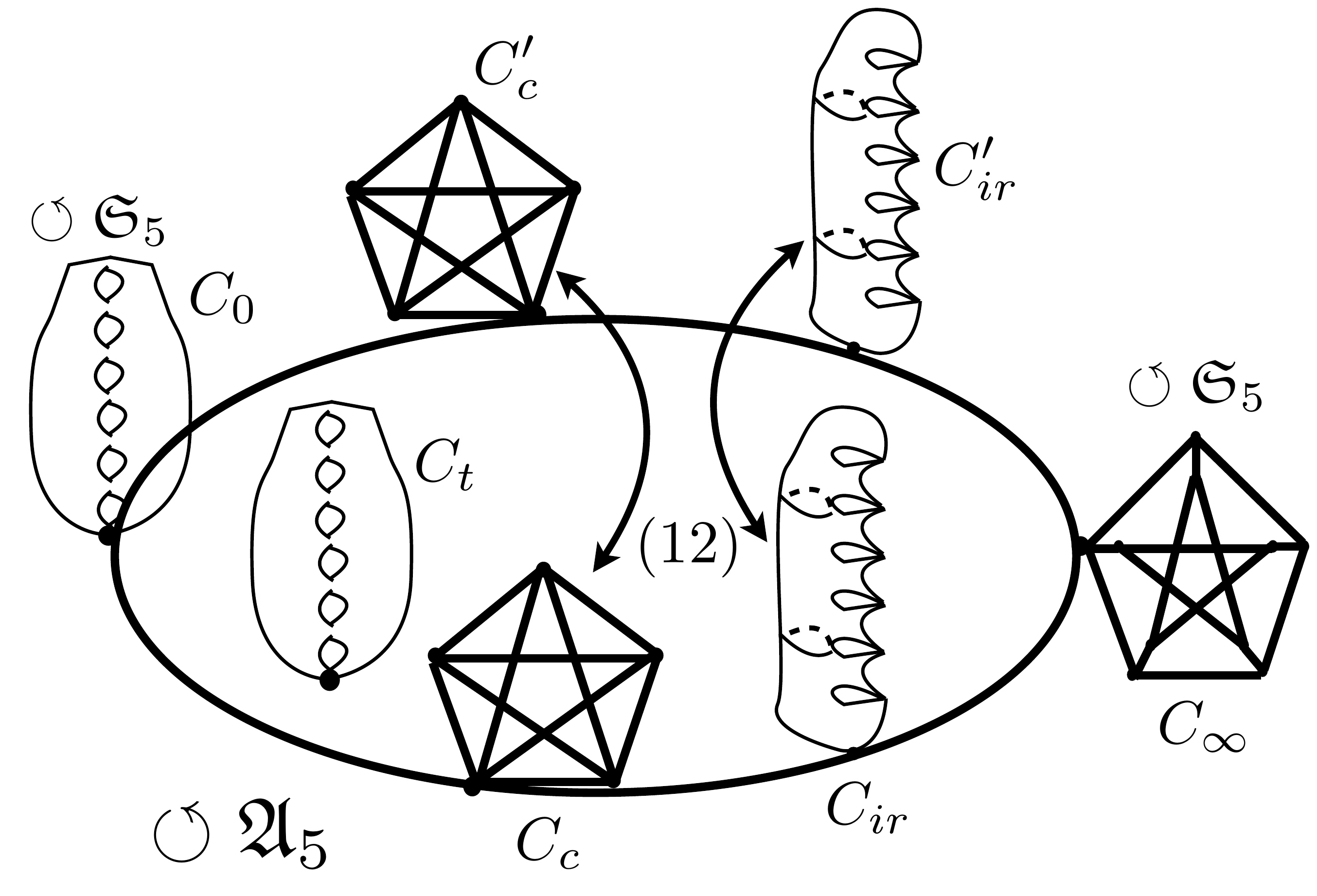}}
\caption{\footnotesize 
A schematic of the Wiman-Edge pencil $\Cs$, consisting of genus $6$ projective curves containing the alternating group $\Af_5$ in their automorphism group.  The generic curve $C_t\in \Cs$ has $\Aut(C_t)\cong\Af_5$, while there is a unique smooth member $C_0\in\Cs$, the {\em Wiman sextic}, with more symmetry: $\Aut(C_0)\cong \Sf_5$.  There are $5$ singular members of $\Cs$: two irreducible curves, $6$-noded rational curves $C_{ir}$ and $C'_{ir}$; two curves $C_c$ and $C'_c$, each consisting of $5$ conics whose intersection graph is the complete graph on $5$ vertices; and a union $C_\infty$ of $10$ lines whose intersection graph is the Petersen graph. The group $\Sf_5$ acts on $\Cs$ with $\Af_5$ leaving each member of $\Cs$ invariant.  This action has two $\Sf_5$-invariant members, $C_0$ and $C_\infty$, while any odd permutation switches $C_{ir}$ with $C'_{ir}$ and $C_c$ with $C'_c$.}
\label{figure:schematic1}
\end{figure}

Since Edge's discovery, the pencil $\Cs$, which we propose to call the \emph{Wiman-Edge pencil},  has appeared in several modern research papers. For example, its nonsingular members have been identified with the quotient of one of the two $1$-parameter families of lines on a nonsingular member of the Dwork pencil of Calabi-Yau quintic threefolds:
$$x_1^5+x_2^5+x_3^5+x_4^5+x_5^5+5\phi x_1x_2x_3x_4x_5 = 0$$ 
by the group of order 125 of obvious symmetries of the Dwork pencil (see \cite{Candelas}, \cite{Zagier}; we elaborate  a bit on the connection in the present paper).The Wiman-Edge occupies a prominent place in monograph \cite{Cheltsov} on birational geometry of algebraic varieties with $\Af_5$-symmetry. It also appears in a recent posting by  Cheltsov-Kuznetsov-Shramov \cite{CKS}.

\subsection*{This paper}The purpose of the present paper is two-fold. First, we reprove all previously known facts about the Wiman-Edge pencil that can be found in Edge's papers. Instead of computations based on the pencil's equation (used also in a later paper \cite{Gonzalez} whose authors it seems were not  aware of Edges's work), our proofs rely on the representation theory of the symmetry group of the pencil, and also on the moduli-theoretical interpretation of the quintic  del Pezzo  surface. Although  our approach is not new, and has been used also in \cite{Cheltsov} or \cite{InoueKato}, respectively, we would like to 
believe that our methods are more conceptual and geometric.

The other goal of the paper is to answer some natural questions that arise while one gets familiar with the pencil. Thus we give a purely geometric proof of the uniqueness of the Wiman-Edge pencil as a $\Af_5$-invariant family of stable curves of genus 6, and in particular, the uniqueness of the Wiman curve $W$ as a non-hyperelliptic curve of genus 6 with the group of automorphisms isomorphic to $\Sf_5$. In fact, we give two different proofs of this result: an algebraic geometrical one given in 
Theorem   \ref{thm:onaquinticDelPezzo} below, and a second one in the sequel \cite{FL} 
that is essentially group-theoretical and topological. 

Secondly, we give a lot of attention to  the problem only barely mentioned by Edge and later addressed in paper \cite{Slodowy}: describe a $\Af_5$-equivariant projection of the Wiman-Edge pencil to a Klein plane realizing an irreducible $2$-dimensional projective representation of $\Af_5$. It reveals a natural relation between the Wiman-Edge pencil and the symmetry of the icosahedron along the lines of  Klein's book \cite{Klein}. In particular, we relate the singular members of the pencil with some attributes of the geometry of the Clebsch diagonal cubic surface as well with some  rational plane curves of degree 6 and 10 invariant with respect to a projective group of automorphisms isomorphic to $\Af_5$ which
 were discovered by R. Winger \cite{Winger}.

In the sequel \cite{FL} we will discuss some other aspects of the Wiman paper related to hyperbolic geometry,  the moduli space of curves and Shimura curves.

\subsection*{Section-by-section outline of this paper.} Section \ref{sect:quinticDP} collects a number of mostly known facts  regarding quintic del Pezzo surfaces, but with emphasis on naturality, so that it is straightforward to keep track of how the automorphism group of such a surface acts on the vector spaces associated to it. We also recall the incarnation of such a surface as the Deligne-Knudsen-Mumford compactification $\calMc_{0,5}$ and mention  how some its features can be recognized in either description.
Perhaps new is Lemma \ref{lemma:plueckeremb} and its use to obtain Proposition \ref{prop:quadricsintersection}. 

Section \ref{sect: wimansextic}  introduces the principal object of this paper, the Wiman-Edge pencil $\Cs$. Our main result here is that its smooth fibres define the universal family of genus $6$ curves endowed with a faithful $\Af_5$-action. We also determine the singular fibers of $\Cs$. Each of these turns out to be a stable curve. Indeed, the whole pencil consists of all the stable genus $6$ curves endowed with a faithful $\Af_5$-action that can be smoothed as a $\Af_5$-curve.
We further prepare for the next two sections by describing $\Sf_5$-orbits in $\calMc_{0,5}$, thus recovering a list due to Coble. This simplifies considerably when we restrict to the $\Af_5$-action and this what we will only need here.

The following two sections concern the projection to a Klein plane. These are $\Af_5$-equivariant  projections as mentioned above.
Section \ref{sect:deg5cover} concentrates on the global properties of this projection, and proves among other things that the ramification curve of such a projection is in 
fact a singular member of the Wiman-Edge pencil. It is perhaps worthwhile to point out that we prove this using the Thom-Boardman formula for cusp singularities.  This is the only instance we are aware of where such a  formula for a second order Thom-Boardman symbol is used to prove an algebro-geometric property. We also show that the image of the Wiman-Edge pencil in a Klein plane  under the  projection is no longer a pencil, but a curve of degree 5 with two singular points. 

As its title makes clear,  Section \ref{sect:WEimages} focusses on the images  
in the Klein plane of special members of the Wiman-Edge pencil. We thus find ourselves suddenly staring at a gallery of planar representations of degree $10$ of 
genus $6$ curves with $\Af_5$-action, probably  all known to our predecessors in the early 20th century if not earlier. Among them stand out (what 
we have called) the \emph{Klein decimic}  and the \emph{Winger decimic}. Less exciting  perhaps at first is the case of a conic with multiplicity $5$; but this turns 
out to be the image of a member of the Wiman-Edge pencil which together with its $\Sf_5$-conjugate is characterized by possessing  a pencil of even theta characteristics.

In the final section, Section \ref{sect:orbitspace}, we look at the $\Sf_5$-orbit space of  $\calMc_{0,5}$ (which is just the Hilbert-Mumford compactification 
of the space of  binary quintics given up to projective equivalence) and make the connection with the associated invariant theory.

\subsection*{Acknowledgements. } We thank Shigeru Mukai for helpful information regarding genus $6$ curves.  

\subsection*{Conventions.} 
Throughout this paper the base field is $\C$ and $S$ stands for a quintic del Pezzo surface (the definition is recalled in Section \ref{sect:quinticDP}).
The canonical line bundle $\Omega_M^n$ of a complex $n$-manifold $M$ will be often denoted by $\omega_M$. As this is also the dualizing sheaf for $M$, we use the same notation if $M$ is possibly singular, but has such a sheaf (we only use this for curves with nodes).

For a vector space $V$, let $\Pb(V)$ denote the projective space of 1-dimensional subspaces of $V$ and $\check\Pb (V)=\Pb (V^\vee)$ denotes the projective space of hyperplanes of $V$.
We write $\Sym^dV$ for $d$th symmetric power of the vector space $V$. For a space or variety $X$ we denote by $\Sym^dX$  the quotient of $X^d$ by the permutation group $\Sf_d$.

\section{Del Pezzo surfaces of degree $5$}\label{sect:quinticDP}

\subsection{A brief review}\label{subsect:generalities}
Here we recall some known facts about quintic del Pezzo surfaces, i.e., del Pezzo surfaces of degree 5,  that one can find in many sources (such  as \cite{CAG}).  By definition  a \emph{del Pezzo surface} 
is a smooth projective algebraic surface $S$ with ample anticanonical bundle $\omega_S^{-1}$. For such a surface $S$, the first Chern class map 
$\Pic(S)\to H^2(S; \Z)$  is an isomorphism. Denoting by  $K_S\in H^2(S; \Z)$ the canonical class of $S$, i.e.,  the  first Chern class of $\omega_S$ (and hence minus the first Chern class of the tangent bundle of $S$), the self-intersection number $d = (-K_S)^2 = K_S^2$ is called the \emph{degree} of $S$. With the exception of surfaces  isomorphic to $\Pb^1\times \Pb^1$ (which are del Pezzo surfaces of degree 8), a del Pezzo surface of degree $d$ admits a birational morphism $\pi:S\to \Pb^2$ whose inverse is the blow-up of $9-d$ distinct points (so we always have $d\le 9$) satisfying some genericity conditions. But beware that when $d\le 6$, there is more that one way to contract curves in $S$ that produce a copy of $\Pb^2$.

When $d=5$ (which we assume from now on),  these genericity conditions amount to having no three (of the four) points lie on a line so that we can
adapt our coordinates  on $\Pb^2$ such that the points in question, after having them numbered $(p_1,p_2,p_3, p_4)$,  are the vertices of the coordinate system: \[p_1=(1:0:0), \ p_2=(0:1:0), \ p_3=(0:0:1), \ p_4=(1:1:1).\] 
This shows that any two quintic del Pezzo surfaces are isomorphic and also proves  that  the automorphism group of $S$ contains the permutation group $\Sf_4$. The complete linear system $|-K_S|:= \Pb(H^0(S,\omega_S^{-1}))$ defined by nonzero sections of the dual of the canonical line bundle of $S$ is then the strict transform of the linear system in $\Pb^2$ of plane cubic curves passing through these points in the sense that a general member is the strict transform of such a cubic. This is a linear system of dimension $9-4=5$ and gives an embedding in a $5$-dimensional projective space
\[
S\hookrightarrow \Pb_S := \check{\Pb}(H^0(S,\omega_S^{-1}))
\]
with image a surface of degree $5$.  
We call $S$, thus  embedded  in a projective space, an \emph{anticanonical model} of $S$. 

The automorphism group of $S$ is, however, bigger than $\Sf_4$: the embedding $\Sf_4\hookrightarrow \Aut (S)$ extends to an isomorphism 
\[\Sf_5\cong \Aut (S)\]
given by assigning to the transposition $(45)$ the lift of the Cremona transformation 
\[(t_0:t_1:t_2)\mapsto (t_0^{-1}:t_1^{-1}:t_2^{-1}),\] 
which indeed lifts to an involution of $S$ (see  \cite[Theorem 8.5.8]{CAG}).  

The image of a nonsingular rational curve on $S$ with self-intersection number $-1$ (in other words, an {\em exceptional curve of the first kind}) is a line on the anticanonical model, and any such line is so obtained. Any line on the quintic del Pezzo surface $S$ is  either the strict transform of a line through $p_i$ and $p_j$ ($1\le i<j\le 4$) or the preimage of $p_i$ ($i=1,\dots, 4$), so that there are $\binom{4}{2}+4=10$ lines on $S$. 

The  strict transform $\Pc_i$ of the pencil of lines through $p_i$ makes a pencil $\Pc_i$ of rational curves  on $S$ whose members are 
pairwise disjoint: they are the fibers of a morphism from $S$ to a projective line. The strict transform of the conics through $p_1,\dots, p_4$ make up a pencil $\Pc_5$ with the same property. The intersection number of a member of $\Pc_i$ with 
$-K_S$ is equal to the intersection number of a cubic  with a  line  in $\Pb^2$ minus 1 if $i\ne 5$ (resp. cubic with a conic minus $4$ if $i = 5$), hence is equal to $2$. This is therefore also a conic in the anticanonical model, and that is why we refer to $\Pc_i$ as a \emph{pencil of conics} and call the corresponding morphism from $S$ to a projective line   a \emph{conic bundle}.

Every pencil of conics has exactly three singular fibers; these are unions of two different lines. For example, the pencil $\Pc_i, i\ne 5$, has singular members equal to the pre-images on $S$ of the lines $\overline{p_ip_j}$ joining the point $p_i$ with the point $p_j, j\ne i$. The singular members of pencil $\Pc_5$ are proper transforms of the pairs of lines $\overline{p_ip_j}+\overline{p_kp_l}$ with all indices distinct. 

It follows that each of ten lines on $S$ is realized as an irreducible component of exactly three different pencils of conics. This allows one to label it with a $2$-element subset of $\{1, \dots, 5\}$, being the complement of the corresponding $3$-element subset.
We thus obtain a bijection between the set of 10 lines and the collection of $2$-element subsets of  $\{1, \dots, 5\}$, with two lines intersecting if and only if the associated $2$-element subsets are disjoint, or equivalently, when the two lines constitute a singular member of one of the five pencils of conics. 
Thus the {\em intersection graph} $\mathcal{G}$ of the set lines, i.e., the graph whose  vertex set is the set of lines and in which two vertices are connected by an edge if and only if the associated lines intersect,  is the Petersen graph (Figure \ref{peterseng}). We see here $\Aut (S)$ also represented as the  automorphism  group of $\mathcal{G}$.

\begin{figure}[ht]
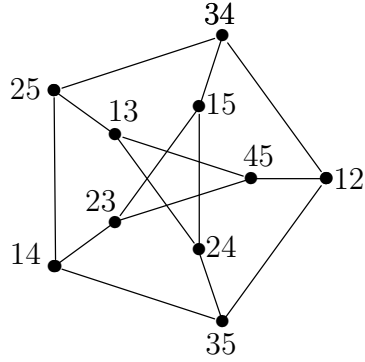

\centering
\xy (-80,0)*{};(20,0)*{\bullet};(6.2,19)*{\bullet}**\dir{-};(-16.2, 11.7)*{\bullet}**\dir{-};(-16.2, 11.7)*{\bullet};(-16,-11.7)*{\bullet}**\dir{-};
(-16.2,-11.7)*{\bullet};(6.2,-19)*{\bullet}**\dir{-};(6.2,-19)*{};(20,0)*{\bullet}**\dir{-};
(10,0)*{\bullet};(-8.1,5.85)*{\bullet}**\dir{-};(-8.1,5.85)*{};(3.1, -9.5)*{\bullet}**\dir{-};(3.1, -9.5)*{};(3.1,9.5)*{\bullet}**\dir{-};
(3.1,9.5)*{};(-8.1,-5.85)*{\bullet}**\dir{-};
(-8.5,-5.85)*{};(10,0)*{}**\dir{-};   (10,0)*{};(20,0)*{}**\dir{-};  (3.1,9.5)*{};(6.2,19)*{}**\dir{-};
 (-16.2, 11.7)*{};(-8.1,5.85)*{}**\dir{-};(-16.2, -11.7)*{};(-8.2,-5.85)*{}**\dir{-};(6.2, -19)*{};(3.1,-9.85)*{}**\dir{-};
 (23,0)*{12};(5.8,22)*{34};(5.8,22)*{34};(-20,12)*{25};(6,-22)*{35};(11,3)*{45};(6,10)*{15};(6,-9)*{24};(-7,9)*{13};
 (-10,-3)*{23};(-20,-10)*{14};
\endxy
\caption{\small The Petersen graph $\mathcal{G}$.  This is the graph with vertices corresponding to $2$-element subsets of $\{1,2,3,4,5\}$, with two vertices $\{i,j\}$ and $\{k,\ell\}$ connected by an edge 
precisely when $\{i,j\}\cap \{k,\ell\}=\emptyset$. The graph $\mathcal{G}$ is the intersection graph of the $10$ lines on the quintic del Pezzo surface, with a vertex for each line and an edge connecting vertices corresponding to lines with nontrivial intersection.}\label{peterseng}
\end{figure}

The edges of the Petersen graph represent reducible conics on $S$, and  there are indeed $3\times 5 = 15$ of them.
The $3$ reducible conics of a conic  bundle are then represented by three disjoint edges whose indices are three $2$-element subsets of a subset 
of $\{1,2,3,4,5\}$ of cardinality $4$. If we regard the missing item  as a label for the conic pencil, we  see that  $\Aut(S)\cong\Sf_5$ is realized as the full 
permutation group of the set of conic pencils.

\subsection{The modular incarnation.}\label{subsect:conicfibrations} 
A quintic del Pezzo surface has the---for us very useful---incarnation as 
the Deligne-Mumford moduli space of stable 5-pointed rational curves, 
$\calMc_{0,5}$, see \cite{Kapranov}. In the present case, this is also the 
Hilbert-Mumford (or GIT) compactification of $\calM_{0,5}$, which means that a point of $\calMc_{0,5}$ is uniquely represented up to automorphism by a \emph{smooth} rational curve $C$ and  a $5$-tuple $(x_1,\dots , x_5)\in C^5$ for which the divisor
$\sum_{i=1}^5 (x_i)$ has all its multiplicities $\le 2$. One of the advantages of either model is that the $\Sf_5$-action is evident.
The ten lines appear as the irreducible components of the boundary $\partial\calMc_{0,5}=\calMc_{0,5}\ssm \calM_{0,5}$ of the compactification:
these are the  loci defined by $x_i=x_j$, where $\{i,j\}$ is a $2$-element subset of $\{1, \dots, 5\}$.
We thus recover the bijection between the set of 10 lines and the collection of  2-element subsets of $\{1, \dots, 5\}$.

The conic bundles also have a modular interpretation, namely 
as forgetful maps: if a point of  $\calMc_{0,5}$ is represented by a Deligne-Mumford stable curve $(C; x_1, \dots , x_5)$, then  forgetting 
$x_i$ ($i=1,\dots, 5$), followed by contraction  unstable components  (and renumbering the points by $\{1,2,3,4\}$ in an order preserving manner) yields an element of $\calMc_{0,4}$. There is a  similar definition in 
case a point of $\calMc_{0,5}$  is represented in a Hilbert-Mumford stable  manner,  but beware that a Hilbert-Mumford stable 
representative of  $\calMc_{0,4}$ is given uniquely up to isomorphism by a $4$-tuple $(x_1, \dots , x_4)\in C^4$  for which the divisor $\sum_{i=1}^4(x_i)$ is either reduced or twice a reduced divisor; if for example $x_1=x_2$, 
then we also require that $x_3=x_4$ (but $x_2\not= x_3$). The moduli space $\calMc_{0,4}$ is a smooth rational curve and 
$\partial\calMc_{0,4}= \calMc_{0,4}\ssm \calM_{0,4}$ consists of three points indexed by the three ways we can partition $\{1, 2,3, 4\}$ into two $2$-element subsets. 
The resulting morphism  $f_i: \calMc_{0,5}\to \calMc_{0,4}$ represents $\Pc_i$ and over the three points of $\partial\calMc_{0,4}$ lie the three singular fibres. 

Let us focus for a moment on $\Pc_5$, in other words, on  $f_5: \calMc_{0,5}\to \calMc_{0,4}$. Each $x_i$, $i=1, \dots, 4$, then defines a section 
$\calMc_{0,4}\to \calMc_{0,5}$ of $f_5$, and $f_5$ endowed with these four sections  can be understood as the ``universal stable 4-pointed rational curve''. Note that the images of these sections are irreducible components of  $\partial\calMc_{0,5}$  and hence lines in the anticanonical model. They are indexed 
by the unordered pairs $\{i, 5\}$, $i=1,\dots, 4$. The singular fibers
of $f_5$ are those over the $3$-element set $\partial\calMc_{0,4}$; each such fiber is a union of two intersection lines, so that in this way all $10$ lines are accounted for.

The $\Sf_5$-stabilizer  of the conic bundle  defined by $f_5$ is clearly $\Sf_4$. Let us take a closer look at how $\Sf_4$ acts on $f_5$. First observe that every smooth
fiber of $f_5$ (i.e., a strict transform of a smooth conic) meets every fiber of every $f_i$  ($i\not=5$) with multiplicity one.
We also note that  $\Sf_4$ acts on the $3$-element set $\partial\calMc_{0,4}$  as its full permutation group, the kernel being the Klein Vierergruppe. So the Vierergruppe acts trivially on $\calMc_{0,4}$,  but its action on the universal pointed rational curve $\calMc_{0,5}$ is of course faithful. 
The homomorphism $\Sf_4\to \Sf_3$ can also be understood as follows.  If we are given a smooth  rational curve $C$ endowed and a four element subset  $X\subset C$, then the double cover $\tilde C$ of $C$ ramified at  $X$ is a smooth genus one curve. 
If we enumerate the points of $X$ by $X=\{ x_1, x_2, x_3, x_4\}$, then  we can choose $x_1$ as origin, so that $(\tilde C, x_1)$ becomes an elliptic curve.  This then makes $\{ x_1, x_2, x_3\}$ the set of elements of $(\tilde C,x_1)$ of order $2$. Thus $\calM_{0,4}$ can be regarded as the moduli space  of elliptic curves endowed with a principal level $2$ structure. This also suggests that we should think of $X$ as an affine plane over $\F_2$; for this interpretation the three boundary points of $\calMc_{0,4}$ are cusps, and correspond to the three directions in this plane. We now can think of $\Sf_4$ as the 
affine group of $X$, the Vierergruppe as its translation subgroup, and the quotient $\Sf_3$ as the projective linear group $\PGL_2(\F_2)$.

Choose an affine coordinate $z$ for the base of $\calMc_{0,4}$ such that $\partial\calMc_{0,4}$ is the root set of $z^3=1$. Then the full permutation  group of $\partial\calMc_{0,4}$ is the group of M\"obius transformations  consisting of the  rotations $z\mapsto \zeta z$ and  the involutions  $z\mapsto \zeta/z$, with $\zeta^3=1$. From this we see that this action has 3 irregular orbits: two $3$-element orbits: besides the root set of $z^3=1$ which is $\partial \calMc_{0,4}$,  the root set of $z^3=-1$, and one $2$-element orbit: $\{ 0,\infty\}$. The root set of $z^3=-1$ is represented by $\Pb^1$ with an enumeration of the $4$th roots of unity in $\C\subset \Pb^1$,  and the third orbit is given by $\{0,\infty\}$. 

We will be more concerned with the last orbit that gives rise to two \emph{special conics} in the pencil. So every conic bundle has two special conics as fibers and $\Sf_5$ permutes these $10$ special conics  transitively. Two distinct special conics intersect 
unless they are in the same pencil. The two points $0,\infty$ have the same $\Sf_3$-stabilizer in $\Pb^1$ of order $3$. In the action of $\Sf_4$ on the total space of the pencil, the two special conics are fixed by the subgroup $\Af_4$. It follows from the preceding that these two fibers define the locus represented by  
$(x_1, \dots, x_5)\in C^5$ (with $C$ a smooth rational curve) for which there exists an affine coordinate $z$ on $C$ such that 
$\{z(x_1), \dots, z(x_4)\}$ is the union of $\{\infty\}$ and the root set of $z^3=1$. Our discussion also shows that the subgroup $\Af_5$ has two orbits in that set, with each orbit having exactly one special conic in each conic bundle.

\subsection{Representation spaces of $\Sf_5$}\label{subsect:repspaces}
In what follows we make repeated use of the representation theory of $\Sf_5$ and $\Af_5$, and so let us agree on its notation. 
In the Tables \ref{table:S5} and \ref{table:A5}, the columns are the conjugacy classes of the group, indicated by the choice of a representative.
We  partially followed Fulton-Harris \cite{FultonHarris} for the notation of the types of the irreducible representations of $\Sf_5$. 
Here $\one$ denotes the trivial representation, $\sgn$ denotes the sign representation, $V$ denotes the Coxeter representation, that is, the standard $4$-dimensional irreducible representation of $\Sf_5$ and $E$ stands for $\wedge^2V$.   Note that each of these representations is real and hence admits  a nondegenerate $\Sf_5$-invariant quadratic form, making it self-dual.

Our labeling of the irreducible representations of  $\Af_5$  overlaps with that of $\Sf_5$, and this is deliberately so: the $\Sf_5$-representations $V$ and $V\otimes\sgn$ become isomorphic when restricted to $\Af_5$,  but  remain irreducible and that is why we  still denote them by $V$. The same applies to $W$ and $W\otimes\sgn$ (and of course to $\one$ and $\sgn$). On the other hand, the restriction to $\Af_5$ of $\Sf_5$-representation $E:=\wedge^2\,V$ is no longer irreducible, but is 
isomorphic as an $\Af_5$-representation $I\oplus I'$ (cf.\ Table \ref{table:A5}). The representations $I$ and $I'$ differ by the outer automorphism of $\Af_5$ induced by conjugation with an element of  $\Sf_5\ssm \Af_5$. Both $I$ and $I'$ are  realized as the group of isometries of Euclidean $3$-space that preserve an icosahedron. In particular, they are real (and hence orthogonal).

We will often use the fact that the natural map $\Sf_5\to \Aut(\Af_5)$ (given by conjugation)  is an isomorphism of groups. So the outer automorphism group of $\Af_5$ is of order two 
and representable by conjugation with an odd permutation.

\tablecaption{The character table of $\Sf_5$.}\label{table:S5}
\begin{center}
\begin{supertabular}{|r||c|c|c|c|c|c|c |}
\hline
type & $1$ & $(12)$ & $(12)(34)$ & $(123)$ & $(123)(45)$ & $(1234)$ & $(12345)$\\
\hline\hline
$\one$ & $1$ & $1$ & $1$ & $1$ & $1$ & $1$ & $1$ \\
\hline
$\sgn$ &$1$ & $-1$ & $1$ & $1$ & $-1$ & $-1$ & $1$ \\
\hline
$V$  & $4$ & $2$ & $0$ & $1$ & $-1$ & $0$ & $-1$ \\
\hline
$V\otimes \sgn$  & $4$ & $-2$ & $0$ & $1$ & $1$ & $0$ & $-1$ \\
\hline
$W$  & $5$ & $1$ & $1$ & $-1$ & $1$ & $-1$ & $0$\\
\hline
$W\otimes \sgn$  & $5$ & $-1$ & $1$ & $-1$ & $-1$ & $1$ & $0$ \\
\hline
$E:=\wedge^2\,V$ & $6$ & $0$ & $-2$ & $0$ & $0$ & $0$ & $1$ \\ 
\hline
\end{supertabular}
\end{center}
\vskip5mm
\tablecaption{The character table of $\Af_5$.}\label{table:A5}
\begin{center}
\begin{supertabular}{|r||c|c|c|c|c|c|c |}
\hline
type & $(1)$ & $(12)(34)$ & $(123)$ & $(12345)$ & $(12354)$\\
\hline\hline
$\one$ & $1$ & $1$ & $1$ & $1$ & $1$\\
\hline
$V$  & $4$ & $0$ & $1$ & $-1$ & $-1$\\
\hline
$W$  & $5$ & $1$ & $-1$ & $0$ & $0$\\
\hline
$I$ & $3$ & $-1$ & $0$ & $(1+\sqrt{5})/2$ & $(1-\sqrt{5})/2$\\ 
\hline
$I'$ & $3$ & $-1$ & $0$ & $(1-\sqrt{5})/2$ & $(1+\sqrt{5})/2$\\ 
\hline
\end{supertabular}
\end{center}
\vskip5mm

The isomorphism $\Aut(S)\cong \Sf_5$ depends of course on the model of $S$ as a blown-up $\Pb^2$, but since  $\Aut(S)$ permutes these models transitively, this isomorphism is unique up to an inner automorphism. This implies that the characters of $\Aut(S)$, or equivalently, the isomorphism types of the finite dimensional irreducible representations of this group, are naturally identified with those of $\Sf_5$.

Via this action of $\Sf_5$ on $S$, all linear spaces naturally associated to $S$ can be made into linear representations of $\Sf_5$. 
For example, $H^2(S; \C)$ contains the trivial representation, spanned by the anticanonical class $c_1(S)=-K_S$.   The intersection pairing gives an integral, symmetric bilinear form on $H^2(S;\Z)$.  
The orthogonal complement of $K_S$ in $H^2(S; \Z)$, denoted here by $H^2_0(S; \Z)$, contains (and is spanned by) a root system of type $A_4$, the roots being the elements of 
self-intersection $-2$. A root basis is \[(e_1-e_2,e_2-e_3,e_3-e_4,e_0-e_1-e_2-e_3)\] where $e_0$ is the class of a preimage of a line in $\Pb^2$ and $e_i$ is the class of the  line over $p_i$. This identifies $H^2_0(S; \C)$ as a $\Sf_5$-representation with the Coxeter representation $V$. The (classes of) lines themselves in $H^2(S; \Z)$ make up a $\Sf_5$-orbit.  In the language of  root systems, it is the orbit of a fundamental weight; for the given root basis  this weight is represented by the orthogonal projection of $e_4$ in $H^2_0(S; \Q)$. Thus $H^2_0(S; \C)$ is isomorphic to $V$ as a $\Sf_5$-representation. 

Another  example is $H^0(S,\omega_S^{-1})$, a $6$-dimensional representation of $\Sf_5$. Using the 
explicit action of $\Sf_5$ on $S$ and hence on the space of cubic polynomials representing elements of $H^0(S,\omega_S^{-1})$, we find that it has the same character as $E=\wedge^2V$.  This is why we shall write $E_S$ for $H^0(S,\omega_S^{-1})$, 
so that  $|-K_S|=\Pb (E_S)$ and $\Pb_S=\check\Pb (E_S)$.

\subsection{The Pl\"ucker embedding}\label{subsect:pluecker}  We here show how $S$ can be obtained  in an intrinsic manner as a linear section  of  the Grassmannian 
of lines in projective $4$-space. The naturality will make this automatically  $\Aut(S)$-equivariant. 
Let $C_\infty$ denote the union of the ten lines on $S$. It is clear that $C_\infty$ is a normal crossing divisor in $S$ (under the 
modular interpretation of $S\cong \calMc_{0,5}$ it is the Deligne-Mumford boundary).

\begin{lemma}\label{lemma:plueckeremb}
The sheaf of logarithmic differentials $\Omega_S^1(\log C_\infty)$ is globally generated.  The action of $\Sf_5$ on $S$ makes the space of global sections $W_S:=H^0(S;\Omega_S^1(\log C_\infty))$ an irreducible $5$-dimensional $\Sf_5$-representation isomorphic to $W$.
\end{lemma}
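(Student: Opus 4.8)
The strategy is to compute $H^0(S;\Omega^1_S(\log C_\infty))$ as an $\Sf_5$-representation by combining two ingredients: a vanishing/Euler-characteristic computation to pin down the dimension, and the residue exact sequence to identify the character. First I would invoke the residue sequence for the logarithmic sheaf attached to the normal crossing divisor $C_\infty = \bigcup_{\{i,j\}} L_{ij}$:
\[
0 \longrightarrow \Omega^1_S \longrightarrow \Omega^1_S(\log C_\infty) \xrightarrow{\ \mathrm{Res}\ } \bigoplus_{\{i,j\}} \Os_{L_{ij}} \longrightarrow 0,
\]
which is $\Sf_5$-equivariant once one remembers that each $L_{ij}$ is a $(-1)$-curve, hence isomorphic to $\Pb^1$ with $H^0(\Os_{L_{ij}})\cong\C$, so the right-hand term is the $\C$-span of the $10$ lines — a known permutation representation of $\Sf_5$ (the same $10$-dimensional module that appears as the boundary divisor classes). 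Taking global sections, and using $H^0(S;\Omega^1_S)=0$ (true for any rational surface, in particular any del Pezzo surface), one gets an injection $W_S \hookrightarrow H^0(\bigoplus \Os_{L_{ij}})$ whose cokernel sits inside $H^1(S;\Omega^1_S)$.

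**Identifying the image.** The map $H^0(\bigoplus\Os_{L_{ij}})\to H^1(S;\Omega^1_S)$ is, up to sign, the map sending the generator of $H^0(\Os_{L_{ij}})$ to the cohomology class $c_1(\Os_S(L_{ij}))\in H^{1,1}(S)\cong H^1(S;\Omega^1_S)$ (this is the standard identification of the connecting map in the residue sequence with the cycle class map). Since $H^1(S;\Omega^1_S)=H^2(S;\C)$ has dimension $5$ for a quintic del Pezzo surface, and the $10$ line classes span it, the image of this map is all of $H^2(S;\C)$, so $\dim W_S = 10-5 = 5$ and $W_S$ is the kernel. As $\Sf_5$-representations we therefore have an exact sequence $0\to W_S\to \C^{10}\to H^2(S;\C)\to 0$, where $\C^{10}$ is the permutation module on the $10$ lines (equivalently on $2$-element subsets of $\{1,\dots,5\}$) and $H^2(S;\C)\cong \one\oplus V$ by the decomposition recalled in Subsection~\ref{subsect:repspaces} ($\one$ spanned by $-K_S$, and $H^2_0\cong V$). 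A character computation for the permutation module on $\binom{5}{2}$ gives $\chi = \one + V + W$, so subtracting $\one + V$ leaves $\chi_{W_S} = \chi_W$, i.e. $W_S\cong W$, which is irreducible and $5$-dimensional.

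**Global generation.** For global generation I would argue that $\Omega^1_S(\log C_\infty)$, being a rank-$2$ bundle on a del Pezzo surface with these $5$ sections, is generated by them; the cleanest route is again the residue sequence together with the fact that $\Omega^1_S$ is "small" — more precisely, one checks that $\Omega^1_S(\log C_\infty)$ has the same Chern classes and section space as a bundle known to be globally generated. In fact the fastest argument uses the modular incarnation $S\cong\calMc_{0,5}$: the five forgetful maps $f_i\colon S\to\calMc_{0,4}\cong\Pb^1$ each pull back $\Omega^1_{\Pb^1}(\log\partial)$, a trivial line bundle, and these combine to give a map whose image realizes the Plücker picture; the log-$1$-forms $f_i^*(dz/z)$ (suitably normalized) are the explicit global generators, and one verifies they generate at each point by a local computation at a point of $C_\infty$ and at a general point.

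**Main obstacle.** The dimension count and the character computation are routine; the genuine work is the global generation statement and the precise identification of $W_S$ with $W$ rather than merely "a $5$-dimensional representation with this character." The subtle point in the character step is justifying that the connecting homomorphism $\bigoplus H^0(\Os_{L_{ij}})\to H^1(\Omega^1_S)$ really is the cycle-class map on line classes — this requires either a local residue computation or citing the standard identification — and then knowing $H^2_0(S;\C)\cong V$ (supplied by Subsection~\ref{subsect:repspaces}). For global generation, the obstacle is ruling out base points along $C_\infty$ itself, where the logarithmic poles interact; I expect the forgetful-map description of $S\cong\calMc_{0,5}$ to make this transparent, since near a boundary point two of the $f_i$ give local logarithmic coordinates and their logarithmic differentials manifestly span the fiber of $\Omega^1_S(\log C_\infty)$ there.
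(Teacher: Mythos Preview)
Your argument for the representation-theoretic statement is essentially identical to the paper's: both use the residue exact sequence, note that $H^0(\Omega^1_S)=0$, identify the connecting map with the cycle-class map so that its surjectivity follows from the fact that the line classes span $H^2(S;\C)$, and then subtract the character $\one\oplus V$ of $H^2(S;\C)$ from the character $\one\oplus V\oplus W$ of the permutation module on the ten lines.

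For global generation the two approaches diverge. The paper works in the blowup model: the forms $\zeta_1=dt_1/t_1-dt_0/t_0$ and $\zeta_2=dt_2/t_2-dt_0/t_0$ visibly generate on $S\ssm C_\infty$, and then the transitivity of $\Aut(S)\cong\Sf_5$ on the vertices \emph{and} on the edges of the Petersen graph reduces the check along $C_\infty$ to a single local computation near one line and one crossing point. Your route via the forgetful maps $f_i:\calMc_{0,5}\to\calMc_{0,4}$ is a legitimate alternative, but note a slip: since $\partial\calMc_{0,4}$ consists of three points, $\Omega^1_{\Pb^1}(\log\partial\calMc_{0,4})$ has degree $-2+3=1$, so it is $\Oc_{\Pb^1}(1)$, not trivial. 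This does not break the argument---pulling back any nonzero section still lands in $H^0(\Omega^1_S(\log C_\infty))$ because the singular fibers of each $f_i$ lie in $C_\infty$---but you should phrase it that way. What your approach buys is that the five conic fibrations are manifestly transverse at a generic point, so generation on the open part is immediate; what it costs is that the boundary check (at a node of $C_\infty$, where several $f_i$ fail to be smooth simultaneously) is no more transparent than in the blowup picture, and you still need an honest local computation there. The paper's use of the large symmetry group to reduce to a single such computation is the cleaner way to finish.
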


\begin{proof} Denote by $\Lcal$ the $10$-element set of lines on $S$ and consider the \emph{residue exact sequence}
\beq\label{residue}
0\to \Omega_S^1\to \Omega_S^1(\log C_\infty)\overset{\textrm{res}}{\longrightarrow} \oplus_{L\in\Lcal}\, i_{L*}\Oc_{L}\to 0,
\eeq
where $i_L:L\hookrightarrow S$ is the inclusion.
All homomorphisms here are natural and the exact sequence of cohomology gives an exact sequence of $\Sf_5$-representations: 
\[
0\to H^0(S;\Omega_S^1(\log C_\infty))\to \C^{\Lcal} \overset{\phi}{\to} H^1(S;\Omega_S^1).
\]
The $\Sf_5$-representation $\C^{\Lcal}$ is the permutation representation on the vertices of the Petersen graph. 
Its character is that of $\one\oplus V\oplus W$, as one can see by looking at pairs of complementary pentagons inside of the Petersen graph. 
We also note that  $H^1(S;\Omega_S^1)\cong H^2(S;\C)$ is as a  representation isomorphic to $\one\oplus V$. 
The blow-up model makes it obvious that  $H^2(S;\C)$ is generated  by the 
classes of lines, so that the homomorphism $\phi$ is surjective, and we obtain an isomorphism of representations 
\[
W_S=H^0(S;\Omega_S^1(\log C_\infty)) \cong W.
\]
It remains to show that $W_S$ generates  $\Omega_S^1(\log C_\infty)$.
For this, we use the  description of $S$ as the blowup of $\Pb^2$ in the vertices of its coordinate simplex. In terms of the 
coordinates $(t_0:t_1:t_2)$, the logarithmic forms $dt_1/t_1-dt_0/t_0$ and $dt_2/t_2-dt_0/t_0$ generate the sheaf of differentials on $\Pb^2$ outside the coordinate simplex  and so they define elements $\zeta_1$  and $\zeta_2$ of $W_S$ which generate $\Omega_{S\ssm C_\infty}$. Since  $\Aut(S)$ is transitive on the vertices of the Petersen graph, it remains to prove generation along just one line, say the strict transform of $t_1=0$. A straightforward computation shows  that $\zeta_1$ and $\zeta_2$ take care of this, 
except at the point at infinity (where $t_0=0$), but  the origin is included. Since $\Aut(S)$ is also transitive on the edges of the Petersen graph,  this finishes the proof.
\end{proof}

\begin{remark}\label{rem:}
A local computation shows right away that $\det \Omega_S^1(\log C_\infty)= \omega_S(C_\infty)$. Since $C_\infty$ is a divisor of $\omega_S^{-2}$, the latter is isomorphic to $\omega_S^{-1}$, so that $c_1(\Omega_S^1(\log C_\infty))=-K_S$. The residue exact sequence allows one to compute $c_2(\Omega_S^1(\log C_\infty))$ also:   we have
\[
c(\Omega_S^1(\log C_\infty)) = 1+c_1+c_2 = \frac{c(\Omega_S^1)}{\prod_{L\in \Lcal}(1-[L])}= (1+K_S+c_2(S))\prod_{L\in \Lcal}(1+[L]+[L]^2).
\]
As is well-known, $c_2(S)$ is the cohomology  class in top degree whose value on the fundamental class is the Euler characteristic of $S$, which is $7$
(since $S$ is the  blow up of four points in $\Pb^2$). 
  Using the identities $L^2 = -1$ and $\sum_{L\in \Lcal} [L] = -2K_S$, together with the fact that the Petersen graph has 15 edges, we then find that
$\la c_2(\Omega_S^1(\log C_\infty)), [S]\ra =2$.
\end{remark}

Since $\Omega_S^1(\log C_\infty)$ is globally generated, the evaluation map $e_x:W_S\to \Omega_S^1(\log C_\infty)(x))$ is onto for every 
$x\in S$. So $\ker  (e_s)$ is of dimension $3$ and the annihilator of this kernel in $W_S^\vee$  is of dimension $2$. We thus obtain a morphism
\[
f:S\to G(2,W^\vee_S) = G_1 (\Pb(W_S^\vee))= G_1(\check\Pb(W_S))
\]
Under this map, the pull-back of the dual of the tautological subbundle on the Grassmannian 
becomes isomorphic to $\Omega_S^1(\log C_\infty)$. 

\begin{proposition}\label{prop:quadricsintersection} 
The morphism $f$ is a closed, $\Sf_5$-equivariant embedding. Its composition with the Pl\"ucker embedding 
\[
G_1(\Pb(W_S^\vee))\hookrightarrow \check\Pb(\wedge^2\,W_S)\cong \Pb^9
\] 
is the anticanonical embedding into the subspace $\check\Pb(E_S)$, where $E_S$ is identified with a direct summand of the representation 
\beq\label{lambdasquare}
\wedge^2\, W_S \cong E_S\oplus (V\otimes\sgn).
\eeq
In particular, the anticanonical model of $S$ is 
$\Sf_5$-equivariantly isomorphic to the intersection of $G_1(\Pb(W^\vee_S))$ with the subspace $\Pb(E)$ in its Pl\"ucker embedding.
\end{proposition}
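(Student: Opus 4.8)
The plan is to establish three things in turn: that $f$ is a closed embedding, that the Plücker composition lands in $\check\Pb(E_S)$ and realizes the anticanonical embedding, and finally the representation-theoretic decomposition \eqref{lambdasquare}. For the decomposition, I would first note that $\wedge^2 W_S$ is a $10$-dimensional $\Sf_5$-representation, and compute its character directly from the character of $W$ given in Table \ref{table:S5} via $\chi_{\wedge^2 W}(g) = \tfrac12(\chi_W(g)^2 - \chi_W(g^2))$. Comparing with the character table one finds $\wedge^2 W \cong E \oplus (V\otimes\sgn)$; this is a finite check over the seven conjugacy classes. Since $H^0(S,\omega_S^{-1}) = E_S$ has the character of $E=\wedge^2 V$ (as recorded in Subsection \ref{subsect:repspaces}), the summand $E_S$ is canonically identified with a subrepresentation of $\wedge^2 W_S$, and because $E$ and $V\otimes\sgn$ are non-isomorphic irreducibles this splitting is unique. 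So $\check\Pb(E_S)$ sits inside $\check\Pb(\wedge^2 W_S) = \Pb^9$ as a distinguished $\Sf_5$-invariant $\Pb^5$.

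Next I would identify the line bundle and the linear system pulled back by the Plücker-composed map. By construction (the discussion just before the proposition), $f^*$ of the dual tautological subbundle on $G_1(\check\Pb(W_S))$ is $\Omega_S^1(\log C_\infty)$, so the Plücker line bundle pulls back to $\det\Omega_S^1(\log C_\infty) = \omega_S(C_\infty)$, which by Remark \ref{rem:} is isomorphic to $\omega_S^{-1}$. Thus the composite morphism $g := (\text{Plücker})\circ f : S \to \Pb^9$ is given by a linear subsystem of $|-K_S| = \Pb(E_S)$, i.e.\ $g$ factors through $\check\Pb(E_S) = \Pb_S$ and is induced by a linear subspace of $H^0(S,\omega_S^{-1})^\vee$; to see it is the \emph{full} anticanonical system it suffices to observe that the pullback map $\wedge^2 W_S^\vee \to H^0(S,\omega_S(C_\infty)) = H^0(S,\omega_S^{-1}) = E_S^\vee$ is surjective, which follows because its image is a nonzero $\Sf_5$-subrepresentation of the irreducible (as a summand) $E_S^\vee$—nonzero because $g$ is not constant (indeed $\Omega_S^1(\log C_\infty)$ is globally generated and not all of $W_S$ maps into a fixed hyperplane). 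Hence $g$ is, up to the identification above, exactly the anticanonical embedding $S \hookrightarrow \check\Pb(E_S)$, which is known to be a closed embedding onto a quintic surface; in particular $f$ itself is a closed embedding, and it is $\Sf_5$-equivariant by naturality of every construction (residue sequence, evaluation map, Plücker map). Equivariance of $f$ then forces the image to lie in the $\Sf_5$-invariant $\check\Pb(E_S)$, confirming consistency.

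Finally, combining these: the image $g(S) = f(S)$ inside $G_1(\Pb(W_S^\vee))$, reembedded by Plücker, equals the anticanonical model of $S$, which lies in $\check\Pb(E_S)$. Since the anticanonical surface has degree $5$ and $\check\Pb(E_S)\cap G_1(\Pb(W_S^\vee))$ is a surface containing it, a dimension-and-degree count—$G_1(\Pb(W_S^\vee))$ has dimension $6$ and degree $5$ in $\Pb^9$, so its intersection with a generic $\Pb^5$ is a quintic surface—identifies $f(S)$ with the \emph{entire} scheme-theoretic intersection $G_1(\Pb(W_S^\vee)) \cap \check\Pb(E_S)$, provided this intersection is proper (dimension $2$) and reduced. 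I expect the main obstacle to be precisely this last transversality/irreducibility point: one must rule out that the linear section $G_1\cap \check\Pb(E_S)$ is reducible or nonreduced, or has excess dimension. This can be handled by noting that $f(S)$ is an irreducible reduced surface contained in the section, and that the section, being a linear slice of the Grassmannian by a subspace that is $\Sf_5$-invariant with no $\Sf_5$-fixed points in $\check\Pb(E_S)$ lying on $G_1$ outside $S$ (equivalently, the orthogonal $V\otimes\sgn$ contains no vector whose span meets the Grassmannian badly), must then coincide with $f(S)$ on dimension grounds once one checks the section is $2$-dimensional—the degree-$5$ computation showing it cannot be larger, and the presence of $f(S)$ showing it cannot be smaller or empty. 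An alternative, perhaps cleaner, route to this step is to invoke the known classification of quintic del Pezzo surfaces as linear sections of $G(2,5)$ (e.g.\ \cite{CAG}) and merely verify that \emph{our} section is the relevant one by the equivariance and line-bundle identifications already established; I would present the argument that way to avoid grinding through the intersection-theoretic transversality by hand.
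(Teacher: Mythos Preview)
Your approach is essentially the same as the paper's: identify the pullback of the Pl\"ucker line bundle with $\det\Omega_S^1(\log C_\infty)\cong\omega_S^{-1}$ so that the composite is the anticanonical map (hence a closed embedding), and verify the decomposition $\wedge^2 W\cong E\oplus(V\otimes\sgn)$ by the character formula $\chi_{\wedge^2 W}(g)=\tfrac12(\chi_W(g)^2-\chi_W(g^2))$. The paper's proof is considerably terser---it invokes the general principle that a globally generated rank-$r$ bundle gives a Grassmannian map whose Pl\"ucker composition is defined by the determinant, and simply asserts this yields the complete system $|-K_S|$---whereas you supply the surjectivity argument explicitly via irreducibility of $E_S$.

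The one place you go further than the paper is the ``In particular'' clause, i.e.\ that $S$ equals (not merely lies in) the linear section $G_1(\Pb(W_S^\vee))\cap\check\Pb(E_S)$. The paper's proof does not address this at all; it is tacitly treated as immediate, and in effect justified by the discussion in the following subsection (\S\ref{subsect:acmodel}), where the five Pl\"ucker quadrics restricted to $E_S$ are shown to cut out $S$. Your degree argument ($G(2,5)$ has degree $5$ in $\Pb^9$, a proper $\Pb^5$-section is a quintic surface, $S$ is an irreducible quintic surface inside it) is the right way to close this, and your caution about properness of the intersection is well placed. Either of your suggested routes---the degree count or invoking the classical fact that every quintic del Pezzo is a linear section of $G(2,5)$---is fine; the degree count is self-contained once you note the section cannot have dimension $>2$ since its degree in $\check\Pb(E_S)$ is at most $5$ while it contains the nondegenerate surface $f(S)$.
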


\begin{proof}  A  locally free sheaf $\mathcal{E}$ of rank $r$ on a compact variety $X$ that is globally generated determines a morphism  $X\to G(r,H^0(\calE)^\vee)$ whose composite with the Pl\"ucker embedding 
$G(r,H^0(\calE)^\vee)\hookrightarrow \Pb(H^0(\calE)^\vee)$ is given by the  invertible sheaf  $\det \mathcal{E}$. Applying this to our situation, we find that the composite of $f$ with the Pl\"ucker embedding is given by the complete linear system $|-K_S|$. We know that it defines a closed embedding, from this the first claim follows.

To see the second claim, we compute the character of the representation of $\Sf_5$ in $\wedge^2\, W_S$ by  means of the formula 
$\chi_{\wedge^2\,W}(g) = \frac{1}{2}(\chi_W(g)^2-\chi_W(g^2))$. The standard character theory gives us the decomposition \eqref{lambdasquare}. 
\end{proof}

\begin{remark} The linear map $F:V\otimes\sgn\to \wedge^2\,W^\vee$ of $\Sf_5$-representations found above defines an equivariant linear embedding \[\Pb^4 \cong \Pb(V)=\Pb (V\otimes\sgn)\hookrightarrow \check\Pb(\wedge^2\,W)\cong \Pb^9\] whose
geometry is discussed in Example 10.2.20 of \cite{CAG}. We note that this give rise to a rational map 
\[
\tilde{F}:\Pb(V)\dasharrow \Pb(W)
\] 
obtained as the composite of the Veronese map $\Pb (V)\to \Pb(\Sym^2 V)$ and the projectivization of the linear map 
\[\Sym^2 V\xrightarrow{\wedge^2F} \Sym^2  (\wedge^2\,W^\vee)\to \wedge^4 W^\vee\cong \det (W^\vee)\otimes W.\]
It is clear that $\tilde{F}$ is given by a 4-dimensional linear system of quadrics. Its  indeterminacy locus  is where $F (v)$ has rank
$\le 2$ instead of $4$ (as one might expect) and  consists of 5 points in general position (so these are the vertices of a coordinate simplex of the 4-dimensional $\Pb(V)$) and $\tilde{F}$ is then given by the linear system of quadrics through these.  
The $\Sf_5$-equivariant map $\tilde{F}$ appears in modular setting which we shall now recall (and which is discussed for example in \S 9.4.4 of \cite{CAG}).

The moduli space $\calMc_{0,6}$ of stable 6-pointed genus zero curves is isomorphic to the blow-up of $\Pb^3$ at the 5  vertices of its coordinate simplex  followed by the blow-up of the proper transforms of the lines of this simplex (see \cite{Kapranov}). There is a natural map from $\calMc_{0,6}$ to the Hilbert-Mumford compactification of $\calM_{0,6}$. The latter 
appears as the image an  $\calMc_{0,6}$ in a $4$-dimensional projective space via  the linear system of quadrics in $\Pb^3$ through the 5 coordinate vertices so that this reproduces a copy of our map $\Phi$ above.  The image  of this map is  a cubic  hypersurface,
called  the \emph{Segre cubic} $\mathcal{S}_3$. 

The Segre cubic has 10 nodal points, each of which is the image of an exceptional divisor over a  line of the coordinate simplex. Observe that the modular interpretation of the morphism $\calMc_{0,6}\to\mathcal{S}_3$
makes evident an action of $\Sf_6$, although in the above model, only its restriction to the subgroup $\Sf_5$  is manifest. The ambient 4-dimensional projective space of the Segre cubic $\mathcal{S}_3$ is the projectivization of an irreducible  $5$-dimensional representation of $\Sf_6$ corresponding to the partition  $(3,3)$ (see \cite{CAG}, p. 470). 
\end{remark}

\subsection{The anticanonical model}\label{subsect:acmodel}
 Some of what follows can be found in  Shepherd-Barron \cite{SB}; see also Mukai \cite{mukai}. 

We have just seen that an  anticanonical model  of $S$ is obtained as  linear section of a Grassmannian $G_1(W_S^\vee)$ for its Pl\"ucker embedding. 
It is well-known that the Pl\"ucker equations define the image of  $G_1(W_S^\vee)\hookrightarrow \Pb (\wedge^2 W^\vee_S)=\check\Pb(\wedge^2 W_S)$ as the intersection of 
five quadrics: these are given by  the map 
\[
\begin{array}{lll}
\wedge^2 W^\vee_S&\to& \wedge^4 W^\vee_S=\det(W_S^\vee)\otimes W_S\\
\alpha&\mapsto&\alpha\wedge\alpha
\end{array}\]
or rather by its dual $\det(W_S)\otimes W_S^\vee\to \Sym^2(\wedge^2 W_S)$. The latter is a nonzero map of $\Sf_5$-representations. 
As we mentioned,  the irreducible representation  $W$ is self-dual, and the character table of $\Sf_5$ shows that the $(-1)$-eigenspace of the transposition   $(12)\in\Sf_5$  in $W$ has dimension $2$. Hence $\det(W_S)$ is the trivial representation and $\det(W_S)\otimes W^\vee_S$ can as a $\Sf_5$-representation be identified with the irreducible representation $W_S$. Thus we have obtained an $\Sf_5$-equivariant embedding 
\[W_S\hookrightarrow \Sym^2(\wedge^2 W_S)\]
 (unique up to scalars) and  this realizes $\Pb(W_S)$ as the linear system of  (Pl\"ucker) quadrics in  
$\check\Pb (\wedge^2 W_S)$ that define $G_1(W_S^\vee)$. By restriction to  $E_S^\vee$, we can also understand $W_S$ as defining the linear system of quadrics in  $\check\Pb (E_S)$ that has  $S=G_1(W_S^\vee)\cap \check\Pb(E_S)$ as its base locus. When thus interpreted, we will write $I_S(2)$ for $W_S$.

The character of $\Sym^2E_S$ can be computed by means of the formula $\chi_{\Sym^2E}(g) = \frac{1}{2}(\chi_E(g^2)+\chi_E(g)^2)$. We then find that
\beq\label{symdec}
\Sym^2E_S \cong W^{\oplus 2}\oplus (W\otimes \sgn)\oplus V\oplus \one\oplus \sgn.
\eeq
We have already singled out the subrepresentation $I_S(2)$ as a copy of $W$ and so the remaining summands in \eqref{symdec} will add up to a representation that embeds in  $H^0(S,\omega_S^{-2})$. An application of Riemann-Roch or the explicit discussion below shows that this is in fact an isomorphism, so that we have an exact sequence of $\Sf_5$-representations
\[
0\to I_S(2)\to S^2H^0(S, \omega_S^{-1})\to H^0(S, \omega_S^{-2})\to 0.
\]
and  a $\Sf_5$-equivariant isomorphism 
\beq\label{anti-bicanonical}
|\omega_S^{-2}| \cong \check{\Pb} (W\oplus (W\otimes\sgn)\oplus V\oplus\one\oplus \sgn) \cong \Pb^{15}.
\eeq
The summands $\one$  and $\sgn$ in $\Sym^2E_S$ are explained by restricting the representation $E_S$ to $\Af_5$: then $E_S$ decomposes into
two irreducible $\Af_5$-representations: $E_S=I\oplus I'$, each of which is orthogonal. This means that $(\Sym^2 I)^{\Af_5}$ and   $(\Sym^2 I')^{\Af_5}$ are of dimension one. If $Q$ is a generator of $(\Sym^2 I)^{\Af_5}$, then its image $Q'$ under an element of $\Sf_5\ssm \Af_5$ is a generator of $(\Sym^2 I')^{\Af_5}$. So $Q+Q'\in \Sym^2E_S$ is $\Sf_5$-invariant (it spans the $\one$-summand) and $Q-Q'\in \Sym^2E_S$ transforms according to the sign character (it spans the $\sgn$-summand). The above discussion shows that their  images in  $H^0(S, \omega_S^{-2})$ remain independent.

\begin{corollary}\label{cor:Cinfty}
The image of $Q-Q'$ in $H^0(S, \omega_S^{-2})$, which spans a copy of the sign representation,  has divisor $C_\infty$.
\end{corollary}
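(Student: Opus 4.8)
\textbf{Proof proposal for Corollary \ref{cor:Cinfty}.}

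The plan is to locate the line $\C s_{C_\infty}\subset H^0(S,\omega_S^{-2})$ spanned by the section cutting out $C_\infty$, observe that it is a one-dimensional subrepresentation, and then pin down \emph{which} one-dimensional subrepresentation it is by an explicit computation with an odd permutation. First I would record that, by the Remark above, $\sum_{L\in\Lcal}[L]=-2K_S$, so $\Oc_S(C_\infty)\cong\omega_S^{-2}$ and the tautological section $s_{C_\infty}$ with $\mathrm{div}(s_{C_\infty})=C_\infty$ spans a line in $H^0(S,\omega_S^{-2})$. Since $C_\infty$ is $\Sf_5$-invariant as a divisor, this line is an $\Sf_5$-subrepresentation. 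Combining \eqref{symdec} with the exact sequence $0\to I_S(2)\to S^2H^0(S,\omega_S^{-1})\to H^0(S,\omega_S^{-2})\to 0$ and $I_S(2)\cong W$, one gets $H^0(S,\omega_S^{-2})\cong W\oplus(W\otimes\sgn)\oplus V\oplus\one\oplus\sgn$, whose only one-dimensional subrepresentations are the line $\one$ (spanned by the image of $Q+Q'$) and the line $\sgn$ (spanned by the image of $Q-Q'$). So $s_{C_\infty}$ is a nonzero multiple of one of these two, and it remains to show that an odd permutation acts on $\C s_{C_\infty}$ by $-1$.

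For that I would use the blow-up model $S\to\Pb^2$ at $p_1,\dots,p_4$, under which $H^0(S,\omega_S^{-2})$ is identified with the space of sextic forms $F(t_0,t_1,t_2)$ of multiplicity $\ge 2$ at every $p_i$ (a form of multiplicity $m_i$ at $p_i$ giving the section vanishing on the strict transform of $\{F=0\}$ plus $(m_i-2)$ times the exceptional curve over $p_i$). Under this identification $s_{C_\infty}$ corresponds to the product of the six lines $\overline{p_ip_j}$,
\[
F_\infty=t_0t_1t_2\,(t_0-t_1)(t_1-t_2)(t_0-t_2),
\]
which has an ordinary triple point at each $p_i$, so that its strict transform is the union of the six lines $\ell_{ij}$ and adding the four exceptional curves recovers exactly $C_\infty$. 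Now take the transposition $(45)$, realized as the lift to $S$ of the Cremona involution $(t_0:t_1:t_2)\mapsto(t_0^{-1}:t_1^{-1}:t_2^{-1})$. Since it lifts to an honest involution of $S$, it acts by an involution on each $H^0(S,\omega_S^{-k})$, and in the plane model its action on sextics must be $F\mapsto\lambda\,(t_0t_1t_2)^4F(t_0^{-1},t_1^{-1},t_2^{-1})$ with $\lambda=\pm1$; comparing with the action on cubics through the equivariant surjection $S^2H^0(S,\omega_S^{-1})\onto H^0(S,\omega_S^{-2})$ — on which the scalar attached to $\omega_S^{-1}$ enters squared — forces $\lambda=1$. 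A one-line substitution then gives $(45)^*F_\infty=(t_0t_1t_2)^4F_\infty(t_0^{-1},t_1^{-1},t_2^{-1})=-F_\infty$.

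Consequently the odd permutation $(45)$ acts on $\C s_{C_\infty}$ by $-1$, so $s_{C_\infty}$ lies in the $\sgn$-isotypical line of $H^0(S,\omega_S^{-2})$, which by the first paragraph is spanned by the image of $Q-Q'$; therefore $\mathrm{div}(\text{image of }Q-Q')=\mathrm{div}(s_{C_\infty})=C_\infty$. The one place that genuinely needs care is the normalization $\lambda=1$: a priori the lift of the Cremona map acts on anticanonical data only up to a scalar, and one must invoke the functoriality of $\omega_S^{-k}$ (together with $(45)^2=\mathrm{id}$ and the equivariant multiplication map) to make the relative sign $(45)^*F_\infty=-F_\infty$ meaningful; an alternative is to evaluate at a fixed point of $(45)$ on $S$. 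A more representation-theoretic route — showing that the $\Sf_5$-invariant member of the pencil spanned by $Q$ and $Q'$ (necessarily the Wiman curve) is irreducible, hence $\ne C_\infty$ — would avoid coordinates altogether but appears to require the structural results established later in the paper.
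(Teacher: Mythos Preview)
Your argument is correct and follows the same overall strategy as the paper's proof: identify the section cutting out $C_\infty$ explicitly in the plane model, and show that an odd permutation acts on it by $-1$, so that it must span the $\sgn$-line rather than the $\one$-line. Your squaring argument via $S^2H^0(S,\omega_S^{-1})\onto H^0(S,\omega_S^{-2})$ to pin down $\lambda=\mu^2=1$ is valid.

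The paper's execution differs from yours in two simplifying choices. First, instead of the sextic $F_\infty$ it works with the rational $2$-form
\[
\alpha=\frac{(dt_0\wedge dt_1\wedge dt_2)^2}{t_0t_1t_2(t_0-t_1)(t_1-t_2)(t_2-t_0)},
\]
so that $\pi^*\alpha^{-1}$ is \emph{canonically} a generating section of $\omega_S^{-2}(-C_\infty)$; this removes the normalization ambiguity entirely, since the action of $\Aut(S)$ on $\omega_S^{2}$ is intrinsic and the formula for $\alpha$ is coordinate-explicit. Second, rather than the Cremona involution $(45)$, the paper uses a \emph{linear} odd permutation (swap $t_1\leftrightarrow t_2$, which lies in the $\Sf_4$ that already acts projectively on $\Pb^2$): the numerator $(dt_0\wedge dt_1\wedge dt_2)^2$ is invariant and the denominator changes sign, so $\alpha\mapsto-\alpha$ with no further bookkeeping. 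Both routes arrive at the same conclusion; the paper's avoids the $\lambda$-discussion you correctly flagged as the delicate point, while yours has the mild advantage of staying with the polynomial $F_\infty$ that later appears as a generator of the plane pencil.
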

\begin{proof}
As is well-known, `taking the  residue at infinity' identifies  the space of rational 3-forms on $\C^3$ that are invariant under scalar multiplication (i.e.,  are homogeneous of degree zero) with the space of rational 2-forms on $\Pb^2$. Thus
\[
\alpha:= \frac{(dt_0\wedge dt_1\wedge dt_2)^2}{t_0t_1t_2(t_0-t_1)(t_1-t_2)(t_2-t_0)}
\]
can be understood as  a rational section of  $\omega_{\Pb^2}^{2}$ whose divisor is minus the sum of the six coordinate lines. It follows that $\pi^*\alpha$ can be regarded as a generating section  of  $\omega_S^{2}(C_\infty)$. Hence its inverse $\pi^*\alpha^{-1}$ becomes a generating section  of
$\omega_S^{-2}(-C_\infty)$, making $H^0(S, \omega_S^{-2}(-C_\infty))$ a $\Sf_5$-invariant subspace of $H^0(S, \omega_S^{-2})$. Exchanging $t_1$ and $t_2$ clearly takes $\alpha$ to $-\alpha$ and
so the span $H^0(S, \omega_S^{-2}(-C_\infty))$ of  $\pi^*\alpha^{-1}$ is a copy of the sign representation in $H^0(S, \omega_S^{-2})$. It follows that $\pi^*\alpha^{-1}$ must be proportional to the image  of $Q-Q'$ in $H^0(S, \omega_S^{-2})$, so that the divisor of $Q-Q'$ is $C_\infty$.
\end{proof}

\begin{remark}\label{rem:explicitquadrics}
We can make the canonical embedding and  $I_2(S)$ explicit in terms of the blowup model of $S$.
Let us first observe that the Petersen graph has twelve $5$-cycles (pentagons). The stabilizer subgroup of a $5$-cycle is the dihedral subgroup $D_{10}$ of $\Sf_5$ of order $10$. For example,  the cycle formed by the vertices $(12),(23),(34),(45),(15)$ is stabilized by the subgroup generated by permutations  $(12345)$ and $(25)(34)$. A geometric interpretation of a $5$-cycle is a hyperplane section of $S$ which consists of a pentagon of lines. Note that they  come in pairs with respect to taking the complementary subgraph. A pentagon of lines, viewed as  reduced divisor on $S$, is a member of the anticanonical system of $S$. For our description of $S$ as a blown-up $\Pb^2$ these must be transforms  of triangles in $\Pb^2$. The list of these triangles is as follows:

\begin{align*}
f_0 &= t_1t_2(t_0-t_2),& f_0' &= t_0(t_0-t_1)(t_1-t_2);\\
f_1 &= t_1(t_0-t_1)(t_0-t_2),&  f_1' &= t_0t_2(t_1-t_2);\\
f_2 &=(t_0-t_1)(t_0-t_2)t_2, & f_2' &= t_0t_1(t_1-t_2);\\
f_3 &= t_1t_2(t_0-t_1), & f_3' &= t_0(t_0-t_2)(t_1-t_2);\\
f_4 &= t_1(t_0-t_2)(t_1-t_2), &  f_4' &= t_0t_2(t_0-t_1);\\
f_5 &= t_2(t_1-t_2)(t_0-t_1), & f_5'&= t_0t_1(t_0-t_2).
\end{align*} 

A direct check gives that the left-hand column can be linearly expressed in terms of the 
right-hand column (and vice versa), as follows:

\begin{align*}
f_0' =& f_1-f_2+f_5,  &f_1'= &f_0-f_3+f_5,  & f_2' = f_0-f_3+f_4,\\
f_3'= & f_1-f_2+f_4,  &f_4' =&f_2+f_3-f_5,  & f_5' = f_0+f_1+f_4.
\end{align*} 

But beware that in order to get actual of sections of $\omega_S^{-1}$, we need to multiply these elements with $(dt_0\wedge dt_1\wedge dt_2)^{-1}$:
the resulting $3$-vector fields are then invariant under scalar multiplication and have a residue at infinity that can be understood as 
an element of $H^0(S, \omega_S^{-1})$
Note that $(f_0,\ldots,f_5)$ is a basis of the linear space $H^0(S,\omega_S^{-1})$, and so these basic elements 
can serve as the coordinates  of an anticanonical embedding $S\into \Pb^5$.
Since $f_if'_i = f_jf_j'$, we see that $S$ is contained in the intersection of quadrics defined by equations 
$x_ix_i'-x_jx_j' = 0$, where $x_i' $ is the linear form in $x_0, \dots, x_5$ that expresses 
$f'_i$ in terms of  the $f_0, \dots , f_5$. The five linear independent quadratic forms
$x_ix_i'-x_0x_0' $,  ($i = 1,\ldots,5$) then give us the defining equations for $S$ in $\Pb^5$.
We easily check that the quadratic form  $\sum_{i=1}^6x_ix_i'$  corresponding to the sum of pentagons of lines spans the $\sgn$ summand. It cuts out on $S$ the union $C_\infty$ of lines on $S$. The linear space spanned by quadratic forms $\{x_ix_i'\}_{i=1}^6$  decomposes as the direct sum $\sgn\oplus W$. As we have already observed, the space of quadrics $V(x_ix_i'-x_6x_6')$ span the kernel $I_S(2)$ of   
 $\Sym^2H^0(S, \omega_S^{-1})\to H^0(S, \omega_S^{-2})$. 

It is a priori clear that  $\Sf_5$ permutes the pentagons of lines listed above, but we see that it in fact preserves 
the 12-element set  and the preceding discussion makes explicit how.
\end{remark}

\section{The Wiman-Edge pencil and its modular interpretation}\label{sect: wimansextic}

\subsection{The Wiman-Edge pencil and its base locus}\label{subsect:pencil}
We found in Subsection \ref{subsect:acmodel} that there are exactly two $\Sf_5$-invariant quadrics on $\check{\Pb} (E_S)$, one defined by  $Q+Q'$  and spanning the $\one$-summand, the other by $Q-Q'$  and spanning the $\sgn$-summand and  (by Corollary \ref {cor:Cinfty}) cutting out on $S$ the $10$-line union ${C_\infty}$.   The trivial summand spanned by $Q+Q'$ cuts out a  curve that we shall call the {\em Wiman curve} and denote by ${C_0}$. We shall find it to be smooth of genus 6. We observe that the plane spanned by $Q$ and $Q'$ is the fixed point set of $\Af_5$ in $\Sym^2E_S$ and hence defines 
a pencil $\Cs$ of curves on $S$ whose members come with a  (faithful) $\Af_5$-action. This pencil is of course spanned by $C_0$ and $C_\infty$ and these are the only members that are in fact $\Sf_5$-invariant.  We refer to $\Cs$ as the \emph{Wiman-Edge pencil}; we sometimes also use this term for its image 
 in $\Pb^2$ under the natural map $\pi:S\to\Pb^2$. 

\begin{lemma}[{\bf Base locus}]\label{lemma:basepoints}
The base locus $\Delta:=C_0\cap C_\infty$ of $\Cc$ is   the unique 
20-element $\Sf_5$-orbit in $C_\infty$. The curves $C_0$ and $C_\infty$  intersect transversally so that each member  of the 
Wiman-Edge pencil is smooth at $\Delta$. 
\end{lemma}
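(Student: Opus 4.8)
The plan is to exploit that $C_0$ and $C_\infty$ are both members of the anti‑bicanonical system $|\omega_S^{-2}|$, and to combine an intersection‑number count with the $\Sf_5$‑orbit structure of $C_\infty$. By Corollary \ref{cor:Cinfty}, $C_\infty$ is the divisor of $Q-Q'$, and by definition $C_0$ is the divisor of $Q+Q'$; from the exact sequence $0\to I_S(2)\to \Sym^2 E_S\to H^0(S,\omega_S^{-2})\to 0$ of Subsection \ref{subsect:acmodel} (where $I_S(2)\cong W$), the images of $Q+Q'$ (spanning a copy of $\one$) and $Q-Q'$ (spanning a copy of $\sgn$) in $H^0(S,\omega_S^{-2})$ are nonzero and non‑proportional, so $C_0$ and $C_\infty$ are genuine, distinct members of $|\omega_S^{-2}|$. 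Since $\omega_S^{-2}$ has class $-2K_S$, this gives $C_0\cdot C_\infty=(-2K_S)^2=4K_S^2=20$. Moreover $C_0$ and $C_\infty$ share no irreducible component: being $\Sf_5$‑invariant, $C_0$ would otherwise contain the whole $\Sf_5$‑orbit of any line it contained, hence all of $C_\infty$, forcing $C_0=C_\infty$ by equality of degrees — a contradiction. Thus $\Delta=C_0\cap C_\infty$ is a zero‑dimensional, $\Sf_5$‑stable subscheme of $C_\infty$ of length $20$.

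Next I would recall, from the analysis in Subsection \ref{subsect:conicfibrations}, the list of $\Sf_5$‑orbits in $C_\infty$. The fifteen nodes of $C_\infty$ (the edges of the Petersen graph) form a single $\Sf_5$‑orbit, while every point of $C_\infty\ssm\Sing(C_\infty)$ lies on exactly one of the ten lines. Since $\Sf_5$ permutes the lines transitively and the stabilizer of a line acts on it (a copy of $\calMc_{0,4}$) through the quotient $\Sf_3\cong\PGL_2(\F_2)$, every orbit meeting the smooth locus has $10k$ elements, where $k$ is the size of an orbit of this $\Sf_3$ on the interior of $\calMc_{0,4}$; the minimal value $k=2$ is attained only by the pair $\{0,\infty\}$, i.e. the two special‑conic points of each conic bundle. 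Hence $C_\infty$ has a \emph{unique} $\Sf_5$‑orbit of size $20$, and every $\Sf_5$‑orbit in $C_\infty$ has either $15$ or at least $20$ elements.

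The heart of the argument is that $\Delta$ misses the nodes. If $\Delta$ met one node, $\Sf_5$‑stability would put all fifteen nodes in $\Delta$; but $C_0$ passes through both branches of $C_\infty$ at a node, so the local intersection multiplicity of $C_0$ and $C_\infty$ there is at least $2$, giving $C_0\cdot C_\infty\ge 30>20$, a contradiction. Therefore $\Delta\subset C_\infty\ssm\Sing(C_\infty)$, so $\Delta$ is a nonempty (it has length $20>0$) union of $\Sf_5$‑orbits each of size $\ge 20$ whose total length is $20$; it must then be precisely the unique $20$‑element orbit, and the local intersection multiplicity is $1$ at each of its points. Multiplicity $1$ at a point where $C_\infty$ is smooth forces $C_0$ to be smooth there and transverse to $C_\infty$. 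Finally, for any member $C_t\in\Cs$ and $p\in\Delta$, choose local equations $g_0,g_\infty$ of $C_0,C_\infty$ at $p$; transversality means $dg_0(p),dg_\infty(p)$ are independent, so the local equation $\lambda g_0+\mu g_\infty$ of $C_t$ has nonzero differential at $p$, and $C_t$ is smooth at $p$.

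The main obstacle is exactly the node‑avoidance step in the third paragraph: it is what turns the naive bound "$|\Delta|\le 20$" into an identification of $\Delta$. Once $\Delta\cap\Sing(C_\infty)=\emptyset$ is known, the rest is immediate orbit bookkeeping together with the length count. One could alternatively verify everything by hand from the explicit quadric equations of Remark \ref{rem:explicitquadrics}, but the equivariant count above is cleaner and makes no use of coordinates.
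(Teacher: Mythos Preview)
Your proof is correct. The paper's own argument is closely related but proceeds line by line rather than globally: it fixes a single line $L\subset C_\infty$, notes that $C_0\cdot L=-2K_S\cdot[L]=2$, observes that the $\Sf_5$-stabilizer of $L$ acts on $L$ through the full $\Sf_3$ permuting the three nodes on $L$, and then uses that this $\Sf_3$-action on $L\cong\Pb^1$ has exactly one orbit of size $2$ (and none of size $1$), forcing $C_0\cap L$ to be that $2$-element orbit, reduced. Transversality and node-avoidance fall out immediately.

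Your route replaces the per-line count $C_0\cdot L=2$ by the global count $C_0\cdot C_\infty=(-2K_S)^2=20$ together with the node-avoidance contradiction (nodes would contribute $\ge 2\cdot 15=30$). Both arguments ultimately rest on the same $\Sf_3$-orbit analysis of a line; yours packages it as ``smooth-locus orbits have size $10k$ with $k\ge 2$'', while the paper uses it to pin down two points on each line directly. The paper's version is marginally shorter because it never needs to argue separately that $C_0$ and $C_\infty$ share no component or that $\Delta$ misses the nodes; your version has the virtue of isolating the node-avoidance step cleanly. One small imprecision: your description of the $2$-element orbit on $L$ as ``the two special-conic points of each conic bundle'' conflates two different pictures (special conics are fibers of a conic bundle, not points on a boundary line), but this is cosmetic and does not affect the argument.
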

\begin{proof}
Since this $C_0\cap C_\infty$ is $\Sf_5$-invariant, it suffices to determine how a line $L$ on $S$ meets $C_0$. We first note that 
 the intersection number $C_0\cdot L$ (taken on $S$) is $-2K_S\cdot [L]=2$. 
When we regard $L$ as an irreducible component of $C_\infty$, or rather, as defining a vertex of the Petersen graph, then
we see that the other lines meet $L$ in 3 distinct points and that the $\Sf_5$-stabilizer of $L$ acts on $L$ as the full permutation group of these three points. So if we choose an affine coordinate $z$ on $L$ such that the three points in question are the third roots of $1$, then 
we find that  the $\Sf_5$-stabilizer of $L$ acts on $L$ with three irregular orbits: two of size 3 (the roots of $z^3-1$  and the roots of $z^3+1$) and one of size 2 ($\{z=0, z=\infty\}$). It follows that the $C_0$ meets $L$ in the size 2 orbit.  In particular, the intersection of $C_0$ with $C_\infty$ is transversal and contained in the smooth part of $C_\infty$.
\end{proof}

\subsection{Genus 6 curves with $\Af_5$-symmetry}\label{subsect:wimansextic}
Any reduced $C\in \Cs$, being a member of $|-2K_S|$, has its normal sheaf in $S$  isomorphic to $\Oc_C\otimes \omega_S^{-2}$.
By the adjunction formula, its dualizing sheaf  $\omega_C$ is therefore isomorphic to 
\[\Oc_C\otimes \omega_S^{-2}\otimes_{\Oc_S}\omega_S=
\Oc_C\otimes \omega_S^{-1}=\Oc_C(1).\] 
In particular
\[\deg (\omega_C)=\deg (\Oc_C\otimes \omega_S^{-1})=(-2K_S)\cdot(-K_S)=10.\] 
It follows that $C$ has arithmetic genus $6$, that the natural map $E_S\to H^0(C, \omega_C)$ is a  $\Af_5$-equivariant isomorphism and that $C$ is canonically embedded in $\Pb_S$. 
Our goal is to give  the Wiman-Edge pencil a modular interpretation. 

\begin{proposition}[{\bf $\Af_5$ and $\Sf_5$ orbit spaces}]
\label{prop:orbifolds} 
Let  $C$ be a smooth projective curve genus $6$ endowed with a faithful action of $\Af_5$. 
Then $\Af_5\backslash C$ is of genus zero and $\Af_5$ has $4$ irregular orbits with isotropy orders $3$, $2$, $2$ and $2$.

If the $\Af_5$-action extends to a faithful $\Sf_5$-action, then  
$\Sf_5\backslash C$ is of genus zero and $\Sf_5$ has $3$ irregular orbits with isotropy orders $6$, $4$ and $2$.
The union of these irregular $\Sf_5$-orbits is also the union of the irregular $\Af_5$-orbits: the $\Sf_5$-orbit with isotropy order $6$ resp.\ $4$ is
a $\Af_5$-orbit with isotropy order $3$ resp.\ $2$  and the $\Sf_5$-orbit with isotropy order $2$ decomposes into two $\Af_5$-orbits with the same isotropy groups. 
(In other words, the double cover $\Af_5\backslash C\to \Sf_5\backslash C$ only ramifies over the points of ramification of order $6$ and $4$.)
Such a curve exists.
\end{proposition}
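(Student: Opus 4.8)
The plan is to use the Riemann–Hurwitz formula to pin down the possible branch data, and then to use the existence of the Wiman–Edge pencil — already constructed in Subsection~\ref{subsect:pencil} — to exhibit a curve realizing that data. First I would let $g'$ denote the genus of $\Af_5\backslash C$ and write the Riemann–Hurwitz formula for the quotient map $C\to \Af_5\backslash C$ of degree $60$: with $C$ of genus $6$ we get $2\cdot 6-2 = 60(2g'-2)+\sum_{i}(e_i-1)$, where the sum runs over points of $C$ with nontrivial stabilizer and $e_i$ is the stabilizer order. Grouping the ramification points into $\Af_5$-orbits, an orbit with stabilizer of order $m$ contributes $60/m$ points each ramified of index $m$, hence contributes $(60/m)(m-1) = 60(1-1/m)$ to the sum. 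So $10 = 60(2g'-2) + 60\sum_{\text{orbits}}(1-1/m_j)$, i.e. $\tfrac{1}{6} = 2g'-2 + \sum_j (1-1/m_j)$. Since $\Af_5$ has elements only of orders $1,2,3,5$, each $m_j\in\{2,3,5\}$ and each term $1-1/m_j\geq 1/2$; thus $2g'-2 < 1/6$ forces $g'=0$, giving $\sum_j(1-1/m_j) = 2+\tfrac16 = \tfrac{13}{6}$. Now I would solve this in nonnegative integers: the possible values of $1-1/m_j$ are $\tfrac12, \tfrac23, \tfrac45$. A short case analysis (noting $\tfrac{13}{6}$ has denominator $6$, which rules out using any $\tfrac45$ term since the $5$ in the denominator cannot be cancelled) shows the only solution is $\tfrac23 + \tfrac12 + \tfrac12 + \tfrac12 = \tfrac{13}{6}$, i.e.\ exactly one orbit of isotropy order $3$ and three orbits of isotropy order $2$. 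This establishes the first assertion.

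Next, for the $\Sf_5$ case, I would run the same computation for $C\to \Sf_5\backslash C$, now of degree $120$: letting $g''$ be the genus of the quotient, $10 = 120(2g''-2) + \sum_j 120(1-1/n_j)$, so $\tfrac{1}{12} = 2g''-2 + \sum_j(1-1/n_j)$, again forcing $g''=0$ and $\sum_j(1-1/n_j) = \tfrac{25}{12}$. The orders $n_j$ are orders of elements of $\Sf_5$, hence in $\{2,3,4,5,6\}$, with $1-1/n_j \in \{\tfrac12,\tfrac23,\tfrac34,\tfrac45,\tfrac56\}$. The denominator $12$ again excludes any $\tfrac45$ term, and a finite check of the remaining combinations shows the unique solution is $\tfrac56 + \tfrac34 + \tfrac12 = \tfrac{25}{12}$: one orbit with isotropy of order $6$, one of order $4$, one of order $2$. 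To relate this to the $\Af_5$-data I would use that the $\Af_5$-orbits refining each $\Sf_5$-orbit are either a single $\Af_5$-orbit (if the point stabilizer in $\Sf_5$ is not contained in $\Af_5$) or two $\Af_5$-orbits swapped by any odd permutation (if it is). A cyclic group of order $6$ or order $4$ in $\Sf_5$ is never contained in $\Af_5$, so those $\Sf_5$-orbits stay single $\Af_5$-orbits, with $\Af_5$-isotropy of order $3$ and $2$ respectively; a subgroup of order $2$ generated by a product of two transpositions lies in $\Af_5$, so that orbit splits into two $\Af_5$-orbits each with isotropy of order $2$. Counting: we have produced exactly one $\Af_5$-orbit of order $3$ and three of order $2$, matching the first part, and proving the ramification statement about the double cover $\Af_5\backslash C\to\Sf_5\backslash C$.

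Finally, for the existence claim, I would invoke the construction already in hand: by Subsection~\ref{subsect:pencil} the Wiman–Edge pencil $\Cs$ consists of members of $|-2K_S|$ on the quintic del Pezzo surface $S$, each carrying a faithful $\Af_5$-action, and by Lemma~\ref{lemma:basepoints} a generic member is smooth near the base locus $\Delta$; a generic member $C_t$ is a smooth curve, and the computation preceding Proposition~\ref{prop:orbifolds} shows such a $C_t$ has arithmetic (hence geometric) genus $6$. Thus a smooth genus $6$ curve with faithful $\Af_5$-action exists, and the Wiman curve $C_0$, being $\Sf_5$-invariant, provides one with the extended $\Sf_5$-action. (One should remark that smoothness of a generic member, and of $C_0$ in particular, is established in the material immediately following; I would cross-reference that, or alternatively observe that the $\Af_5$-quotient of any \emph{smooth} hyperbolic member is a sphere with the prescribed cone points, which via the uniformization/orbifold-covering correspondence re-proves existence purely from the signature $(0;2,2,2,3)$ having a surjection from its orbifold fundamental group onto $\Af_5$ with torsion-free kernel of the right genus.) The main obstacle is the arithmetic case analysis solving $\sum_j(1-1/m_j)=\tfrac{13}{6}$ and $\sum_j(1-1/n_j)=\tfrac{25}{12}$ uniquely — it is elementary but must be done carefully, exploiting the denominators to exclude the order-$5$ contributions — together with being careful about which $\Sf_5$-orbits split upon restriction to $\Af_5$.
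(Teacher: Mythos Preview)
Your Riemann--Hurwitz reduction is right in outline and matches the paper's approach, but there is a genuine gap in the $\Sf_5$ case: the equation $\sum_j(1-1/n_j)=\tfrac{25}{12}$ with $n_j\in\{2,3,4,6\}$ has \emph{two} solutions, not one. Besides $\tfrac56+\tfrac34+\tfrac12=\tfrac{25}{12}$ there is also $\tfrac23+\tfrac23+\tfrac34=\tfrac{25}{12}$, i.e.\ branch data $(4,3,3)$. The paper finds both and then excludes $(4,3,3)$ by a separate argument: in that case no point of $C$ has stabilizer of order $6$, so an element $\sigma\in\Sf_5$ of order $6$ acts without fixed points, and one then derives a contradiction from the Riemann--Hurwitz formula applied to $C\to C/\langle\sigma\rangle$. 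Your ``finite check'' misses this, and the proposition is not proved without it.

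A smaller point: when you pass from the $\Sf_5$-orbit with isotropy order $2$ to its $\Af_5$-refinement, you assert that the stabilizer is generated by a product of two disjoint transpositions. A priori it could be generated by a single transposition, in which case the $\Af_5$-stabilizer would be trivial and the orbit would be $\Af_5$-regular. The correct way to pin this down is to run the matching with the $\Af_5$-data you already established: the $(6,4,2)$ orbits contribute $\Af_5$-orbits of isotropy $3$ and $2$, and the remaining two $\Af_5$-orbits of isotropy $2$ must come from somewhere, forcing the order-$2$ $\Sf_5$-stabilizer to lie in $\Af_5$.

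Finally, your existence argument via the Wiman--Edge pencil is circular as stated: the smoothness of $C_0$ (Corollary~\ref{cor:Wimanpencilmodular}) is deduced later \emph{from} this proposition via Theorem~\ref{thm:Wimanpencilmodular}. The paper avoids this by invoking the Riemann existence theorem directly, exhibiting explicit elements $\alpha=(123)(45)$, $\beta=(1245)$, $\gamma=(14)(23)$ of $\Sf_5$ of orders $6,4,2$ with $\alpha\beta\gamma=1$ that generate $\Sf_5$. Your parenthetical alternative via an orbifold-covering argument would work too, but you would need to actually carry it out.
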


See also Theorem 5.1.5 of \cite{Cheltsov}.

\begin{proof}
The stabilizer of a point of a smooth curve under  a faithful finite group action is cyclic. So when  $G=\Sf_5$,  the stabilizer of a point of $C$  is a cyclic group of  order $i\le 6$, but when $G=\Af_5$, it cannot of order $4$ or $6$. Let for $i>1$, $k_i$ be the number of $G$-orbits of  size $|G|/i$. By the Hurwitz formula we have 
\[10/|G|=2(g'-1)+\sum_{i=2}^6 \frac{i-1}{i}k_i,\] where $g'$ is the genus of the quotient. 

For $G=\Sf_5$ we have  $|G| =120$ and so this shows right away that $g'=0$. It follows that 
\[25=6k_2+8k_3+ 9k_4+ \frac{48}{5} k_5+10k_6.\] 
It is then immediate that $k_5=0$ and $k_6\le 1$.
We find that  the only solutions for $(k_2,k_3,k_4, k_6)$ are $(1,0,1,1)$   and $(0,2,1,0)$. The latter possibility clearly does not occur because it would imply that an element of order 6 acts without fixed points, contradicting the Hurwitz equality $10= 6(2g'-2)$. 
 
For $G=\Af_5$, we have  $|G| =60$ and  hence  we then also have $g'=0$. The formula now becomes 
becomes $13=3k_2+4k_3+\frac{24}{5}k_5$, which has  as only solution $(k_2,k_3, k_5)=(3,1,0)$.  

The assertion concerning the map $\Af_5\backslash C\to \Sf_5\backslash C$ formally  follows from the above computations.

The last assertion will follow from Riemann existence theorem, once we find  a regular $\Sf_5$-covering of  $\Pb^1\ssm \{0,1,\infty\}$ with the simple loops yielding monodromies $\alpha,\beta ,\gamma\in\Sf_5$ of order  $6,4,2$ respectively, which generate $\Sf_5$ and for which $\alpha\beta\gamma=1$.
This can be arranged: take $\alpha=(123)(45)$ and $\beta=(1245)$ and $\gamma=(14)(23)$.
\end{proof}

We next show that any smooth projective curve genus $6$ endowed with a faithful action of $\Af_5$ appears in the Wiman-Edge pencil. For this 
we shall invoke a theorem of
S. Mukai \cite{mukai}, which states that a canonical smooth projective curve $C$ of genus 6 lies on a quintic del Pezzo surface if and only if it is neither bielliptic (i.e., it does not  double cover  a genus 1 curve), nor trigonal  (it does not triple cover a genus zero curve), nor  isomorphic to a plane quintic.

\begin{theorem}\label{thm:onaquinticDelPezzo}
Every  smooth projective curve of genus 6 endowed with a faithful $\Af_5$-action is $\Af_5$-equivariantly isomorphic to a member of the Wiman-Edge pencil.
This member is unique up to the natural action of the involution $\Sf_5/\Af_5$.
\end{theorem}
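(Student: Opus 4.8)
The strategy is to reduce the statement to Mukai's theorem together with the representation-theoretic description of $S$ and the orbit-space analysis of Proposition \ref{prop:orbifolds}. First I would verify that a smooth genus $6$ curve $C$ with a faithful $\Af_5$-action satisfies the three hypotheses of Mukai's theorem, i.e.\ is not bielliptic, trigonal, or a plane quintic. For each of these three exceptional types the curve carries a distinguished linear series ($g^1_4$ for bielliptic after composing with the hyperelliptic-type map, $g^1_3$ for trigonal, $g^2_5$ for the plane quintic) which, by uniqueness, must be preserved by $\Aut(C)\supseteq\Af_5$. One then gets a nontrivial action of $\Af_5$ on a $\Pb^1$ or a $\Pb^2$; since $\Af_5$ has no faithful $2$-dimensional projective representation and is not a subgroup of $\PGL_2(\C)$ acting with the right ramification, this forces a contradiction with the Hurwitz/orbit data of Proposition \ref{prop:orbifolds}, or with the genus of the base curve of the covering $C\to\Af_5\backslash C$. (For trigonal: $C\to\Pb^1$ of degree $3$ would have to be $\Af_5$-equivariant for the induced $\Af_5$-action on the target $\Pb^1$, but $\Af_5\hookrightarrow\PGL_2(\C)\cong\SO_3(\C)$ does exist, so here the argument must instead use that a degree-$3$ map is incompatible with the isotropy orders $(3,2,2,2)$ found in Proposition \ref{prop:orbifolds}; bielliptic is excluded because the quotient elliptic curve would inherit an $\Af_5$-action, impossible for an elliptic curve whose automorphism group is solvable; plane quintic is excluded because $\Af_5$ has no faithful $3$-dimensional projective representation other than $I,I'$, and one checks a $\Af_5$-invariant plane quintic of genus $6$ — hence smooth — cannot exist, e.g.\ by examining invariants of $I$ and $I'$ in $\Sym^5$.)

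Second, granting Mukai's theorem, $C$ lies as a canonically embedded curve on some quintic del Pezzo surface $S'$. Since all quintic del Pezzo surfaces are isomorphic (as recalled in Subsection \ref{subsect:generalities}), I may take $S'=S$. The $\Af_5$-action on $C$ extends to an action on $S$: indeed $C$ is canonically embedded, the canonical space $H^0(C,\omega_C)$ is identified $\Af_5$-equivariantly with $E_S=H^0(S,\omega_S^{-1})$ (this identification, for curves in $|-2K_S|$, was established just before Proposition \ref{prop:orbifolds}, and the same Riemann–Roch argument applies to any canonical genus $6$ curve on $S$), and the del Pezzo surface is the intersection of the quadrics through $C$ in $\check\Pb(E_S)$, hence is carried into itself by any projective transformation preserving $C$. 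So we get $\Af_5\hookrightarrow\Aut(S)\cong\Sf_5$.

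Third, I would pin down this embedding up to conjugacy. There is a unique conjugacy class of subgroups of $\Sf_5$ isomorphic to $\Af_5$, namely $\Af_5$ itself. Therefore the $\Af_5$-action on $S$ is, after composing with an automorphism of $S$, the standard one. Now $C\in|-2K_S|$ is $\Af_5$-invariant, so the point $[C]\in\Pb(\Sym^2E_S)=|{-}2K_S|$ is a fixed point of $\Af_5$. But the $\Af_5$-fixed locus in $\Sym^2E_S$ was computed in Subsection \ref{subsect:acmodel} to be exactly the plane spanned by $Q$ and $Q'$ — equivalently, using $E_S\vert_{\Af_5}=I\oplus I'$ with $(\Sym^2I)^{\Af_5}$ and $(\Sym^2I')^{\Af_5}$ each one-dimensional — and the projectivization of that plane is precisely the Wiman-Edge pencil $\Cs$. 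Hence $C$ is a member of $\Cs$, proving existence.

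\textbf{Uniqueness.} For the uniqueness clause: suppose $C,C'\in\Cs$ are $\Af_5$-equivariantly isomorphic, via $\phi:C\xrightarrow{\sim}C'$. Both are canonically embedded in $\Pb_S=\check\Pb(E_S)$ and $\phi$ is induced by an element $g\in\GL(E_S)$ normalizing the $\Af_5$-action, i.e.\ acting on $\Af_5$ (via conjugation on its image in $\Aut(S)$) by some automorphism of $\Af_5$. Since $g$ preserves $S$ (again: $S$ is cut out by the quadrics through $C$, which $g$ permutes among those through $C'$, but both curves lie on the same $S$ because $\Cs$ consists of curves on $S$), $g$ defines an element of $\Aut(S)\cong\Sf_5$. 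If $g\in\Af_5$ then it fixes every member of $\Cs$, so $C=C'$. Otherwise $g$ is an odd permutation, acting on $\Cs\cong\Pb^1$ as the involution $\Sf_5/\Af_5$ (it swaps the $\one$- and $\sgn$-characters appropriately: concretely it fixes $C_0$ and $C_\infty$ and acts nontrivially in between), and then $C'$ is the image of $C$ under this involution. Thus the member is unique up to $\Sf_5/\Af_5$, as claimed.

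\textbf{Main obstacle.} I expect the genuine work to be in the first step — ruling out the three Mukai-exceptional classes — and specifically the trigonal case, since $\Af_5$ does embed in $\PGL_2(\C)$, so one cannot argue purely by non-existence of a representation and must instead exploit the precise isotropy profile $(3,2,2,2)$ of Proposition \ref{prop:orbifolds} against the branching of a putative $\Af_5$-equivariant degree-$3$ map $C\to\Pb^1$ (and handle the possibility that the $g^1_3$ is merely $\Aut(C)$-invariant as a linear system without the map itself being equivariant, which it is, by uniqueness of the $g^1_3$ on a trigonal curve of genus $\ge 5$). The bielliptic and plane-quintic cases are softer: an elliptic curve has no faithful $\Af_5$-action, and a smooth plane quintic with $\Af_5\subset\PGL_3(\C)$ would force the quintic form to be an $\Af_5$-invariant in $\Sym^5 I^\vee$ (or $\Sym^5 (I')^\vee$), whose dimension one checks — via the character formula $\chi_{\Sym^5 I}=\tfrac1{120}\sum$ of symmetric-power characters — to be zero, or at any rate to contain no smooth member.
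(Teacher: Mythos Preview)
Your overall architecture matches the paper's, but there are several genuine gaps.

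\textbf{The bielliptic case.} You assert that ``the quotient elliptic curve would inherit an $\Af_5$-action.'' This requires the bielliptic involution $\iota$ to commute with $\Af_5$, which is not automatic: you must first show that $\iota$ is \emph{unique}, so that $\Af_5$ normalizes and hence centralizes $\langle\iota\rangle$. For $g=6$ this does follow from the Castelnuovo--Severi inequality (two independent degree-$2$ maps to genus-$1$ curves would force $g\le 5$), but you need to invoke it. The paper instead runs a longer case analysis on $G=\langle\Af_5,\iota\rangle$, using Hurwitz's bound $|\Aut(C)|\le 420$ to limit $[G:\Af_5]$ to $1,2,4,6$ and then excluding each case via Hurwitz formulas and character computations on $H^0(C,\omega_C)$. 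Your shortcut is cleaner once made explicit.

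\textbf{Smoothness of the del Pezzo.} Mukai's theorem, as the paper uses it, produces a possibly \emph{weak} quintic del Pezzo surface $S_C$. The paper devotes a paragraph to arguing that $S_C$ is smooth: on a singular one $C$ would carry fewer than five $g^1_4$'s, and since $\Af_5$ has no subgroup of index $<5$ it would fix each of them, yielding an $\Af_5$-invariant degree-$4$ map to $\Pb^1$ with an impossible ramification set. You skip this entirely.

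\textbf{Extending $\Af_5$ to $S$.} Your claim that ``the del Pezzo surface is the intersection of the quadrics through $C$'' is false: a canonical genus-$6$ curve in $\Pb^5$ has $\dim I_C(2)=21-15=6$, while $\dim I_S(2)=5$; the sixth quadric is exactly the one cutting $C$ out of $S$, so the base locus of $I_C(2)$ is $C$, not $S$. What you actually need is that $S_C$ is the \emph{unique} quintic del Pezzo containing $C$ (the paper cites \cite{SB}); uniqueness then forces any automorphism of $C$, extended canonically to $\Pb_S$, to preserve $S_C$.

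\textbf{Minor omissions.} You do not treat the hyperelliptic case, which must be excluded before one has a canonical model; the paper does this by observing that the $14$ Weierstra\ss\ points would be an $\Af_5$-invariant set, impossible given the orbit sizes $20,30,30,30$ from Proposition~\ref{prop:orbifolds}. For the trigonal case you correctly flag the difficulty but do not resolve it; the paper's argument is that the ramification divisor of the unique $g^1_3$ has degree $16$, again matching no union of irregular $\Af_5$-orbits. For the plane quintic the paper does not compute $(\Sym^5 I)^{\Af_5}$ but instead uses that $I$ is orthogonal, producing an invariant conic which the quintic would meet in an $\Af_5$-invariant divisor of degree $10$, whereas the smallest icosahedral orbit on a conic has $12$ points.
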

\begin{proof}
Let $C$ be such a curve. We first show that $C$ is not hyperelliptic, so that we have a canonical model.  
If it were,  then it is so in a unique manner so that the set of its 14 Weierstra\ss\ points is in invariant with respect to $\Af_5$. 
But we found in Proposition \ref{prop:orbifolds} that $\Af_5$ has in $C$ one irregular orbit of size $20$,  three of size $30$, and no others, and thus such an invariant subset cannot exist. 

From now on we assume that $C$ is canonical. We first show that it is neither trigonal, nor bielliptic, nor isomorphic to a plane quintic.

\medskip
\noindent
\emph{$C$ is trigonal: } This means that $C$ admits a base point free pencil of degree $3$. This pencil is then unique (\cite{ACGH1}, p.\ 209)  
so that the $\Af_5$-action on $C$ permutes the fibers. Consider the associated morphism $C\to \Pb^1$. The Riemann-Hurwitz formula then tells us that the ramification divisor of this morphism on $C$ has degree $16$. It must be $\Af_5$-invariant. But our list of orbit sizes precludes this possibility and  so such a divisor cannot exist.

\medskip
\noindent
\emph{$C$ is isomorphic to a plane quintic: }  It is then so in a unique manner (\cite{ACGH1}, p.\ 209)  and hence the $\Af_5$-action on $C$ will extend as a projective representation to the ambient $\Pb^2$. 

The resulting projective representation cannot be reducible, for then $\Af_5$ has a fixed point, $p\in \Pb^2$ say,  and   the action of 
$\Af_5$ on the tangent space $T_p\Pb^2$ will be faithful. But as Table \ref{table:A5} shows,  $\Af_5$ has no faithful representation. So the projective representation is  irreducible and hence the projectivization of copy of  $I$ or $I'$. Either representation is orthogonal and so the ambient 
projective plane contains a  $\Af_5$-invariant conic. The quintic defines  on this conic an effective divisor of degree $10$. Since $\Af_5$ acts on the conic (a Riemann sphere) as the group of motions of a regular  icosahedron, no $\Af_5$-orbit on this conic has fewer than $12$ points and so such a divisor cannot exist.

\medskip
\noindent
\emph{$C$ is bielliptic: } This means that $C$ comes with an involution $\iota$ whose orbit space is of genus one.   Let $G\subset \Aut(C)$ be the subgroup generated by $\Af_5$ and $\iota$. By a theorem of Hurwitz, $|\Aut(C)|\le 84(6-1) = 420$ and so $[G:\Af_5]$ can be $1$, $2$, $4$ or $6$.  

Let us first deal with the index $6$ case. For this we note that the $G$-action (by left translations) on $G/\Af_5$ has a kernel contained in $\Af_5$. This kernel is a normal subgroup and contained in $\Af_5$. Since $\Af_5$ is simple, this kernel is either trivial or all of $\Af_5$. It cannot be  all of $\Af_5$, because $G/\Af_5$ is then cyclic of order $6$ and hence cannot be generated by the image of $\iota$. It follows that $G$ acts faithfully on $G/\Af_5$
so that we get an embedding of $G$ in $\Sf_6$. Its image is then a subgroup of index $2$ and so this image must be $\Af_6$: $G\cong\Af_6$.  We now invoke the Hurwitz  formula to this group action: the stabilizer of a point is cyclic of order $\le 6$ and so if  for a divisor $i$ of $|G|$, 
$k_i$ is the number of $G$-orbits in $C$ of  size $|G|/i$, then
\[
\textstyle 10/360=2(g'-1)+\sum_{i=2}^6 \frac{i-1}{i}k_i,
\] 
where $g'$ is the genus of the quotient.  This implies that $g'=0$ and then this comes down to
\[
 73=18k_2+ 24k_3+27k_4+ \tfrac{144}{5}k_5+30 k_6.
\]
It is clear that $k_5$ must be zero. Since the left hand side is odd, we must have $k_4=1$, and we then  find  that no solution exists.

If the index $\le 4$, then the argument above gives a map $G\to \Sf_4$. Its kernel is contained in $\Af_5$, but cannot be trivial for reasons of cardinality. It follows that the kernel equals $\Af_5$, so that $\Af_5$ is normal in $G$. Since the image of $\iota$ generates $G/\Af_5$, it then follows that the index is $1$ or $2$. 

In the last case, $G$ is either isomorphic to $\Af_5\times\Cf_2$ or to
$\Sf_5$, depending on whether or not conjugation by $\iota$ induces in $\Af_5$ an inner automorphism.  In the first case, the action of 
$\Af_5$ descends to a faithful action on the elliptic curve. This would make $\Af_5$ an extension of finite cyclic group by an abelian group, which is evidently not the case. It follows that $G\subset \Sf_5$ and that $\iota$ is conjugate to $(12)$  or to $(12)(34)$. 

Denote by $\chi$ the character of the $\Sf_5$-representation $H^0(C, \Omega_C)$. Since the quotient of $C$ by $\iota$ has genus 1, we have $\chi(\iota)=-5+1=-4$.
If $\iota$ is conjugate to $(12)$, then  we then read off from Table \ref{table:S5} that $\chi$ is the character of $\mathbf{1}\oplus\sgn^{\oplus 5}$ or $\sgn^{\oplus 2} \oplus (V\otimes\sgn)$. Its restriction to $\Af_5$ is then the  trivial character resp.\  $\mathbf{1}^{\oplus 2}\oplus V$. But this contradicts the fact that the $\Af_5$-orbit space of $C$ has genus zero. If $\iota$ is conjugate to $(12)(34)$, then we then read off from Table \ref{table:A5}  that the $\Af_5$-representation $H^0(C, \Omega_C)$ takes on $\iota$ a value  $\ge -2$, which contradicts the fact that this value equals $-4$.

\smallskip
According to Mukai \cite{mukai}, it now follows that $C$ lies on a weak quintic Del Pezzo surface $S_C$ in $\check\Pb (H^0(C, \omega_C))$.  It may have singular points, and in fact quadric sections of a weak del Pezzo quintic form a divisor $D$ in the moduli space ${\mathcal M}_6$.  However, we claim that $C$ must lie on a smooth Del Pezzo surface.  

To prove this claim, first note that if $C$ it lies on a singular surface then it has fewer than five $g_4^1$'s, where by a $g_4^1$ we mean a linear series of degree $4$ and dimension $1$.  In the plane model this is because three points are collinear or two points coincide. The five $g_4^1$Õs are defined by four pencils through nodes of the sextic and the pencil of conics through $4$ nodes. In the singular case, when, we choose $3$ collinear points, there is no pencil of conics.  The divisor 
$D$ in ${\mathcal M}_6$ mentioned above is characterized by the fact that it has at most four 
$g_4^1$'s.  Now, $\Af_5$ acts on these $g_4^1$Õs, and hence leaves them all invariant. Thus it preserves a map $C\to\Pb^1$ of degree 4. This is impossible since there are no invariant subset of ramification points. This proves the claim.

It is also known \cite{SB} that $S_C$ is unique. This uniqueness property implies that the faithful $\Af_5$-action on $C$, which extends  naturally to $\check\Pb (H^0(C, \omega_C))$,  will leave $S_C$ invariant.  A choice of an $\Af_5$-equivariant isomorphism $h:S_C\stackrel{\cong}{\to} S$ will then identify $C$ in an $\Af_5$-equivariant manner with a member of the Wiman-Edge pencil. Any two  $\Af_5$-equivariant isomorphisms $h, h': S_C\stackrel{\cong}{\to} S$ differ by an automorphism of $S$,  so by an element   $g\in \Sf_5$. But the $\Af_5$-equivariance then amounts to 
$g$ centralizing $\Af_5$. This can happen only when $g$ is the identity. So $h$ is unique.
\end{proof}

Let $\calB$ denote the base of the Wiman-Edge pencil (a copy of $\Pb^1$) so that we have projective flat morphism $\Cs\to\calB$. Recall that $\Sf_5$ acts on the family in such a manner that the action on  $\Cs\to\calB$ is through an involution $\iota$ which has two fixed points. We denote by  
$\calB^\circ\subset\calB$ the locus over which this morphism is smooth. 
So the restriction over $\calB^\circ$ is a family of smooth projective genus 6 curves endowed with a faithful $\Af_5$-action.  It has  the  following modular interpretation.

\begin{theorem}[{\bf Universal property}]
\label{thm:Wimanpencilmodular}
The  smooth part of the Wiman-Edge pencil, $\Cs_{\calB^\circ}\to \calB^\circ$,  is universal in the sense that every family $\Cc'\to \calB'$ of smooth projective genus 6 curves endowed with a fiberwise faithful $\Af_5$-action fits in a unique $\Af_5$-equivariant fiber square
\[
\begin{CD}
\Cc' @>>> \Cc_{\calB\circ}\\
@VVV @VVV\\
\calB' @>>> \calB^\circ
\end{CD}
\]
Moreover, the natural morphism $\calB^\circ\to \calM_6$ factors through an injection $\la\iota\ra\bs\calB^\circ\hookrightarrow \calM_6$. 
\end{theorem}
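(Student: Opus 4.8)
\textbf{Proof plan for Theorem \ref{thm:Wimanpencilmodular}.}

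The plan is to build the universal family directly from the moduli-theoretic input already assembled, rather than appealing to stack-theoretic generalities. First I would set up the classifying map: given a family $\Cc'\to\calB'$ of smooth genus $6$ curves with fiberwise faithful $\Af_5$-action, apply the relative canonical embedding. Since $\omega_{\Cc'/\calB'}$ has relative degree $10$ and (by Proposition \ref{prop:orbifolds}) each fiber has genus $6$ quotient of genus zero, the fibers are non-hyperelliptic, non-trigonal, non-bielliptic and not plane quintics by exactly the orbit-size arguments of Theorem \ref{thm:onaquinticDelPezzo}. So $\pi_*\omega_{\Cc'/\calB'}$ is a rank-$6$ vector bundle on $\calB'$ carrying a fiberwise $\Af_5$-action, and each fiber is $\Af_5$-equivariantly isomorphic to $E_S$ as an $\Af_5$-representation (it is $I\oplus I'$, with no other $6$-dimensional $\Af_5$-representation possible that is a sum of two irreducibles of the relevant dimensions). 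The key rigidity point is that $\Aut_{\Af_5}(E_S)=\Cb^*\times\Cb^*$ is abelian, so the projectivization $\check\Pb(\pi_*\omega_{\Cc'/\calB'})$ is, \emph{canonically and without any choice of trivialization}, identified with the constant bundle $\calB'\times\check\Pb(E_S)$; more precisely the two summands $I,I'$ give two line subbundles whose projectivizations are canonical, which pins down the projective frame up to the (trivial) action of the finite group $\Sf_5/\Af_5$ on the ambient. One must be slightly careful here, but the conclusion is that the relative canonical model of $\Cc'$ embeds $\Af_5$-equivariantly as a subscheme of $\calB'\times\check\Pb(E_S)$.

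Next I would invoke Mukai's theorem fiberwise, together with the Shepherd-Barron uniqueness \cite{SB}, to see that each fiber lies on a copy of the quintic del Pezzo $S\subset\check\Pb(E_S)$, and the argument in Theorem \ref{thm:onaquinticDelPezzo} (the $g^1_4$-counting argument) shows the del Pezzo is smooth, hence equals our fixed $S$ after the canonical identification above. Thus the relative canonical model of $\Cc'$ is a family of curves \emph{inside the fixed surface} $S$, i.e. a map $\calB'\to\Hilb(S)$ landing in the component of $|-2K_S|=\Pb(\Sym^2 E_S)$; and $\Af_5$-invariance of each fiber forces this map to land in $\Pb\big((\Sym^2E_S)^{\Af_5}\big)=\calB$, which is precisely the Wiman-Edge pencil by the discussion in Subsection \ref{subsect:pencil}. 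Since smoothness is an open condition the map factors through $\calB^\circ$. This produces the morphism $\calB'\to\calB^\circ$ and the induced fiber square; the $\Af_5$-equivariance is built in, and uniqueness follows because any two classifying maps would induce the same relative canonical embedding into $\calB'\times\check\Pb(E_S)$, whose image determines the point of $\calB$ uniquely.

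For the final sentence I would argue that the classifying map $\calB^\circ\to\calM_6$ has fibers exactly the $\la\iota\ra$-orbits: by Theorem \ref{thm:onaquinticDelPezzo} two members $C_t,C_{t'}$ of the pencil are abstractly isomorphic as curves iff they are $\Af_5$-equivariantly isomorphic up to the $\Sf_5/\Af_5$-action (any abstract isomorphism conjugates the two $\Af_5$-actions, which are both the unique-up-to-$\Out(\Af_5)$ faithful $\Af_5$-action realizing genus $6$, since $\Aut(C_t)$ for generic $t$ is exactly $\Af_5$, forcing the isomorphism to be $\Af_5$-equivariant up to an outer twist), and this is exactly the relation $t'\in\la\iota\ra t$. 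Hence the map factors through an \emph{injection} $\la\iota\ra\bs\calB^\circ\hookrightarrow\calM_6$.

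\textbf{Main obstacle.} The delicate step is the rigidification in the first paragraph: passing from ``each fiber is isomorphic to $E_S$ as an $\Af_5$-module'' to ``the relative canonical embedding lands in the \emph{constant} bundle $\calB'\times\check\Pb(E_S)$ canonically.'' This uses that $\End_{\Af_5}(E_S)=\Cb\times\Cb$ (two summands $I\ne I'$), so the associated bundle of $\Af_5$-frames has structure group reducing to the diagonal torus, and projectivizing kills the torus; the two canonical sub-line-bundles $\Pb(I),\Pb(I')$ of the relative canonical space provide the needed canonical trivialization of $\check\Pb(\pi_*\omega_{\Cc'/\calB'})$. Everything else — the exclusion of hyperelliptic/trigonal/bielliptic/plane-quintic fibers, the smoothness of the del Pezzo, the identification $|-2K_S|^{\Af_5}=\calB$ — is either already proved in the excerpt or a direct transcription of those arguments to the relative setting.
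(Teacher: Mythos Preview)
Your overall strategy is the right one and matches the paper's: relativize the proof of Theorem~\ref{thm:onaquinticDelPezzo} to build the classifying map, then use the del Pezzo uniqueness for injectivity into $\calM_6$. But the step you yourself flag as the main obstacle contains a genuine error, and the paper's route around it is different from yours.

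Your rigidification claim is that ``projectivizing kills the torus.'' It does not. From $\End_{\Af_5}(E_S)\cong\Cb\times\Cb$ you correctly get that the $\Af_5$-equivariant frame bundle of $\pi_*\omega_{\Cc'/\calB'}$ has structure group $\Cb^*\times\Cb^*$; but passing to $\check\Pb$ only quotients by the diagonal $\Cb^*$, leaving a residual $\Cb^*$ that rescales the $I$-summand against the $I'$-summand. Concretely, $\pi_*\omega\cong (L_I\otimes I)\oplus(L_{I'}\otimes I')$ for line bundles $L_I,L_{I'}$ on $\calB'$, and $\check\Pb(\pi_*\omega)$ depends on $L_{I'}\otimes L_I^{-1}$, which you have no reason to expect to be trivial. (Your parenthetical about ``two line subbundles $\Pb(I),\Pb(I')$'' is also off: these are $\Pb^2$-subbundles, and knowing them does not pin down the projective frame.) So the sentence ``the relative canonical model embeds in the constant bundle $\calB'\times\check\Pb(E_S)$'' is not justified.

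The paper sidesteps this entirely by rigidifying at the level of the \emph{surface}, not the ambient $\Pb^5$. Inside the (possibly nontrivial) projective bundle $\check\Pb(\pi_*\omega)$ you still get, via Mukai and Shepherd-Barron, a canonical relative quintic del Pezzo $\mathcal S_{\calB'}$ containing $\Cc'$ and inheriting the fibrewise $\Af_5$-action. Now use the last paragraph of the proof of Theorem~\ref{thm:onaquinticDelPezzo}: for each fibre there exists an $\Af_5$-equivariant isomorphism to the fixed $(S,\Af_5)$, and it is \emph{unique} because the centralizer of $\Af_5$ in $\Aut(S)=\Sf_5$ is trivial. Uniqueness makes these fibrewise isomorphisms glue to a canonical $\Af_5$-equivariant isomorphism $\mathcal S_{\calB'}\cong\calB'\times S$ over $\calB'$, after which your second paragraph goes through verbatim. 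In short: reorder your argument so that the del Pezzo comes \emph{before} the trivialization, and use the centralizer fact (not the representation-theoretic one) to rigidify.

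For the last sentence your argument is essentially right but can be said more cleanly, as the paper does: an abstract isomorphism $C_t\cong C_{t'}$ extends to the canonical ambient $\check\Pb(H^0(\omega))$ and hence preserves the unique quintic del Pezzo containing the curve; it is therefore induced by an element of $\Aut(S)=\Sf_5$, giving $t'\in\{t,\iota(t)\}$ directly. This avoids the detour through $\Out(\Af_5)$ and the appeal to generic $\Aut(C_t)=\Af_5$.
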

\begin{proof}
Theorem \ref{thm:onaquinticDelPezzo} (and its proof) has an obvious extension to families of genus $6$-curves with $\Af_5$-action. This yields the first assertion. If $t, t'\in\calB^\circ$ are such that $C_t$ and $C_{t'}$ are isomorphic as projective curves, then as we have seen, an isomorphism $C_t\cong C_{t'}$ is induced by an element of $\Sf_5$ and so $t'\in \{ t, \iota(t)\}$. 
\end{proof}

We will find in Subsection \ref{subsect:singmembers}  that the singular members of the Wiman-Edge pencil are all stable. 
We found already one such curve, namely the union of the 10 lines, and so this element will map in $\calM_6$ to the boundary.

From on we identify the base of Wiman-Edge pencil with $\calB$. 

\begin{corollary}\label{cor:Wimanpencilmodular}
The  Wiman curve $C_0$ is smooth and is $\Sf_5$-isomorphic to the curve  found in
Proposition \ref{prop:orbifolds}. It defines  the unique $\iota$-fixed point of $\calB^\circ$.
\end{corollary}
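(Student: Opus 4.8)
\textbf{Proof proposal for Corollary~\ref{cor:Wimanpencilmodular}.}

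The plan is to identify the abstract genus~$6$ curve $C$ produced by Proposition~\ref{prop:orbifolds} with the Wiman curve $C_0$ using the uniqueness established in Theorem~\ref{thm:onaquinticDelPezzo}, and then to pin down which member of the pencil $C_0$ is via its $\Sf_5$-symmetry. First I would invoke Proposition~\ref{prop:orbifolds}: it produces a smooth projective genus~$6$ curve $C$ carrying a faithful $\Af_5$-action which extends to a faithful $\Sf_5$-action. By Theorem~\ref{thm:onaquinticDelPezzo}, $C$ is $\Af_5$-equivariantly isomorphic to a member $C_t$ of the Wiman-Edge pencil, unique up to the involution $\iota=\Sf_5/\Af_5$ of $\calB$. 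So $\{t,\iota(t)\}$ is well-defined, and $C_t$ carries a faithful $\Af_5$-action that extends to a faithful $\Sf_5$-action.

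Next I would argue that having a faithful $\Sf_5$-action forces $t$ to be one of the two $\iota$-fixed points of $\calB$. Indeed, by Theorem~\ref{thm:onaquinticDelPezzo} (and its proof) any projective automorphism of $C_t$ comes from an element of $\Aut(S)\cong\Sf_5$ preserving the curve; since $\Af_5$ already leaves every member of $\Cs$ invariant while any odd permutation sends $C_t$ to $C_{\iota(t)}$ (this is how $\iota$ is defined, cf.\ the discussion preceding Theorem~\ref{thm:Wimanpencilmodular}), a faithful $\Sf_5$-action on $C_t$ can exist only if $C_t$ is its own image under an odd permutation, i.e.\ $C_t=C_{\iota(t)}$, i.e.\ $t=\iota(t)$. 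Thus $t\in\calB^\circ$ is an $\iota$-fixed point. There are exactly two such points; I must rule out the one whose fiber is $C_\infty$, the union of the ten lines, which is singular. But $C$ is smooth by hypothesis and $C_t\cong C$, so $C_t$ is smooth, hence $t$ is the \emph{other} $\iota$-fixed point, which is precisely the one whose fiber we have named $C_0$ (the member cut out by the trivial summand $Q+Q'$, shown in Lemma~\ref{lemma:basepoints} to meet $C_\infty$ transversally). Therefore $C\cong C_0$, $C_0$ is smooth of genus~$6$, and it is the unique $\iota$-fixed point of $\calB^\circ$; the isomorphism $C\cong C_0$ is $\Sf_5$-equivariant because both curves carry $\Sf_5$-actions compatible with the identification, the only remaining freedom being an automorphism of $S$ centralizing $\Af_5$, which is trivial.

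The main obstacle, and the point that needs a little care, is the claim that a faithful $\Sf_5$-action on $C_t$ forces $t$ to be $\iota$-fixed: one must make sure that the ``unique up to $\iota$'' clause of Theorem~\ref{thm:onaquinticDelPezzo} is used correctly, namely that if $C_t$ admitted a faithful $\Sf_5$-action then applying an odd permutation $\sigma\in\Sf_5\subset\Aut(S)$ to the embedded curve $C_t\subset S$ would give an $\Af_5$-equivariant isomorphism $C_t\cong C_{\sigma\cdot t}=C_{\iota(t)}$, and yet $\sigma$ restricted to $C_t$ is an automorphism of the \emph{abstract} curve, so $C_t\cong C_{\iota(t)}$ as members of the pencil forces $t=\iota(t)$ by the established uniqueness. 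Equivalently, and perhaps more cleanly, one observes directly that Proposition~\ref{prop:orbifolds} constructs $C$ with the $\Sf_5$-orbit structure (isotropy orders $6,4,2$) and checks that the smooth $\iota$-fixed member $C_0$ is the only member of $\Cs$ with this property; this is immediate once $t$ is known to be one of the two $\iota$-fixed points and $C_\infty$ is excluded for being singular. Everything else is a direct appeal to results already proved.
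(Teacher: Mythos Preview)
Your proof is correct and follows essentially the same approach as the paper's: identify the smooth $\Sf_5$-curve of Proposition~\ref{prop:orbifolds} with a member of the pencil via Theorem~\ref{thm:onaquinticDelPezzo}, observe that $\Sf_5$-symmetry forces the member to be $\iota$-fixed, and rule out $C_\infty$ by smoothness. The paper's version is terser because it simply cites the fact (established in \S\ref{subsect:pencil}) that $C_0$ and $C_\infty$ are the only $\Sf_5$-invariant members, whereas you rederive this from the $\Aut(C_t)\subset\Sf_5$ argument implicit in the proof of Theorem~\ref{thm:onaquinticDelPezzo}; your added remark on $\Sf_5$-equivariance of the isomorphism (via uniqueness of the $\Af_5$-equivariant identification) is a welcome clarification the paper leaves implicit.
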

\begin{proof}
By  Theorem \ref{thm:Wimanpencilmodular}, there is a unique member of the Wiman-Edge pencil whose base point  maps the unique point of 
$\calB^\circ$ which supports a smooth genus 6 curve with $\Sf_5$-action. The Wiman-Edge pencil has two members with $\Sf_5$-action, one is
the union of the $10$ lines and  the other is the Wiman curve. So it must be the Wiman curve.
\end{proof}

\begin{corollary}\label{cor:action1}
Let  $C$ be a smooth projective curve genus $6$ endowed with a faithful action of $\Af_5$. If the resulting map $\phi:\Af_5\hookrightarrow\Aut(C)$ is not surjective, then it extends to an isomorphism $\Sf_5\cong \Aut(C)$, and $C=C_0$ is the Wiman curve. The $\Af_5$-representation (resp.\ $\Sf_5$-representation) $H^0(C, \omega_C)$ is equivalent to $I\oplus I'$ (resp.\ $E$) and  $H^1(C; \C)$  is equivalent to $I^{\oplus 2}\oplus I'{}^{\oplus 2}$ (resp.\ $E^{\oplus 2}$).
\end{corollary}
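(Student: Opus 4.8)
The plan is to leverage Theorem \ref{thm:onaquinticDelPezzo} together with Corollary \ref{cor:Wimanpencilmodular} to reduce the entire statement to a representation-theoretic computation. First I would argue the identification of the curve: by Theorem \ref{thm:onaquinticDelPezzo}, $C$ is $\Af_5$-equivariantly isomorphic to a member $C_t$ of the Wiman-Edge pencil, unique up to the involution $\iota$. The hypothesis that $\phi:\Af_5\hookrightarrow\Aut(C)$ is \emph{not} surjective means $\Aut(C)$ strictly contains the image of $\Af_5$; since $\Af_5$ is maximal in $\Sf_5$ and (by the Hurwitz bound $|\Aut(C)|\le 420$ applied as in the proof of Theorem \ref{thm:onaquinticDelPezzo}) the only possibility for a strictly larger group that can contain $\Af_5$ as an index-$\le 2$ subgroup acting faithfully is $\Sf_5$, we conclude $\Aut(C)\cong\Sf_5$. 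But then $C$ is a member of the Wiman-Edge pencil fixed by $\iota$, hence by Corollary \ref{cor:Wimanpencilmodular} (the pencil has exactly two $\Sf_5$-invariant members, the $10$-line union and the Wiman curve, and only the latter is smooth) we get $C=C_0$. One subtlety to nail down here: one must rule out $\Aut(C)$ being strictly larger than $\Sf_5$; this follows because any such group would still normalize (indeed centralize, by simplicity of $\Af_5$ and the argument in the proof of Theorem \ref{thm:onaquinticDelPezzo}) a copy of $\Af_5$, and the full automorphism group of a genus $6$ curve containing $\Sf_5$ with a further automorphism would force, via Hurwitz, a group whose orbit data is incompatible with Proposition \ref{prop:orbifolds}, exactly as in the bielliptic index-$6$ case there.

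Next I would compute $H^0(C_0,\omega_{C_0})$ as an $\Sf_5$-representation. Since $C_0$ is a reduced member of $|-2K_S|$, the discussion at the start of Subsection \ref{subsect:wimansextic} gives a canonical $\Af_5$-equivariant (in fact $\Sf_5$-equivariant, since $C_0$ is $\Sf_5$-invariant) isomorphism $E_S\xrightarrow{\cong} H^0(C_0,\omega_{C_0})$. By Subsection \ref{subsect:repspaces}, $E_S\cong E=\wedge^2 V$ as an $\Sf_5$-representation, and restricting to $\Af_5$ we have $E=I\oplus I'$ by Table \ref{table:A5}. This is exactly the claimed description of the space of holomorphic differentials.

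Finally, for $H^1(C_0;\C)$, I would use the Hodge decomposition $H^1(C_0;\C)\cong H^0(C_0,\omega_{C_0})\oplus \overline{H^0(C_0,\omega_{C_0})}$. Since every irreducible representation of $\Sf_5$ is real (as noted in Subsection \ref{subsect:repspaces}, each admits an invariant quadratic form), $E$ is self-conjugate, so $\overline{H^0(C_0,\omega_{C_0})}\cong E$ as well; hence $H^1(C_0;\C)\cong E^{\oplus 2}$ as an $\Sf_5$-representation. For $I$ and $I'$ one notes they are complex-conjugate to themselves too — they are real (realized as the rotation group of the icosahedron, as recorded after Table \ref{table:A5}) — and the two summands $I\oplus I'$ in $H^{1,0}$ and $I\oplus I'$ in $H^{0,1}$ combine to $I^{\oplus 2}\oplus I'^{\,\oplus 2}$. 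I do not expect any serious obstacle in this last step; the only thing to be careful about is that conjugation on $H^1$ swaps $H^{1,0}$ and $H^{0,1}$ and commutes with the $\Sf_5$-action, which is automatic since the action is by biholomorphisms. The main obstacle, if any, is the clean deduction that non-surjectivity of $\phi$ forces $\Aut(C)=\Sf_5$ exactly (rather than something larger or something between), and this is handled by reusing the Hurwitz/orbit-counting bookkeeping already set up in Proposition \ref{prop:orbifolds} and the proof of Theorem \ref{thm:onaquinticDelPezzo}.
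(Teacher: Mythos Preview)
Your overall strategy is sound and the representation-theoretic half is essentially identical to the paper's (you use the Hodge decomposition and reality of $E$, $I$, $I'$; the paper phrases this as self-duality via Serre duality, which amounts to the same thing). However, your argument that $\Aut(C)=\Sf_5$ is both incomplete and unnecessarily indirect compared to the paper's.

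The paper's route is much cleaner: once $C$ is realized as a member of the Wiman--Edge pencil, it is \emph{canonically embedded} in $\check\Pb(E_S)$, so any automorphism of $C$ extends to a projective automorphism of the ambient $\Pb^5$. By Mukai's uniqueness (cited as \cite{SB} in the paper), $S$ is the unique quintic del Pezzo surface containing $C$, hence is preserved by this automorphism. Therefore $\Aut(C)\subset\Aut(S)=\Sf_5$, and equality holds exactly when $C$ is the Wiman curve. No Hurwitz bound or orbit-counting is needed.

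Your approach instead tries to bound $[\Aut(C):\Af_5]$ via Hurwitz and then classify extensions of $\Af_5$. This can be made to work, but as written it has gaps: the Hurwitz bound only gives index $\le 7$, not $\le 2$; your parenthetical that a larger group would ``centralize'' $\Af_5$ is not correct (normalize, perhaps, but even that requires the core argument you only gesture at); and the appeal to ``reusing the bookkeeping'' from the bielliptic case in Theorem~\ref{thm:onaquinticDelPezzo} is not quite the same analysis, since there the extra generator was a specific involution. You would need to separately rule out $\Af_5\times\Cf_p$ for small primes $p$ and the possibility that $\Af_5$ is not normal (forcing an embedding of $\Aut(C)$ into some $\Sf_n$ with $n\le 7$). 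None of this is hard, but the paper's one-line argument via the canonical embedding sidesteps it entirely. A minor additional point: the $\Af_5$-representation claim in the corollary is stated for \emph{every} smooth $C$ with faithful $\Af_5$-action, not just $C_0$; your write-up computes only for $C_0$, though of course the same restriction map $E_S\to H^0(C,\omega_C)$ works verbatim for any member of the pencil.
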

\begin{proof}
Since the $\Af_5$-curve $C$ is represented by a member of the Wiman-Edge pencil, we can assume it is a member of that pencil. Since $C$ is canonically embedded, an automorphism of $C$ extends naturally of the ambient projective space and hence also to $S$ (as it is the unique quintic del Pezzo surface containing $C$). This implies that $\Aut(C)\subset \Sf_5$. This inclusion is an equality precisely when $C$ is the Wiman curve.

As for the last assertion, we know that the representation space $H^0(C, \omega_C)$ is as asserted, since it is a member of the Wiman curve. The representations $I\oplus I'$ resp.\ $E$ are self-dual and 
since $H^1(C, \C)$ contains $H^0(C, \omega_C)$ as an invariant  subspace with quotient the (Serre-)dual of
$H^0(C, \omega_C)$, the last assertion  follows also. 
\end{proof}

\begin{remark}\label{rem:}
If an $\Af_5$-curve $C$ represents a point  $z\in\calB^\circ$, then $\Af_5$ acts nontrivially (and hence faithfully) on $H^0(C,\omega_C^{2})$ and hence
also on its Serre dual $H^1(C,\omega_C^{-1})$. Since $\Af_5$ is not   a complex reflection  group, a theorem of Chevalley implies that the orbit space 
$\Af_5\bs H^1(C,\omega_C^{-1})$ must be singular at  the image of the origin. The local deformation theory of curves tells us that  when $\Af_5$ is the full automorphism group of $C$,  the germ of  this orbit space at the origin is isomorphic to the germ of $\calM_6$ at  the image of $z$.  
Hence $\calB^\circ$ maps to the singular locus of $\calM_6$. (This is a special case of a theorem of Rauch-Popp-Oort which states that any curve of genus $g\ge 4$ with nontrivial group of automorphisms defines a singular point of $\calM_g$.) 
\end{remark}

\subsection{Singular members of the Wiman-Edge pencil}\label{subsect:singmembers}
The singular members of the Wiman-Edge pencil were found by W. Edge \cite{Edge2}. A modern proof of his result can be found in \cite[Theorem 6.2.9]{Cheltsov}.  Here we  obtain them in a different manner as part of a slightly stronger  result that we obtain with a minimum of computation.

Let us begin with an \textit{a priori} characterization of the reducible genus $5$ curves with $\Af_5$-action that occur in the Wiman-Edge pencil.

\begin{lemma}\label{lemma:specialconic}
There are precisely two reducible members $C_c$ and $C'_c$ of $\Cs$ distinct from $C_\infty$; each is is a stable union of  $5$ special conics whose intersection graph is $\Af_5$-equivariantly isomorphic to the full graph on  a $5$-element set.  Any element of $\Sf_5\ssm \Af_5$ exchanges $C_c$ and $C'_c$. 
\end{lemma}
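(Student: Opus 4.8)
The strategy is to locate the reducible members of $\Cs$ by intersecting the Wiman-Edge pencil with the subvariety of $|-2K_S|$ parametrizing ``conic configurations,'' and then to use the $\Af_5$-action to pin down exactly which configurations can occur. First I would recall that any reduced member $C\in\Cs$ lies in $|-2K_S|$ and hence, by the discussion in Subsection~\ref{subsect:wimansextic}, is canonically embedded of arithmetic genus $6$. If $C$ is reducible, write $C=\sum C_j$; since $C$ is $\Af_5$-invariant and the anticanonical degree $C\cdot(-K_S)=2(-K_S)^2\cdot\tfrac12\cdots$ works out so that the sum of the anticanonical degrees of the components is $10$, and each irreducible curve on $S$ has anticanonical degree $\ge 1$ with equality only for lines, a short case analysis on how $\Af_5$ permutes the components (using that $\Af_5$ has no subgroup of index $2,3,4$, and index-$5$ subgroups are the $\Af_4$'s, while there is no faithful transitive action on fewer than $5$ points) forces the reducible non-$C_\infty$ members to be a transitive $\Af_5$-orbit of $5$ components each of anticanonical degree $2$, i.e.\ $5$ conics permuted as the full set $\binom{[5]}{1}$ on which $\Af_5$ acts naturally.

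Next I would identify \emph{which} conics. A conic on $S$ (a curve of anticanonical degree $2$ with self-intersection $0$) is a fiber of one of the five conic bundles $\Pc_1,\dots,\Pc_5$, which $\Af_5$ permutes transitively; its stabilizer is the corresponding $\Af_4$. So a component of $C_c$ is a fiber of $\Pc_i$ fixed by $\Af_4$, and by the analysis in Subsection~\ref{subsect:conicfibrations} the fibers of $\Pc_i$ fixed by $\Af_4$ are exactly the two \emph{special conics} of that bundle (the ones lying over the $\Sf_3$-orbit $\{0,\infty\}$ in the base $\calMc_{0,4}$). Since $\Af_5$ has, by the last sentence of Subsection~\ref{subsect:conicfibrations}, exactly two orbits on the $10$ special conics, each orbit meeting each conic bundle in one special conic, the two candidate reducible members are precisely the two unions $C_c$ and $C'_c$ of the $5$ special conics in each orbit, and an element of $\Sf_5\ssm\Af_5$ (realizing the outer automorphism, hence swapping $I$ and $I'$) exchanges these two orbits and thus exchanges $C_c$ and $C'_c$.

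Then I would check that each such union actually lies in the pencil $\Cs$ and is stable. Membership: the union $C_c$ of $5$ special conics is an $\Af_5$-invariant divisor in $|-2K_S|$, hence its class lies in the $\Af_5$-fixed locus of $\Pb(\Sym^2 E_S)$, which by Subsection~\ref{subsect:acmodel} is exactly the pencil $\Cs$; since $C_c\ne C_\infty$ (one is a union of conics, the other of lines) it is one of the members other than $C_0$ and $C_\infty$. (Alternatively one can exhibit it via the explicit quadric equations $x_ix_i'=x_jx_j'$ of Remark~\ref{rem:explicitquadrics}.) Stability: one checks the intersection graph is the complete graph $K_5$, so each conic meets the union of the others in $4$ distinct points, all nodes, and each smooth rational component carries $4$ special points — hence Deligne--Mumford stable; the arithmetic genus $1+\binom{5}{2}\cdot 1-5+1\cdot\text{(check)}$ of $K_5$-configured $\Pb^1$'s comes out to $6$, consistent with membership in $|-2K_S|$. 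Finally, I would argue there are \emph{precisely two} such members: a reducible member $\ne C_\infty$ must be one of the two unions just described, and conversely both occur, so there are exactly two.

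\textbf{Main obstacle.} The delicate point is showing these are \emph{all} the reducible members besides $C_\infty$ — equivalently, that a reducible $\Af_5$-invariant member of $|-2K_S|$ in $\Cs$ cannot have components of anticanonical degree $1$ mixed with higher ones, nor be supported on conics not in a single $\Af_5$-orbit, nor be non-reduced. This requires the combinatorial bookkeeping of $\Af_5$-orbits on the $10$ lines (the Petersen graph) and on the $10$ special conics together with the degree constraint $\sum_j (-K_S\cdot C_j)=10$ counted with multiplicity; ruling out a member like ``$4$ lines through a common point plus a conic'' or ``a double conic plus something'' uses both the representation-theoretic fact that $\Cs$ is the $\Af_5$-fixed pencil (so the member's support must be a union of full $\Af_5$-orbits) and Lemma~\ref{lemma:basepoints} (every member is smooth along the $20$-point base locus $\Delta\subset C_\infty$, which constrains how a reducible member can meet the lines). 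I expect that assembling this case analysis cleanly, rather than any single hard computation, is where the real work lies.
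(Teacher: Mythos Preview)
Your proposal is correct and follows essentially the same route as the paper: pick an irreducible component $Y$ of a reducible member $C\neq C_\infty$, observe that $Y$ cannot be a line (else its $\Af_5$-orbit is all ten lines and $C=C_\infty$), so $\deg Y\ge 2$ while the $\Af_5$-orbit of $Y$ has size $r\ge 5$, and the constraint $rd\le \deg C=10$ forces $(d,r)=(2,5)$, so $C$ is a single $\Af_5$-orbit of five conics whose $\Af_4$-stabilizer pins each down as a special conic; one then checks that both $\Af_5$-orbits of special conics have class $-2K_S$ and hence lie in $\Cs$. Your ``main obstacle'' dissolves in this argument: the single inequality $rd\le 10$ (applied to \emph{any} component) already excludes mixed, non-reduced, or multi-orbit configurations without needing Lemma~\ref{lemma:basepoints} or a separate case analysis.
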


\begin{proof} Let $C$ be such a member of $\Cs$ and let $Y$ 
an irreducible component of $C$. Then  $Y$ cannot be a line and so its degree $d:=-Y\cdot K_S$ in the anticanonial model must be $\ge 2$. Since $\Af_5$ has 
no subgroup of index $< 5$,  the number $r$ of irreducible components in the $\Af_5$-orbit of $Y$, must be $\ge 5$. But we also must 
have $rd\le \deg (C)=10$, and so the only possibility is that $(d, r)=(2,5)$.  

An irreducible, degree 2 curve in a projective space is necessarily 
a smooth conic. Its $\Af_5$-stabilizer has index $5$ in $\Af_5$, and so must be $\Sf_5$-conjugate to $\Af_4$. This implies that
$Y$ is a special conic. The 5 irreducible components of its $\Af_5$-orbit lie in distinct  conic bundles $\Pc_i$, and we number them accordingly 
$Y_1, \dots , Y_5$.  For $1\le i<j\le 5$,  any smooth member of $\Pc_i$  meets any smooth member of  $\Pc_j$ with multiplicity one, 
and so  this is in particular true  for $Y_i$ and $Y_j$.  Hence the set of singular points of $C$ is covered in a $\Af_5$-equivariant manner by the set 
of $2$-element subsets of $\{1, \dots, 5\}$.  The action of $\Af_5$ on this last set is  transitive, and so either the singular set of $C$ is a singleton 
$\{p\}$ (a point common to all the irreducible components) or the intersections $Y_i\cap Y_j$, $1\le i<j\le 5$ are pairwise distinct. 
The first case is easily excluded, for  $p$ must then be a fixed point of the $\Af_5$-action and there is no such point.  
So $C$ is as described by the lemma.

The sum of the classes of the 5 conic bundles is $\Sf_5$-invariant and hence proportional to $-K_S$. The intersection product with $-K_S$ is $5.2=10$ and hence this class is equal to $-2K_S$, in other words, the class of the Wiman-Edge pencil. Since an $\Af_5$-orbit of a special conic takes precisely one  member from every conic bundle, its follows that the sum of such an orbit indeed gives a member of  $\Cs$.
There two such orbits and so we get two such members.
\end{proof}

There are precisely two faithful projective representations of $\Af_5$ on $\Pb^1$ up to equivalence, and they only differ by precomposition with an automorphism of $\Af_5$ that is not inner. We will refer to theses two representations as the \emph{Schwarzian representations} of $\Af_5$.
Both appear as the symmetry groups of a regular icosahedron drawn on the Riemann sphere. This action has three irregular orbits, corresponding to the vertices, the barycenters of the faces and the midpoints of the edges, and so are resp.\ $12$, $20$ and $30$ in size. Their points come in (antipodal) pairs and the $\Af_5$-action preserves these pairs. We give a fuller discussion in Subsection \ref{subsect:icosahedralplane}.

\begin{lemma}\label{lemma:glueingconstruct}
There exists an irreducible stable curve $C$ of genus 6 with 6 nodes endowed with a faithful $\Af_5$-action. Such a  $C$ is unique up to an automorphism of $\Af_5$.
\end{lemma}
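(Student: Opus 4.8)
The idea is to construct $C$ by gluing together a pair of the genus-zero curves with $\Af_5$-action (Schwarzian representations) that we have just discussed, and then verify stability and genus by a Hurwitz/Euler-characteristic count. Concretely, I would first take $\Pb^1$ with one of the two Schwarzian $\Af_5$-actions, whose irregular orbits have sizes $12,20,30$ with antipodal pairings preserved by $\Af_5$. The size-$12$ orbit decomposes into $6$ antipodal pairs, and since $\Af_5$ preserves the pairing it acts on the set of $6$ pairs; this action is transitive (the stabilizer of a pair is the order-$10$ dihedral subgroup, index $6$). To produce a nodal curve of arithmetic genus $6$ with $6$ nodes from a rational normalization, I would glue the two points of each of the $6$ antipodal pairs to each other, i.e.\ take the curve $C$ whose normalization is $\Pb^1$ and whose $6$ nodes are the images of these $6$ pairs. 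The $\Af_5$-action on $\Pb^1$ descends to $C$ because it preserves both the set of $12$ points and the antipodal pairing among them; and it is faithful on $C$ since it is faithful on $\Pb^1$ and on the set of nodes (the image of the $12$-point orbit spans $\Pb^1$). The arithmetic genus of a rational curve with $\delta$ nodes is $\delta$, so $p_a(C)=6$; and $C$ is stable because its normalization is $\Pb^1$ with $12\ge 3$ marked points (the preimages of the nodes), so every component of $C$ in the dual-graph sense is stable. Irreducibility is clear since the normalization is irreducible.

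For uniqueness, suppose $C$ is any irreducible stable genus-$6$ curve with $6$ nodes carrying a faithful $\Af_5$-action. First I would argue the normalization $\widetilde{C}$ is $\Pb^1$: since $C$ is irreducible with $6$ nodes and $p_a(C)=6$, we have $p_a(\widetilde C)=6-6=0$, so $\widetilde C\cong\Pb^1$. The $\Af_5$-action lifts to $\widetilde C$ (normalization is functorial), giving a faithful $\Af_5\hookrightarrow\PGL_2(\C)$, hence one of the two Schwarzian representations. The $6$ nodes give a set $N$ of $6$ points of $C$; their preimages form an $\Af_5$-stable set $\widetilde N\subset\Pb^1$ of $12$ points, equipped with the fixed-point-free involution $\tau$ pairing the two preimages of each node, and $\tau$ commutes with $\Af_5$. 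An $\Af_5$-stable $12$-element subset of $\Pb^1$ under a Schwarzian action must be the unique orbit of size $12$ (the orbit-size list is $12,20,30$ and larger orbits have $\ge 60$ points, which I would note follows from the icosahedral description); and on that orbit the only $\Af_5$-equivariant fixed-point-free involution is the antipodal map, because $\tau$ must permute the $6$ blocks of the (unique) $\Af_5$-invariant partition into antipodal pairs and, being fixed-point-free and $\Af_5$-equivariant with $\Af_5$ transitive on blocks, it cannot swap two blocks (the stabilizer $D_{10}$ of a block would then fix the unordered pair of blocks, forcing a block-swap compatible with $D_{10}$, which one checks is impossible) — so $\tau$ is the antipodal involution. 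Hence the pair $(\widetilde C, \widetilde N,\tau)$, and therefore $C$ itself, is determined up to the choice of Schwarzian representation, i.e.\ up to an automorphism of $\Af_5$.

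\textbf{Main obstacle.} The construction half is essentially bookkeeping once one has the Schwarzian picture, so the substantive point is the uniqueness argument, and within it the one genuinely delicate step: showing that the only $\Af_5$-equivariant fixed-point-free involution $\tau$ on the size-$12$ orbit is the antipodal map. This is where I would be most careful. The clean way is to use that $\Af_5$ acts on the $12$ points as it acts on the vertices of the icosahedron; the point-stabilizer is the cyclic group $C_5$, while the block-stabilizer (of an antipodal pair) is $D_{10}$. If $\tau$ swapped two blocks $B_1,B_2$, then the subgroup preserving $\{B_1,B_2\}$ would have order $|\Af_5|/\binom{6}{2}\cdot 2 = $ (index of the stabilizer of an unordered pair of blocks) and would have to be compatible with $\tau$; comparing this with the list of subgroups of $\Af_5$ and their actions on the $6$ blocks rules it out, leaving only $\tau(B)=B$ for every block, and on a single antipodal pair the only fixed-point-free involution is the swap. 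I should also make sure to state at the outset (or cite Subsection \ref{subsect:icosahedralplane}) the structural facts about the Schwarzian representations I am using: the three irregular-orbit sizes, the antipodal pairing, and that there are exactly two such representations differing by an outer automorphism.
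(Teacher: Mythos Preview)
Your proof is correct and follows essentially the same route as the paper's: construct $C$ by identifying the six antipodal pairs in the size-$12$ orbit on a Schwarzian $\Pb^1$, and prove uniqueness by normalizing and recognizing the resulting data. Your handling of the ``main obstacle'' is more elaborate than needed---since $\tau$ commutes with $\Af_5$ it preserves point-stabilizers, and each order-$5$ cyclic stabilizer fixes exactly one antipodal pair of vertices, so $\tau$ already sends every vertex into its own antipodal pair; fixed-point-freeness then forces $\tau$ to be the antipodal map outright, with no block-swapping case analysis required.
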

\begin{proof}
Let $C$ be such a curve with 6 nodes and denote by $D\subset C$ its singular set.
Then the normalization of $\hat C\to C$ is of genus zero: $\hat C\cong\Pb^1$. If $\hat D\subset \hat C$ denotes the preimage of $D$, then $\hat D$ consists of 12 points that come in $6$ pairs. The $\Af_5$-action on $\hat C$  lifts to $\hat C$ and will preserve  $\hat D$ and its decomposition into 6 pairs. From the above remarks it follows that $\Af_5$ acts on
 $\hat C$ as the symmetry group of an icosahedron drawn on  $\hat C$ which  has $\hat D$ as vertex set and such 
that  antipodal pairs are the fibers of $\hat C\to C$.  
This shows both existence and uniqueness up to an automorphism of $\Af_5$.
\end{proof}

\begin{proposition}
\label{prop:irred}
An irreducible singular member of the Wiman-Edge pencil is necessarily as in Lemma  \ref{lemma:glueingconstruct}: a stable curve with 6 nodes and  of geometric genus  zero. It appears in the Wiman-Edge pencil together with its outer transform.
\end{proposition}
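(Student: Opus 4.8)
The plan is to study the normalization $\nu\colon\hat C\to C$ of an irreducible singular member $C\in\Cs$, exploiting the faithful $\Af_5$-action that every member of $\Cs$ carries. First I would note that $C$ is integral: it passes through the reduced $20$-point base locus $\Delta$, where it is smooth by Lemma~\ref{lemma:basepoints}, so $C$ is generically reduced, and being irreducible it is therefore integral. As a reduced member of $|-2K_S|$ it has arithmetic genus $6$, and its $\Af_5$-action lifts to a faithful action on $\hat C$ (an element acting trivially on $\hat C$ acts trivially on $C=\nu(\hat C)$). Thus $\hat C$ is a smooth curve of some genus $g'<6$ carrying a faithful $\Af_5$-action.

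The heart of the matter is to show $g'=0$. I would use the elementary fact that every $\Af_5$-orbit on $\hat C$ has at least $5$ points: the stabilizer of a point acts faithfully on the $1$-dimensional tangent space there, so it is cyclic and proper in $\Af_5$, and every proper subgroup of $\Af_5$ has index $\ge 5$. Let $D=\Sing(C)$ and $\hat D=\nu^{-1}(D)$; since $C$ is singular these are nonempty, and both are $\Af_5$-invariant. Writing $\delta_p$ and $b_p$ for the $\delta$-invariant and the number of branches of $C$ at $p\in D$, the identity $\sum_{p\in D}\delta_p=6-g'$, the bound $|D|\le\sum_{p\in D}\delta_p$ (each $\delta_p\ge1$) and the inequalities $\delta_p\ge b_p-1$ give
\[
|\hat D|=\sum_{p\in D}b_p\;\le\;|D|+\sum_{p\in D}(b_p-1)\;\le\;|D|+\sum_{p\in D}\delta_p\;\le\;2(6-g').
\]
Since $|\hat D|\ge5$, this forces $g'\le3$; and $g'=1,2,3$ are excluded because $\Af_5$ acts faithfully on no smooth curve of genus $1$, $2$, or $3$ (for $g'=1$ the automorphism group of a genus-one curve is an extension of a cyclic group by an abelian group, hence not $\Af_5$, as in the bielliptic argument of Theorem~\ref{thm:onaquinticDelPezzo}; for $g'=2,3$ by a Hurwitz count as in Proposition~\ref{prop:orbifolds}). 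Hence $g'=0$, $\hat C\cong\Pb^1$, and $\Af_5$ acts through one of the two Schwarzian representations.

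Now $\hat D$ is a nonempty $\Af_5$-invariant subset of $\Pb^1$, so $|\hat D|$ is at least the size $12$ of the smallest Schwarzian orbit; with the display this forces $12\le|\hat D|\le 2\cdot6=12$, hence equality throughout. So $|\hat D|=12$ and $\hat D$ is the icosahedral vertex orbit, $|D|=6$, every $\delta_p=1$ and every $b_p=2$: $C$ has exactly six nodes, geometric genus zero, and unramified normalization. It is a stable curve (its normalization $\hat C\cong\Pb^1$ bears the $12$ preimages of the nodes, more than the $3$ required for stability), so by Lemma~\ref{lemma:glueingconstruct} it exists and is unique up to an automorphism of $\Af_5$; thus $C$ is exactly the curve of that lemma. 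Finally, an odd permutation $g\in\Sf_5\ssm\Af_5$, acting on $S$ and hence on the pencil, sends $C$ to a member $C':=g\cdot C$ of $\Cs$ which realizes the other Schwarzian representation of $\Af_5$, i.e.\ is the outer transform of $C$; and $C'\neq C$, for a $g$-fixed member would be $\Sf_5$-invariant, whereas the only $\Sf_5$-invariant members of $\Cs$ are $C_0$ (smooth) and $C_\infty$ (the ten lines).

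The main obstacle is the clean exclusion of $g'\in\{1,2,3\}$: it rests on the classical---but not established in this excerpt---fact that $\Af_5$ admits no faithful action on a smooth curve of genus $1$, $2$, or $3$, which one either cites or replaces by a direct Hurwitz/orbit computation in the style of Proposition~\ref{prop:orbifolds}. Everything after reaching $g'=0$ is routine bookkeeping with $\delta$-invariants, branch numbers, and Schwarzian orbit sizes, together with the simplicity of $\Af_5$.
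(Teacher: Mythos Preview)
Your proof is correct and takes a genuinely different route from the paper's. The paper argues globally: it applies the Euler-characteristic formula for a pencil,
\[
e(S)-e(C)\,e(\Pb^1)+C\cdot C=\sum_{t\in\calB}\bigl(e(C_t)-e(C)\bigr),
\]
computes the left side as $47$, subtracts the contribution $35$ of the known reducible fibers $C_\infty,C_c,C'_c$, and is left with $12$ as the total Milnor-number contribution of the irreducible singular fibers. Since such fibers come in $\Sf_5$-conjugate pairs and $\Af_5$ has no orbit on $S$ of size below $5$, the only possibility is a single pair, each with six ordinary nodes. Your argument works instead on a single member via its normalization and $\delta$-invariants. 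Your route is more direct for the structural claim, while the paper's gives existence and the exact count (one pair) for free---information used in the Corollary that follows.

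One efficiency remark: you already observe that point-stabilizers on the smooth curve $\hat C$ are cyclic, and the largest cyclic subgroup of $\Af_5$ has order $5$, hence index $12$. So every $\Af_5$-orbit on $\hat C$ has at least $12$ points, not merely $5$. Feeding $|\hat D|\ge 12$ into your displayed inequality $|\hat D|\le 2(6-g')$ forces $g'=0$ immediately, and your ``main obstacle''---the separate exclusion of $g'\in\{1,2,3\}$---disappears.
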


\begin{proof} 
We have  already encountered the singular members $C_\infty$ and $C_c,C_c'$.  A well-known formula (see, for example, \cite{GriffithsHarris}, p.\ 509-510) applied to the Wiman  pencil gives
\[
e(S)-e(C)e(\Pb^1)+ C\cdot C =\sum_{t\in \calB}(e(C_t)-e(C))
\]
where $e(\; )$ stands for the Euler-Poincar\'e characteristic, $C$ denotes a general fiber and $C_t$ denotes a fiber over a point $t\in \calB$. The left hand side equals $27-(-20)=47$.
The reducible fibers $C_\infty$, $C_c$, $C'_c$ contribute  to the right hand side $15+2\cdot 10 = 35$, so that there is $12$ left as the contribution
coming the irreducible fibers. 

It is known that $e(C_t)-e(C)$ is equal to the sum of the Milnor numbers of singular points of $C_t$. 
We know that $\Af_5$ leaves invariant each fiber,  but that no fiber  other than $C_\infty$ (which is reducible) or the Wiman curve $C_0$ 
(which is smooth) is $\Sf_5$-invariant.  In other words, the irreducible fibers come in pairs.  Since $\Af_5$ cannot fix a point on $S$ (because it has no a nontrivial linear representations of dimension 2 and hence cannot act nontrivially on the tangent space at this point), and a proper subgroup of $\Af_5$ has 
index  $\ge 5$, the irreducible fibers  come as a single pair, with each member having  exactly $6$ singular points, all of Milnor number $1$, that is, having  
$6$ ordinary double points. Hence the normalization of such a fiber is a rational curve as in Lemma  \ref{lemma:glueingconstruct}
\end{proof}

We sum up with the following.

\begin{corollary} [{\bf Classification of singular members of $\Cs$}]
Each singular member of the Wiman-Edge pencil $\Cs$ is a stable curve of genus 6 with $\Af_5$-action. The set of these curves is a union of three sets:
\begin{description}
\item[(lines)] ${C_\infty}$, a union of $10$ lines with intersection graph the Petersen graph.
\smallskip

\item[(conics)] a pair $C_c$, $C'_c$, each of which is a union of  5 conics  whose intersection graph is the complete graph on $5$ vertices. 
\smallskip

\item[(irred)] a pair $C_{\ir}$, $C_{\ir}'$ of irreducible rational curves, each with 6 nodes.
\end{description}
\medskip

The action of $\Sf_5$ on $\Cs$ leaves ${C_\infty}$ invariant; the induced action on the set of 
$10$ lines of ${C_\infty}$ induces an action on the corresponding intersection graph that is isomorphic to the $\Sf_5$ action on the Petersen graph.  The action of any odd permutation of $\Sf_5$ on $\Cs$ interchanges $C_c$ and $C'_c$, and also interchanges $C_{\ir}$ and $C_{\ir}'$ .
\end{corollary}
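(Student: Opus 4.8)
The plan is to read the Corollary off the three structural results already in hand, so that essentially no new computation is needed. First I would dispose of the formal assertions: every member of $\Cs$ lies in $|-2K_S|$ and hence has arithmetic genus $6$ (Subsection \ref{subsect:wimansextic}), and carries by construction the faithful $\Af_5$-action that defines the pencil; moreover $C_\infty$ was identified in Subsection \ref{subsect:pencil}, via Corollary \ref{cor:Cinfty}, with the reduced union of the ten lines on $S$, whose intersection graph is the Petersen graph by the discussion in Section \ref{sect:quinticDP}. The remaining content is threefold: (i) the five listed curves exhaust the singular members of $\Cs$; (ii) each of them is a stable curve; (iii) the description of the $\Sf_5$-action.

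For (i), Lemma \ref{lemma:specialconic} already says that the only reducible members besides $C_\infty$ are $C_c$ and $C_c'$, each a stable union of five special conics with intersection graph the complete graph on five vertices, and interchanged by every odd permutation. For the irreducible singular members I would quote Proposition \ref{prop:irred}: such a member is a stable $6$-nodal rational curve, and the Euler--Poincar\'e count carried out there (left-hand side $47$, with $C_\infty$ contributing $15$ and the pair $C_c,C_c'$ contributing $10+10$, leaving $12=6+6$) forces exactly one conjugate pair $\{C_{\ir},C_{\ir}'\}$, each member with six ordinary nodes. Thus the singular members are precisely the five curves listed. For (ii), the normalization of each is a disjoint union of copies of $\Pb^1$ (ten lines, five conics, or one rational curve of geometric genus zero), and stability is exactly the statement that each copy carries at least three preimages of nodes; this holds because the Petersen graph is $3$-regular (three nodes on each line of $C_\infty$), the complete graph on five vertices is $4$-regular (four nodes on each conic of $C_c,C_c'$), and the normalization of $C_{\ir}$ carries the twelve preimages of its six nodes --- all facts already recorded in Lemmas \ref{lemma:specialconic}, \ref{lemma:glueingconstruct} and Proposition \ref{prop:irred}.

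For (iii): $C_\infty$ is cut out on $S$ by $Q-Q'$, which spans the one-dimensional $\sgn$-summand of $\Sym^2 E_S$ (Corollary \ref{cor:Cinfty}); a one-dimensional subrepresentation is $\Sf_5$-invariant, so $C_\infty$ is $\Sf_5$-invariant, and the induced permutation of its ten components is the action of $\Aut(S)\cong\Sf_5$ on the Petersen graph recalled in Section \ref{sect:quinticDP}. Since the only $\Sf_5$-invariant members of $\Cs$ are $C_0$ and $C_\infty$ (Subsection \ref{subsect:pencil}), the involution of $\calB$ induced by any odd permutation moves both $C_c$ and $C_{\ir}$, and by Lemma \ref{lemma:specialconic}, respectively Proposition \ref{prop:irred} (``appears together with its outer transform''), it sends $C_c$ to $C_c'$ and $C_{\ir}$ to $C_{\ir}'$. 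The only point needing genuine care is the completeness in (i): a priori the residual $12$ in the Euler count could be carried by a different configuration of singular fibres --- a single fibre with a worse singularity, or conjugate pairs having two or three singular points --- but each possibility would force $\Af_5$ to act on a set of $1$, $2$ or $3$ points, contradicting the absence of an $\Af_5$-fixed point on $S$ and of a proper subgroup of $\Af_5$ of index below $5$. This is precisely the argument already made in the proof of Proposition \ref{prop:irred}, so no further singular members can occur and the classification is complete.
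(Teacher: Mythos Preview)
Your proposal is correct and matches the paper's approach exactly: the paper gives no proof at all (it introduces the Corollary with ``We sum up with the following''), treating it as a summary of Lemma \ref{lemma:specialconic}, Lemma \ref{lemma:glueingconstruct}, and Proposition \ref{prop:irred}. You have correctly identified and assembled these ingredients, and your added verification of stability (via the valence of the intersection graphs) and your explicit treatment of the $\Sf_5$-action make the summary more self-contained than the paper's bare statement.
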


\begin{remark}\label{rem:specialconicmoduli}
Our discussion in Subsection \ref{subsect:generalities} shows that $C_c\cup C'_c$, when regarded as a curve on $\calMc_{0,5}$, meets $\calM_{0,5}$ in the locus parameterizing $5$-pointed rational curves $(C; x_1, \dots, x_5)$ with the property that there exists an affine coordinate $z$ for $C$ such that
$\{ x, \dots, x_5\}$ contains the union of $\{0\}$ and the roots of $z^3=1$.
So we can characterize the Wiman-Edge pencil on $\calMc_{0,5}$ as the pencil which contains in $|\partial\calMc_{0,5}|$ and  these two loci.  
It is desirable  to have  a modular interpretation of this pencil.
\end{remark}

\subsection{Connection with the Dwork pencil}
Since the singular members of the Wiman-Edge pencil also play a special role in the work of Candelas \emph{et al.}\ \cite{Candelas},
this is perhaps a good place to make the connection with that paper.
Let $p=(p_1, \dots,p_5)$ be distinct points of $\Pb^1$. As Zagier \cite{Zagier} points out, there exist 
linear forms  $\ell_i$ on $\Cb^2$ defining  $p_i$, such that  
\[\ell_1{}^5+\dots+\ell_5{}^5=5\psi\ell_1\ell_2\ell_3\ell_4\ell_5\]
 for some $\psi\in \C$ with the $5$-tuple $(\ell_1{}^5, \ell_2{}^5,\ell_3{}^5,\ell_4{}^5,\ell_5{}^5)$ being unique up to a common scalar (so that $\psi^5$ only depends on $p_1, \dots,p_5$). In other words, 
 \[\ell:=[\ell_1:\cdots :\ell_5]: \Pb^1\to \Pb^4\] maps to a line  on a member of the \emph{Dwork pencil}, that is,  the pencil of quintic $3$-folds  $X_\psi\subset \Pb^4$ defined by  $z_1{}^5+\dots+z_5{}^5=5\psi z_1z_2z_3z_4z_5$, such that the coordinate hyperplane $z_i=0$ defines $p_i$.

This construction is essentially $\PGL_2(\C)$-equivariant and has some symmetries, perhaps the most obvious ones being its $\Sf_5$-symmetry.  But we also have acting the group $T[5]\cong \mu_5{}^4$ of order $5$  elements in the diagonal torus $T$ of  $\PGL_5(\C)$. Together with $\Sf_5$ this gives a faithful action of the subgroup  $\Sf_5\ltimes T[5]\subset \PGL_5(\C)$ on the total space of the pencil. It acts on the parameter $\psi$ through  a character $\Sf_5\ltimes T[5]\to \mu_5$, the latter being given as a nontrivial  $\Sf_5$-invariant character $\chi: T[5]\to \mu_5$. So $\ker (\chi)$ is as a  group isomorphic to $\mu_5{}^3$ and  every $X_\psi$ is stabilized by $\Sf_5\ltimes \ker (\chi)$. A good way to express this is that  we have thus obtained the following: 
\begin{enumerate}
\item [(i)] an unramified   $T[5]$-covering $\Tc_{0,5}\to \calM_{0,5}$ endowed with an action $\Sf_5\ltimes T[5]$ which extends the $T[5]$-action and is compatible with the  $\Sf_5$-action on $\calM_{0,5}$,
\item [(ii)] a regular function $\psi: \Tc_{0,5}\to \C$, equivariant with respect to the above character (so that $\psi^5$ is defined as an $\Sf_5$-invariant function on $\calM_{0,5}$),
\item [(iii)] an $\Sf_5\ltimes T[5]$-equivariant lift of $\psi$ from $\Tc_{0,5}$ to the Fano variety  $\text{Fano}(\Xs/\Pb^1)$ of lines on the Dwork pencil.
\end{enumerate}
The main results of \cite{Candelas}  can then be summed up as follows:

A fiber of $\calM_{0,5}\xrightarrow{\psi^5}\C$ is the sum of a member of the Wiman pencil plus its transform under an odd permutation and for a suitable  parametrization of  the pencil base $\calB$ by a parameter $\phi$ such that $C_0$ resp.\  $C_\infty$ is defined by $\phi=0$ resp.\ $\phi=\infty$ and we have  $32\psi^{-5}=\phi^2+3/4$. Moreover,
the    morphisms above with their symmetry extend (uniquely) over the blowup $\Cs\to \calMc_{0,5}$ of $\calMc_{0,5}$ in $\Delta$ (recall that this is simply the total space of the Wiman-Edge pencil $\phi:\Cs\to\Pb^1$):  we have a $T[5]$-cover $\pi:\tilde\Cs\to \Cs$ with a compatible $\Sf_5\ltimes T[5]$-action  and an equivariant extension of the Fano map  to $\tilde\Cs$ which makes  the Dwork pencil and the symmetrized Wiman-Edge pencil fit in  the following  commutative diagram
\[
\xymatrix{
 \text{Fano}(\Xs/\Pb^1) \ar[rd]&\tilde\Cs\ar[r]|-{\pi}\ar[l]|-{F} \ar[d]|-{\psi} & \Cs \ar[d]|-{\phi^2}\\
&\Pb^1 \ar[r]  &\Pb^1\\}
\]
Here  the slant map is the  structural map  and the bottom map is given by $\psi\mapsto 32\psi^{-5}-3/4$.
For $\psi\not=0, \infty$, $\tilde C_\psi$ parametrizes all the nonisolated  points of $\text{Fano}(X_\psi)$; for $\psi^5\not=2^7/3$ it consists of two irreducible curves, each of which  is an unramified $\ker (\chi)$-cover of $C_{\pm\phi}$ (of arithmetic genus $626$), but  for $\psi^5=2^7/3$, there is only one irreducible curve (which covers the Wiman curve).

The singular members of the Wiman-Edge pencil are accounted for as follows:
the Fermat quintic ($\psi=0$)  yields the sum of 10 lines with multiplicity $2$ ($2C_\infty$), the sum of the $5$ coordinate hyperplanes ($\psi=\infty$) yields  $C_c+C'_c$, and if $\psi$ is a $5$th root  of unity, we get  $C_\ir+C'_\ir$.

\subsection{Plane model of the Wiman-Edge pencil}\label{planemodel}
Let $\pi:S\to \Pb^2$ be the blowing-down morphism. The Wiman-Edge pencil is the proper inverse transform of a pencil of curves of degree $6$ with points of multiplicity $\ge 2$ at the fundamental points invariant with respect to the Cremona group $G$ of transformations isomorphic to $\Af_4$. Following Edge, we chose  the fundamental points of $\pi^{-1}$ to be the  points 
\[(-1:1:1), \ (1:-1:1), \ (1:1:-1), \ (1:1:1).\] 
The group generated by  projective transformations that permute the coordinates and their sign changes is isomorphic to $\Sf_4$, where the sign changes are even permutations of order $2$.  Together with the  additional symmetry  defined by the Cremona involution $T$ with the first reference points, we obtain a subgroup of Cremona transformations isomorphic to $\Sf_5$. The subgroup $\Af_5$ is generated by cyclic permutations, sign changes and the transformation $T$. 

Let $F = 0$ be the equation of a curve from the Wiman-Edge pencil. The condition that $F$ is invariant with respect the sign changes shows that  $F$  must be a combination of monomials $x^ay^bz^c$ with $a,b,c$ even. This allows us to write 
$$F= a(x^6+y^6+z^6)+b(x^4y^2+y^4z^2+z^4x^2)+c(x^4z^2+y^4x^2+z^4y^2)+dx^2y^2z^2.$$
The additional conditions that $3a+3b+3c+d= 0$  will guarantee that the fundamental points are singular points. The Cremona involution $T$ is given by
{\Small $$\sigma:(x:y:z)\mapsto (-x^2+y^2+z^2+xy+xz+yz:x^2-y^2+z^2+xy+xz+yz:x^2+y^2-z^2+xy+xz+yz)$$}
The invariance of $F$ with respect to $T$ gives  $(a,b,c)$ is a linear combination of $(2,1,1)$ and $(0,1,-1,0)$. This gives the equation of the Wiman-Edge pencil
{\Small $$
 F = \lambda (x^6+y^6+z^6+(x^2+y^2+z^2)(x^4+y^4+z^4)-12x^2y^2z^2)+\mu (x^2-y^2)(y^2-z^2)(x^2-z^2) = 0
$$}
We check that the Wiman curve $B$ is the member of the pencil with $(\lambda:\mu) = (1:0)$. Computing the partial derivatives, we find that the curve is indeed smooth confirming  Corollary \ref{cor:}.

\begin{figure}[h]
\begin{subfigure}{.4\textwidth}
\includegraphics[scale=0.35]{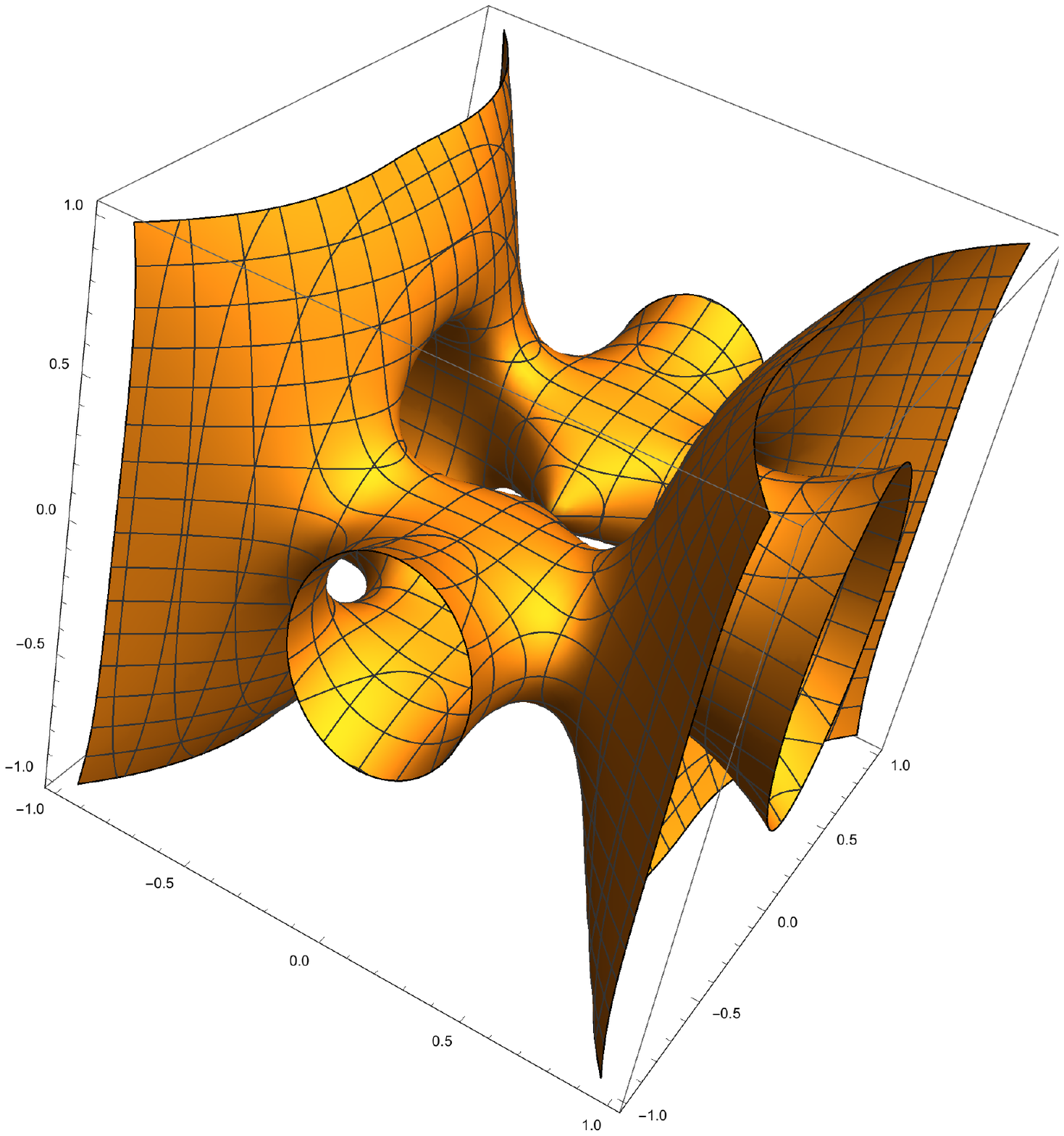}
\end{subfigure}
\begin{subfigure}{.4\textwidth}
\includegraphics[scale=0.35]{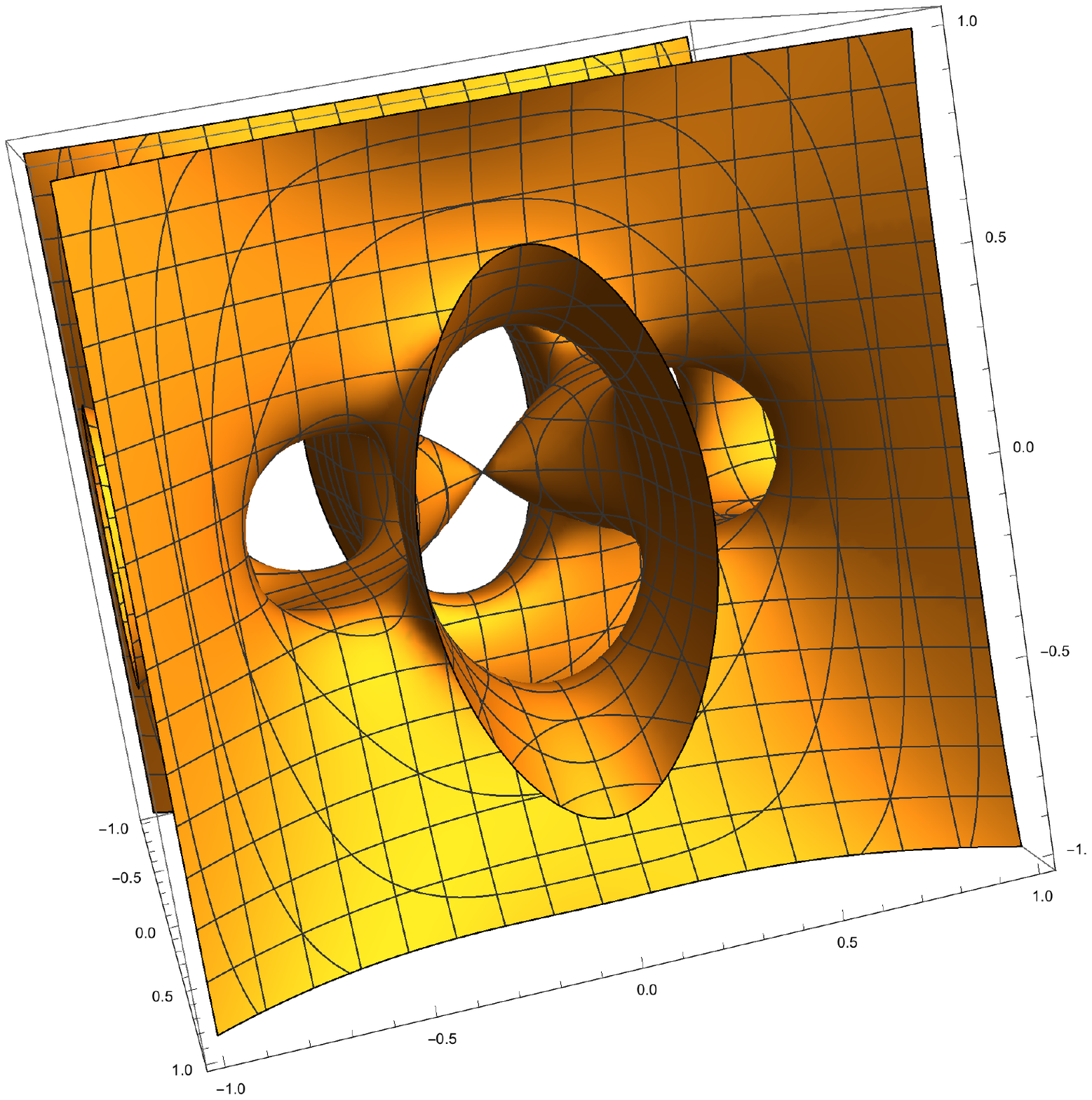}
\end{subfigure}
\caption{\small Two views of a piece of the Wiman sextic (the image of the Wiman curve in $\Pb^2$).}
\label{figure:wiman1}
\end{figure}

The base locus of the pencil should be  a subscheme of length $6^2 = 36$. The four reference points contribute $16$ to this number, the rest is the image of 20 base points of the Wiman-Edge pencil on $S$ in the plane. It easy to find them. Each line through two fundamental points intersects $C_0$ at two nonsingular points, this gives us $12$ points:
\[(\sqrt{-3}:\pm 1:\pm 1)\ \text{and}\ (\pm 1:\sqrt{-3}:\pm 1)\ \text{and}\   (\pm 1:\pm 1:\sqrt{-3}).\]
The remaining 8 points are on the exceptional curves where they represent the directions of the tangent lines to the branches of singular points of $B$. The equations of the 4 pairs of these tangent lines are
\begin{eqnarray}\label{conics}
(x+\epsilon y+\epsilon^2 z)(x+\epsilon^2 y+\epsilon z) &=& 0,\\ \notag
(x+\epsilon y-\epsilon^2 z)(x+ \epsilon^2 y-\epsilon z) &=& 0,\\ \notag
(x-\epsilon y+\epsilon^2 z)(x- \epsilon^2 y+\epsilon z) &=& 0,\\ \notag
(x-\epsilon y-\epsilon^2 z)(x-\epsilon^2 y-\epsilon z) &=& 0, \notag
\end{eqnarray}
where $\epsilon = \frac{-1+\sqrt{-3}}{2}$ is a primitive root of $1$. One checks that  each tangent line passing through $p_i$ together with three of the lines $\la p_i,p_j\ra, j\ne i,$ form a harmonic 4-tuple of lines in the pencil of lines through $p_i$ (\footnote{Edge calls them \emph{Hesse duads} for the following reason. It follows from 
observing the Petersen graph the stabilizer subgroup of each line acts on the line by permuting three intersection points with other lines. If one considers these points as the zero set of a binary form $\phi$ of degree 3, then 
its Hessian binary form of degree 2 given as the determinant of the matrix of second partial derivatives of $\phi$ has zeros at two point, the Hessian duad.}).

Choose one of the branch tangents, say $x+\epsilon y+\epsilon^2 z = 0$. It intersects the Wiman sextic $B$ at two nonsingular points $(1:\sqrt{-3}:1)$ and $(1:-1:\sqrt{-3})$. We see that each double point of $W$ is a \emph{biflecnode}, i.e. the branch tangents at singular point intersect the curve at this point with multiplicity $4$ instead of expected $3$.

The singular irreducible members $C_{\ir}$ (resp.  $C_{\ir}'$) are proper transforms of the members of the pencil corresponding to the parameters $(\lambda:\mu) = (1:5\sqrt{5})$ (resp. $(1:-5\sqrt{5})$). The singular points of $C_{\ir}$ besides the fundamental points  are
$$(0:0:\lambda), \ (0:1:-\lambda), \ (1:0:\lambda'),\ (1:0:-\lambda'),\ (1:\lambda:0),\ (1,-\lambda,0),$$
where $\lambda = \frac{1+\sqrt{5}}{2}, \lambda' = \frac{1-\sqrt{5}}{2}$. The six singular points of $C_{\ir}'$ are obtained from these points by replacing $\lambda$ with $\lambda'$.

The reducible members  $C_c$ and $C_c'$ of the Wiman-Edge pencil are proper transforms of the members of the  pencil of plane curves
$$(x+\epsilon y+\epsilon^2 z)(x+ \epsilon y-\epsilon^2 z)(x-\epsilon y+\epsilon^2 z)(x-\epsilon y-\epsilon^2z)(x^2+\epsilon^2 y^2+\epsilon z^2) = 0,$$
where $\epsilon\ne 1, \epsilon ^3 = 1$.  As shown already by Edge, they correspond to parameters $(\lambda:\mu) = (1:\pm \sqrt{-3})$.
We leave these computations to the reader, they are straightforward.

\subsection{Irregular orbits in $S$}\label{subsection:fixedpoints} 
Recall that if a group acts on a set, an orbit is called \emph{regular} if the stabilizer of one (and hence any) of its points is trivial; otherwise it is called an \emph{irregular} orbit.
For what follows it will helpful to have a catalogue of irregular  an $\Sf_5$-orbits and $\Af_5$-orbits in $S$.  Here we can observe that a $\Sf_5$-orbit  is an $\Af_5$-orbit if and only if its $\Sf_5$-stabilizer is not contained in $\Af_5$ (otherwise it splits into two $\Af_5$-orbits). So a determination of the 
irregular $\Sf_5$-orbits determines one of the irregular $\Af_5$-orbits. The $\Sf_5$-equivariant incarnation of $S$ as the moduli space of  stable, $5$-pointed genus zero curves makes this  determination (which is in fact due to Coble) rather straightforward, as we will now explain.  

A point of $\calMc_{0,5}$ is the same thing as  a \emph{stable} map  
$x:\{1,2 \dots, 5\}\to \Pb^1$ (where `stable' means here that every fiber has at most two elements),   given up to a composition with a M\"obius transformation. The $\Sf_5$-stabilizer of a stable map $x$ consists of the set of $\sigma\in \Sf_5$ for which  there exists a  $\rho(\sigma)\in \PGL_2(\C)$ with the property that $x\sigma=\rho(\sigma)x$. Since $x$ is stable, its image has at least $3$ distinct points,  and  so $\rho(\sigma)$ will be unique. It follows that $\rho$ will be a group homomorphism. Its image will be a finite subgroup of $\PGL_2(\C)$
with the property that it has in $\Pb^1$ an orbit of size $\le 5$.

Klein determined the finite subgroups of $\PGL_2(\C)$ up to conjugacy:  they are the cyclic groups,  represented by  the group
$\mu_n$ of $n$th roots of unity  acting in $\C\subset \Pb^1$ as scalar multiplication; the dihedral groups, represented  by the semidirect product of 
$\mu_n$ and  the order 2 group  generated by the inversion $z\mapsto z^{-1}$; and the 
tetrahedral, octahedral and icosahedral  groups, which are isomorphic to $\Af_4$, $\Sf_4$, $\Af_5$ respectively.  The  octahedral and icosahedral groups have no orbit of size $\le 5$ in $\Pb^1$,  and hence cannot occur here. The tetrahedral group  has one such orbit: it is of size $4$ (the vertices of a tetrahedron), but since we want  a degree $5$ divisor, it then  must have a fixed point, and this is clearly not the case. 

It remains to go through the dihedral  and cyclic cases. We  denote by $\Cf_{n}$ a cyclic group of order $n$ and by $\Df_{2n}$ the dihedral group of order $2n$ isomorphic $\Cf_n\rtimes \Cf_2$ (\footnote{We follow the now standard ATLAS notation for finite groups.}). The  conjugacy classes of the nontrival cyclic subgroups of $\Af_5$ are represented by  $\la (12)(34)\ra\cong \Cf_2$ and 
$\la (123)\ra\cong \Cf_3$, and  for  $\Sf_5$ we have the two additional classes represented by $\la (12)\ra\cong \Cf_2$ and $\la (12)(345)\ra\cong \Cf_6$. Likewise, there are 3 conjugacy  classes of dihedral subgroups  in $\Af_5$ represented by 
\[
\la (12)(34),(13)(23)\ra \cong \Df_4,\  \la (12)(45), (123)\ra\cong \Df_6 \cong \Sf_3,\  \la (12)(34),(12345)\ra\cong \Df_{10} 
\]
and an additional three conjugacy classes in $\Sf_5$: 
\[
\la (12),(34)\ra \cong \Df_4,\ \  \la (12),(123)\ra \cong \Df_6,\ \la (12),(1324)\ra\cong \Df_8.
\]
To distinguish the conjugacy classes of dihedral subgroups of order $4$ or $6$ we denote them by 
$\Df_{4}^{\ev}, \Sf_3^{\ev}$ (resp. $\Df_{4}^{\odd}, \Sf_3^{\odd}$) if they are contained in $\Af_5$ (resp.\ in $\Sf_5$, but not in $\Af_5$).  A similar convention applies to $\Cf_2$: we have  $\Cf_2^{\odd}$ and $\Cf_2^{\ev}$.

We then end up with the following list, due to  A. Coble \cite{Coble}, pp. 400--401.

\begin{theorem}[{\bf The irregular orbits of $\Sf_5$ acting on $S$}]
\label{thm:irrorbits} The set of irregular orbits of $\Sf_5$ acting on $S$ is one of the following sets, 
named by the conjugacy class of a stabilizer subgroup :
\begin{description}
\item[$\Cf_2^{\odd}$] For example, $\la(12)\!\ra$ is the stabilizer of $(0,0,\infty, 1,z)$ when $z$ is generic. An orbit of this type has size $60$. 
It is a regular $\Af_5$-orbit.
\smallskip

\item[$\Cf_2^{\ev}$] For example, $\la (12)(34)\!\ra $ is the stabilizer of $( z, -z, 1, -1, \infty)$ when $z$ is generic.  An orbit of this type has size $60$ and decomposes into two $\Af_5$-orbits of size $30$.
\smallskip

\item[$\Cf_4$] For example, $\la (1234)\!\ra$ is the stabilizer of  $(1,\sqrt{-1}, -1,-\sqrt{-1},\infty)$. This is an $\Sf_5$-orbit of size $30$. It is also an $\Af_5$-orbit of type $\Cf_2^{\ev}$ (take $z=\sqrt{-1}$).
\smallskip

\item[$\Df_4^{\odd}$] For example  $\la (12),(34)\!\ra $ is the stabilizer of  $(0,0, 1 , -1,\infty)$. This is a $\Sf_5$-orbit of size $30$, which is also an $\Af_5$-orbit of type $\Cf_2^{\ev}$ (let $z\to 0$).
\smallskip

\item[$\Sf_3^{\ev}$] For example  $\la (23)(45),(123)\!\ra $ is the stabilizer of  $(1, \zeta_3, \zeta_3^2, 0,\infty)$. This is a $\Sf_5$-orbit 
of size $20$ which splits into two $\Af_5$-orbits of size $10$.
\smallskip

\item[$\Df_8$]  For example,   $\la (12),(1324)\!\ra $ is the stabilizer of  $(0, 0,\infty,\infty, 1)$. This is the unique $\Sf_5$-orbit  of size $15$. It is also an 
$\Af_5$-orbit of type $\Df_4^{\ev}$.
\smallskip

\item[$\Cf_6$]  For example, $\la (12)(345)\!\ra $ is the stabilizer of  $(\infty, \infty, 1, \zeta_3, \zeta_3^2)$. This  is a single orbit of size $20$ which is also a $\Af_5$-orbit of type $\Cf_3$.
\smallskip

\item[$\Df_{10}$] For example, $\la (25)(34),(12345)\!\ra$  is the stabilizer of  $(1, \zeta_5, \zeta_5^2, \zeta_5^3, \zeta_5^4)$.
The associated orbit is  of size $12$ and splits into two $\Af_5$-orbits of size $6$.
\end{description}

\medskip
In particular, the irregular $\Af_5$-orbits are of type  $\Cf_2^{\ev}$ are parametrized by a punctured rational curve.
\end{theorem}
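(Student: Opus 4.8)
The plan is to exploit the modular description $S\cong\calMc_{0,5}$ and to go through, one conjugacy class at a time, the cyclic and dihedral subgroups of $\Sf_5$ listed above, deciding in each case whether the subgroup is realised as the stabiliser of a stable map $x\colon\{1,\dots,5\}\to\Pb^1$, and, when it is, recording the size of the orbit and its behaviour on restriction to $\Af_5$. The reduction to cyclic and dihedral stabilisers — together with the exclusion of the tetrahedral, octahedral and icosahedral groups — has already been carried out in the paragraphs preceding the statement, so what remains is a finite bookkeeping; the point is to organise it around the modular picture so that it recovers Coble's list with very little computation.

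First I would treat a cyclic subgroup $H=\la\sigma\ra$. If $H$ lies in the stabiliser of $x$, then the homomorphism $\rho\colon H\to\PGL_2(\C)$ of the preceding discussion is forced, and after a M\"obius change of coordinate one may take $\rho(\sigma)$ to be the rotation $z\mapsto\zeta z$ with $\zeta$ a primitive $m$-th root of unity, $m=\ord\rho(\sigma)$ a divisor of $\ord\sigma$. The $\la\rho(\sigma)\ra$-orbits on $\Pb^1$ are then $\{0\}$, $\{\infty\}$ and free orbits of size $m$, and $x$ must send the $\sigma$-orbits of the label set to orbits of this form, subject to the stability constraint that no point of $\Pb^1$ carries more than two labels. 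Running through $\ord\sigma\in\{2,3,4,6\}$ — using that $\rho(\sigma)$ cannot be trivial on a $\sigma$-cycle of length $\ge 3$ (that would create a fibre with three labels) and that, for $\sigma\in\Af_5$, it cannot have order $4$ or $6$ — should produce a short list of normalised configurations, and from each one reads off the generic stabiliser, its order, and hence the orbit size $120/|H|$. I expect the representatives $(0,0,\infty,1,z)$, $(z,-z,1,-1,\infty)$, $(1,\sqrt{-1},-1,-\sqrt{-1},\infty)$ and $(\infty,\infty,1,\zeta_3,\zeta_3^2)$ in the statement to be exactly these normal forms.

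Next, for a dihedral subgroup $H$ I would impose, on top of the cyclic data, an additional reflection $\tau$, i.e.\ ask for $\rho(\tau)\in\PGL_2(\C)$ carrying the configuration to itself and conjugating $\rho(\sigma)$ to $\rho(\sigma)^{-1}$. When $\sigma$ has order $\ge 3$ this should single out the zero-dimensional strata with stabiliser $\Sf_3^{\ev}$, $\Df_8$ and $\Df_{10}$ (represented by $(1,\zeta_3,\zeta_3^2,0,\infty)$, $(0,0,\infty,\infty,1)$ and the five fifth roots of unity); here one must also check that the ``odd'' dihedral groups such as $\Sf_3^{\odd}=\la (12),(123)\ra$ do \emph{not} occur — a faithful copy of $\Sf_3$ in $\PGL_2(\C)$ has no fixed point on $\Pb^1$, so the untwinned reflection would have to fix the two remaining marked points, whereas a non-faithful $\rho$ collapses a $3$-cycle of $\sigma$ and violates stability. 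When $|H|=2$ the fixed locus of $H$ in $S$ is instead a union of rational curves, whose generic point has stabiliser exactly $\Cf_2^{\odd}$ or $\Cf_2^{\ev}$ according to the parity of the involution (orbit size $60$), and the remaining entries $\Cf_4$, $\Df_4^{\odd}$, $\Cf_6$, $\Df_8$, $\Sf_3^{\ev}$ should arise as the finitely many special points on these curves — and at their intersections with the boundary $\partial\calMc_{0,5}$ — where the stabiliser jumps. The passage to $\Af_5$ is then purely group-theoretic, via the dichotomy recalled just before the theorem: a $\Sf_5$-orbit with stabiliser $H$ stays a single $\Af_5$-orbit, with stabiliser $H\cap\Af_5$, when $H\not\subseteq\Af_5$, and splits into two $\Af_5$-orbits of half the size when $H\subseteq\Af_5$. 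Applying this entry by entry yields the parenthetical assertions, and in particular shows that the $\Af_5$-orbits whose stabiliser has type $\Cf_2^{\ev}$ sweep out the $\Cf_2^{\ev}$-fixed curve with its special points removed, i.e.\ a punctured rational curve.

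The hard part, I expect, will be the completeness and accuracy of that last, special-point analysis on the order-$2$ fixed curves rather than any individual computation: one must verify that the displayed representatives have exactly the claimed stabiliser (neither accidentally larger, nor — when a coincidence of strata is in play — accidentally smaller), that every stabiliser-jump along $\Fix$ of a transposition (resp.\ of a double transposition) is accounted for by one of the listed types with none missing, and that the coincidences between families are correctly recorded — for instance that the $\Cf_4$-point and the $\Df_4^{\odd}$-point are distinct specialisations of a single $\Cf_2^{\ev}$-curve, that the $15$-point orbit has stabiliser exactly $\Df_8$ rather than only the Klein four-group $\Df_4^{\ev}$ it contains, that the $\Cf_6$- and $\Df_{10}$-points lie on the two-dimensional fixed curves only through their intersections with $\partial\calMc_{0,5}$, and that no spurious dihedral type slips in. Since those fixed curves are explicit rational curves carrying an explicit residual M\"obius action, each such check should come down to determining the finite set of moduli $z$ at which the five marked points acquire a new M\"obius symmetry, and matching the resulting configuration to one of the eight types above.
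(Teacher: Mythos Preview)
Your proposal is correct and follows exactly the route the paper sets up: the paragraphs preceding the theorem already reduce the question to enumerating stable maps $x:\{1,\dots,5\}\to\Pb^1$ whose $\Sf_5$-stabiliser maps via $\rho$ to a cyclic or dihedral subgroup of $\PGL_2(\C)$, and the paper then simply records the outcome as Coble's list without writing out the case analysis. Your plan to normalise each cyclic generator as a rotation, match $\sigma$-orbits on labels to $\rho(\sigma)$-orbits on $\Pb^1$ under the stability constraint, and then impose the dihedral reflection on top, is precisely the bookkeeping the paper leaves implicit; the completeness checks you flag as the delicate part are the right ones to worry about.
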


\begin{remark}\label{rem:irrcompA2locus}
It is clear from Theorem \ref{thm:irrorbits} that we have a curve of irregular $\Af_5$-orbits of size $30$. However the locus of such points in $S$ has  $15$ irreducible components. This is because the preimage of a $\Af_5$-orbit under the map  $w\in\C\mapsto (w,-w,1,-1, \infty)$ is generically of the form 
$\{z, 1/z\}$.  An example of the closure such an irreducible component is the preimage  of the line defined in $\Pb^2$ by $t_2=t_0+t_1$ under the blowup of the vertices of the coordinate vertex (it  is pointwise fixed  under an even linear permutation of these vertices). Since this line does not pass through any 
of the four vertices, this also shows that its preimage in $S$  is a rational normal curve of degree $3$.
We thus obtain a $\Sf_5$-invariant curve on $S$ of degree $45$ (defined by a section of $\omega_S^{-9}$) with $15$ irreducible components. \textit{A  priori} this section is 
$\Af_5$-invariant, but as Clebsch \cite{Clebsch} showed,  it is in fact $\Sf_5$-invariant (see also Remark \ref{rem:clebsch}).
\end{remark}

Since we have already determined the size of some orbits in the  anticanonical model, we can now interpret the  orbits thus found. We do  this only 
insofar it concerns $\Af_5$-orbits, because that is all we need. Note that this is not a closed subset. Here is what remains, but stated in terms of the Wiman-Edge pencil:

\begin{corollary}\label{cor:irrorbits} The irregular $\Af_5$-orbits in $S\cong\calMc_{0,5}$ of size $30$ are parametrized by punctured rational curve and 
the others (named by the conjugacy class of a stabilizer subgroup) are as follows:
\begin{description}
\item[$\Cf_3$]  This $20$-element orbit is the base point locus $\Delta$   of $\Cs$. 
\item[$\Df_4^{\ev}$] This $15$-element orbit is the  singular locus of $C_\infty$. 
\item[$\Sf^{\ev}_3$]  This consists of two $10$-element orbits in $\calM_{0,5}\cong S\ssm C_\infty$ equal to  $\Sing(C_c)$ and $\Sing(C'_c)$. 
\item[$\Df_{10}$]  This consists of two $6$-element orbits, namely the singular loci of the two irreducible members of the Wiman-Edge pencil,
 $\Sing(C_{\ir})$ and  $\Sing(C_{\ir}')$.
\end{description}
The orbit pairs of type   $\Sf^{\ev}_3$  and $\Df_{10}$ are swapped by an a conjugacy with an element of $\Sf_5\ssm\Af_5$ (which induces a nontrivial 
outer automorphism of $\Af_5$).
\end{corollary}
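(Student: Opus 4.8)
The plan is to deduce the corollary from Theorem~\ref{thm:irrorbits} by a matching argument that is forced once one records cardinalities. From Theorem~\ref{thm:irrorbits} one extracts the complete list of irregular $\Af_5$-orbits in $S$: discarding the $\Cf_2^{\odd}$-orbit of size $60$ (which is \emph{regular} for $\Af_5$), what remains is a one-parameter family of orbits of size $30$ (those of type $\Cf_2^{\ev}$, with the $\Cf_4$- and $\Df_4^{\odd}$-orbits among them), a single orbit of size $20$ (type $\Cf_3$, namely the $\Cf_6$-orbit of Theorem~\ref{thm:irrorbits} viewed as an $\Af_5$-orbit), a single orbit of size $15$ (type $\Df_4^{\ev}$, the $\Df_8$-orbit viewed as an $\Af_5$-orbit), two orbits of size $10$ (the two halves into which the $\Sf_5$-orbit of type $\Sf_3^{\ev}$ splits, exchanged by any odd permutation), and two orbits of size $6$ (the two halves of the $\Sf_5$-orbit of type $\Df_{10}$, again exchanged by any odd permutation). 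That the orbits of size $30$ are parametrized by a punctured rational curve is precisely the content of Remark~\ref{rem:irrcompA2locus} (via $w\mapsto(w,-w,1,-1,\infty)$); so it remains to identify the four remaining orbit types.

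For this I would exhibit each of the loci $\Delta$, $\Sing(C_\infty)$, $\Sing(C_c)$, $\Sing(C_c')$, $\Sing(C_{\ir})$, $\Sing(C_{\ir}')$ as a single $\Af_5$-orbit and compute its size; since the sizes $20,15,10,6$ are pairwise distinct and each occurs for a unique orbit type (up to the $\Sf_5/\Af_5$-swap in the cases $10$ and $6$), the assignment is then forced. Concretely: by Lemma~\ref{lemma:basepoints}, $\Delta=C_0\cap C_\infty$ has $20$ elements and lies in the smooth part of $C_\infty$; by Subsection~\ref{subsect:generalities} the nodes of $C_\infty$ are the $15$ edges of the Petersen graph, permuted transitively by $\Sf_5$; by Lemma~\ref{lemma:specialconic} each of $C_c,C_c'$ is a union of five conics whose nodes correspond to the $\binom{5}{2}=10$ two-element subsets of $\{1,\dots,5\}$, on which $\Af_5$ acts transitively (being multiply transitive on $5$ letters), and any odd permutation carries $C_c$ to $C_c'$ and hence $\Sing(C_c)$ to $\Sing(C_c')$; by Proposition~\ref{prop:irred} each of $C_{\ir},C_{\ir}'$ has exactly $6$ nodes, and, exactly as in the proof of that proposition ($\Af_5$ has no fixed point on $S$ and no subgroup of index $<5$), these $6$ nodes form a single $\Af_5$-orbit, with $C_{\ir},C_{\ir}'$ and hence their node sets exchanged by an odd permutation. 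Matching cardinalities then yields that $\Delta$ is the $\Cf_3$-orbit, $\Sing(C_\infty)$ the $\Df_4^{\ev}$-orbit, $\{\Sing(C_c),\Sing(C_c')\}$ the pair of $\Sf_3^{\ev}$-orbits, and $\{\Sing(C_{\ir}),\Sing(C_{\ir}')\}$ the pair of $\Df_{10}$-orbits.

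The only point that needs genuine care — and the main obstacle — is to be sure that an $\Sf_5$-invariant locus of the prescribed size does not decompose into \emph{smaller} $\Af_5$-orbits, so that the claimed identification with a single orbit is legitimate. For the node sets of $C_{\ir},C_{\ir}'$ this is handled by the index argument already used for Proposition~\ref{prop:irred}. For $\Delta$ one must in addition rule out that it coincides with the $\Sf_5$-orbit of type $\Sf_3^{\ev}$, which is also of size $20$ but splits into two $\Af_5$-orbits of size $10$: this is immediate, because that orbit lies in the open stratum $\calM_{0,5}=S\ssm C_\infty$ (its representative $(1,\zeta_3,\zeta_3^2,0,\infty)$ has distinct coordinates), whereas $\Delta\subset C_\infty$; hence $\Delta$ is the $\Sf_5$-orbit of type $\Cf_6$, which remains a single $\Af_5$-orbit of type $\Cf_3$. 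Finally, the assertion that the pairs of type $\Sf_3^{\ev}$ and $\Df_{10}$ are swapped by conjugation with an element of $\Sf_5\ssm\Af_5$ combines the corresponding statements for $C_c\leftrightarrow C_c'$ and $C_{\ir}\leftrightarrow C_{\ir}'$ (Lemma~\ref{lemma:specialconic}, Proposition~\ref{prop:irred}) with the splitting assertions already recorded in Theorem~\ref{thm:irrorbits}. Everything else is bookkeeping against that theorem's table.
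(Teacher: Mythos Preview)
Your proof is correct and follows exactly the approach of the paper: extract the complete list of irregular $\Af_5$-orbits from Theorem~\ref{thm:irrorbits}, note that the loci $\Delta$, $\Sing(C_\infty)$, $\Sing(C_c)$, $\Sing(C_c')$, $\Sing(C_{\ir})$, $\Sing(C_{\ir}')$ have already been shown (in Lemma~\ref{lemma:basepoints}, Subsection~\ref{subsect:generalities}, Lemma~\ref{lemma:specialconic}, Proposition~\ref{prop:irred}) to be $\Af_5$-orbits of the appropriate sizes, and then match by cardinality. The paper's own proof is in fact just three sentences asserting that ``all that is then left to do is to compare cardinalities''; you have simply spelled out the bookkeeping (including the care needed to rule out $\Delta$ being the $\Sf_3^{\ev}$-orbit of the same $\Sf_5$-size) that the paper leaves implicit.
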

\begin{proof}
Theorem \ref{thm:irrorbits} yields a complete list of the irregular  $\Af_5$-orbits in terms of $\calMc_{0,5}$ of size smaller than $30$. We have already encountered
some of these orbits as they appear in this corollary. All that is then left to do is to compare cardinalities.
\end{proof}

\section{Projection to a Klein plane}\label{sect:deg5cover} 
\subsection{The Klein plane}\label{subsect:icosahedralplane} 
The two representations $I$ and $I'$ of $\Af_5$ are  the complexification of two real representations that realize $\Af_5$ as the group of motions of a regular icosahedron.  They differ only 
in the way we have identified this group of motions with $\Af_5$.  The full group of isometries of the regular icosahedron (including reflections) is a direct product $\{\pm 1\}\times \Af_5$, and is in fact a Coxeter group of type $H_3$, a property that will be quite helpful to us when we need to deal with the $\Af_5$-invariants in the symmetric algebra of $I$.
Both $\Af_5$ actions give rise to $\Af_5$ actions on the unit sphere in Euclidean 3-space. Via the
isomorphism  $\SO_3(\Rb)\cong \PU_2$, they can also be considered as actions of $\Af_5$ on the Riemann sphere $\Pb^1$. These projective representations are what we have called 
the {\em Schwarzian representations} and are the only two nontrivial  projective representations of $\Af_5$ on $\Pb^1$ up to isomorphism. 

We observed above that a Schwarzian representation has 3 irregular orbits of sizes $12$, $20$ and $30$, corresponding respectively to the vertices, 
the barycenters and the midpoints of the edges of a spherical icosahedron. The antipodal map, when considered as an involution $\Pb^1$, is 
antiholomorphic: it comes from assigning  to a line in $\C^2$ its orthogonal complement. We can think of  this as defining a  $\Af_5$-invariant real structure on  $\Pb^1$ without real points. In particular, the involution is not in the image of $\Af_5$.   
Yet it preserves the $\Af_5$-orbits, so that each orbit decomposes into pairs.  The preimages of $\Af_5\hookrightarrow \Aut(\Pb^1)$  under the degree 2 isogeny $\SU_2\to \PU_2$ define two representations of degree $2$ of  an extension $\hat \Af_5$ of $\Af_5$ by a central subgroup of order $2$, called \emph{binary icosahedral group}. As above, these two representations of $\hat\Af_5$ differ by an outer automorphism.  If we take  the symmetric square of such a  representation, then
the kernel $\{ \pm 1\}$ of this isogeny  acts trivially and hence factors through a linear representation of  $\Af_5$ of degree 3. 
This is an icosahedral representation  of type $I$  or $I'$. 

We can phrase this solely in terms of a given Schwarzian representation of $\Af_5$  on a projective line $K$. For then the projective plane $P$ underlying the associated  icosahedral representation is the one of 
effective degree 2 divisors on $K$ (the symmetric square of $K$) and $K$ embeds $\Af_5$-equivariantly in $P$ as the locus defined by 
points with multiplicity 2. This is of course the image of $K$ under the Veronese embedding and makes $K$ appear as a conic.

We will identify $K$ with its image in $P$, and following Klein \cite{Klein} we refer to this image as the \emph{fundamental conic}. It is also defined by a non-degenerate 
$\Af_5$-invariant quadratic form on the icosahedral representation (which we know is self-dual). We call $P$ a \emph{Klein plane}.
So  an $x\in P\ssm K$ can be understood as a $2$-element subset of $K$. The latter spans a line in $P$ and this is simply the polar that
the conic $K$ associates to $x$ (when $x\in K$, this will be  the tangent line of $X$ at $x$).

Following Winger,  we can now identify all the  irregular $\Af_5$-orbits in $P$.
As we have seen, $K$ has exactly three irregular $\Af_5$-orbits having sizes $30$, $20$ and $12$, each of which being invariant  under an antipodal 
map. This  antipodal  invariance implies that the antipodal pairs in the above orbits span  a collection of resp.\ $15$, 
 $10$ and $6$ lines in $P$, each of which makes up an $\Af_5$-orbit. When we regard these pairs  as  effective divisors of degree 2, they also yield 
 $\Af_5$-orbits in $P\ssm K $ of the same size ($K$ parameterizes the nonreduced divisors). The bijection between lines and points is induced by 
 polarity with respect to $K$. This  yields all the irregular orbits in the Klein plane: 

\begin{lemma}[Winger \cite{Winger}, \S1]\label{invset1} 
There are unique irregular $\Af_5$-orbits in $P$ having size $12$, $20$ (both in $K$),  $6$, $10$ or $15$ (all three in $P\ssm K$). The remaining irregular orbits in $P$  have size $30$ and are parametrized by a punctured rational curve.  Further, the points with stabilizer  a fixed $\tau\in\Af_5$ of order $2$ is  open and dense in the image of the map $K\to P$ given by $z\in K\mapsto (z) +(\tau(z))$. 
\end{lemma}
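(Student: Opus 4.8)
The plan is to enumerate all irregular $\Af_5$-orbits in $P$ by running through the conjugacy classes of finite subgroups $H\subseteq\Af_5$ and computing the fixed locus $P^{H}$, using the incarnation $P=\Sym^{2}K$ together with the structure of the Schwarzian action recalled above: on $K\cong\Pb^{1}$ there are exactly three irregular $\Af_5$-orbits — the $12$ vertices, the $20$ face-barycenters and the $30$ edge-midpoints of a spherical icosahedron — with cyclic stabilizers of orders $5$, $3$, $2$ respectively.

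First I would set up the dictionary between subgroups and fixed loci. A point of $P$ is a degree-$2$ effective divisor $(p)+(q)$ on $K$, and its $\Af_5$-stabilizer is the subgroup preserving the multiset $\{p,q\}$. Since $\Af_5$ acts on $P$ through a faithful irreducible $3$-dimensional representation ($I$ or $I'$), a subgroup $H$ has $P^{H}\neq\emptyset$ only if $I|_{H}$ contains a character, and the character table \ref{table:A5} then pins down $P^{H}$:
\begin{itemize}
\item For $H=\Af_4$, $I|_{H}$ is the unique $3$-dimensional irreducible representation of $\Af_4$, so $P^{\Af_4}=\emptyset$; the irreducibility of $I$ itself likewise gives $P^{\Af_5}=\emptyset$. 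Thus neither $\Af_4$ nor $\Af_5$ occurs as a stabilizer.
\item If $g\in\Af_5$ has order $3$ or $5$, then (as its odd order cannot interchange two points) any $g$-invariant effective degree-$2$ divisor on $K$ is supported on the $g$-fixed pair $\{f,f'\}\subset K$, so $P^{\langle g\rangle}=\{\,2(f),\,2(f'),\,(f)+(f')\,\}$; here $\{f,f'\}$ is a pair of opposite vertices if $g$ has order $5$, a pair of opposite face-barycenters if $g$ has order $3$.
\item If $\tau\in\Af_5$ has order $2$, its eigenvalues on $I$ are $1,-1,-1$, so $\Fix(\tau)\subset P$ is the union of a line $\ell_{\tau}$ (an eigenplane) and its polar point $p_{\tau}$ (the remaining eigenline). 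Reading off divisors, $\ell_{\tau}$ is precisely the image of $z\mapsto (z)+(\tau(z))$ (the locus where $\tau$ swaps $z\leftrightarrow\tau(z)$), while $p_{\tau}=(f_{1})+(f_{2})$ for the two $\tau$-fixed points $f_{1},f_{2}\in K$. In particular $p_{\tau}\in P\ssm K$ and is the pole of $\ell_{\tau}$ with respect to $K$.
\end{itemize}

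Next I would read off the orbits. On $K$, the Veronese images of the three irregular orbits of $K$ are irregular $\Af_5$-orbits of $P$ of sizes $12$, $20$, $30$, with stabilizers of orders $5$, $3$, $2$. Off $K$: the point $(f)+(f')$ in $P^{\langle g\rangle}$ (for $g$ of order $5$, resp.\ $3$) is fixed by the full $\Af_5$-stabilizer of the axis $\{f,f'\}$, namely $\Df_{10}$ (resp.\ $\Sf_3\cong\Df_6$); and since $\Af_5$ has no subgroup of order $8$, $15$ or $20$, while its subgroups of order $12$ and $60$ act on $P$ without fixed points, the stabilizer of $(f)+(f')$ is \emph{exactly} $\Df_{10}$ (resp.\ $\Sf_3$), giving a unique orbit of size $6$ (resp.\ $10$) contained in $P\ssm K$. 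For $H=\Df_4$ a Klein four-group, $I|_{H}$ is the sum of the three nontrivial characters, so $P^{\Df_4}$ is the coordinate triangle; comparing with $\Fix(\tau)$ for the three involutions $\tau\in\Df_4$ shows its three points are the poles $p_{\tau}$ (which are distinct, being polars of the three distinct lines $\ell_{\tau}$), and by the same subgroup bookkeeping each has stabilizer exactly $\Df_4$, giving a unique orbit of size $15$ in $P\ssm K$. Since an orbit of size $6,10,12,15$ or $20$ has stabilizer of order $\ge 3$, and we have listed all of $P^{H}$ with $|H|\ge 3$, these five orbits are the only irregular orbits of those sizes.

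It remains to treat the orbits whose stabilizer has order $2$, i.e.\ size $30$, and here lies the main point: one must check that no positive-dimensional fixed locus comes from anything other than an order-$2$ element. This is exactly what the computations above give, since $P^{H}$ is finite (three points or empty) whenever $|H|\ge 3$. So fix an involution $\tau$: every point of $\ell_{\tau}$ is $\tau$-fixed, and such a point has strictly larger stabilizer only if it also lies in $P^{H}$ for some $H\supsetneq\langle\tau\rangle$ — and the relevant such $H$ ($\Df_4$, $\Sf_3$, $\Df_{10}$) have finite fixed loci while $P^{\Af_4}=\emptyset$ and the order-$3$ and order-$5$ subgroups avoid $\tau$. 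Hence $\ell_{\tau}$ minus a finite set consists of points with stabilizer exactly $\langle\tau\rangle$ — the claimed open dense subset of the image of $z\mapsto (z)+(\tau(z))$ — and these lie in orbits of size $30$; conversely any size-$30$ orbit has stabilizer some $\langle\tau\rangle$ and so lies on the corresponding $\ell_{\tau}$. Finally, to see that the size-$30$ orbits form a punctured rational curve, I would use that $\Af_5$ permutes the $15$ lines $\ell_{\tau}$ transitively with $\Stab_{\Af_5}(\ell_{\tau})=N_{\Af_5}(\langle\tau\rangle)=\Df_4$, which fixes $\ell_{\tau}$ pointwise through $\tau$ and hence acts on $\ell_{\tau}$ effectively only through $\Df_4/\langle\tau\rangle\cong\Cf_2$; thus the locus of size-$30$ orbits is an open subset of $\ell_{\tau}/\Cf_2\cong\Pb^{1}$, and the proof is complete.
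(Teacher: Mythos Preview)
Your proof is correct and complete, but it takes a different route from the paper's. The paper's argument is purely Euclidean-geometric: it identifies a point of $P\ssm K$ with the affine line in $\Rb^3$ spanned by the corresponding pair of points on the icosahedral sphere, and then observes that such a line has nontrivial stabilizer of order $>2$ exactly when it passes through the origin, i.e., when it is one of the vertex-, face-, or edge-axes of the icosahedron --- giving the orbits of sizes $6$, $10$, $15$ at once. Your approach instead runs systematically through the subgroup lattice of $\Af_5$, computing each fixed locus $P^{H}$ from the restriction $I|_H$ via character theory, and then assembles the orbit list from this data. The paper's version is shorter and more conceptual, but relies on the reader visualizing the icosahedron; your version is self-contained and makes the uniqueness claims (and the absence of other small orbits) completely explicit. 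Your identification of $\ell_\tau$ with the image of $z\mapsto (z)+(\tau(z))$ and of $p_\tau$ with $(f_1)+(f_2)$ is a nice concrete verification of the last clause that the paper leaves implicit. Your final paragraph, showing that the size-$30$ orbits are parametrized by an open subset of $\ell_\tau/(\Df_4/\langle\tau\rangle)\cong\Pb^1$, is also more detailed than what the paper supplies.
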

\begin{proof} 
We can think of a point of $K$ as a point on the  icosahedron in Euclidean 3-space. An element of  $P\ssm K$  is represented by an effective  degree 2 divisor on $K$
which has the same $\Af_5$-stabilizer. If we identify $K$ with its $\Af_5$-action as a spherical icosahedron in Euclidean 3-space with its group of motions, then such a divisor spans an affine line in Euclidean 3-space  with the same stabilizer. When the line passes through the origin, we get the three orbits 
of sizes $30$, $20$ and $12$, otherwise the stabilizer is of order $2$. 
\end{proof}

We call a member of the $6$-element $\Af_5$-orbit in $P\ssm K$, a   \emph{fundamental point} and denote this orbit by $F$. We call the polar line of such a point a \emph{fundamental line}.

\subsection{Two projections}\label{subsect:kleinplane}
The irreducible $\Sf_5$-representation $E_S$ splits into two $3$-dimensional irreducible $\Af_5$-representations $I_S$ and $I'_S$. The two summands 
give a pair of disjoint planes $\check\Pb (I_S)$ and $\check\Pb (I'_S)$ in $\check\Pb (E)$, to which we shall refer as \emph{Klein planes}. We abbreviate them by $P$ resp.\ $P'$, and  denote by $K\subset P$ and $K'\subset P'$ the fundamental ($\Af_5$-invariant) conics. We have $\Af_5$-equivariant (Klein) projections  $p:S\dasharrow P$  and $p':S\dasharrow P'$ of the anticanonical model $S\subset \check\Pb(E)$ with center $P'$ resp.\  $P$. Precomposition with an  element of $\Sf_5\ssm\Af_5$  exchanges these projections.

\begin{proposition}\label{prop:Kleinproj} 
The Klein planes  are disjoint with $S$  and the Klein projections are  finite morphisms of degree 5. Together they define  a finite morphism
\[f:=(p, p'): S\to P\times P'\] that is birational onto its image.
\end{proposition}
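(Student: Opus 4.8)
The plan is to prove the statement in three stages: first that $S$ is disjoint from both Klein planes; second that this forces the Klein projections to be finite of degree $5$ and the map $f$ to be finite; and third that $f$ separates a general pair of points of $S$, hence is birational onto its image.

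For the disjointness $S\cap P=\emptyset$ (with $P=\check{\Pb}(I_S)$): recall from \S\ref{subsect:acmodel} that $S\subset\check{\Pb}(E_S)$ is the common zero locus of the five-dimensional system of Pl\"ucker quadrics $I_S(2)\cong W$ inside $\Sym^2 E_S$. Restriction of quadratic forms from $\check{\Pb}(E_S)$ to $P$ is the $\Af_5$-equivariant projection $\Sym^2 E_S\to\Sym^2 I_S$ attached to the splitting $E_S=I_S\oplus I'_S$. First I would compute, via $\chi_{\Sym^2 I}(g)=\tfrac12\bigl(\chi_I(g)^2+\chi_I(g^2)\bigr)$, that $\Sym^2 I_S\cong\one\oplus W$, with the $\one$-summand spanned by the fundamental conic $K$. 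By Schur's lemma the composite $I_S(2)\cong W\to\Sym^2 I_S$ is a scalar multiple of the inclusion of the $W$-summand, and it is nonzero (otherwise every Pl\"ucker quadric vanishes identically on $P$, so $P\subseteq S$, which is absurd since $\deg S=5$ and $\deg P=1$). Hence $S\cap P$ is the base locus of the $W$-summand of $H^0(P,\Oc_P(2))\cong\Sym^2 I_S$, that is, of a linear system of conics on $P\cong\Pb^2$ of projective codimension one. Such a base locus has length at most one; but it is also $\Af_5$-invariant, and an $\Af_5$-invariant length-one subscheme of $P$ is a fixed point, which cannot exist because $I_S$ is an irreducible $3$-dimensional $\Af_5$-representation. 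Therefore $S\cap P=\emptyset$, and $S\cap P'=\emptyset$ by the same argument (or by conjugating with an odd permutation, which interchanges $P$ and $P'$).

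The remaining two stages are largely formal. Since $S\cap P'=\emptyset$, linear projection from the center $P'$ is defined on all of $S$, so $p\colon S\to P$ is a morphism, and likewise $p'$. A fibre $p^{-1}(x)=S\cap\langle P',x\rangle$ cannot contain a curve, for such a curve would meet the plane $P'$ inside the $3$-space $\langle P',x\rangle$, contradicting $S\cap P'=\emptyset$; hence $p$ is proper with finite fibres, so finite, and similarly $p'$. The preimage under $p$ of a line in $P$ is a hyperplane section of the anticanonically embedded $S$, so $p^{*}\Oc_P(1)\cong\omega_S^{-1}$ and $\deg p=(\omega_S^{-1})^2=K_S^2=5$, and likewise $\deg p'=5$; consequently $f=(p,p')$ is finite (its composition with the first projection is the finite morphism $p$), with image an irreducible surface $\bar S\subset P\times P'$. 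For birationality I would, given $s\in S$, write $s=[u_1+u_2]$ with $u_i$ a nonzero vector in the $i$-th summand of $E_S^\vee=I_S^\vee\oplus (I'_S)^\vee$ (possible since $S$ misses $P\cup P'$), and check directly that $\langle P',s\rangle\cap\langle P,s\rangle$ is the unique line $\ell_s$ through $s$ meeting $P$ at $[u_1]$ and $P'$ at $[u_2]$, so that $f^{-1}(f(s))=S\cap\ell_s$. Since $\ell_s$ meets $P$ while every line contained in $S$ is disjoint from $P$ (as $S\cap P=\emptyset$), we have $\ell_s\not\subseteq S$; and $S$ being an intersection of quadrics, $\ell_s$ then meets $S$ in a subscheme of length at most $2$. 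Thus every fibre of $f$ has length at most $2$, so $\deg f\le 2$; but $p$ factors as $S\xrightarrow{f}\bar S\to P$ with the second map dominant, so $\deg f$ divides $\deg p=5$. Hence $\deg f=1$, i.e.\ $f$ is birational onto $\bar S$.

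I expect the only genuinely non-formal point to be the disjointness $S\cap P=\emptyset$, where the special position of the Klein planes relative to $S$ is actually exploited; identifying the restriction of the Pl\"ucker system with the $W$-part of the conics on $P$, and then ruling out an $\Af_5$-fixed base point, is the crux. The other place demanding care is the numerical squeeze at the end: one combines the geometric inequality $\deg f\le 2$ (from $S$ being cut out by quadrics and missing $P$) with the divisibility $\deg f\mid\deg p=5$ to force $\deg f=1$.
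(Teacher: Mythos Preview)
Your proof is correct, and in each of the three stages you take a route genuinely different from the paper's.

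For disjointness, the paper argues on the $S$ side: any curve component of $S\cap P'$ would have $\Af_5$-invariant class in $H^2(S;\Z)$, hence a multiple of $-K_S$, which is excluded by degree; the residual finite set is then an $\Af_5$-orbit in $P'$ of size $\le 5$, excluded by Lemma~\ref{invset1}. You instead work on the $P$ side: restricting the Pl\"ucker ideal $I_S(2)\cong W$ to $P$ and identifying its image via Schur with the $W$-summand of $\Sym^2 I_S\cong\one\oplus W$ reduces the question to the base locus of a hyperplane system of conics, which is at most a single point and hence would be $\Af_5$-fixed. (In fact your argument gives a bit more: the hyperplane is the annihilator of the invariant quadric, which has rank $3$, so does not lie on the Veronese, and the base locus is already empty without invoking the fixed-point contradiction.) Your approach trades the orbit-size input of Lemma~\ref{invset1} for the description of $S$ as a quadric intersection from \S\ref{subsect:acmodel}.

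For finiteness, the paper first shows $p$ is surjective, then rules out contracted curves by noting they would have negative self-intersection, hence be lines, hence (by transitivity) all ten lines, contradicting $(C_\infty)^2>0$. Your argument is more direct: a positive-dimensional fibre would be a curve in the $3$-space $\langle P',x\rangle$, which must meet the plane $P'$ sitting inside it. This avoids any appeal to the configuration of lines on $S$.

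For birationality, both arguments use $\deg f\mid 5$. The paper rules out $\deg f=5$ by observing it would force $p'$ to factor through $p$, making $I$ and $I'$ projectively equivalent. You instead bound $\deg f\le 2$ by identifying each fibre $f^{-1}(f(s))$ with $S\cap\ell_s$ for a secant line $\ell_s$ and using that $S$ is cut out by quadrics; combined with the divisibility this forces $\deg f=1$. Your route again exploits the quadric description of $S$, while the paper's uses only the inequivalence of the two icosahedral representations.
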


The first assertion of Proposition~\ref{prop:Kleinproj} is Theorem 5 of \cite{Slodowy}.

\begin{proof} 
We focus on $p:S\to P$.  Let us first note that $P'\cap S$ is $\Af_5$-invariant and equals the base locus of the linear system of anticanonical curves parametrized by $\Pb (I)\subset\Pb (E)$. The curve part $Y$ of $P'\cap S$ has degree $<5$. The class $[Y]\in H^2(S; \Z)$ of $Y$ is
$\Af_5$-invariant. Since  the span of $K_S$ is supplemented  in $H^2(S; \C)$  by an irreducible  $\Af_5$-representation isomorphic to $V$, 
we  must have  $[Y]=-dK_S$ for some integer $d\ge 0$.  But then 
\[5>\deg(Y)=[Y]\cdot (-K_S)= 5d\] and so $Y=\emptyset$. It follows that the base locus is finite. Since it is
a linear section of $S$ it has at most 5 points. But Lemma \ref{invset1}  shows that  $P'$ has no orbit of size $\le 5$ and so the  base locus  is empty.

To see that $p:S\to P$ is surjective, suppose its image is a curve, say of degree $m$. The preimage of a  general line in $P$ in $S$ is an anticanonical curve and hence connected. This implies that $m=1$. But then $S$ lies in hyperplane and this is a contradiction.

So $p$ is  a surjection of nonsingular surfaces. If some irreducible curve is contracted by $p$, then this curve will have negative self-intersection. The only curves on $S$ with that property are the lines,  and then a line is being contracted. Since  $\Af_5$ acts  transitively on the lines, all of them are then  contracted.
In other words, the exceptional set is the union $C_\infty$ of the 10 lines on $S$. But  $C_\infty$ has self-intersection $(-2K_S)^2=20>0$ and hence
cannot be contracted.

So the preimage of a  point in $P$ is finite. This is also the intersection of the quintic surface $S$ with a codimension 2 linear subspace  and so this fiber consists of 5 points, when counted with multiplicity.

For the last assertion, we notice  that since one of the components of $f$ has degree $5$, the degree of $S\xrightarrow{f}f(S)$  must divide $5$. So it is either $1$ or $5$. If it is  $5$ then $p$ and $p'$ will have the same generic fiber
so that $p'$ factors through $p$ via an isomorphism $h:P\xrightarrow{\cong}P'$.  But this would make the $\Af_5$-representations 
$I$ and $I'$ projectively equivalent, which is not the case.  Alternatively, (We could alternatively observe that then the elements of $\Sf_5\ssm\Af_5$ preserve the fibers of $p$  so that we get in fact an action of $\Sf_5$ on $P$ which makes $p$ $\Sf_5$-equivariant. But there is no projective representation of $\Sf_5$ on $\Pb^2$.)
\end{proof}  

The proof of the next proposition makes use of the Thom-Boardman polynomial for the $A_2$-singularity locus. Let us first state the general result that we need. Let $f$ be a morphism between two compact, nonsingular complex surfaces.
Assume first that  $f$ has a  smooth locus of  critical points ($=$ ramification divisor) $\Sigma^1(f)$. As Whitney and Thom observed, $f|\Sigma^1(f)$  need not be a local  immersion: generically it will  have a finite set $\Sigma^{1,1}(f)$ of (Whitney) cusp singularities; at such a point $f$ exhibits a stable map-germ which in local-analytic coordinates can be given by $(x,y)\mapsto (x^3+xy, y)$. 

In  the more general case when $\Sigma^1(f)$ is a reduced divisor (when defined by the Jacobian determinant),  
$\Sigma^{1,1}(f)$ is defined as a $0$-cycle on the source manifold.
While is not so hard to prove that the degree of $\Sigma^{1,1}(f)$ is a characteristic number of the virtual normal bundle $\nu_f$ of $f$, it is another matter to obtain a closed formula for it. In the present case it is equal to  $\la c_1^2(\nu_f) + c_2(\nu_f), [S]\ra$ (see for instance
 \cite[Theorem 5.1]{rimanyi}, where this is listed as the case $A_2$).

\begin{proposition}[{\bf The ramification curve}]
\label{prop:Kleinram} The
ramification curve $R$ of the finite morphism $p:S\to P$ is a singular irreducible member of the Wiman-Edge pencil, and hence obtained 
as a $\Af_5$-curve by means of the procedure of  Lemma \ref{lemma:glueingconstruct}. 
\end{proposition}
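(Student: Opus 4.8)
The plan is to identify $R$ first as a member of the Wiman--Edge pencil by a divisor-class computation, and then to pin down \emph{which} member it is by counting the cusps of $p$ via the Thom--Boardman formula recalled above. First I would note that $p$ is the restriction to $S$ of the linear projection $\check\Pb(E_S)\dashrightarrow\check\Pb(I_S)=P$ with centre the plane $\check\Pb(I'_S)=P'$, which is disjoint from $S$ by Proposition~\ref{prop:Kleinproj}; hence $p^*\Oc_P(1)=\Oc_S(1)=\omega_S^{-1}$, that is $p^*h=-K_S$ for $h$ the hyperplane class of $P$. The ramification formula for the finite morphism $p$ of smooth surfaces then gives $\omega_S\cong p^*\omega_P\otimes\Oc_S(R)$, so $[R]=K_S-p^*K_P=K_S+3\,p^*h=-2K_S$ and $R\in|\omega_S^{-2}|$. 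Since $p$ is $\Af_5$-equivariant and $P$ carries the $\Af_5$-action, $R$ is an $\Af_5$-invariant effective divisor in $|\omega_S^{-2}|$; by the decomposition \eqref{anti-bicanonical} the $\Af_5$-fixed locus in $\Pb(H^0(S,\omega_S^{-2}))=\Pb^{15}$ is exactly the Wiman--Edge pencil $\Cs$, so $R$ is a member of $\Cs$. In particular $R$ is reduced (every member of $\Cs$ is, by the classification of its singular members), which is the hypothesis needed for the cusp formula above to apply to $p$.

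Next I would compute the degree of the cusp $0$-cycle $\Sigma^{1,1}(p)$. Writing $\nu_p=p^*TP-TS$ in $K$-theory and using $p^*h=-K_S$, $K_S^2=5$, $c(TS)=1-K_S+7[\mathrm{pt}]$ (since $e(S)=7$) and $c(p^*TP)=(1+p^*h)^3=1-3K_S+15[\mathrm{pt}]$, one finds $c(\nu_p)=c(p^*TP)\,c(TS)^{-1}=1-2K_S-2[\mathrm{pt}]$, hence $c_1(\nu_p)=-2K_S=[R]$ and $c_2(\nu_p)=-2[\mathrm{pt}]$. Therefore $\deg\Sigma^{1,1}(p)=\langle c_1^2(\nu_p)+c_2(\nu_p),[S]\rangle=4K_S^2-2=18$.

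Finally I would identify $R$ among the members of $\Cs$. The cycle $\Sigma^{1,1}(p)$ is effective, $\Af_5$-invariant, of degree $18$, and supported inside $\Sigma^1(p)=R$. By Corollary~\ref{cor:irrorbits} the $\Af_5$-orbits in $S$ have sizes $6,10,15,20,30$ (the irregular ones) or $60$; writing $\Sigma^{1,1}(p)$ as a positive combination $\sum_i m_i O_i$ of orbits, the only possibility with total $18$ is that each $O_i$ has size $6$, so $\Sigma^{1,1}(p)$ is supported on the union of the size-$6$ orbits, which by Corollary~\ref{cor:irrorbits} are $\Sing(C_{\ir})$ and $\Sing(C_{\ir}')$. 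Being nonzero, it contains one of them, say $\Sing(C_{\ir})$, and then $\Sing(C_{\ir})\subset R$. Since $\Sing(C_{\ir})$ (size $6$) is disjoint from the base locus $\Delta$ of $\Cs$ (the size-$20$ orbit, by Lemma~\ref{lemma:basepoints} and Corollary~\ref{cor:irrorbits}), every point of $\Sing(C_{\ir})$ lies on a \emph{unique} member of $\Cs$; as $C_{\ir}$ contains its own singular locus, $C_{\ir}$ is that member, so $C_{\ir}$ is the unique member of $\Cs$ containing $\Sing(C_{\ir})$. Since $R\in\Cs$ also contains $\Sing(C_{\ir})$, we conclude $R=C_{\ir}$ (and $R=C_{\ir}'$ in the symmetric case). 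Both are singular irreducible members, hence obtained as $\Af_5$-curves by the procedure of Lemma~\ref{lemma:glueingconstruct}, by Proposition~\ref{prop:irred}.

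The step I expect to be the main obstacle is the second one: setting up the Thom--Boardman $A_2$-formula correctly (it requires $\Sigma^1(p)$ to be a reduced divisor, which the first step supplies) and evaluating $c_1^2(\nu_p)+c_2(\nu_p)$ without slips. Once the number $18$ is in hand the identification is a short piece of orbit-size bookkeeping from Corollary~\ref{cor:irrorbits}: no member of $\Cs$ other than $C_{\ir}$ or $C_{\ir}'$ can occur, since only those contain an $\Af_5$-orbit of size $\le 18$ lying off the base locus $\Delta$ (in particular this also rules out $C_0$, $C_\infty$ and $C_c,C_c'$ without a separate argument).
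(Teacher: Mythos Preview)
Your proof is correct and follows essentially the same approach as the paper's: the divisor-class identification $[R]=-2K_S$ places $R$ in the Wiman--Edge pencil, the Thom--Boardman $A_2$-count gives $\deg\Sigma^{1,1}(p)=18$, and orbit-size bookkeeping from Corollary~\ref{cor:irrorbits} forces the support of $\Sigma^{1,1}(p)$ into $\Sing(C_{\ir})\cup\Sing(C'_{\ir})$, hence $R\in\{C_{\ir},C'_{\ir}\}$. Your write-up is somewhat more explicit than the paper's at two points (spelling out the Chern-class computation for $\nu_p$, and using the base-locus argument to deduce $R=C_{\ir}$ from $\Sing(C_{\ir})\subset R$), but the substance is identical.
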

\begin{proof}
The divisor class $[R]$ is  given by the well-known formula 
$K_S-p^*(K_{P})$. Since $K_{P}$ is $-3$ times the class of a line, and $p^*$ takes the class of a line to
the class of hyperplane section (i.e., $-K_S$), we have $p^*(K_{P})=3K_S$. It follows that $[R]=-2K_S$. Since  $R$ is $\Af_5$-invariant, it must be a member of the Wiman-Edge pencil. In particular, $R$ is reduced.

We now apply the above formula to $p: S\to P$. The  Chern classes of the virtual normal bundle of $p$
are easily computed: $c_1(\nu_p) = 2K_S$ and $c_2(\nu_p)$ takes the value  $-2$ on the fundamental class $[S]$ of $S$.
We find that 
\[\la c_1(\nu_p) ^2 + c_2(\nu_p), [S]\ra = 4\cdot 5-2 = 18.\] So
$\Sigma^{1,1}(p)$ is a $0$-cycle whose support  is a $\Af_5$-invariant subset of $S$ contained in $R$.
In Subsection \ref{subsection:fixedpoints}, we found that  $\Af_5$ has
two $6$-element orbits in $S$ (each being the singular locus of an irreducible member of the Wiman-Edge pencil) and that
any other $\Af_5$-orbit has  at least $10$ elements. Since $R$ is a member of the Wiman-Edge pencil, we conclude that
that it must be one with 6 nodes (and that each node has multiplicity $3$ in $\Sigma^{1,1}(p)$). This also proves that 
$p|R$ is a local isomorphism at the $20$-element orbit $\Delta$.
\end{proof}

\begin{remark}\label{computation2} 
The assertion of Proposition~\ref{prop:Kleinram} can be confirmed by computation. If $\pi:S\to \Pb^2$ is the blowing down morphism and $\phi:S\to\Pb^5 = \Pb(E)$ is the $\Sf_5$-equivariant anti-canonical embedding, then  the composition $\phi\circ \pi^{-1}:\Pb^2\dashrightarrow  \Pb(E)$ and the $\Af_5$-equivariant projection $\Pb(E)\dashrightarrow  \Pb(I)$ is given by 
\[(t_0:t_1:t_2)\mapsto (f_0(t_0,t_1,t_2):f_1(t_0,t_1,t_2):f_2(t_0,t_1,t_2))\] where
\begin{eqnarray*}
f_0&=&-x^3+y^3+z^3-\lambda x^2y-\lambda' x^2z+\lambda' xy^2-\lambda y^2z+\lambda xz^2-\lambda' yz^2,\\
f_1&=&\lambda' x^3+\lambda y^3-(\lambda+1)x^2y-(\lambda'+1)y^2x+xz^2+yz^2,\\
f_2&=&\lambda y^3+z^3-(\lambda+1)x^2y-\lambda' x^2z-\lambda y^2z+yz^2.
\end{eqnarray*}
Here $\lambda = \frac{1+\sqrt{5}}{2}$ and $\lambda' = \frac{1-\sqrt{5}}{2}$ are two roots of the quadratic equation 
$t^2-t-1 = 0$. Here we use the coordinates in the plane in which the equation of the Wiman-Edge pencil is written. Computing the Jacobian $J(f_0,f_1,f_2)$ we find the equation of a irreducible singular member of the pencil with parameters $(1:5\sqrt{5})$. 
\end{remark}

\begin{proposition}\label{prop:}
The  preimage $C_K=p^*K$ of the fundamental conic in $P$ is a nonsingular member of the Wiman-Edge pencil.  The ordered $4$-tuple  $(C_0, C_\infty, C_K, C_{K'})$ consists of four distinct members of the Wiman-Edge pencil which lie in harmonic position: there is a unique affine coordinate for $\Cs$ that identifies this $4$-tuple with $(0, \infty, 1, -1)$.
\end{proposition}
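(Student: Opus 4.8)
The plan is to express all four members of the quadruple intrinsically through the pair of $\Af_5$-invariant quadrics $Q,Q'$ on $E_S$ introduced in Subsection~\ref{subsect:acmodel}, and then to deduce the smoothness of $C_K$ from a short orbit-size argument.

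First I would pin down $C_K$ inside the pencil. Recall that $P=\check\Pb(I_S)$, that the fundamental conic $K\subset P$ is the zero locus of the (up to scalar unique) $\Af_5$-invariant quadratic form on $I_S$, and that $p$ is the restriction to $S$ of the linear projection of $\check\Pb(E_S)$ away from the complementary plane $P'=\check\Pb(I'_S)$, a finite morphism by Proposition~\ref{prop:Kleinproj}. Hence $C_K=p^{*}K$ is cut out on $S$ by the quadric of $\check\Pb(E_S)$ obtained from that invariant form by extending it by zero on $I'_S$; by the definition of $Q$ in Subsection~\ref{subsect:acmodel} this quadric is a nonzero multiple of $Q$, and symmetrically $C_{K'}=(p')^{*}K'$ is cut out by $Q'$. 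Since $C_0$ is cut out by $Q+Q'$ and, by Corollary~\ref{cor:Cinfty}, $C_\infty$ is cut out by $Q-Q'$, the quadruple $(C_0,C_\infty,C_K,C_{K'})$ consists of the members of $\Cs$ which, in the basis $(Q,Q')$ of the two-dimensional space $(\Sym^2E_S)^{\Af_5}$ (mapped isomorphically onto $(H^0(S,\omega_S^{-2}))^{\Af_5}$), correspond to the points $(1:1)$, $(1:-1)$, $(1:0)$, $(0:1)$ of $\calB$. In particular $C_K$ is $\Af_5$-invariant of class $-2K_S$, hence a member of $\Cs$.

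Next I would show that $C_K$ is smooth, i.e.\ that it is none of the five singular members $C_\infty,C_c,C'_c,C_{\ir},C'_{\ir}$. Since the images of $Q$ and $Q'$ in $H^0(S,\omega_S^{-2})$ are independent, the images of $Q$, $Q+Q'$, $Q-Q'$ are pairwise non-proportional, so $C_K\ne C_0$ and $C_K\ne C_\infty$ (and likewise $C_{K'}\ne C_K$). Suppose $C_K$ equalled one of $C_c,C'_c,C_{\ir},C'_{\ir}$. Then by Corollary~\ref{cor:irrorbits} its singular locus would be an $\Af_5$-orbit of size $10$ or $6$; but $\Sing(C_K)$ lies in the support $p^{-1}(K)$ of $C_K=p^{*}K$, so $p(\Sing(C_K))$ would be a non-empty $\Af_5$-invariant subset of $K$ of cardinality at most $10$. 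That is impossible, since $\Af_5$ acts on $K\cong\Pb^1$ through a Schwarzian representation, every orbit of which has at least $12$ elements. As the singular members of $\Cs$ are exactly those five, $C_K$ is smooth.

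Finally I would read off the harmonic position: with $(\lambda:\mu)\in\calB$ denoting the member cut out by $\lambda Q+\mu Q'$, the first step identifies $(C_0,C_\infty,C_K,C_{K'})$ with the four distinct points $(1:1),(1:-1),(1:0),(0:1)$, and the affine coordinate $t((\lambda:\mu)):=(\lambda-\mu)/(\lambda+\mu)$ sends $(C_0,C_\infty,C_K)$ to $(0,\infty,1)$ and hence $C_{K'}$ to $-1$; it is the unique such coordinate because a M\"obius transformation is determined by three of its values, and equivalently the cross-ratio of the quadruple equals $-1$. The one step with real content is the smoothness; the crux there is to notice the correct obstruction — the singular loci of the singular members of $\Cs$ are $\Af_5$-orbits too small to be carried into $K$ — after which everything reduces to the decomposition of $\Sym^2 E_S$ already computed in Subsection~\ref{subsect:acmodel} together with the fact that $p$ is finite.
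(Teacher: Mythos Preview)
Your proof is correct, and the identification of $C_K,C_{K'}$ with the zero loci of $Q,Q'$ and the ensuing harmonic-position argument coincide with the paper's. The difference is in how you establish smoothness.

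The paper excludes each candidate singular member individually: for $C_c$ or $C'_c$ it compares two incompatible computations of the degree of a component conic over $K$; for $C_K=R$ (the ramification curve, which is one of $C_{\ir},C'_{\ir}$) it argues that $p^*K$ would then be $2$-divisible, forcing $R$ to be twice a hyperplane section; and for $C_K=R'$ it shows $\Sing(R')\subset R$, contradicting the transversality of $R\cap R'$ at the base locus. Your route is more uniform: the singular locus of any of $C_c,C'_c,C_{\ir},C'_{\ir}$ is an $\Af_5$-orbit of size $6$ or $10$, its image under the equivariant finite map $p$ would be a nonempty $\Af_5$-invariant subset of $K$ of size at most $10$, and no such subset exists because the Schwarzian $\Af_5$-action on $K$ has smallest orbit of size $12$. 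This is shorter and avoids the case analysis, at the cost of invoking Corollary~\ref{cor:irrorbits}; the paper's argument instead uses the identification of $R$ as the ramification curve (Proposition~\ref{prop:Kleinram}) and the transversality of the base locus (Lemma~\ref{lemma:basepoints}). Both are valid and rest on facts already available at this point in the paper.
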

\begin{proof}
It is clear that $C_K$ is an $\Af_5$-invariant member of $|-2K_S|$ and therefore a member of the Wiman-Edge pencil. It is defined  by the quadric $Q$ 
and similarly $C_{K'}$ is defined by the quadric $Q'$. The last clause of the proposition then follows as $C_0$ (resp.\ $C_\infty$) is defined by
$Q+Q'$ (resp.\ $Q-Q'$).

If $C_K$ is singular, then it must be one of $C_c, C'_c, R, R'$. We shall exclude each of these possibilities.

Suppose that $C_K=C_c$ or $C'_c$.  Then let  $Y$  be an irreducible component of $C_K$. 
Now both $Y$ and $K$ are conics and since $p$ is a linear projection, $Y\xrightarrow{p} K$ must be of degree $1$.
On the other hand, $R$ meets $C_K$  transversally in $\Delta$, and so $Y$  meets $R$ 
transversally  in four distinct points. This implies that $Y\xrightarrow{p} K$ must have degree $3$ and we arrive at a contradiction. 

We cannot have $C_K=R$, for then $p$ would  ramify along $p^*K$ and so $p^*K$ would be $2$-divisible: this would make it twice an anticanonical 
divisor (a hyperplane section) and $R$ is clearly not of that type. If $C_K=R'$, then  $R'=C_K\xrightarrow{p} K$ must ramify in the singular part 
$\Sing(R')$ of $R'$ and so $R$ contains $\Sing(R')$. This contradicts the fact that the two members $R$ and $R'$ of the Wiman-Edge pencil intersect 
transversally. 
\end{proof}

We can also improve the statement about the birationality  of the map $(p,p')$.

\begin{theorem} The map 
\[(p,p'):S\to P\times  P'\] is a local isomorphism onto its image.  Further, 
\[f_!(1)=5(h^2+hh'+h'{}^2)\] where $(h,h')$ is the standard basis in $H^2(P\times P' ;\Z)$, and hence the composite of $f$ with
the Segre embedding 
\[P\times P=\check\Pb(I)\times\check\Pb(I')\hookrightarrow \check\Pb (I\otimes I')\cong \Pb^8\] has degree $20$.
\end{theorem}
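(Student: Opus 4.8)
The plan is to upgrade the finite morphism $f=(p,p')$ of the previous proposition from ``birational onto its image'' to ``local isomorphism onto its image,'' and then to compute the pushforward class $f_!(1)$ and deduce the degree after Segre embedding. First I would deal with the local-isomorphism claim. Since both $S$ and $f(S)$ are projective and $f:S\to f(S)$ is finite and birational, it is an isomorphism away from a closed subset; the issue is whether $df$ is everywhere injective, equivalently whether $f$ is unramified. The ramification locus of $f=(p,p')$ is contained in $\Sigma^1(p)\cap\Sigma^1(p')=R\cap R'$, where $R$ and $R'$ are the two ramification curves, which by Proposition~\ref{prop:Kleinram} are the two irreducible singular members $C_{\ir}$ and $C'_{\ir}$ of the Wiman-Edge pencil (or their $\Sf_5$-conjugates). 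Two distinct members of a pencil meet exactly along the base locus $\Delta$ of $\Cs$, so $R\cap R'=\Delta$, a $20$-element set. Along $\Delta$, Proposition~\ref{prop:Kleinram} already records that $p|R$ is a local isomorphism at $\Delta$ (and symmetrically $p'|R'$ is), so at a point of $\Delta$ the differentials $dp$ and $dp'$ each have rank one with kernels the tangent lines to $R$ and to $R'$ respectively; since $R$ and $R'$ meet transversally at $\Delta$ (Lemma~\ref{lemma:basepoints}), these kernel lines are distinct, hence $\ker dp\cap\ker dp'=0$ and $df=(dp,dp')$ is injective there. Combined with injectivity of $f$ itself (birational + finite, so $f$ is injective on the complement of the exceptional set, and the exceptional set is empty since no curve is contracted — this was shown in Proposition~\ref{prop:Kleinproj}), this gives that $f$ is an injective immersion, i.e.\ a local isomorphism onto its image.

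Next, the class computation. The pushforward $f_!(1)\in H^4(P\times P';\Z)$ is, by the projection formula and the fact that $f$ has degree $5$ over each factor, determined by its pairings with the generators of $H^4(P\times P';\Z)$, namely $h^2,\ hh',\ h'^2$ (here $h,h'$ pull back to $-K_S$ under $p,p'$ respectively, as in the proof of Proposition~\ref{prop:Kleinram}). We have $\langle f_!(1),h^2\rangle = \langle 1, f^*h^2\rangle = (p^*h)^2=(-K_S)^2=5$, and likewise $\langle f_!(1),h'^2\rangle=5$, while $\langle f_!(1),hh'\rangle = p^*h\cdot p'^*h' = (-K_S)\cdot(-K_S)=5$ as well (both factors pull back the hyperplane class to $-K_S$). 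Since $\{h^2,h'^2,hh'\}$ is a basis of $H^4(P\times P';\Z)$ with the obvious intersection form (each pairs to $1$ with its Poincar\'e dual among $\{h'^2,h^2,hh'\}$ in $H^0$), these three numbers being all equal to $5$ force
\[
f_!(1)=5\,(h^2+hh'+h'^2).
\]
Finally, to get the degree of $f$ followed by the Segre embedding $P\times P'\hookrightarrow\check\Pb(I\otimes I')\cong\Pb^8$: the hyperplane class of $\Pb^8$ pulls back to $h+h'$ on $P\times P'$, so the degree of the image of $S$ is $\langle f_!(1),(h+h')^2\rangle = 5\langle h^2+hh'+h'^2,\ h^2+2hh'+h'^2\rangle$. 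Expanding using $h^3=h'^3=0$, $h^2h'=hh'^2$ being the point class with $h^2\cdot h'^2$ also the point class and $h^2\cdot h^2=0$ etc., one finds this equals $5\cdot 4=20$ (the relevant pairings: $h^2\cdot 2hh'$ contributes nothing since $h^3=0$; $h^2\cdot h'^2$ contributes $1$; $hh'\cdot 2hh'$ contributes $2$; $hh'\cdot(h^2+h'^2)$ contributes nothing; $h'^2\cdot h^2$ contributes $1$; total $4$, times $5$).

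The main obstacle I anticipate is the local-isomorphism step, specifically verifying unramifiedness along $\Delta$: one must be sure that $df$ is injective there rather than merely generically, and this rests on knowing both that $p|R$ and $p'|R'$ are local isomorphisms near $\Delta$ (which Proposition~\ref{prop:Kleinram} supplies) and that $R$ and $R'$ are transverse at $\Delta$ (which follows from their being distinct members of the pencil meeting only along $\Delta$, together with Lemma~\ref{lemma:basepoints}). One should also double-check that $R\ne R'$ — that is, that the Klein projections $p$ and $p'$ have genuinely different ramification curves, which holds because $p'$ is obtained from $p$ by precomposition with an odd permutation and the latter swaps $C_{\ir}$ with $C'_{\ir}$ (Corollary~\ref{cor:irrorbits}), so $R'$ is the $\Sf_5$-conjugate of $R$, a different member of the pencil. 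Once these geometric inputs are in place, the rest of the argument is formal intersection theory on $P\times P'$, and the degree-$20$ conclusion drops out.
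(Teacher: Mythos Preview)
Your argument tracks the paper's closely: same reduction of the unramifiedness question to $R\cap R'=\Delta$, same transversality appeal, and same class computation via $p^*h=p'^*h'=-K_S$. Two issues deserve flagging.

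First, your claim that $f$ is globally injective is both false and unnecessary. ``Finite and birational'' does not imply injective (Zariski's Main Theorem would require the target to be normal, which $f(S)$ is not), and in fact the Remark immediately following this theorem in the paper shows that $f$ has a whole curve of double points. Fortunately ``local isomorphism onto its image'' only asks that $df_x$ be injective for every $x$, which you argue separately; simply drop the injectivity claim.

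Second, there is a confusion in your kernel identification. You cite that $p|R$ is a local isomorphism at $\Delta$---but this says precisely that $T_xR$ injects under $dp_x$, i.e.\ $T_xR\cap\ker dp_x=0$, the \emph{opposite} of your assertion $\ker dp_x=T_xR$. (At a fold $(z_1,z_2)\mapsto(z_1^2,z_2)$ the ramification curve is $\{z_1=0\}$ while the kernel is $\langle\partial_{z_1}\rangle$, transverse to it.) The paper's own one-line justification is phrased the same way and is equally questionable. A clean fix: the local model in the proof of Proposition~\ref{prop:meetingconicsup} shows $\ker dp_x=T_xC_K$, and by symmetry $\ker dp'_x=T_xC_{K'}$; since $C_K$ and $C_{K'}$ are distinct members of the pencil (Proposition~4.4) they have distinct tangent directions at the base point $x$, so $\ker dp_x\cap\ker dp'_x=0$ as desired.

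The intersection-theoretic half of your proof is correct and matches the paper.
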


\begin{proof}  We have already proved that the map is of degree one onto its image and hence coincides with the normalization of the image. 

It remains to show that $(p,p')$ is a local isomorphism. Suppose it is not. Since the projection $p$ resp.\ $p'$ is a local isomorphism
on $S\ssm R$ resp.\   $S\ssm R'$, the map could only fail to be local isomorphism at some $x\in R\cap R'$.   Let $L$ be the kernel of the derivative at  $x$. Then $L$ is mapped to $0$ under the composition with the both projections. This implies that $L$ coincides with the tangent line of $R$ and $R'$ at $x$. But we know that two curves in the Wiman-Edge pencil intersect transversally at a base point. This contradiction proves the assertion. 

Next represent $h$ resp.\ $h'$ by general lines $\ell\subset P$ and $\ell'\subset P'$. Then $f^*(h.h')$ is represented
by $p^*\ell .p'{}^*\ell'$. This is evidently a plane section of $S$ and hence is a class of degree $5$. Since $p$ and $p'$ are also of degree $5$, it follows that $f_*(1)$ is as asserted.
\end{proof}

\begin{remark}\label{rem:}
The map $(p,p'):S\to P\times  P'$  is not injective. We will make  this  explicit in Remark \ref{rem:4ptfibers}, but  we here give a general argument that implies
that there must  exist  a whole curve of fibers consisting of more than a single point. Let us write $f$ for $(p,p')$.
The double point formula asserts that its virtual number of double points is equal to the degree of $f^*f_!(1)-c_2(\nu_f)$ \cite{rimanyi}. The degree of 
\[f^*f_!(1)=f^*(5(h^2+hh'+h'{}^2))\] is equal to $
5\cdot 5+5\cdot 5+5\cdot 5=75$. On the other hand,  if $\mu_S\in H^4(S)$ is the fundamental class (so that $K_S^2=5\mu_S$) then 
$c(\theta_S)=1+K_S+ 7\mu_S$ (the coefficient $7$ is the topological euler characteristic  of $S$) and  \[p^*c(\theta_P)=p^*(1+3h+3h^2)=1-3K_S+15\mu_S.\]  Thus 
\[\begin{array}{ll}
c(\nu_f)&=c(\theta_S)^{-1} p^*c(\theta_P)p'{}^*c(\theta_{P'})\\
&= (1+K_S+ 7\mu_S)^{-1}(1-3K_S+15\mu_S)^{2}\\
&=(1-K_S-2\mu_S)(1-6K_S+75\mu_S)\\
&= 1-7K_S + 103\mu_S.
\end{array}
\]
This tells us that  the degree of $\nu_f$ is $103$. It follows that  the number of virtual double points is $-28$.  This can only be interpreted  this
as saying that the double point locus of $f$ must  contain a curve on $S$ with negative self-intersection.
\end{remark}

\subsection{The projection of irregular orbits} 
Let us  describe the images of the irregular $\Af_5$-orbits in $S$ under  the projection map $p:S\to P$. 
Since the projection is $\Af_5$-equivariant, the image of an irregular $\Af_5$-orbit  in $S$  is an irregular orbit in the Klein plane. 
According to Lemma \ref{invset1} there are $\Af_5$-orbits in $P$ of size $6$, $10$, $15$ (all outside the fundamental conic) and of $12$  and $20$ (all on the fundamental conic), of which those of size $6$ and $10$ come in pairs, the others being unique. 
The other irregular orbits in $P$ are of size $30$ and are parametrized by a rational curve.
On the other hand, by  Corollary \ref{cor:irrorbits}, in $S$ there are two $\Af_5$-orbits in $S$ of size $6$ and  $10$, one of size  $15$ and $20$, and 
an irreducible curve of orbits  of size $30$.

This immediately implies that the orbits on $S$ of cardinalities $6$, $10$, $15$ are mapped one-to-one to the orbits of the same cardinality in the Klein 
plane. This is also true for the $20$-element orbit, since it  consists of the base points of the Wiman-Edge pencil $\Cs$ and hence is mapped to the 
$20$-element orbit on the conic. The size $12$ orbit in $K$ must be the image of an orbit in $S$ whose size is divisible by $12$ and so this can only be a regular orbit. Since the $\Af_5$-orbits of size $30$ in $S$ and in $P$ are parametrized by an irreducible curve; $p$ will map the generic point of the former to the generic point of the latter. This information,  insofar relevant here is displayed  in Table \ref{table:orbits}; it will help us to determine the $p$-images  of the special members of the pencil. 
\vskip8mm
\tablecaption{Irregular $\Af_5$-orbits in $P$ of cardinality $\not=30$ explained by $\Af_5$-orbits in $S$.}\label{table:orbits}
\begin{center}
\begin{supertabular}{|c|| c |c|c|}
\hline special $\Af_5$-orbit $\Os$ in $S$ & $\#(\Os)$  & $p(\Os)\subset K$? & $\#(p(\Os))$\\
\hline
\hline base locus  $\Delta$ of $\Cs$ &  20 & yes & 20\\
\hline a regular orbit in $C_K$  &  60 & yes  & 12\\
\hline singular part of $C_\infty$  &  15 & no &  15\\
\hline singular part of $C_c$ or $C'_c$ &  10  (2)& no &  10\\
\hline singular part of $R$ or $R'$ &  6  (2)&  no &  6\\
\hline
\end{supertabular}
\end{center}
\vskip 5mm

\subsection{The projection of the Wiman-Edge pencil}\label{subsect:imageWP}
We will later investigate the $p$-images of the special members of the Wiman-Edge pencil, but at this point it is convenient to already make the following observation.

\begin{proposition}\label{prop:tenlineimage}
The divisor $p_*C_\infty$  is the sum of the $10$ lines that are spanned by the antipodal pairs in the 20 element orbit $p(\Delta)$ on $K$. Each singular point of  $p_*C_\infty$ lies on 
exactly two lines and the resulting $\binom{10}{2}=45$ double points make up two irregular $\Af_5$-orbits,  one of which
is the unique $15$-element orbit defined by a pairs of lines which meet in $S$ (and so the other has size $30$).
\end{proposition}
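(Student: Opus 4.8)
The plan is to use the fact that $p\colon S\to P$ is a degree $5$ finite morphism which, as established, restricts to an isomorphism of each line $L\subset C_\infty$ onto its image. First I would recall from the proof of Proposition~\ref{prop:Kleinram} (and the paragraph preceding this proposition) that $p$ contracts nothing, sends each of the $10$ lines of $C_\infty$ isomorphically onto a line in $P$, and that $p|C_\infty$ identifies the line $L$ with the projective line spanned in $P$ by the two points of $K$ that are the images under $p$ of the two points $L\cap C_0$ — the size-$2$ orbit on $L$ described in Lemma~\ref{lemma:basepoints}. Since $\Delta=C_0\cap C_\infty$ is the $20$-element $\Sf_5$-orbit and $p(\Delta)$ is the $20$-element orbit on $K$ by Table~\ref{table:orbits}, and the antipodal pairing on $K$ (from Lemma~\ref{invset1}) groups this $20$-element orbit into $10$ antipodal pairs, each line $p(L)$ is the polar/secant line through one such antipodal pair. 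Because $\Sf_5$ acts transitively on the $10$ lines and on the $10$ antipodal pairs, this identification is equivariant, so $p_*C_\infty$ is exactly the sum of these $10$ lines (with multiplicity one, as $p|L$ has degree one).

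Next I would count and classify the singular points of $p_*C_\infty=\bigcup_{i=1}^{10}p(L_i)$. A priori a singular point is where two or more of the $10$ image lines meet. I would argue no three image lines are concurrent: if three of them met at a point $q\in P$, then the $\Af_5$-orbit of $q$ would have size dividing $60$ and at most... more to the point, such a configuration would force (pulling back by the finite map $p$, which is unramified away from $R$ and in any case generically injective on each line) either a genuine triple point among the $L_i$ in $S$ — impossible, since the intersection graph of the $10$ lines is the Petersen graph, which is triangle-free and has no vertex of a multi-edge — or else a point $q$ lying on $p(L_i)\cap p(L_j)$ with $L_i\cap L_j=\emptyset$ in $S$ for three distinct pairs through $q$, which I would rule out by the orbit-size bookkeeping of Lemma~\ref{invset1} together with transitivity of $\Sf_5$ on configurations. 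Hence every singular point of $p_*C_\infty$ lies on exactly two of the image lines, giving $\binom{10}{2}=45$ nodes.

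Then I would split these $45$ nodes according to whether the two lines $p(L_i),p(L_j)$ come from lines $L_i,L_j$ that do or do not meet in $S$. The Petersen graph has $15$ edges, so there are $15$ pairs $\{L_i,L_j\}$ with $L_i\cap L_j\ne\emptyset$; for such a pair the intersection point lies in $S$ and maps under $p$ to a point of $P$, and these $15$ points form the image of the $15$-element orbit $\Sing(C_\infty)$ (Corollary~\ref{cor:irrorbits}), which by Table~\ref{table:orbits} maps bijectively to the unique $15$-element $\Af_5$-orbit in $P\smallsetminus K$. The remaining $45-15=30$ nodes correspond to the $30$ non-edges of the Petersen graph; these form a single $\Af_5$-orbit of size $30$ because $\Af_5$ (indeed $\Sf_5$) acts transitively on the non-edges of the Petersen graph — this is the standard fact that the complement of the Petersen graph is again the Petersen graph, on which $\Sf_5\cong\Aut(\mathcal G)$ acts transitively on edges.

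The main obstacle is the non-concurrency claim: ruling out that three distinct image lines $p(L_i),p(L_j),p(L_k)$ pass through a common point of $P$ even when some of $L_i,L_j,L_k$ are disjoint in $S$. The clean way is to observe that such a point would have to be an irregular $\Af_5$-point of $P$ of a constrained orbit size, invoke Lemma~\ref{invset1} to see the only candidates are the orbits of size $6,10,12,15,20$ or the $30$-element family, and then match stabilizers: a point lying on three of the ten secant lines of the $20$-element orbit on $K$ would have stabilizer at least $\Df_6$-type, which among the listed orbits forces it into the $15$-element orbit — but the $15$-element orbit is realized (as just shown) by pairs $L_i\cap L_j$ with $L_i\cap L_j\ne\emptyset$ in $S$, i.e. lying on exactly two image lines, a contradiction. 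I would also note that since the $45$ nodes all occur with multiplicity one and the number $45$ matches $e(C_\infty)$-type Euler-characteristic bookkeeping for a nodal curve of ten $\Pb^1$'s, there is no room for hidden higher-order intersections, which pins down the count completely.
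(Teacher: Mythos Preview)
Your overall strategy matches the paper's, but the execution differs in a few places worth noting.

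\medskip
\textbf{Identifying the ten image lines.} The paper's argument here is a one-liner: the polars of the ten image lines form an $\Af_5$-orbit of size at most $10$, and by Lemma~\ref{invset1} the only such orbit in $P$ has exactly ten elements (the $10$-element orbit in $P\ssm K$). This simultaneously shows the ten image lines are distinct and pins them down as the polars of that orbit, which by construction are the ten secant lines through antipodal pairs of the $20$-element orbit on $K$. Your route via $L\cap\Delta\to K$ is correct, but you silently use that the two points of $p(L)\cap K$ form an \emph{antipodal} pair rather than some other $\Af_5$-invariant pairing of the $20$-element orbit; this needs the extra remark that the only $\Af_5$-invariant pairing on that orbit is the antipodal one (since the $\Cf_3$-stabilizer of a point of the $20$-orbit fixes exactly one other point). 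The polarity argument sidesteps this.

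\medskip
\textbf{The $45=15+30$ split.} Your use of the Petersen graph (edges versus non-edges) to explain the split and to get transitivity on the $30$ remaining pairs is genuinely more informative than what the paper writes. The paper simply observes that $\Sing(C_\infty)$ maps bijectively to the unique $15$-element orbit in $P$ and then asserts ``the remaining $45-15=30$ points form an orbit of $\Af_5$ in $P$,'' without spelling out the non-edge transitivity that you supply.

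\medskip
\textbf{The non-concurrency claim.} You are right that this is the crux, and you should know that the paper's proof is equally terse here: it does not explicitly rule out three of the ten image lines being concurrent. That said, your proposed stabilizer argument does not work as stated. There is no a priori reason that a point lying on three of the ten lines must have stabilizer of ``$\Df_6$-type'': the stabilizer of such a point only has to permute the lines through it, which imposes no lower bound on its order. A workable version of your idea is this: the $30$ non-edges form a single $\Af_5$-orbit, so their intersection points in $P$ form a single orbit whose size divides $30$; by Lemma~\ref{invset1} the candidates are $6$, $10$, $15$, or $30$. Size $15$ would force coincidence with the unique $15$-element orbit already occupied by the images of $\Sing(C_\infty)$, so each such point would lie on at least four of the ten lines; this in turn forces further coincidences among the $\binom{10}{2}$ pairs and one quickly reaches a contradiction with the orbit structure. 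Sizes $6$ and $10$ are excluded by checking that neither the fundamental $6$-orbit nor the $10$-element orbit lies on any of the ten lines (equivalently, via polarity, that no fundamental line and no line of the other family contains a point of the $10$-element orbit). Your Euler-characteristic remark at the end is too vague to substitute for this.

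\medskip
In short: your proof is along the same lines as the paper's and is in places more explicit, but the polarity trick is cleaner for the first assertion, and your attempted fix for non-concurrency needs to be replaced by an honest orbit-size elimination.
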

\begin{proof}
The image of a line on $S$ is a line in $P$ and so $p_*C_\infty$ is a sum of 10 lines. The polars of these lines make up an
$\Af_5$-orbit in $P$ of size $\le 10$. There is only one such orbit and it has exactly 10 elements.

The singular locus of $C_\infty$ is a 15-element orbit and we observed that  this orbit maps bijectively onto the unique 15-element orbit in $P$. It follows from  the discussion  in Subsection \ref{subsection:fixedpoints}, that the stabilizer of each singular point of $C_\infty$ is the group $\Df_4^{\ev}$. Its projection has the same stabilizer group. Thus the image of $\Sing(C_\infty)$  consists of 15 points. Since there is only one orbit in $P$ of cardinality $15$, the remaining $45-15 = 30$ points form an orbit of $\Af_5$ in $P$.
\end{proof}

This has implications  for a  \emph{generic} member $C$ of $\Cs$, as follows.  The curve $p_*C_\infty$ being  reduced and of geometric genus $6$, it follows that  $p_*C$ has the same property. As $p$ is linear, $p_*C$ is a plane curve of the same degree as $C$, namely $10$.  So the arithmetic genus of 
$p_*C$ is  $(10-1)(10-2)/2=36$, and hence its genus defect is $30$. Since the singular set of $C$ specializes to a subset 
of the  singular set of $C_\infty$, it follows that  this singular set consists of $30$ nodes and  makes up a $\Af_5$-orbit 
(but remember that such orbits  move in a curve and can degenerate into an orbit of smaller size).

So $C\in\Cs\mapsto p_*C$ defines a morphism from the base $\calB$ of the Wiman-Edge 
pencil (a copy of $\Pb^1$) to $|\Oc_P(10)|$. We denote its image  by $p_*\calB$.  It is clear that every point of $p_*\calB$ will
be an $\Af_5$-invariant curve.  An $\Af_5$-invariant curve in $P$ admits an $\Af_5$-invariant equation 
(because every  homomorphism $\Af_5\to \C^\times$ is trivial), and so every member of $p_*\Cs$ lands in the 
projectivization of $(\Sym^{10} I)^{\Af_5}$ (recall that $P$ is the projectivization of the dual of $I$).

Since $\Af_5$ acts on $I$ as the group of orientation-preserving elements of a Coxeter group of type $H_3$, this 
space is easy to determine using the invariant theory of Coxeter groups:
if $\Phi_2$ is an equation for $K$ and  $\Phi_6$, $\Phi_{10}$, $\Phi_{15}$ an equation  for the $\Af_5$-invariant union of resp.\ $6$, $10$ and $15$ lines, then these generate the $\Af_5$-invariants  in the symmetric algebra of $I$. The first three generate  the invariants of the Coxeter group and are algebraically independent ($\Phi_{15}{}^2$ is a polynomial in these). So $(\Sym^{10} I)^{\Af_5}$ is of dimension $3$ and has the basis $\{\Phi_2{}^5, \Phi_6\Phi_2{}^2, \Phi_{10}\}$. Thus every $\Af_5$-invariant decimic in $P$ can be written by equation
\beq\label{netofcurves}
a\Phi_2{}^5+b\Phi_6\Phi_2{}^2+c\Phi_{10} = 0
\eeq
In particular, $p_*\calB$ is a plane curve.
Since $p_*\Delta$ is the transversal intersection of the $10$ line union and $K$ (the common zero set of $\Phi_2$ and $\Phi_{10}$), we also see from equation \eqref{netofcurves} that  
every  $\Af_5$-invariant decimic in $P$ passes through $p_*\Delta$ and is there tangent to the $10$ line union unless it contains $K$ as an irreducible component of multiplicity $2$ (i.e., $c=0$).

The formula \eqref{netofcurves} also proves  the following

\begin{proposition}[{\bf The net of $\Af_5$-decimics}]
\label{lem:basepoints} All members of the net of $\Af_5$-decimics intersect the fundamental conic transversally at 20 points and they are all tangent at these points to one of the 10 lines that joins two antipodal points.  
\end{proposition}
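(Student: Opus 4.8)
The plan is to read the proposition directly off the normal form \eqref{netofcurves}, using only two geometric inputs that are already in hand: that $\Phi_2$ is an equation for the fundamental conic $K$; that $\Phi_{10}$ is an equation for the union $\ell_1\cup\cdots\cup\ell_{10}$ of the ten lines joining antipodal pairs of points of $K$; and that these ten lines meet $K$ transversally in the $20$ \emph{distinct} points of $p_*\Delta$. The latter holds because $p_*\Delta$ is the unique $20$-element $\Af_5$-orbit on $K$ — it is the $p$-image of the $\Cf_3$-orbit $\Delta$ of Corollary \ref{cor:irrorbits}, mapped injectively according to Table \ref{table:orbits} — each $\ell_i$ is a secant of $K$ cutting it in one antipodal pair of distinct points, and $20=10\cdot 2$ forces these pairs to be pairwise disjoint. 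Consequently the restriction $\Phi_{10}|_K$ is a \emph{reduced} divisor of degree $20$ on $K\cong\Pb^1$ supported on $p_*\Delta$ (degree $20$ since a decimic pulls back along the degree-$2$ embedding $K\hookrightarrow P$ to a form of degree $20$).

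For the first assertion, I would take a member $D$ of the net with equation $a\Phi_2{}^5+b\Phi_6\Phi_2{}^2+c\Phi_{10}=0$ and restrict it to $K$. Since $\Phi_2|_K\equiv 0$, the first two terms vanish identically on $K$, so $D|_K$ is the divisor of $c\,\Phi_{10}|_K$. When $c\neq 0$ this equals the reduced degree-$20$ divisor $p_*\Delta$ just described; hence $D$ meets $K$ in exactly the $20$ points of $p_*\Delta$, each with intersection multiplicity $1$, i.e.\ transversally, and in particular $D$ is smooth at each of these points. (For $c=0$ the decimic contains $K$ with multiplicity $2$ — among these is the member $p_*C_K$, which is $5$ times the conic $K$ — and, exactly as flagged just before the statement, the proposition is to be read as vacuous for such members.)

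For the tangency assertion, fix $x\in p_*\Delta$ and let $\ell=\ell_i$ be the unique one of the ten lines passing through $x$ (unique because the $20$ points are distinct, so no two $\ell_i$ share a point of $p_*\Delta$). Restrict $D$ to $\ell$. Because $\ell$ is an irreducible component of the divisor of $\Phi_{10}$, we have $\Phi_{10}|_\ell\equiv 0$, whence $D|_\ell$ is the restriction to $\ell$ of $\Phi_2{}^2(a\Phi_2{}^3+b\Phi_6)$. Now $\Phi_2|_\ell$ is a binary form of degree $2$ vanishing simply at the two (distinct) points of $\ell\cap K$, one of which is $x$, so $\Phi_2{}^2|_\ell$ vanishes to order $2$ at $x$. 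Assuming $D\neq V(\Phi_{10})$ — which, by $\Af_5$-invariance and a degree count, is the only member of the net containing a line — we have $\ell\not\subseteq D$, so the intersection multiplicity of $D$ and $\ell$ at $x$ is at least $2$; combined with the smoothness of $D$ at $x$ from the previous step, this forces the tangent line to $D$ at $x$ to be $\ell$, as claimed. For $D=V(\Phi_{10})$ the statement is immediate, since near $x$ the curve $D$ coincides with $\ell$.

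The argument is essentially formal once these ingredients are available, so there is no real obstacle; the one point deserving care is the reducedness of $\Phi_{10}|_K$, i.e.\ that the ten lines cut $K$ transversally in $20$ distinct points. This is precisely the identification of $p_*\Delta$ with $V(\Phi_2)\cap V(\Phi_{10})$ and with the unique $20$-point $\Af_5$-orbit on $K$, and I would cite Corollary \ref{cor:irrorbits}, Table \ref{table:orbits} and Lemma \ref{invset1} for it rather than reprove it.
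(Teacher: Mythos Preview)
Your proposal is correct and follows exactly the approach the paper has in mind: the paper's ``proof'' is the single sentence preceding the proposition, which observes that equation \eqref{netofcurves} together with the transversal intersection $p_*\Delta=V(\Phi_2)\cap V(\Phi_{10})$ immediately gives both the transversality along $K$ and the tangency to the ten lines (with the explicit exception $c=0$). You have simply unpacked this in full detail---restricting to $K$ for the first claim and to a line $\ell_i$ for the second---and been more careful about the edge cases $c=0$ and $D=V(\Phi_{10})$; nothing in your argument departs from the paper's reasoning.
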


Of course, the twenty points in the statement of Proposition~\ref{lem:basepoints} are the projection of the set $\Delta$ of base points of the Wiman-Edge pencil.

\begin{proposition}\label{prop:meetingconic} 
The members of $p_*\Cs$ distinct from $5K$ are reduced and intersect $K$ transversally in $p_*\Delta$.
The map $\calB\to p_*\calB$ defined by $C\mapsto p_*C$  is injective, and  $p_*\calB$ is a curve of degree $5$ in $|\Oc_P(10)|$.
\end{proposition}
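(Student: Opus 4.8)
The plan is to analyze the morphism $\Psi\colon\calB\to\Pb^2$, $t\mapsto[p_*C_t]$, where $\Pb^2=\Pb\bigl((\Sym^{10}I)^{\Af_5}\bigr)$ is the net of $\Af_5$-decimics spanned by $\Phi_2{}^5,\Phi_6\Phi_2{}^2,\Phi_{10}$ as in \eqref{netofcurves}; this is a genuine morphism since $p$ contracts nothing, so $p_*C_t$ is always a degree $10$ cycle. Its target is linearly embedded in $|\Oc_P(10)|$, so $\deg(p_*\calB)=\deg\Psi^*\Oc_{\Pb^2}(1)$ once $\Psi$ is shown to be injective. Thus everything reduces to two claims: that $\deg\Psi=5$, and that $\Psi$ is injective; the first assertion of the proposition is a separate, easier, bookkeeping matter.

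For that first assertion I would argue as follows. If $C_t\neq C_K$ then $p_*C_t$ cannot contain $K$: otherwise $p^{-1}(K)\cap C_t$ would contain a curve, whereas $p^{-1}(K)=C_K$ and distinct members of the pencil meet only along $\Delta$. Hence in the coordinates $(a:b:c)$ of \eqref{netofcurves} one has $c\neq 0$ for $p_*C_t$. Any net member with $c\neq 0$ is reduced: a square factor would be a plane curve disjoint from $K$, since restricting to $K$ the decimic has exactly the $20$ simple zeros $p_*\Delta$, and no positive-dimensional plane curve avoids the conic $K$. Transversality of $p_*C_t$ with $K$ along $p_*\Delta$ then follows from the discussion around \eqref{netofcurves}: such a decimic is tangent at $p_*\Delta$ to the ten-line union, which by Proposition \ref{lem:basepoints} meets $K$ transversally at $p_*\Delta$.

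Next I would compute $\deg\Psi^*\Oc_{\Pb^2}(1)=5$. A generic hyperplane of $\Pb^2$ is $\ell_x=\{D:x\in D\}$ for a generic $x\in P$, and $\Psi^{-1}(\ell_x)=\{t:x\in p_*C_t\}$. For generic $x$ the fibre $p^{-1}(x)=\{y_1,\dots,y_5\}\subset S$ consists of five points in general position; writing $C_t=\{\alpha_tQ+\beta_tQ'=0\}\cap S$ with $(\alpha_t:\beta_t)$ an affine-linear parameter on $\calB$ and $Q,Q'$ the two invariant quadrics of Subsection \ref{subsect:acmodel}, the condition $y_i\in C_t$ reads $\alpha_tQ(y_i)+\beta_tQ'(y_i)=0$, pinning down the single value $(\alpha_t:\beta_t)=(Q'(y_i):-Q(y_i))$. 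For generic $x$ the five resulting values of $t$ are distinct, so $\Psi^{-1}(\ell_x)$ is five reduced points. Hence $\deg\Psi=5=\deg\bigl(\Psi\colon\calB\to p_*\calB\bigr)\cdot\deg(p_*\calB)$, and it remains to prove $\Psi$ injective.

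The injectivity is the heart of the matter. The key point is that $p$ restricted to a component of any $C_t\neq C_K$ is birational onto its image: a component of geometric genus $6$ admits no degree $\ge 2$ map onto an $\Af_5$-invariant curve, because $P$ has no invariant line, the $H_3$-invariant ring contains no invariant quintic form (its generators have degrees $2,6,10$) so there is no invariant plane quintic, and the one invariant conic $K$ is excluded since its preimage is $C_K$; lines map isomorphically, and the reducible members are handled componentwise. Consequently $p_*C_t=p_*C_{t'}$ forces the normalizations of $C_t$ and $C_{t'}$ to be isomorphic as $\Af_5$-curves, whence by the universal property (Theorem \ref{thm:Wimanpencilmodular}) $t'\in\{t,\iota(t)\}$; the case $t'=\iota(t)\neq t$ is excluded because an odd permutation $\sigma$ carries $C_t$ to $C_{\iota(t)}$ and intertwines $p$ with $p'$ up to the linear identification $P'\xrightarrow{\sim}P$ it induces, so the equality $p(C_t)=p(C_{\iota(t)})$ would make the two $\Af_5$-structures on this common curve — from $p|C_t$ and from $p\sigma|C_t$ — differ by the outer automorphism of $\Af_5$ while both are equivariantly isomorphic to $C_t$, forcing an outer automorphism into $\Aut(C_t)\cong\Af_5$. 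The Wiman curve $C_0$ is covered by the same moduli argument since $\iota(0)=0$. For the singular fibres one notes that $p_*C_\infty=\Phi_{10}$ is the unique member equal to a union of ten lines (Proposition \ref{prop:tenlineimage}), that $p_*C_{\ir},p_*C_{\ir}'$ are irreducible while $p_*C_c,p_*C_c'$ are unions of conics, and within each pair one separates the images using that $C_c\not\cong C_c'$ and $C_{\ir}\not\cong C_{\ir}'$ as $\Af_5$-curves (Lemmas \ref{lemma:specialconic}, \ref{lemma:glueingconstruct}) together with the equivariant birationality of $p$ on the relevant normalizations. This gives injectivity, hence $\deg(\Psi\colon\calB\to p_*\calB)=1$ and $\deg(p_*\calB)=5$. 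I expect the separation of $C_c$ from $C_c'$ — and likewise $R$ from $R'$ — to be the main obstacle, because there one must track not merely the normalization but the full $\Af_5$-structure of a reducible stable curve and how $p$ reorganizes its nodes.
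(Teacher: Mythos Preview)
Your argument is correct, but the paper's route is much shorter on the two main points.

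For reducedness and transversality you work through the invariant theory (forcing $c\neq 0$, then excluding a square factor because it would miss $K$). The paper bypasses this entirely with one intersection computation: for $C\neq C_K$ one has $p_*C\cdot K=C\cdot C_K=(-2K_S)^2=20$, which already equals $\#p(\Delta)$; hence every local intersection multiplicity is $1$, which simultaneously forces $p_*C$ to be reduced (a multiple component would contribute $\ge 2$ at each meeting with $K$) and to meet $K$ transversally. This also makes your later birationality argument (ruling out $\Af_5$-invariant curves of degree $1,2,5$) unnecessary: once $p_*C$ is reduced, $p|_C$ is automatically birational on each component.

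For injectivity the paper is again terse: from $p_*C_1=p_*C_2$ and reducedness, $C_i\to p_*C_i$ is finite birational, so the equality lifts to an $\Af_5$-equivariant isomorphism $C_1\cong C_2$, and then ``$\Cs$ is the universal family'' gives $C_1=C_2$. Your unpacking of this --- the explicit exclusion of $t'=\iota(t)$ via the outer-automorphism obstruction in $\Aut(C_t)\cong\Af_5$, and the case-by-case treatment of singular members --- is more thorough than the paper, which arguably glosses over the singular fibres. Your concern about separating $C_c$ from $C_c'$ (and $R$ from $R'$) is legitimate, but the same normalization argument resolves it: an $\Af_5$-equivariant isomorphism of their normalizations would contradict Lemmas~\ref{lemma:specialconic} and~\ref{lemma:glueingconstruct}, which say these pairs differ by the outer automorphism. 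The degree-$5$ computation is essentially identical in both proofs.
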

\begin{proof}
If $C\in \Cs$ is not equal to $C_K$ then 
\[p_*C\cdot K=C\cdot C_K=(-2K_S)^2=20.\] Since this is also the size of 
$p(C)\cap K=p(\Delta)$, it follows that $p_*C$ is reduced and meets $K$ transversally in $p(\Delta)$.

Let $C_1, C_2$ be members of the Wiman-Edge  pencil  distinct  from $C_K$. Since  $p_*C_i$ is reduced, 
the map $p: C_i\to  p_*C_i$ is a normalization and so the equality $p_*C_1=p_*C_2$ lifts to an $\Af_5$-equivariant isomorphism  $C_1\cong C_2$. As $\Cs$ is the universal family, it follows that $C_1=C_2$. So the map $\calB\to p_*\calB$ is injective.  

It also follows that for $z\in P$ generic, then through each of the $5$ points of $p^{-1}(z)$ passes exactly one member of $\Cs$ and these members are distinct and smooth. This means that the hyperplane in $|\Oc_P(10)|$ of decimics passing through $z$ meets $p_*\calB$ transversally in $5$ points and so the curve in question has degree $5$. 
\end{proof}

In particular, $p_*\Cs$ is not a pencil. We can be a bit more precise.  If  $a\Phi_2{}^5+b\Phi_6\Phi_2{}^2+c\Phi_{10}=0$ represents  $p_*C\in p_*\Cs$, with $p_*C\not=5K$, then the fact that $p_*C$ is  transversal to $K$ implies that $c\not=0$. So such a curve has unique equation  for which $c=1$. In particular, $p_*R$ has an equation of the form 
\[\Phi_R:=a_R\Phi_2{}^5+b_R\Phi_6\Phi_2{}^2+\Phi_{10}.\] 
(We will see later that this curve is in fact the Klein decimic.) It follows that $\Phi_2, \Phi_6, \Phi_R, \Phi_{15}$ still generate the algebra $\Af_5$-invariants.

\begin{proposition}\label{prop:meetingconicsup} 
The plane curve $p_*\calB$ has two singular points, namely the points represented by $p_*R$ and by $5K$, where it has a singularity of type $A_4$ resp.\ $E_8$ (having local-analytic parameterizations  $t\mapsto (t^2, t^5)$ resp.\ $t\mapsto (t^3, t^5)$).
\end{proposition}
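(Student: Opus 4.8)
The idea is to combine a genus--defect count with the precise shape of the parametrization $\calB\to p_*\calB$, the only non--formal ingredient being the computation of a couple of constants.

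\textbf{Defect budget.} By Proposition~\ref{prop:meetingconic} the assignment $C\mapsto p_*C$ is an injective morphism from $\calB\cong\Pb^1$ onto a plane curve of degree $5$ lying in the net $\Pb^2=\Pb\big((\Sym^{10}I)^{\Af_5}\big)$, whose homogeneous coordinates $[A:B:C]$ we take with respect to the basis $(\Phi_2{}^5,\,\Phi_6\Phi_2{}^2,\,\Phi_{10})$. Injectivity makes this morphism the normalization of $p_*\calB$ and shows it to be bijective; hence every singular point of $p_*\calB$ is unibranch, and since a plane quintic has arithmetic genus $6$ while $p_*\calB$ has geometric genus $0$, the local $\delta$--invariants of its singular points sum to $6$.

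\textbf{The parametrization in coordinates.} Present the member of $\Cs$ cut out on $S$ by $aQ+bQ'$; this gives a coordinate $[a:b]$ on $\calB$ with $C_K,C_{K'},C_0,C_\infty$ at $[1:0],[0:1],[1:1],[1:-1]$, and $Q=p^*\Phi_2$ because $C_K=p^{-1}(K)$. As $Q$ is pulled back from $P$, the norm $N_{S/P}(aQ+bQ')$, whose zero locus in $P$ is the divisor $p_*C_{[a:b]}$, expands as $\sum_{i=0}^{5}a^{5-i}b^{i}\,\Phi_2{}^{5-i}\,e_i$ with $e_i\in H^0(P,\Oc_P(2i))^{\Af_5}$. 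Since the $\Af_5$--invariant subring of $\Sym^\bullet I$ (the positive part of the $H_3$--Coxeter invariants) is $\C[\Phi_2,\Phi_6,\Phi_{10}]$ with generators in degrees $2,6,10$, dimension counts force $e_1\in\C\Phi_2$, $e_2\in\C\Phi_2{}^2$, $e_3\in\C\Phi_2{}^3\oplus\C\Phi_6$, $e_4\in\C\Phi_2{}^4\oplus\C\Phi_2\Phi_6$ and $e_5\in\C\Phi_2{}^5\oplus\C\Phi_2{}^2\Phi_6\oplus\C\Phi_{10}$. Collecting terms, the parametrization of $p_*\calB$ takes the form
\[
A(a,b)=a^5+\cdots+\lambda_5 b^5,\qquad
B(a,b)=b^3\big(\mu_3 a^2+\mu_4 ab+\mu_5 b^2\big),\qquad
C(a,b)=\nu_5\, b^5 .
\]
Here $\nu_5\neq 0$, for otherwise every $p_*C$ would contain $K$, against the reducedness of the generic member (Proposition~\ref{prop:meetingconic}); and $\mu_3\neq 0$ — this is the key point, which I would check from the explicit model of Remark~\ref{computation2} (equivalently: the restriction $e_3|_K$ spans the one--dimensional space of degree--$12$ $\Af_5$--invariant binary forms on $K$ and is nonzero).

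\textbf{The two singularities.} At $[5K]=[1:0:0]$, i.e.\ $[a:b]=[1:0]$, the affine coordinate $s=b/a$ parametrizes $p_*\calB$ in the chart $A=1$ by $s\mapsto(\mu_3 s^3+\cdots,\ \nu_5 s^5+\cdots)$; with $\mu_3,\nu_5\neq 0$ this is a branch with the single Puiseux pair $(3,5)$, i.e.\ an $E_8$ singularity, of $\delta$--invariant $4$ (the $E_8$ plane--curve singularity has no moduli, so higher--order terms are irrelevant). The budget leaves $6-4=2$, so $p_*\calB$ has a further singular point and the remaining $\delta$'s sum to $2$. A non--immersion point of the parametrization must satisfy $BC'-B'C=0$; a short computation with the formulas above gives $BC'-B'C=\pm\nu_5 s^{7}(2\mu_3+\mu_4 s)$, whose only zeros are $s=0$ and $s=-2\mu_3/\mu_4$ (and $\mu_4\neq 0$, since otherwise $s=0$ would be the only non--immersion point and the budget would be violated). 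Hence the second singular point is the one at $s=-2\mu_3/\mu_4$, there are no others, and being unibranch with $\delta=2$ it is of type $A_4$ (Puiseux pair $(2,5)$). Finally this member is $R$: by Remark~\ref{computation2} the ramification curve is the member with parameters $(1:5\sqrt5)$, and matching this with $s=-2\mu_3/\mu_4$ (or invoking the earlier identification of $R$) gives $C_{-2\mu_3/\mu_4}=R$.

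\textbf{Main obstacle.} The genuine content is the non--formal input: that $\mu_3\neq 0$, equivalently that $p_*\calB$ has multiplicity exactly $3$ (not $4$) at $[5K]$ — which is what separates $E_8$ from the $\delta=6$ singularity with Puiseux pair $(4,5)$ — together with the identification of the $A_4$--point with $[p_*R]$. Both reduce to evaluating a few constants in the explicit model of Remark~\ref{computation2}, or to a closer analysis of how $p^*\Phi_2,p^*\Phi_6,p^*\Phi_{10}$ restrict to the members of the pencil through $C_K$ and $R$; the $\delta$--budget, the bijectivity of the normalization, the invariant--theoretic shape of the $e_i$, and the location of the unique candidate second singular point are all formal.
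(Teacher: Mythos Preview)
Your approach via the norm $N_{S/P}(aQ+bQ')$ and the invariant-theoretic shape of the $e_i$ is genuinely different from the paper's, and the structural part—$B(a,b)=b^3(\mu_3a^2+\mu_4ab+\mu_5b^2)$, $C(a,b)=\nu_5b^5$, unibranchness from bijectivity, and the $\delta$-budget—is correct and pleasant. But two points deserve comment.

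First, a gap. Your claim ``$\mu_4\neq0$, since otherwise $s=0$ would be the only non-immersion point'' is not justified: you computed $BC'-B'C$ in the affine chart $a=1$, so you have not examined $s=\infty$, i.e.\ $[a:b]=[0:1]=C_{K'}$. The minor $M_{BC}=B_aC_b-B_bC_a=5\nu_5b^{7}(2\mu_3a+\mu_4b)$ is a section of $\Oc_{\Pb^1}(8)$; when $\mu_4=0$ it has a simple zero at $a=0$ in addition to the order-$7$ zero at $b=0$. So the $\delta$-budget alone does not force $\mu_4\neq0$; it only forces a second non-immersion point, which a priori could sit over $C_{K'}$ rather than over $R$. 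Thus even after repairing this, your approach still leaves both $\mu_3\neq0$ and the identification of the $A_4$-point with $p_*R$ to an explicit computation, as you acknowledge.

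Second, the paper's proof circumvents both issues with a single geometric observation, and this is worth contrasting. At a base point $x\in\Delta$, the projection $p$ has \emph{simple} ramification along $R$; in adapted local coordinates $p^*w_1=z_1^{\,2}$, $p^*w_2=z_2$, with $R=\{z_1=0\}$ and $C_K=\{z_2=0\}$. Members of the pencil through $x$ are indexed by their tangent slope $\lambda$ (so $\lambda=0$ is $R$, $\lambda=\infty$ is $C_K$), and pushing forward shows that the local coefficient of $w_2^{\,2}$ in $p_*C_\lambda$ is $\lambda^2$. Choosing $(w_1,w_2)$ compatible with the invariant basis, this says \emph{exactly} that the $\Phi_6\Phi_2^{\,2}$-coordinate is $\lambda^2$ in this chart; the $A_4$ at $p_*R$ falls out at once, with no constants to check. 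Injectivity plus the degree then force the $\Phi_2^{\,5}$-coordinate to be $a_5\lambda^5+a_4\lambda^4+a_2\lambda^2$ (only the single odd power $\lambda^5$ survives), and reading off at $\lambda=\infty$ gives the $E_8$ at $5K$—again without needing to verify any ``$\mu_3\neq0$''. So what your approach buys is a clean global shape for the parametrization from invariant theory; what the paper's local argument buys is that the two non-formal inputs you flag become automatic.
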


\begin{proof} Let $x\in \Delta$.
Since $p$ has simple ramification at $x$, we can find  local-analytic coordinates $(z_1,z_2)$ at $x$  
and $(w_1,w_2)$ at $p(x)$ such that  $p^*w_1=z_1^2$ and $p^*w_2=z_2$, and such that $K$ is at $p(x)$ given by  $w_2=0$. 
So the ramification locus $R$ is given at $x$ by $z_1=0$  and $C_K$ by $z_2=0$.  

A tangent direction  at $x$ not tangent to $C_K$
has in the $(z_1,z_2)$-coordinates  a unique generator of the form $(\lambda, 1)$. We therefore can regard $\lambda$ as 
a coordinate for the complement in  $\Pb(T_xS)$ of the point defined by $T_xC_K$ and hence as a coordinate for the complement $\calB^+\subset \calB$ of the point representing $C_K$. This means that  the  member of $\Cs\ssm \{C_K\}$ corresponding to $\lambda$ has a local parametrization at $x$ given by $z_1= \lambda z_2 (1+c_1(\lambda)z_2+\cdots )$. Its image under $p$ has then the  local  parametrization
$w_1 =\lambda^2 w_2{}^2(1+2 c_1(\lambda)w_2+\cdots)$, which shows that  $\lambda^2$, when regarded as a regular function on $\calB^+$,  is in fact a regular function on its image $p_*\calB^+$.

Now let us make these coordinate choices compatible with the chosen basis of invariants.
For this we choose a third root $\Phi_6^{1/3}$ of $\Phi_6$ and take $w_1=\Phi_R\Phi_6^{-5/3}$ (this means that $z_1$ must be a square root of this) and $w_2=\Phi_2\Phi_6^{-1/3}$. We write a member of $p_*\calB^+$ uniquely as
\[a(\lambda )\Phi_2{}^5+b(\lambda)\Phi_6\Phi_2{}^2=\Phi_R\] with $a$ and $b$ a  polynomials of degree $\le 5$ and $5$ being attained. If we multiply this equation with $\Phi_6^{-5/3}$, then this becomes \[a(\lambda)w_2{}^5+b(\lambda)w_2{}^2=w_1.\] It follows from the preceding 
that $b(\lambda)=\lambda^2$. Hence $a$ has degree $5$. By Proposition  \ref{prop:meetingconic}, the  curve $\lambda\mapsto (a(\lambda),\lambda^2)$ must be injective. This means that $a(\lambda)-a(-\lambda)$ is nonzero 
when $\lambda\not=0$. This can only happen when there is at  most one odd power of $\lambda$  appearing in $a$. This power must then be $5$, of course. It follows that  $\Pb^1\cong\calB\to p_*\calB$ is given by 
\[
[\lambda:\mu]\mapsto (\lambda^5a_5+\lambda^4\mu a_4+\lambda^2\mu^3a_2)\Phi_2{}^5 +
(\lambda^2\mu^3)\Phi_6\Phi_2{}^2-\mu^5\Phi_R,
\]
where $a_5, a_4, a_2$ are constants  with $a_5\not=0$. The proposition follows from this.
\end{proof}

\section{Images of some members of the  Wiman-Edge pencil in  the Klein plane}\label{sect:WEimages}

\subsection{The preimage of a fundamental conic}\label{subsection:preimageconic}
We will characterize $C_K$ and $C_{K'}$ as members of $\Cs$ by the fact that they support an exceptional even theta characteristic.
Recall that a \emph{theta characteristic} of  a projective smooth curve $C$ is a line bundle $\kappa$ over $C_K$ endowed with an isomorphism 
$\phi:\kappa^{\otimes 2}\cong \omega_{C}$. It is called even or odd according to the parity of the dimension of $H^0(C, \kappa)$.

The strong form of Clifford's theorem as stated in \cite{ACGH1} implies that 
\[\dim |\kappa|\le \tfrac{1}{2}(g(C)-1)\] provided
that $C$ is not hyperelliptic. If $|\kappa|$ is one-dimensional and without fixed points, then the associated morphism $C\to \check\Pb(H^0(C, \kappa))\cong \Pb^1$ 
is of degree $\le g(C)-1$ and has as fibers the moving part of $|\kappa|$. We can obtain this morphism by means of a linear projection of the canonical image:
the natural map $\Sym^2\! H^0(C, \kappa)\to  H^0(C, \omega_C)$ is injective (with image a 3-dimensional subspace), so that dually, we have a projection onto a projective plane 
\[\check\Pb(H^0(C, \omega_C))\dashrightarrow \check\Pb(\Sym^2\! H^0(C, \kappa)).\] The latter contains a conic that can be identified with the image of $\check\Pb H^0(C, \kappa)$ under the Veronese map. The composition with the canonical map $C\to \check\Pb(H^0(C, \omega_C))$ has image in this conic and thus realizes $C\to \check\Pb H^0(C, \kappa)$. Note that this conic can be understood as a quadric in $\check\Pb(H^0(C, \omega_C))$ of rank $3$ that contains the canonical image of $C$.

\begin{proposition}\label{prop:}
The morphism $C_K\to K$ is obtained as  the complete linear system  of an even theta characteristic $\kappa$ on $C_K$  followed by the (Veronese) embedding $K\subset P$. The $\Af_5$-action on $C_K$ lifts to an action of the binary icosahedral group  $\hat\Af_5$ on $\kappa$ in such a way that $H^0(C_K, \kappa)$ is an irreducible 
$\hat \Af_5$-representation of degree $2$.  Similarly for $C_{K'}\to K'$, albeit that $H^0(C_{K'}, \kappa')$ will be the other irreducible 
$\hat \Af_5$-representation of degree $2$. 

There are no other pairs $(C, \theta)$, where $C$ is a member of $\Cs$ and $\theta$ is a  $\Af_5$-invariant  theta characteristic with $\dim |\theta |=1$.
\end{proposition}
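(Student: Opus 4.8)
The plan is to construct the theta characteristic $\kappa$ directly out of the Klein projection $p\colon S\to P$, verify its properties by a short representation-theoretic argument, and then obtain uniqueness by running the construction backwards.

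\textit{Existence (first assertion).} I would set $\kappa:=(p|_{C_K})^*\Oc_K(1)$, the pullback to $C_K$ of the degree-one bundle on the conic $K\cong\Pb^1$. Since $\Oc_K(1)^{\otimes 2}=\Oc_P(1)|_K$ (because $K$ is a conic) and $p^*\Oc_P(1)=\Oc_S(-K_S)$, restricting to $C_K$ and using the adjunction identity $\Oc_{C_K}(-K_S)\cong\omega_{C_K}$ from Subsection \ref{subsect:wimansextic} gives $\kappa^{\otimes 2}\cong\omega_{C_K}$, so $\kappa$ is a theta characteristic; moreover $\deg\kappa=5$ and $\deg(p|_{C_K})=5$, the latter because $p(C_K)=K$ has degree $2$, $C_K$ has degree $10$ in the anticanonical model, and $C_K$ is disjoint from the centre of the projection by Proposition \ref{prop:Kleinproj}. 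The morphism $p|_{C_K}\colon C_K\to K$ is cut out by the base-point-free pencil $(p|_{C_K})^*H^0(K,\Oc_K(1))\subseteq H^0(C_K,\kappa)$; to identify it with the complete system $|\kappa|$ composed with the Veronese $K\hookrightarrow P$, it suffices to show $h^0(\kappa)=2$. The strong form of Clifford's theorem (applicable since $C_K$, being a canonically embedded member of $\Cs$, is not hyperelliptic) gives $\dim|\kappa|\le\tfrac52$, hence $h^0(\kappa)\le 3$; and $h^0(\kappa)=3$ would force $|\kappa|$ to be a base-point-free $g^2_5$ (a base point would produce a $g^2_{\le 4}$, impossible on a non-hyperelliptic genus $6$ curve), hence, $5$ being prime, birational onto a nondegenerate plane quintic of arithmetic genus $6$, which must then be smooth and isomorphic to $C_K$ — contradicting Theorem \ref{thm:onaquinticDelPezzo} (Mukai's criterion). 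So $h^0(\kappa)=2$, and $\kappa$ is an even theta characteristic with $\dim|\kappa|=1$.

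\textit{The $\hat\Af_5$-structure (second and third assertions).} As $\Pic(\Pb^1)=\Z$ is fixed by automorphisms, $\kappa$ is $\Af_5$-invariant, so the $\Af_5$-action on $C_K$ induces a projective action on the line $\check\Pb(H^0(C_K,\kappa))\cong\Pb^1$, i.e.\ a genuine linear action on $H^0(C_K,\kappa)$ of a central extension of $\Af_5$ by $\Cb^\times$. This extension cannot be $\Af_5$ itself: a $2$-dimensional linear representation of $\Af_5$ is trivial (Table \ref{table:A5}), which would make $p|_{C_K}$ an $\Af_5$-invariant morphism, whereas it is equivariant for the Schwarzian action of $\Af_5$ on $K$, which is nontrivial. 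Hence the extension is $\hat\Af_5$, the centre acts by $-1$, and (the only $1$-dimensional representation of $\hat\Af_5$ being trivial) $H^0(C_K,\kappa)$ is irreducible of degree $2$. Precomposing the whole picture with an odd permutation $\sigma\in\Sf_5\ssm\Af_5$ sends $(S,P,K,C_K,\kappa)$ to $(S,P',K',C_{K'},\kappa')$ and conjugates the $\Af_5$-action by $\sigma$, i.e.\ applies the outer automorphism of $\Af_5$, which interchanges the two degree-$2$ irreducible representations of $\hat\Af_5$; this yields the statement for $C_{K'}$.

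\textit{Uniqueness (last assertion).} Let $C\in\Cs$ be (a smooth member, since theta characteristics are defined for smooth curves) carrying an $\Af_5$-invariant theta characteristic $\theta$ with $h^0(\theta)=2$. The base divisor of $|\theta|$ is $\Af_5$-invariant of degree $\le 3$, but by Proposition \ref{prop:orbifolds} every $\Af_5$-orbit in $C$ has $\ge 20$ points, so $|\theta|$ is base-point-free and defines an $\Af_5$-equivariant degree-$5$ morphism $\psi\colon C\to\check\Pb(H^0(C,\theta))\cong\Pb^1$. If $\Af_5$ acted trivially on this $\Pb^1$ then $\psi$ would factor through $C\to\Af_5\backslash C$, forcing $\deg\psi\ge 60$; so the action is nontrivial, and, exactly as above, $H^0(C,\theta)$ is an irreducible degree-$2$ $\hat\Af_5$-representation with $\Pb^1$ Schwarzian. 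Then $\Sym^2 H^0(C,\theta)\hookrightarrow H^0(C,\omega_C)$ (this map is injective, cf.\ the discussion preceding the Proposition) is a $3$-dimensional irreducible $\Af_5$-subrepresentation of $H^0(C,\omega_C)\cong I\oplus I'$ (Subsection \ref{subsect:wimansextic}), hence \emph{equals} one of the two summands, say $I_S$. Since the same summand $I_S\subseteq H^0(C,\omega_C)$ defines the Klein projection $p|_C\colon C\to P=\check\Pb(I_S)$ (which is base-point-free on every member of $\Cs$ by adjunction), the map $C\xrightarrow{\psi}\Pb^1\xrightarrow{\mathrm{Ver}}P$ is exactly $p|_C$; its image lies on the Veronese conic, which is the unique $\Af_5$-invariant conic of $P$, namely $K$. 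Thus $p(C)\subseteq K$, so $C\subseteq p^{-1}(K)=C_K$, and since distinct members of $\Cs$ meet only along $\Delta$, this forces $C=C_K$; chasing the bundles back gives $\theta\cong\kappa$. Had the summand been $I'_S$ we would instead get $C=C_{K'}$, $\theta\cong\kappa'$. Hence $(C_K,\kappa)$ and $(C_{K'},\kappa')$ are the only such pairs.

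The non-formal inputs are Clifford's theorem together with Mukai's characterization (via Theorem \ref{thm:onaquinticDelPezzo}) to pin down $h^0(\kappa)=2$, and the identification of $\Sym^2$ of a binary-icosahedral degree-$2$ representation with $I$ or $I'$. I expect the delicate point to be the very last step of the uniqueness argument: making sure that landing in a Klein plane genuinely forces $C$ to be the corresponding preimage $C_K$ (using $C_K\neq C_{K'}$ and the transversality of the pencil along $\Delta$), and keeping the bookkeeping of which of the two summands $I_S,I'_S$ one lands in consistent with which Klein plane and which spin representation.
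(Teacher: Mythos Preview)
Your proof is correct and follows the same geometry as the paper's; the differences are in packaging rather than substance. For excluding $h^0(\kappa)=3$, the paper stays with representation theory: a $3$-dimensional $\hat\Af_5$-module containing the irreducible $2$-dimensional pullback piece would split off a trivial line, whose square would put an $\Af_5$-invariant section into $H^0(C_K,\omega_{C_K})\cong I\oplus I'$, which has none---this avoids your detour through the plane-quintic exclusion of Theorem~\ref{thm:onaquinticDelPezzo}. For uniqueness, the paper compresses your argument into three lines by invoking the observation made just before the Proposition (a pencil theta characteristic places the canonical curve on a rank-$3$ quadric) together with the identification in Subsection~\ref{subsect:acmodel} of the $\Af_5$-invariant quadrics as the pencil $\langle Q,Q'\rangle$: since $Q$ and $Q'$ are each rank $3$ on complementary summands $I_S, I'_S$, a rank-$3$ member of this pencil must be $Q$ or $Q'$ itself. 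Your identification of $\Sym^2 H^0(C,\theta)$ with one of the summands $I_S$, $I'_S$ and the subsequent chase through the Klein projection is exactly the dual of this, written out rather than quoted.
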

\begin{proof}
Observe that the preimage of a line in $P$ meets $C_K$ in a canonical divisor  and that any effective degree 2 divisor on $K$ spans a line in $P$. This implies that the fibers of $C_K\to K$ belong the divisor class of a theta characteristic  $(\kappa, \phi:\kappa^{\otimes 2}\cong \omega_{C_K})$. The $\Af_5$-action on $C_K$ need not lift to such an action on $\kappa$, but its central extension, the binary icosahedral group  $\bar{\Af}_5$, will (in a way that makes $\phi$ equivariant).
Thus $H^0(C_K, \kappa)$  becomes $\hat \Af_5$-representation. It contains a $2$-dimensional (base point free) subrepresentation which  accounts for the morphism  $C_K\to K$. To see that this inclusion is an equality, we note that
by Clifford's theorem as cited above, $\dim H^0(C_K, \kappa)\le 3$. If it were equal to $3$, then $\hat \Af_5$ would have  a trivial summand in $H^0(C_K, \kappa)$ and hence so would  $H^0(C_K, \kappa^{\otimes 2})=
H^0(C_K, \omega_{C_K})$. This contradicts the fact that the latter is of type $I\oplus I'$ as a $\Af_5$-representation.

It is clear that we obtain  $(C_{K'}, \kappa')$ as the transform/pull-back of $(C_K, \kappa)$ under an element of $\Sf_5\ssm \Af_5$.

If $(C, \theta)$ is as in the proposition, then
$C$ lies on a $\Af_5$-invariant quadric of rank $3$. This quadric will be defined by $\lambda Q+\lambda' Q'$ for some $(\lambda:\lambda')$ (with $Q$ and $Q'$ as in Subsection \ref{subsect:acmodel}), and so the rank condition implies $\lambda=0$ or $\lambda'=0$. In other words, $(C, \theta)$ equals $(C_K,\kappa)$ or $(C_{K'}, \kappa')$. 
\end{proof}

\subsection{The image of reducible singular members}
Recall that $C_c$ and $C'_c$ is the $\Af_5$-orbit of a special conic. So the   following proposition tells us what their $p$-images are like.

\begin{proposition}\label{prop:}
The projection  $p$ maps each special conic  isomorphically onto a conic in $P$ that is tangent to four lines that are the projections of lines on $S$, and  passes through four points of the 10-element $\Af_5$-orbit. 
\end{proposition}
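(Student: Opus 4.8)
The plan is to fix one special conic $Y$, say a component of $C_c$, and prove the statement for it; the general case then follows since $\Af_5$ has exactly two orbits of five special conics and an element of $\Sf_5\ssm\Af_5$ interchanges $C_c$ and $C_c'$. Write $G\cong\Af_4$ for the $\Af_5$-stabiliser of $Y$. I will use the following facts, all established earlier: $Y$ is a smooth fibre of the conic bundle $\Pc$ containing it (Subsection~\ref{subsect:conicfibrations}); the lines on $S$ meeting $Y$ are precisely the four sections of $\Pc$, each met transversally in a single point, and $G$ permutes these four sections, hence these four points, transitively; these four points make up the set $Y\cap C_\infty$, which is contained in the base locus $\Delta$ of $\Cs$ (Lemma~\ref{lemma:basepoints}); and, writing $Y=Y_1$ and $Y_1,\dots,Y_5$ for the components of $C_c$, the four points $Y\cap Y_i$ ($i\ge 2$) are four distinct nodes of $C_c$, hence four distinct points of $\Sing(C_c)$ (Lemma~\ref{lemma:specialconic}).

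First I would show that $p|_Y$ is an isomorphism onto a smooth conic. Since $p$ is finite, $p(Y)$ is an irreducible curve; since $p^*\Oc_P(1)=\omega_S^{-1}$ and $\omega_S^{-1}\cdot Y=2$, we get $\deg(p|_Y)\cdot\deg p(Y)=2$. So either $p(Y)$ is a smooth conic and $p|_Y$ is birational, or $p(Y)$ is a line and $p|_Y$ has degree $2$. In the second case $p|_Y\colon Y\to p(Y)$ is a degree-$2$ map of copies of $\Pb^1$, hence has exactly two ramification points; these lie in $R\cap Y=Y\cap\Delta$ (where $R$ is the ramification divisor of $p$) and form a $G$-stable two-element subset of the four-element set $Y\cap\Delta$, contradicting transitivity. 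Hence $p(Y)$ is a smooth conic and, being a finite birational morphism onto a normal curve, $p|_Y$ is an isomorphism. The same numerical identity applied to a line $L$ on $S$ ($\omega_S^{-1}\cdot L=1$) shows $p$ maps $L$ isomorphically onto a line $p(L)$ in $P$, and by Proposition~\ref{prop:tenlineimage} the images of the ten lines of $C_\infty$ are ten distinct lines; in particular the four lines $p(L)$, with $L$ meeting $Y$, are distinct.

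The key point is the tangency, and this is where the ramification divisor enters. Let $q\in Y\cap\Delta$ be one of the four points and $L$ the line on $S$ through it. Since $q$ lies on every member of $\Cs$, it lies on $R$ (Proposition~\ref{prop:Kleinram}), so $dp_q\colon T_qS\to T_{p(q)}P$ has rank at most $1$; but $p|_Y$ is an immersion at $q$, so $dp_q(T_qY)\ne 0$, and therefore $dp_q$ has rank exactly $1$. As $p|_L$ is likewise an immersion at $q$, both $dp_q(T_qY)$ and $dp_q(T_qL)$ are nonzero and hence both equal the one-dimensional image of $dp_q$; thus the tangent line of $p(Y)$ at $p(q)$ and the line $p(L)$ have the same tangent direction at the common point $p(q)$ and so coincide. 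This shows $p(L)$ is tangent to $p(Y)$ at $p(q)$ for each of the four points $q\in Y\cap\Delta$. For the last assertion, Table~\ref{table:orbits} says $\Sing(C_c)$ maps bijectively onto a ten-element $\Af_5$-orbit in $P$, so under the isomorphism $p|_Y$ the four points $Y\cap Y_i$ go to four distinct points of that orbit, all lying on $p(Y)$.

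The main obstacle is the tangency step: a priori the line $p(L)$ meets the conic $p(Y)$ in a length-two subscheme, and one needs this to be a double point rather than a reduced pair. The device that resolves it is the observation that the intersection point $q$ of $Y$ with $L$ automatically lies on $R$ (because $q\in\Delta$), so $p$ collapses a one-dimensional tangent direction at $q$; combined with the fact that $p$ is an immersion along $Y$ and along $L$, this forces $p(L)$ to be exactly the tangent line to $p(Y)$ at $p(q)$. The only auxiliary points requiring care are the rank-one (versus rank-zero) property of $dp_q$, which is immediate from $p|_Y$ being an immersion, and the combinatorics in the first paragraph identifying the four lines meeting $Y$ with the sections of $\Pc$ and the four points $Y\cap C_\infty$ with a transitive $G$-set inside $\Delta$.
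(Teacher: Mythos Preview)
Your proof is correct and complete. It differs from the paper's in two substantive ways.

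For the isomorphism onto a conic, the paper simply observes that $p(Y\cap\Delta)$ consists of four points of $p(\Delta)\subset K$, and since a line meets the conic $K$ in at most two points, $p(Y)$ cannot be a line. Your ramification-plus-transitivity argument reaches the same conclusion via a different mechanism; both are short, and the paper's is marginally more direct.

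The real divergence is in the tangency step. The paper invokes the structure of the net of $\Af_5$-invariant decimics (the discussion around Proposition~\ref{lem:basepoints}): every such decimic not containing $K$ doubly is tangent at each point of $p(\Delta)$ to the line through that point and its antipode, and $p_*C_c$ is such a decimic. Your argument is instead purely local and differential: since $q\in\Delta\subset R$, the derivative $dp_q$ has one-dimensional image, and as $p|_Y$ and $p|_L$ are both immersions, their tangent images must coincide with that image and hence with each other. This is more elementary and self-contained; it does not use the invariant theory of $H_3$ or the decomposition $(\Sym^{10}I)^{\Af_5}$ at all. The paper's route, on the other hand, situates the tangency within a global statement about the entire pencil.

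You also explicitly verify the claim about the four points of the $10$-element orbit (via Table~\ref{table:orbits} and the nodes $Y\cap Y_i$), which the paper's proof leaves implicit.
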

\begin{proof}
Let $Y$ be a special  conic. Since $Y$ is part of $C_c$ or $C'_c$, $Y$ meets $\Delta$ in $4$ distinct points and  $p_*Y$ is a degree 2 curve which meets $p(\Delta)$ in $4$ points. In particular $p$ maps $Y$ isomorphically onto its image (not on a double line). For each $a\in Y\cap \Delta$, $p(a)$ and its antipode  span a line in $P$ and according to Proposition \ref{prop:meetingconic}  $p(Y)$ is tangent to the line at $p(a)$.  
\end{proof} 

\subsection{The  decimics of Klein and Winger}\label{subsect:winger}
Let us call a projective plane  endowed with a group $G$ of automorphisms  isomorphic to $\Af_5$ a \emph{Klein plane} (but without specifying an isomorphism $G\cong \Af_5$). Such a plane is unique up to isomorphism (it is isomorphic to both $P$ and $P'$).
For what follows it is convenient to make the  following definition.

\begin{definition}[{\bf Special decimics}]
\label{def:}
We call a reduced, $G$-invariant curve in a Klein plane of degree $10$ a \emph{special decimic} if is singular at each fundamental point
and its normalization is rational. If the singularity at a fundamental point is an ordinary node, we call it a \emph{Winger decimic}; otherwise
(so when it is worse than that) we call it a \emph{Klein decimic}.
\end{definition}

We will see that each of these decimics is unique up to isomorphism, and that $p(R)$ is a Klein decimic and $p(R')$ a Winger decimic.
We will also show that the singularity of a Klein decimic at a fundamental point must be a double cusp (i.e., with local analytic equation $(x^3-y^2)(y^3-x^3)$).
As to our naming:  a Klein curve appears in \cite{Klein} , Ch.\ 4, \S  3 (p.\ 218 in the cited edition), and a Winger curve appears in \S 9 of \cite{Winger}.

Let us first establish the relation between special decimics and the Wiman-Edge pencil.
Let $K$ be a copy of a $\Pb^1$ and let $G\subset \Aut(K)$ a subgroup isomorphic to $\Af_5$.
Recall that  $K$ has  a unique $G$-orbit $F^\#$ of size $12$ (think of this as the vertex set of a regular icosahedron) which comes in  $6$ antipodal pairs. 
Let us denote  by $z\in F^\#\mapsto z'\in F^\#$ the antipodal involution, and let 
$\overline K$ be obtained from $\tilde Y$ by identifying every $z\in F^\#$ with $z'\in F^\#$ as to produce an ordinary node. 
This is just the curve of arithmetic genus $6$ that we constructed in Lemma \ref{lemma:glueingconstruct}. 

Notice that the normalization of a special decimic factors through this quotient of $K$: it is the image of $\overline K$  under a $2$-dimensional linear system 
of degree $10$ divisors on $\overline K$ that comes from a $3$-dimensional irreducible subrepresentation of $H^0(\overline K, \Oc_{\overline K}(10))$.
In order to identify these subrepresentations, we focus our attention on  the dualizing sheaf $\omega_{\overline K}$ of $\overline K$.
This is the subsheaf of the direct image of $\omega_K(F^\#)$ characterized by the property that the sum of the residues in a fiber add up to zero. So
$H^0(\overline K, \omega_{\overline K})$ is the subspace of $H^0(K,\omega_K(F^\#))$ consisting of differentials whose  residues at the two
points of any antipodal pair add up to zero.  

\begin{lemma}\label{lemma:irrsplitoff5}
There is an exact sequence of $G$-representations
\[
0\to H^0(\overline K, \omega_{\overline K})\to H^0(K, \omega_K(F^\#))\to \tilde H^0(F; \C)\to 0
\]
where the last term denotes the reduced cohomology of $F$.  Further,  $\tilde H^0(F; \C)$ is an irreducible $G$-representation of dimension $5$ (hence of type $W$).
\end{lemma}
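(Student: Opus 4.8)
The plan is to make all three spaces in the sequence concrete via residues on $K\cong\Pb^1$. Write $\nu\colon K\to\overline K$ for the normalization, so that the fibre of $\nu$ over the node indexed by an antipodal pair $p\in F$ is a two-element set $\{z_p,z_p'\}\subset F^\#$. As the text preceding the lemma records, $\nu^*\omega_{\overline K}=\omega_K(F^\#)$ and $H^0(\overline K,\omega_{\overline K})$ is precisely the subspace of $H^0(K,\omega_K(F^\#))$ cut out by the conditions $\Res_{z_p}\eta+\Res_{z_p'}\eta=0$, one for each $p\in F$. Accordingly I would introduce the linear map $\sigma\colon H^0(K,\omega_K(F^\#))\to\C^F$ with $\sigma(\eta)_p=\Res_{z_p}\eta+\Res_{z_p'}\eta$, so that $\ker\sigma=H^0(\overline K,\omega_{\overline K})$ by construction. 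The whole problem then reduces to showing that $\operatorname{image}(\sigma)=\tilde H^0(F;\C)$ and that $\sigma$ is $G$-equivariant.

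For the image I would first record that the total residue map $r\colon H^0(K,\omega_K(F^\#))\to\C^{F^\#}$, $\eta\mapsto(\Res_z\eta)_{z\in F^\#}$, is injective (a holomorphic differential on $\Pb^1$ vanishes) and, by the residue theorem, has image inside the hyperplane $H:=\{a\in\C^{F^\#}:\sum_z a_z=0\}$; since $h^0(K,\omega_K(F^\#))=\deg\omega_K(F^\#)+1=11=\dim H$ by Riemann--Roch, $r$ is an isomorphism onto $H$. Now $\sigma=\pi\circ r$ where $\pi\colon\C^{F^\#}\to\C^F$ sums the two coordinates belonging to each antipodal pair; because $\sum_z a_z=\sum_{p\in F}(a_{z_p}+a_{z_p'})$, one gets $\pi(H)=\{b\in\C^F:\sum_p b_p=0\}=\tilde H^0(F;\C)$, and $\pi$ restricted to $H$ is onto this subspace since any $\pi$-preimage of such a $b$ already lies in $H$. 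Hence $\sigma$ is surjective onto $\tilde H^0(F;\C)$, which yields the asserted short exact sequence. Equivariance is then immediate from naturality: $G$ acts on $K$ preserving $F^\#$ together with its partition into antipodal pairs, hence permutes $F$, and residues are functorial under automorphisms, so $r$, $\pi$ and therefore $\sigma$ all commute with the $G$-actions.

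It remains to identify $\tilde H^0(F;\C)$. Viewing $F$ as the set of six diagonals (antipodal vertex pairs) of an icosahedron on which $\Af_5$ acts, $\C^F$ is a transitive degree-$6$ permutation representation, so its complement $\tilde H^0(F;\C)$ has dimension $5$; since $\Af_5$ has a unique irreducible representation of dimension $5$, namely $W$, it suffices to prove that this complement is irreducible. I would deduce this from $2$-transitivity of the action on $F$: the stabilizer of a diagonal $\ell$ is the dihedral group $D_{10}$ generated by the order-$5$ rotation about $\ell$ and an order-$2$ rotation about a perpendicular axis, and the order-$5$ rotation permutes the remaining five diagonals in a single cycle; hence $G$ has exactly two orbits on $F\times F$, so $\langle\chi_{\C^F},\chi_{\C^F}\rangle=2$ and the complement of $\one$ is irreducible, hence $\iso W$. (Equivalently one tabulates the numbers of diagonals fixed by the five conjugacy classes of $\Af_5$, namely $(6,2,0,1,1)$, subtracts the trivial character, and reads off $\chi_W=(5,1,-1,0,0)$ from Table~\ref{table:A5}.)

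Nothing in this argument is deep; the only point demanding care is the dimension bookkeeping that pins $\operatorname{image}(\sigma)$ down to the codimension-one subspace $\tilde H^0(F;\C)$ rather than to all of $\C^F$ — that is exactly where the global residue theorem on $\Pb^1$ enters — together with keeping track of the $G$-action on the residue maps.
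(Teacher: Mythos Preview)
Your argument is correct and follows essentially the same route as the paper: define the map via the sum of residues over each antipodal pair, use the residue theorem on $\Pb^1$ to see that the image lands in the reduced part $\tilde H^0(F;\C)$, check surjectivity, and then identify the $5$-dimensional quotient as $W$. The paper compresses the surjectivity step into the phrase ``apart from that, residues can be arbitrarily prescribed'' and computes the permutation character of $\C^F$ directly, whereas you spell out the Riemann--Roch dimension count and argue via $2$-transitivity; these are merely more explicit versions of the same ideas.
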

\begin{proof}
For every  $x\in F$ there is a linear  form on $H^0(K, \omega_K(F^\#))$ that assigns to $\alpha\in H^0K, \omega_K(F^\#))$ the sum of the residues
at the associated antipodal pair on $K$. By the residue formula, these $6$ linear forms add up to zero. Apart from that, residues can be arbitrarily 
described  and we thus obtain the exact sequence. The character of the permutation representation on $F$ (which can think of as the set of $6$ lines through opposite vertices of the  icosahedron) is computed to be that of trivial representation plus that of $W$. This implies that $\tilde H^0(F; \C)\cong W$ as $G$-representations.
\end{proof}

We know by Proposition \ref{prop:irred} and  Proposition \ref{prop:Kleinram}  that the  dualizing sheaf of $\overline K$ defines the canonical embedding for $\overline K$ and that  the  $6$-dimensional $G$-representation  $H^0(\overline K, \omega_{\overline K})$ decomposes into two irreducible subrepresentations of dimension $3$ that are not of the same type. This enables us to prove that there are only two isomorphism types of special decimics.

\begin{corollary}[{\bf Classification of special decimics}]
\label{cor:}
Let $Y\subset P$ be a  special decimic and let $\overline K\to Y$ define a partial $G$-equivariant normalization (so that  the $6$ nodes of $\overline K$ lie over the  $6$ fundamental points of $P$). Then $Y$ can be identified in a $G$-equivariant manner with the image of $\overline K$ under the linear system  that comes from one of the two irreducible $3$-dimensional $G$-subrepresentations of $H^0(\overline K, \omega_{\overline K})$. 
In fact,  $p_*R$ and $p'_*R$ are special decimics and every special decimic is isomorphic to one of them.
\end{corollary}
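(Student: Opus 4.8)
The plan is to use the structure theory already developed: by Proposition \ref{prop:irred} and Proposition \ref{prop:Kleinram}, the irreducible singular members $R$ and $R'$ of the Wiman-Edge pencil are precisely the stable genus $6$ curves with $6$ nodes of Lemma \ref{lemma:glueingconstruct}, so each carries a $G$-equivariant partial normalization $\overline K\to R$ (resp.\ $\overline K\to R'$) whose $6$ nodes map to the $6$ fundamental points. The canonical embedding of $R$ into $\check\Pb(H^0(R,\omega_R))=\check\Pb(E_S)$ followed by the Klein projection $p:S\dasharrow P$ amounts to composing the canonical map with the projectivization of one of the two $\Af_5$-equivariant projections $E_S\onto I_S$ (resp.\ $I'_S$). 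Since $H^0(\overline K,\omega_{\overline K})\cong H^0(R,\omega_R)\cong E_S$ splits $G$-equivariantly as $I\oplus I'$, the composite $\overline K\to R\xrightarrow{p} P$ is exactly the map defined by the linear system coming from the $3$-dimensional subrepresentation $I$ of $H^0(\overline K,\omega_{\overline K})$ (resp.\ $I'$ for $R'$). So the first step is simply to observe that $p_*R$ and $p_*'R$ are realized this way.

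Second, I would check that $p_*R$ and $p'_*R$ are genuinely \emph{special decimics} in the sense of Definition \ref{def:}: each is reduced (established in Proposition \ref{prop:Kleinram}, since $R$ is a reduced member of the pencil and $p$ is a finite morphism of degree $5$ that is birational onto its image by Proposition \ref{prop:Kleinproj}), has degree $10$ (the degree of a general anticanonical-squared curve under the linear projection), is $G$-invariant, and has rational normalization $\overline K\to\Pb^1$. The only point needing a word is that $p_*R$ is singular at each of the $6$ fundamental points: this follows because $R$ has $6$ nodes forming the unique $6$-element orbit $\Sing(R)$, and under the $G$-equivariant map $\overline K\to P$ the two branches at each node go to the two points of an antipodal pair on $K$, hence their images in $P$ collide at the corresponding fundamental point; thus $p_*R$ has (at least) a node there, and by $G$-invariance this happens at all $6$ fundamental points simultaneously. (Whether these singularities are ordinary nodes or worse is precisely the Klein/Winger dichotomy, treated afterwards.)

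Third, I would prove the converse — every special decimic $Y$ arises this way. By definition $Y$ is reduced, $G$-invariant, of degree $10$, singular at each fundamental point, with rational normalization. Its $G$-equivariant normalization $\nu:\tilde Y\to Y$ has $\tilde Y\cong\Pb^1$ with a $G$-action, i.e.\ a Schwarzian representation; the $6$ fundamental points each pull back to a $G$-invariant configuration, and since $G$ acts on $\Pb^1$ with smallest orbit of size $12$, the preimage of $\Sing(Y)\cap F$ is the unique $12$-point orbit $F^\#$, occurring in $6$ antipodal pairs, with the two points over each fundamental point forming such a pair. Hence $\nu$ factors through the nodal curve $\overline K$ of Lemma \ref{lemma:glueingconstruct}. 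The map $\overline K\to Y\hookrightarrow P$ is then given by a $2$-dimensional base-point-free linear system of degree-$10$ divisors on $\overline K$, spanning inside $H^0(\overline K,\Oc_{\overline K}(10))$ a subspace whose ambient $3$-dimensional span is $G$-invariant and, being the dual of the Klein plane $P=\check\Pb(I)$ (or $P'$), is a copy of $I$ or $I'$. The essential identification is that this degree-$10$ very ample class is the canonical class $\omega_{\overline K}$: indeed $\overline K$ has arithmetic genus $6$, so $\omega_{\overline K}$ has degree $10$, and by the discussion following Lemma \ref{lemma:irrsplitoff5} the canonical model of $\overline K$ lies on the quintic del Pezzo $S$, whose Klein projections recover exactly the two irreducible $3$-dimensional summands $I,I'$ of $H^0(\overline K,\omega_{\overline K})$; a degree-$10$ linear system on $\overline K$ carrying an irreducible $3$-dimensional $G$-action must be cut out by one of these two summands of the canonical system, because $\Hom_G$ from $I$ (or $I'$) into $H^0(\overline K,\Oc_{\overline K}(10))$ is one-dimensional, its image landing in $H^0(\overline K,\omega_{\overline K})$. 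Therefore $Y$ is $G$-equivariantly the image of $\overline K$ under $I$ or $I'$, hence isomorphic (as a plane curve with $G$-action) to $p_*R$ or $p'_*R$ respectively.

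The main obstacle I anticipate is the third step — pinning down that the degree-$10$ linear system defining an abstract special decimic is the \emph{canonical} system of $\overline K$ rather than some other $G$-stable degree-$10$ series. One has to rule out, via Clifford-type bounds and the representation theory (the fact that $H^0(\overline K,\omega_{\overline K})\cong I\oplus I'$ with no trivial summand, and that $\overline K$ is not hyperelliptic/trigonal/bielliptic/a plane quintic in the sense of Mukai's criterion extended to the nodal case), that there is a unique $G$-equivariant way, up to the outer automorphism, to map $\overline K$ to $P$ by degree-$10$ divisors. Once the canonical identification is secured, matching with $p_*R$ and $p'_*R$ is immediate from the explicit description of the Klein projections as the two projectivized quotients $E_S\onto I_S,\,I'_S$, together with the fact (Proposition \ref{prop:irred}) that $R$ and $R'$ exhaust the irreducible singular members and are swapped by $\Sf_5/\Af_5$, which matches the swap of $I$ and $I'$.
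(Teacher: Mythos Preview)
Your overall architecture is correct and matches the paper's, but you are making the crucial third step harder than it needs to be, and the workaround you propose (Clifford bounds, a nodal Mukai criterion) is both unnecessary and not obviously valid in this singular setting.

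The point you flag as the ``main obstacle'' --- showing that the degree-$10$ linear system cutting out an arbitrary special decimic $Y$ is the canonical system of $\overline K$ --- is exactly what Lemma \ref{lemma:irrsplitoff5} is there for, and the paper uses it directly. Work on the normalization $K\cong\Pb^1$ rather than on $\overline K$. A degree-$10$ embedding of $K$ into a plane is given by a $3$-dimensional $G$-invariant subspace of the complete linear system $H^0(K,\Oc_K(10))=H^0(K,\omega_K(F^\#))$. Lemma \ref{lemma:irrsplitoff5} gives the $G$-decomposition
\[
H^0(K,\omega_K(F^\#))\;\cong\; H^0(\overline K,\omega_{\overline K})\oplus W\;\cong\; I\oplus I'\oplus W,
\]
with $W$ irreducible of dimension $5$. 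Hence the only $3$-dimensional irreducible $G$-subspaces are the summands $I$ and $I'$, and both already sit inside $H^0(\overline K,\omega_{\overline K})$. No Clifford inequality or Mukai-type argument is needed; the representation theory forces the linear system to be canonical. Your sentence ``$\Hom_G(I,H^0(\overline K,\Oc_{\overline K}(10)))$ is one-dimensional'' is morally this, but as written it is ill-posed: $\Oc_{\overline K}(10)$ is not a well-defined sheaf on the nodal curve, and you would have to sort out which degree-$10$ line bundle on $\overline K$ you mean before you can even state the multiplicity claim. Passing to $K$ sidesteps this entirely.

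Once that is in place, your identification of the two summands with $p_*R$ and $p'_*R$ via the canonical embedding $\overline K\hookrightarrow\check\Pb(E_S)$ and the two Klein projections is exactly the paper's final paragraph. One small addendum: to see that $p_*R$ really is \emph{special} (singular at every fundamental point), you don't need the branch-by-branch argument you sketch; it is enough to note that $\Sing(R)$ is a $6$-element $\Af_5$-orbit in $S$ and, by Lemma \ref{invset1}, the only $\Af_5$-orbit of size $\le 6$ in $P$ is $F$, so $p(\Sing R)=F$.
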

\begin{proof}
The line bundle $\omega_K(F^\#)$ is of degree $10$. The fact that $\omega_{\overline K}$ is of degree $-2+12=10$ implies that $\Pb(H^0(K, \omega_K(F^\#)))$ is the complete  linear system of degree $10$. As the $G$-embedding $Y\subset P$ is of degree $10$, the $G$-embedding $Y\subset P$ is definable by a 
$3$-dimensional $G$-invariant subspace of $H^0(K, \omega_K(F^\#))$. It follows from  Lemma \ref{lemma:irrsplitoff5} that this subrepresentation 
must be contained in $H^0(\overline K, \omega_{\overline K})$, and hence is given by one of its $3$-dimensional summands. 

Let us write $E_K$ for $H^0(\overline K, \omega_{\overline K})$ and $\Pb_K$ for $ \check\Pb(E_K)$.
The canonical map $\overline K\to  \Pb_K$ is an embedding and realizes  $\overline K$ as a member of the Wiman-Edge pencil: if  we regard this embedding as an inclusion, then  the $\Af_5$-embedding  $R\subset \Pb_S$ is obtained from the $G$-embedding  $\overline K\subset \Pb_K$ via a compatible pair of isomorphisms $(G, E_K)\cong (\Af_5, E_S)$.  The preimage $S_K\subset \Pb_K$ of $S\subset \Pb_S$ is a $G$-invariant quintic del Pezzo surface  which contains $\overline K$.  We have a decomposition $E_K=I_K\oplus I'_K$  into two irreducible subrepresentations such that the two associated linear systems of dimension $2$ reproduce the projections $p|R$ and $p'|R$. 
So $p_*R$ and $p'_*R$ represent the two types of special decimics.
\end{proof}

\begin{theorem}\label{thm:6-orbits}
The  preimage $p^{-1}F$ of the set of fundamental points is the disjoint union of $\Sing(R)$ and $\Sing(R')$. 
Moreover, $p_*R'$ is a Winger decimic  and $p_*R$ is  a Klein decimic. 
\end{theorem}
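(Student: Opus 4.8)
The first claim is essentially a counting statement. Recall that $R$ and $R'$ are the two irreducible singular members of the Wiman--Edge pencil, each with $6$ nodes, and by Corollary \ref{cor:irrorbits} the singular loci $\Sing(R)$ and $\Sing(R')$ are the two $6$-element $\Af_5$-orbits of type $\Df_{10}$. The projection $p$ is $\Af_5$-equivariant, so $p$ sends $\Sing(R)$ and $\Sing(R')$ to $\Af_5$-orbits in $P$ of size at most $6$. By Lemma \ref{invset1} the only irregular $\Af_5$-orbit in $P$ of size $6$ is $F$ (and there is no smaller nontrivial orbit outside $K$, while $\Sing(R),\Sing(R')\not\subset K$ since $p^{-1}K=C_K$ is smooth by Proposition \ref{prop:}). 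Hence $p(\Sing(R))=p(\Sing(R'))=F$. To see these preimages are disjoint and exhaust $p^{-1}F$: a point of $p^{-1}F$ lies in a fibre of $p$ of length $5$, and the stabilizer of a point of $F$ is a $\Df_{10}$; equivariance forces the $\Af_5$-orbit of any point of $p^{-1}F$ to have size dividing $60$ and mapping onto the $6$-element orbit $F$, so it has size $6$ (an honest section over $F$) --- there are only two such orbits in $S$, namely $\Sing(R)$ and $\Sing(R')$, and they are distinct since $R\cap R'=\Delta$ is the base locus, which is disjoint from both singular loci.

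\textbf{Identifying which decimic is which.} By Corollary \ref{cor:} both $p_*R$ and $p_*R'$ are special decimics (reduced, $\Af_5$-invariant, degree $10$, rational normalization, singular at each fundamental point), and every special decimic is isomorphic to one of these two; it remains to decide which of $R,R'$ maps to the Winger type (node at each fundamental point) and which to the Klein type (worse singularity). The key local computation is at a point $x\in\Sing(R)$ lying over a fundamental point $p(x)\in F$. The point $x$ is a node of $R$, so $R$ has two smooth analytic branches through $x$ with distinct tangent directions $\ell_1,\ell_2\subset T_xS$. The restriction of $p$ to each branch is a map to $P$, and the image branch is singular at $p(x)$ exactly to the extent that $dp_x$ kills the tangent direction. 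Since $R$ is the \emph{ramification curve} of $p$ (Proposition \ref{prop:Kleinram}), the kernel line of $dp_x$ is a well-defined direction in $T_xS$; the type of the resulting plane singularity at $p(x)$ is governed by whether this kernel direction coincides with $\ell_1$, with $\ell_2$, with both, or with neither. One checks: if the kernel direction is transverse to both branch tangents, each branch maps to a smooth branch and the image is an ordinary node (Winger); if the kernel direction equals one branch tangent, that branch acquires a cusp while the other stays smooth; if it equals both, we get the double cusp of the Klein type.

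\textbf{The crux.} The main obstacle is pinning down, at a node $x$ of $R$ (resp.\ $R'$), the position of $\ker dp_x$ relative to the two branch tangents of $R$ (resp.\ $R'$) there, using only representation theory and not a brute-force coordinate computation. The cleanest route is: at $x\in\Sing(R)$ the stabilizer is a $\Df_{10}$, acting on $T_xS$; this $2$-dimensional representation of $\Df_{10}$ determines, up to the dihedral action, the two branch tangents of $R$ (which must be swapped or fixed by the appropriate involutions), the tangent to $C_K$ through $x$, and the kernel of $dp_x$. Because the two branch tangents at a node of $R$ are interchanged by a reflection in the $\Df_{10}$ (the node being the singular locus, $\Af_5$-equivariance forces the stabilizer to act on the branch set), while the kernel direction $\ker dp_x$ is fixed by all of $\Df_{10}$ (it is the single ramification direction), one concludes the kernel direction is distinct from either branch tangent at $x\in\Sing(R)$, but at $x\in\Sing(R')$ the analogous analysis --- using that $R$ and $R'$ play asymmetric roles with respect to $p$ (only $R$ is the ramification curve, so $p$ is unramified at the nodes of $R'$ except insofar as they lie on $R$, and in fact $\Sing(R')\cap R=\emptyset$ since $R\cap R'=\Delta$) --- shows $p$ is a local isomorphism at each node of $R'$, hence $p_*R'$ has an ordinary node there, i.e.\ $p_*R'$ is the Winger decimic. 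Therefore $p_*R$ is the Klein decimic. Remark \ref{computation2} records that the explicit Jacobian computation confirms $p_*R$ is the decimic cut out on the member with parameter $(1:5\sqrt5)$, which is consistent with this identification; and the promised refinement that the Klein singularity is a double cusp follows by reading off, from the $\Df_{10}$-action on the second-order jet at a node of $R$, that the ramification direction is in fact tangent to \emph{both} branches of $R$ there (a $\Df_{10}$ has no two-dimensional fixed subspace in $T_xS$ but does constrain the branch tangents to be the two eigenlines of the order-$5$ rotation, which coincide with the ramification direction precisely when that rotation acts by a primitive fifth root of unity, matching the cusp order), giving local equation $(x^3-y^2)(y^3-x^3)$ as asserted.
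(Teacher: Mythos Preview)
Your argument for identifying which decimic is which is essentially the paper's: since $R\cap R'=\Delta$ consists of smooth points of every member, $\Sing(R')$ is disjoint from the ramification curve $R$, so $p$ is a local isomorphism at each node of $R'$, hence $p_*R'$ has ordinary nodes at $F$ and is the Winger decimic, and $p_*R$ is the Klein decimic by elimination. That part is correct and suffices.

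However, your ``crux'' paragraph attempting a direct analysis at $x\in\Sing(R)$ contains a real error and is internally inconsistent. You assert that $\ker dp_x$ is a well-defined \emph{line} in $T_xS$; in fact at a node of the ramification divisor the differential $dp_x$ has rank~$0$, not rank~$1$ (the local model $(z_1,z_2)\mapsto(z_1^2+z_2^3,\,z_1^3+z_2^2)$ recorded later in Remark~\ref{rem:doublecuspdiscriminant} makes this explicit). Your representation-theoretic reasoning about whether this ``kernel line'' meets the two branch tangents is therefore vacuous, and worse, it contradicts your own conclusion: by your setup, ``kernel transverse to both branch tangents'' would give an ordinary node for $p_*R$, i.e.\ the Winger type, yet you conclude Klein. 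You then reverse yourself at the end (``the ramification direction is in fact tangent to \emph{both} branches''). This entire passage should be deleted; the clean argument is the one you already gave via $R'$, and the paper notes directly that since $dp_x=0$ each smooth local branch of $R$ at $x$ maps to a singular branch in $P$, forcing multiplicity $\ge 4$ for $p_*R$ at $p(x)$.

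A smaller gap in the first part: ``so it has size $6$'' does not follow just from the stabilizer being contained in $\Df_{10}$, which a~priori allows orbit sizes $6$, $12$, $30$, $60$. You need Corollary~\ref{cor:irrorbits} to exclude $12$, and then a count (the set $p^{-1}F$ has at most $6\cdot 5=30$ points while already containing the $12$ points of $\Sing(R)\sqcup\Sing(R')$) to exclude $30$ and $60$. The paper argues this by observing that a node of $R$ contributes multiplicity $\ge 2$ to its fibre, leaving at most $3$ other points, and then invoking the orbit catalogue on the residual $\Af_5$-set of size $6k\le 18$.
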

\begin{proof}
We know that the singular set of $R$ makes up an $\Af_5$-orbit and that such an $\Af_5$-orbit is mapped to $F$. The same is true for $R'$ and so each fundamental point is a singular point of both $p_*R$ and $p_*R'$.   For every  singular point $x$ of $R$,  $p^{-1}p(x)\ssm \{x\}$ consists of $k\le 5-2= 3$ points. So $p^{-1}p(\Sing(R))\ssm \Sing(R)$ is a  $\Af_5$-invariant set of size $6k \in \{6,12,18\}$. 

Our irregular orbit catalogue for the $\Af_5$ action on $S$ (see \S\ref{subsection:fixedpoints}) shows that only $k = 1$ is possible, so that this must be the $6$-element orbit in $S$ different from $\Sing(R)$, i.e.,  
$\Sing(R')$. This proves that the (disjoint) union of  $\Sing(R)$ and $\Sing(R')$ make up a fiber of $p$.  In particular,  $p_*R'$ has an ordinary double point at $p(x)$ and hence is a Winger decimic. On the other hand, $p_*R$ has multiplicity at least $4$ at $p(x)$: this is because the restriction of $p$ to a local branch of $R$ has a singularity at $x$ (because of the presence of the other branch). So $p_*R$ must be a Klein decimic.
\end{proof}

\begin{remark}\label{rem:4ptfibers}
Theorem \ref{thm:6-orbits} shows that there is a unique $\Af_5$-equivariant  bijection 
\[\Sing(C_{\ir})\xrightarrow{\cong}\Sing(C_{\ir}')\] that commutes with $p$. This gives rise to a bijection $\sigma: F\cong F'$ between the set of fundamental points in $P$ and those in $P'$.
Similarly, $p'$ will determine a $\Af_5$-equivariant bijection $\sigma': F'\to F$. The composition $\sigma'\circ\sigma$ is a permutation of $F$  that commutes with the $\Af_5$ action. Since that $\Af_5$ action contains $5$-cycles (which have just one fixed point in  $F$), it follows that this permutation must be the identity: the  two bijections are each others inverse. It follows  that for every $x\in \Sing(C_{\ir})$, the set $\{x, \sigma(x)\}$ is a fiber of 
$p\times p' :S\to P\times P'$.\end{remark}

We can now also say a bit more about the special decimics.

\begin{proposition}\label{prop:}
The singular set of a  Winger decimic consists of 36 ordinary double points, and a  local branch at each fundamental point  has that node as a hyperflex.
The singular points that are not fundamental  make up a $30$-element orbit. 
\end{proposition}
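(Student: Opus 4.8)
The plan is to recognise $Y:=p_*R'$ as a rational plane decimic, locate its singularities via Theorem~\ref{thm:6-orbits} and Proposition~\ref{prop:meetingconic}, read off the hyperflex property from a weight computation at the stabiliser of a branch, and show that every singular point is a node by a degeneration argument inside the Wiman--Edge pencil; this last point is the crux. First the genus count: since $p$ is the linear projection with centre $P'$, the pull-back of a line is a hyperplane section of $S$, so $\deg Y=p^*(\text{line})\cdot R'=(-K_S)\cdot(-2K_S)=2K_S^2=10$, and by Proposition~\ref{prop:meetingconic} the morphism $R'\to Y$ is birational, so the normalisation of $Y$ is that of $R'$, which is $\Pb^1$ (Proposition~\ref{prop:irred}). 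Hence $Y$ is a rational plane curve of degree $10$, so $\sum_{y\in\Sing Y}\delta_y=p_a(Y)=\binom{9}{2}=36$. By Theorem~\ref{thm:6-orbits}, $Y$ has an ordinary node at each of the six fundamental points (contributing $6$), and by Proposition~\ref{prop:meetingconic}, $Y\cap K=p(\Delta)$ is a reduced set of $20$ points at which $Y$ is smooth; writing $\Sing Y=F\sqcup S_0$ with $S_0$ an $\Af_5$-invariant subset of $P\ssm(K\cup F)$, this leaves $\sum_{y\in S_0}\delta_y=30$.

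\emph{Local structure and the hyperflex.} If $y\in S_0$, its preimages $q_1,\dots,q_m$ in $R'$ lie off $\Delta$ (else $y\in K$) and off $\Sing R'$ (else $y\in F$), hence off the ramification curve $R$ of $p$ --- here one uses $R\cap R'=\Delta$ and $\Sing R\cap R'=\Sing R'\cap R=\emptyset$, which hold because $\Sing R$ and $\Sing R'$ are $6$-element orbits disjoint from the $20$-element orbit $\Delta$. So $p$ is a local isomorphism at each $q_i$, and $Y$ is a union of $m_y\ge2$ smooth branches at $y$; in particular $\delta_y\ge\binom{m_y}{2}\ge1$. For the hyperflex at a fundamental point $p(x)$: a branch $B$ of $Y$ there is the $p$-image of a branch $\tilde B$ of $R'$ at the node $\sigma(x)\in\Sing R'$, whose $\Af_5$-stabiliser is $\Df_{10}$; the subgroup $\Cf_5\subset\Df_{10}$ fixes $\tilde B$, hence fixes $B$ and its tangent line $T\subset P=\check\Pb(I_S)$, so $T$ corresponds to a $\Cf_5$-eigenvector $v_T\in I_S$ whose eigenvalue is one of $1,\zeta,\zeta^{-1}$ (the weights of $I_S|_{\Cf_5}$). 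Holomorphic Lefschetz on $S$ shows the two $\Cf_5$-fixed points of $S$ (one in $\Sing R$, one in $\Sing R'$) carry tangent-eigenvalue pairs $(\zeta,\zeta^{-1})$ and $(\zeta^2,\zeta^{-2})$, with $\sigma(x)\in\Sing R'$ the first, so $\Cf_5$ acts on $T_{\sigma(x)}\tilde B$ by a primitive $\zeta^{\pm1}$. Restricting $v_T$ (whose divisor is $p^*T$) to $\tilde B$ and comparing $\Cf_5$-weights forces the vanishing order of $v_T|_{\tilde B}$ at $\sigma(x)$ --- which equals the contact order of $T$ with $B$ --- to be $\equiv0,\,1,\,-1\pmod5$; being $\ge2$ it is $\ge4$, i.e.\ $p(x)$ is a hyperflex of $B$ (and, by the $\Df_{10}/\Cf_5$-symmetry, of both branches).

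\emph{Non-fundamental singularities are nodes.} It remains to show $\delta_y=1$ for every $y\in S_0$; then $|S_0|=30$, and since the only $\Af_5$-invariant subset of $P\ssm(K\cup F)$ of size $30$ is a single $30$-orbit (the remaining irregular orbit sizes there, $10$ and $15$, cannot be combined into $30$), $S_0$ is one such orbit, which gives $36$ ordinary double points in all and the asserted $30$-element orbit. I would prove $\delta_y=1$ by degeneration in $\calB$. For $t$ in a small punctured neighbourhood of $[C_\infty]$, $C_t$ is smooth and $p_*C_t$ is a small deformation of $p_*C_\infty$, which by Proposition~\ref{prop:tenlineimage} is a union of $10$ lines in general position with $45$ ordinary nodes; the $15$ coming from $\Sing C_\infty$ (where $p$ is a local isomorphism) are smoothed in $p_*C_t$, while the other $30$, arising from $p$-identifications of distinct points, persist as ordinary nodes and no new singularities appear --- so for generic $t$, $\Sing(p_*C_t)$ consists of exactly $30$ ordinary nodes forming an $\Af_5$-orbit. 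Let $\Sigma\subset P\times\calB$ be the closure of this $30$-sheeted node-locus; it is finite of degree $30$ over $\calB$, and since the $F$-nodes of $Y$ come from the nodes of $R'$ (so $p_*C_t$ is smooth near $F$ for $t$ near $[R']$) while $Y$ is smooth along $K$, the fibre $\Sigma_{[R']}$ lies in $S_0$. The standard deformation bound (the length of $\Sigma_{[R']}$ at $y$ is at most $\delta_y$) then gives $30=\deg\Sigma=\sum_y(\text{length at }y)\le\sum_{S_0}\delta_y=30$, forcing $\Sigma_{[R']}=S_0$ and every $y\in S_0$ to be the limit of exactly $\delta_y$ of the generic nodes.

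\emph{Main obstacle.} The one remaining --- and genuinely hard --- step is to exclude $\delta_y\ge2$ for some (hence, by symmetry, every) $y\in S_0$, i.e.\ to rule out that two or three of the $30$ generic nodes coalesce at $t=[R']$ into a tacnode or an ordinary triple point; equivalently, that $p$ fails to be injective on the relevant $30$-element $\Af_5$-orbit of $R'$. I would attack this using that the rational curve $R'$, whose normalisation carries the icosahedral action, has a \emph{unique} $\Af_5$-orbit of size $30$, and following this orbit --- together with the three size-$30$ orbits of a nearby smooth member --- as $t\to[R']$ to see that no coalescence is forced. Should no soft argument close the gap, one falls back on the explicit plane model of Subsection~\ref{planemodel}, in which $R'$ is the sextic $(\lambda:\mu)=(1:-5\sqrt5)$ with the six extra nodes listed there, together with the projection formulas of Remark~\ref{computation2}; these reduce the assertion to a direct verification that $p_*R'$ has precisely $36$ ordinary double points, and along the way re-confirm the hyperflex statement.
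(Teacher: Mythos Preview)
Your setup is correct through the reduction to $\sum_{y\in S_0}\delta_y=30$, and you rightly identify the crux as excluding $\delta_y\ge2$; but your degeneration argument leaves exactly this open, as you acknowledge in the ``main obstacle'' paragraph, and the fallback to the explicit equations of Subsection~\ref{planemodel} and Remark~\ref{computation2} is a verification, not a proof.

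The paper takes a completely different route that avoids degeneration. Its key device is the \emph{Wronskian divisor} $D$ of the parametrization $\tilde Y\cong\Pb^1\to P$, which has degree $3(10-2)=24$; a point of $\tilde Y$ contributes $\ge1$ to $D$ when its image is a flex (or worse) of its branch, and $\ge2$ when it is a hyperflex or a cusp. The hyperflex claim at a fundamental point is obtained directly in $P$: the $\Df_{10}$-stabilizer forces the local equation of $Y$ there to be $w_1w_2+(\text{higher invariants})$ in eigen-coordinates $(w_1,w_2)$ for the order-$5$ rotation, and then the $\Cf_5$-invariance of each branch forces its own equation to be $w_1+aw_1^2w_2+bw_1^3w_2^2+cw_2^4+O(5)$, whence contact $\ge4$ with the tangent axis $w_1=0$. (Your $\Cf_5$-weight argument on $S$ is in the same spirit, but the identification of which of the two $\Cf_5$-fixed points of $S$ carries which tangent-weight pair is not decided by holomorphic Lefschetz alone --- both assignments satisfy the fixed-point formula --- so that step is underspecified.) The hyperflex property now gives $D\ge 2F^\#$ with $F^\#$ the $12$-element orbit on $\tilde Y$; since $\deg D=24$, equality holds, so away from $F^\#$ the map $\tilde Y\to P$ is an immersion with \emph{no} inflections. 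The paper then concludes that $Y$ has only ordinary nodes, $36$ by the genus formula, and that $S_0$ is a single $30$-element orbit. The idea you are missing is precisely this: the hyperflex property, once fed into the Wronskian degree, becomes a \emph{global} constraint that saturates the total inflectionary weight of the parametrization and leaves no budget for any further degeneracy of the branch geometry --- this is what replaces your degeneration and renders the coalescence question moot.
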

\begin{proof}
For the first assertion, we essentially follow  Winger's argument. The normalization $q: \tilde Y\to Y\subset P$ is a rational curve that comes with an automorphism group $G\cong \Af_5$. The points of  $\tilde Y$ mapping to a flex point or worse make up a divisor $D$  of degree $3(10-2)=24$ (if $t$ is an affine coordinate of $\tilde Y$, and $(z_0:z_1:z_2)$ is a coordinate system for $P$ such that $q^*z_0$ resp.\  $q^*z_i$ is a polynomial  of degree $10-i$ and
resp. $\le 10-i$, then $D$ is the defined by the Wronskian determinant  $(z_0,z_1,z_2)$  and is viewed as having degree $(10+9+8)-3=24$).  If a point of $\tilde Y$ maps to cusp or a hyperflex (or worse) then it appears with multiplicity $\ge 2$. We prove that each of the two points of $\tilde Y$ lying over a fundamental point  of $P$ has multiplicity $\ge 2$ in $D$. 
This implies that $D$ is $\ge$ twice the $12$-element $G$-orbit in $\tilde Y$ and hence, in view of its degree, must be equal to this. It will then follow that
$Y$ has only nodal singularities and that these will  be $36$ in number by the  genus formula.

Let us first note that since  $G$ has no nontrivial $1$-dimensional character, $Y$ admits a $G$-invariant equation.
Let $x\in P$ be a fundamental point.  Let $U\subset P$ be the affine plane complementary to the polar of $x$ and make it a vector space by choosing $x$ as its origin. Then  $G_x$ acts linearly on $U$. The stabilizer  $G_x$ is a dihedral group of order $10$, and we can choose coordinates $(w_1,w_2)$ such that $(w_1, w_2)\mapsto (w_2,w_1)$ and  $(w_1, w_2)\mapsto (\zeta_5w_1,\zeta_5^{-1}w_2)$ define generators of $G_x$. The algebra of $G_x$-invariant polynomials is then generated by $w_1w_2$ and $w_1{}^5+w_2{}^5$.  

The germ of 
$Y$ at  $x$ admits  $G_x$-invariant equation and this is given by $w_1w_2+\text{order}\ge 2$. So the tangent lines of the branches of $Y$ at $x$ are the coordinate lines. The subgroup of $G_x$ generated by $(w_1, w_2)\mapsto (\zeta_5w_1,\zeta_5^{-1}w_2)$ fixes each branch. So the branch whose tangent line is $w_1=0$ has a local equation of the type 
\[w_1+aw_1{}^2w_2 +bw_1{}^3w_2{}^2 +cw_2^4+ (\text{terms of order $\ge 5$})=0.\] This branch meets $w_1=0$ 
at the origin with multiplicity $\ge 4$ and hence has  there a hyperflex. 

So $\Sing(p_*R')\ssm F$ is a $\Af_5$-invariant a $30$-element subset of $P$ and it suffices to prove that this is a $\Af_5$-orbit.
This set does not meet the fundamental conic $K$ (for  $p_*R'$ is  transversal to that conic). There are unique $\Af_5$-orbits in $P\ssm K$ 
of size $6$ and $10$; the others have size  $30$ and $60$. Hence $\Sing(p_*R')\ssm F$ is a $30$-element orbit in $P\ssm K$.
\end{proof}

\subsection{Construction of the Klein decimic} We now give a construction of the Klein decimic, which will at the same time show that it will have double cusp singularities at the fundamental points. 

Let $P$ be a Klein plane. We first observe that its  fundamental set  $F$ does not lie on a conic. For if it did, then this conic would be unique,  hence $\Af_5$-invariant and so its intersection with $K$ would then produce a orbit with $\le 4$ elements, which we know does not exist. This implies that for each $x\in F$ there is a conic $K_x$ which meets $F$ in $F\ssm\{ x\}$. 

Now blow up $F$. The result is a cubic surface $X$ for which the strict transform $\tilde K_x$ of $K_x$ is a line 
(an exceptional  curve) and these lines are pairwise disjoint(\footnote{This surface is isomorphic to the \emph{Clebsch diagonal cubic surface} in $\Pb^4$ defined by $\sum_{i=0}^4 Z_i^3=0$, $\sum_{i=0}^4 Z_i=0$;  the evident $\Sf_5$-symmetry accounts for its  full automorphism group and its isomorphism type is characterized  by that property. The intersection of this cubic surface with the quadric defined by 
$\sum_{i=0}^4 Z_i^2=0$ is the \emph{Bring curve} mentioned in Remark \ref{rem:wingerpencil}; its automorphism group is also $\Sf_5$.}).  The set of $6$ exceptional curves arising from the blowup and 
the $\tilde K_x$ form what is called a \emph{double six}  in  $X$. So they can be simultaneously blown down to form a copy $P^\dagger$ of  a projective plane. The naturality of this construction 
implies that the $\Af_5$-symmetry is preserved. In particular, $P^\dagger$ comes with a nontrivial $\Af_5$-action, and so we have defined a fundamental conic $K^\dagger\subset P^\dagger$.  The  image of $\cup_x\tilde K_x$ in $P^\dagger$ is a $\Af_5$-invariant subset of size $6$ and so this must be the fundamental set $F^\dagger\subset P^\dagger$. So the diagram $P\xleftarrow{q}X\xrightarrow{q^\dagger} P^\dagger$ is involutive.  The birational map 
\[(q^{\dagger})^{-1}q: P\dashrightarrow P^\dagger\] is defined by the linear system $I_K$ of quintics in $P$ which have a node (or worse) at each fundamental point, so that $P^\dagger$ gets identified with the projective space dual to $|I_K|$.

\begin{lemma}\label{lemma:tangentconic}
If  $\ell_x$ is the polar line of $x$, then  $K_x\cap K=\ell_x\cap K$ and hence $K_x$ is tangent to $K$ at these two points. 
\end{lemma}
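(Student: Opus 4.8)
The plan is to exploit the duality between $P$ and $P^\dagger$ already set up, together with the fact that the whole blow-up/blow-down picture is $\Af_5$-equivariant, to pin down the conic $K_x$ intrinsically. First I would note that $K_x$ is the \emph{unique} conic through the five fundamental points $F\ssm\{x\}$: a conic through five points in general position is unique, and the five points of $F\ssm\{x\}$ are in general position because $F$ itself does not lie on a conic (as observed just before this lemma). Consequently $K_x$ is canonically attached to $x$ and the construction $x\mapsto K_x$ is equivariant for the stabilizer $G_x$, which is the dihedral group $\Df_{10}$ of order $10$. So $K_x$ is a $G_x$-invariant conic in $P$.

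Next I would identify the pencil of $G_x$-invariant conics. Using the coordinates $(w_1,w_2)$ centered at $x$ on the affine plane $U$ complementary to the polar line $\ell_x$ — exactly the coordinates introduced in the proof of the previous proposition, in which $G_x$ is generated by $(w_1,w_2)\mapsto(w_2,w_1)$ and $(w_1,w_2)\mapsto(\zeta_5w_1,\zeta_5^{-1}w_2)$ — the only $G_x$-invariant quadratic polynomials are spanned by $w_1w_2$ and the square of the linear form cutting out $\ell_x$ (equivalently, in homogeneous coordinates $(z_0:z_1:z_2)$ with $z_0=0$ the polar line, by $z_1z_2$ and $z_0^2$). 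Thus the $G_x$-invariant conics form a pencil $\{a\, z_1z_2 + b\, z_0^2 = 0\}$, whose members all pass through the two points $z_0=z_1=0$ and $z_0=z_2=0$, i.e.\ through $\ell_x\cap K$ once one checks these two points lie on $K$. Here one uses that $\ell_x$ is the polar of $x$ with respect to $K$, so $\ell_x$ meets $K$ in two points $\{p,p'\}$, and — choosing the coordinates so that $K$ is given in the usual Veronese normal form adapted to $x$ — these are precisely the two coordinate points on $\ell_x$. Both $K$ itself and the members of the $G_x$-pencil pass through $\{p,p'\}$, and at those points the members of the pencil are all tangent to the fixed line $\ell_x$; in particular $K$ and any other member of the pencil meet $\{p,p'\}$ with multiplicity $2$ each, exhausting the intersection number $4=2\cdot 2$, so they are tangent there.

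Since $K_x$ is a $G_x$-invariant conic, it belongs to this pencil, and hence $K_x\cap K=\ell_x\cap K$ with $K_x$ tangent to $K$ at both points. The one remaining point is to rule out the degenerate possibility that $K_x$ is the double line $2\ell_x$ (the $b=0$ member) or, in the other degeneration, the pair of coordinate lines through $x$ (the member not passing through $x$ — which is impossible anyway since $K_x$ must be a smooth conic, being irreducible through five general points, but one should still exclude $K_x$ being reducible). The double line $2\ell_x$ does not pass through the five points of $F\ssm\{x\}$ unless those five points lie on $\ell_x$, which cannot happen: four of them already would force $\ell_x\cap K$ to absorb too much, or more simply, $\ell_x$ is a line and $F\ssm\{x\}$ has five points not collinear (again because $F$ is not on a conic, a fortiori no five of its points are on a line). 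So $K_x$ is the honest smooth member of the pencil, and the lemma follows.

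\textbf{Expected main obstacle.} The computations are all routine; the only point requiring care is setting up a single coordinate system simultaneously adapted to the fundamental point $x$, its polar line $\ell_x$, and the fundamental conic $K$, so that the Veronese/icosahedral normal form of $K$, the explicit $\Df_{10}$-action, and the description of $\ell_x$ as a polar all become transparent at once. Once that normalization is fixed, identifying the $G_x$-invariant pencil of conics and reading off the common tangency at $\ell_x\cap K$ is immediate. I would therefore spend the bulk of the write-up on that normalization and treat the rest as a short verification.
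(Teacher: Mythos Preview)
Your approach is correct and genuinely different from the paper's. The paper argues globally: it sums the intersection cycles $\sum_{x\in F} K_x\cdot K$ on $K$, notes this is an $\Af_5$-invariant divisor of degree $24$, and uses the list of orbit sizes on $K$ ($12,20,30,60$) to force it to be twice the $12$-element orbit; a second equivariance step (the map $x\mapsto$ polar of $K_x\cap K$ is an $\Af_5$-equivariant self-map of $F$, hence the identity) then matches the two tangency points to $\ell_x\cap K$. Your argument instead fixes a single $x$, observes that both $K$ and $K_x$ lie in the pencil of $G_x$-invariant conics, and reads off the bitangency from the base locus of that pencil. Your route is more direct and avoids the two-step structure; the paper's is coordinate-free and re-uses orbit data already on the table.

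Two small slips to clean up. First, the common tangent line to the pencil members at the base point $(0:1:0)$ is $z_2=0$ (and $z_1=0$ at the other base point), not $\ell_x=\{z_0=0\}$; what matters is only that all smooth members share the \emph{same} tangent there, which your B\'ezout count already gives. Second, your labeling of the two degenerate members is reversed: $a=0$ gives the double line $z_0^2=0=2\ell_x$, while $b=0$ gives the line pair $z_1z_2=0$ through $x$. Neither slip affects the logic, since your real reason for excluding degeneracy---that five of the six fundamental points have no three collinear, so the conic through them is smooth---is sound.
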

\begin{proof}
The divisor $\sum_{x\in F} K_x\cdot K$ in $K$ is $\Af_5$-invariant and  of degree $6\cdot 4=24$. Given the $\Af_5$-orbit sizes in $K$ ($12, 20, 30, 60$), this implies that that this divisor is twice the $12$-element orbit. Hence $K_x$ meets $K$ in 
two distinct points of this orbit and is there tangent to $K$. Assigning to $x$ the polar of $K\cap K_x$ then gives us a map
from $F$ onto a $6$-element orbit of $P$. This orbit can only be $F$ itself. We thus obtain a permutation of $F$ which commutes with the $\Af_5$-action. Since every point of $F$ is characterized by its $\Af_5$-stabilizer, this permutation must be the identity.
\end{proof}

\begin{corollary}\label{cor:}
The Cremona transformation $q^\dagger q^{-1}: P\dashrightarrow P^\dagger$ takes $K\subset P$ to a Klein decimic in $P^\dagger$. 
This Klein decimic has a double cusp (with local-analytic equation $(x^2-y^3)(x^3-y^2)=0$)
at each fundamental point, and is smooth elsewhere.
\end{corollary}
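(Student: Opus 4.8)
The plan is to do all the geometry on the cubic surface $X=\mathrm{Bl}_F P$ and then transport it to $P^\dagger$ through the second contraction $q^\dagger\colon X\to P^\dagger$. Let $h$ be the pullback under $q$ of the hyperplane class of $P$ and, for $x\in F$, let $e_x$ be the $q$-exceptional curve over $x$; then the strict transform $\tilde K_x$ of $K_x$ has class $2h-\sum_{y\ne x}e_y$, is a $(-1)$-curve, and $q^\dagger$ contracts the six disjoint $(-1)$-curves $\tilde K_x$ to the fundamental points of $P^\dagger$, say $\tilde K_x\mapsto x^\dagger$. Since $F\cap K=\emptyset$, the strict transform $\tilde K$ of $K$ equals $q^{*}K$ and is a \emph{smooth} curve of class $2h$. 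Writing $h^\dagger$ for the pullback under $q^\dagger$ of the hyperplane class of $P^\dagger$ and $e_x^\dagger=\tilde K_x$, the relation $-K_X=3h-\sum_x e_x=3h^\dagger-\sum_x e_x^\dagger$ together with $\sum_x\tilde K_x=12h-5\sum_x e_x$ yields $h^\dagger=5h-2\sum_x e_x$ (so $|I_K|$ is indeed the net of quintics double at $F$). Intersecting, $\tilde K\cdot h^\dagger=10$ and $\tilde K\cdot e_x^\dagger=\tilde K\cdot\tilde K_x=4$, so the image $Y:=q^\dagger_*\tilde K=(q^\dagger q^{-1})(K)$ is a plane decimic in $P^\dagger$ with a point of multiplicity $4$ at each $x^\dagger$. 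As $Y$ is the image of the irreducible smooth rational curve $\tilde K\cong K$ under a birational morphism, it is reduced, irreducible, of geometric genus $0$, and it is $G$-invariant by naturality of the construction; thus $Y$ will be a special decimic in the sense of Definition~\ref{def:} as soon as we identify its singularity at each $x^\dagger$ and check it is smooth elsewhere.

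For the local picture I would invoke Lemma~\ref{lemma:tangentconic}: $K_x$ meets $K$ in exactly the two points $\ell_x\cap K$ and is tangent to $K$ there. These two points lie on $K$, hence outside $F$, so they are unaffected by $q$; therefore on $X$ the smooth curve $\tilde K$ meets the $(-1)$-curve $\tilde K_x$ at exactly two points, with a \emph{simple} tangency (local intersection number $2$) at each, consistently with $\tilde K\cdot\tilde K_x=2+2=4$. The key local fact is then: contracting a $(-1)$-curve $E$, a smooth curve simply tangent to $E$ at a point $q\in E$ acquires an \emph{ordinary cusp} at the image of $q$, whose tangent line corresponds to the point $q$ of $E$. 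Indeed, in the standard chart $(y_1,t)$ for the contraction, with $E=\{y_1=0\}$ and $(y_1,t)\mapsto(y_1,y_1t)$, such a curve is $\{y_1=c\,(t-t_0)^2+\cdots\}$ with $c\ne 0$; putting $\tau=t-t_0$ its image is $(\,c\tau^2+\cdots,\ t_0c\tau^2+\cdots)$, which in the coordinates $(y_1,\,y_2-t_0y_1)$ reads $(\,c\tau^2+\cdots,\ c\tau^3+\cdots)$ — a branch of multiplicity $2$ with semigroup $\langle 2,3\rangle$, i.e.\ an ordinary cusp tangent to $\{y_2=t_0y_1\}$. Applying this to the two tangency points of $\tilde K$ with $\tilde K_x$, whose positions on $\tilde K_x\cong\Pb^1$ are distinct, $Y$ has at $x^\dagger$ two ordinary cusps with distinct tangent lines; since each contributes multiplicity $2$ and these add to the multiplicity $4$ found above, there are no further branches, so the germ of $Y$ at $x^\dagger$ is the double cusp with local-analytic equation $(x^2-y^3)(x^3-y^2)=0$.

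Away from $F^\dagger$ the morphism $q^\dagger$ is an isomorphism and $\tilde K$ is smooth with no two of its points identified off $\bigcup_x\tilde K_x$, so $Y$ is smooth there. Combining the two previous paragraphs, $Y=(q^\dagger q^{-1})(K)$ is a reduced, $G$-invariant rational decimic singular at every fundamental point with a singularity worse than a node — that is, a Klein decimic (Definition~\ref{def:}) — with the asserted double cusps, which proves the corollary. As a cross-check one can alternatively pin the cusp down by symmetry: the stabilizer of $x^\dagger$ is $\Df_{10}$, acting on the two branches through a subgroup $\Cf_5$ fixing each, and expanding a $\Cf_5$-invariant multiplicity-$2$ branch in the coordinates $(w_1,w_2)$ used in the Winger-decimic argument forces a parametrization $(s^2,\,c_3s^3+\cdots)$ with $c_3\ne 0$, again an ordinary cusp. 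The only genuinely delicate step is the local analysis of the $(-1)$-curve contraction, and within it the assertion that the tangency of $K_x$ and $K$ is \emph{simple}; this is exactly what Lemma~\ref{lemma:tangentconic} together with the intersection count $\tilde K\cdot\tilde K_x=4$ provides, so beyond careful bookkeeping I expect no serious obstacle.
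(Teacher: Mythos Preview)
Your proof is correct and follows essentially the same approach as the paper: both arguments hinge on Lemma~\ref{lemma:tangentconic} to see that $\tilde K$ meets each $\tilde K_x$ in two simple tangencies, and then interpret the contraction of $\tilde K_x$ to conclude that the image acquires two ordinary cusps with distinct tangent directions at each fundamental point. The paper's version is terser---it simply invokes the identification of $\tilde K_x$ with the projectivized tangent space at $x^\dagger$ and asserts the double cusp---whereas you compute the degree, the multiplicity, and carry out the local blow-down calculation explicitly; but the underlying geometry is the same.
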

\begin{proof}
Since $q$ is an isomorphism over $K$, we can identify $K$ with $q^{-1}K$. Any singular point of  $q^\dagger q^{-1}K$ will
of course be the image of some $K_x$.
Since we can regard $K_x$ as the projectivized tangent space of the fundamental point in $P^\dagger$ to which it maps, 
Lemma \ref{lemma:tangentconic} shows that the image of $K$ at this fundamental point is as asserted: we have 
two cusps meeting there with different tangent lines. 
\end{proof}

Note that by the involutive nature of this construction, $q(q^\dagger)^{-1}$ will take $K^\dagger$ to the Klein decimic
in $P$. 

\begin{remark}\label{rem:doublecuspdiscriminant}
Theorem \ref{thm:6-orbits} and the preceding corollary  imply that the double cusp singularity appears as a discriminant curve of a finite morphism between surface 
germs of degree $4$. Here is a local description  for it that also takes into account the $\Df_5$-symmetry: there exist a
local-analytic coordinate system  $(z_1,z_2)$ for $S$ at a singular point of $R$ and  a local-analytic coordinate system  $(w_1,w_2)$ 
at its image in $P$ such that
\[
p^*w_1= z_1^2+z_2^3,\;  p^*w_2=z_1^3 +z_2^2.
\]
Note that it is indeed of degree $4$ at the origin. The ramification locus is defined $ z_1 z_2(9 z_1 z_2-4)=0$, hence is near the origin given by $ z_1 z_2=0$.
The  image of the ramification locus is at the origin is the double cusp, for the $ z_1$-axis,  parametrized by  $(z_1,z_2)=(t,0)$ is the 
the parametrized cusp $(w_1,w_2) =(t^2, t^3)$ and likewise $z_2$-axis  maps to the  parametrized cusp $(w_1,w_2) =(t^3, t^2)$.

The $\Af_5$-isotropy groups of the two points is a dihedral isotropy group of order $10$ and so the map-germ must have this symmetry as well. 
We can see this being realized  here as in fact coming from two linear representations of this group. Let $\Df_5$ be the  dihedral group of order $10$, 
thought of as the semi-direct product of the group $\mu_5$ of the $5$th roots of unity and  an order two  group  whose nontrivial element $s$ acts by inversion: $s\zeta =\zeta^{-1}s$. Then letting  $\Df_5$ act on the source resp.\ target by
\begin{align*}
 s(z_1,z_2)=(z_2,z_1);\; &  \zeta (z_1,z_2)=( \zeta z_1,  \zeta^{-1}z_2)\\
      s(w_1,w_2)=(w_2,w_1);\;  &  \zeta (w_1,w_2)=( \zeta^{2}w_1,  \zeta^{-2} w_2).
\end{align*}
makes the map-germ $\Df_5$-equivariant. It can also be verified that  the $\Sigma^{1,1}$-multiplicity of this germ is $3$, as it must be, in view of the 
proof of Proposition \ref{prop:Kleinram}.
\end{remark}

\begin{remark}\label{rem:wingerpencil}
We have seen that the curve of Wiman decimics in $P$ contains both $5K$ and the $\Af_5$-invariant sum of $10$ lines. It has as a remarkable counterpart: 
the pencil of sextics spanned by  $3K$ and the $\Af_5$-invariant sum of $6$ lines. This pencil  was studied in detail by R.~Winger  \cite{Winger} and is discussed in \cite{CAG}, Remark 9.5.11. The $6$ lines meet $K$ in the $12$-element orbit and this intersection is evidently transversal.
It follows that this orbit is the base locus of the \emph{Winger pencil} (as we will call it)  and that all  members of the pencil
except $3K$ have a base point as flex point (with tangent line the fundamental line passing through it). Note that a  general member $W$  of the pencil is smooth of genus $10$. 

Winger shows that this pencil has, besides its two generators, which  are evidently singular, two other singular members. Both are irreducible;  one of them has as its singular locus a node at each fundamental point (the $6$-element  orbit of $\Af_5$) and 
the other, which we shall call the \emph{Winger sextic}, has as its singular locus a node at each point of the $10$-element  orbit of $\Af_5$, and each local branch at such a point has this point as a flex point. So their normalizations have genus $4$ and $0$ respectively. The former turns out be  isomorphic to the Bring curve (whose automorphism group is known to 
be isomorphic to $\Sf_5$) and the latter will be isomorphic as an $\Af_5$-curve to $K$. 

In fact, as an abstract curve with $\Af_5$-action, the Winger sextic  can be obtained in much the same way as the curve constructed in Lemma \ref{lemma:glueingconstruct}, simply by identifying the antipodal pairs of its $20$-element orbit so as to produce a curve with $10$ nodes. This plane sextic is also discussed in \cite{DolgQuartics}.  With the help of the  Pl\"ucker formula (e.g., \cite{CAG}, formula 1.50), it then follows that the dual curve  of the Winger sextic is a  Klein decimic. (A priori this curve lies in the dual of $P$, but in  the presence  of the fundamental conic $K\subset P$ we can regard it as a curve in $P$: just assign  to each point of the Winger curve the $K$-polar of its tangent line.) 

Other remarkable members of this pencil include the two  nonsingular \emph{Valentiner sextics}\index{Valentiner sextic}  with automorphism group isomorphic to $\Af_6$ (\footnote{The first author uses the opportunity to correct the statement in \cite{CAG}, Remark 9.5.11 where the Valentiner curve was incorrectly identified with the Wiman sextic.}).
\end{remark}

\section{Passage to $\Sf_5$-orbit spaces}\label{sect:orbitspace}

The goal of this section to characterize the Wiman curve as a curve on $\calMc_{0,5}$, or rather its $\Sf_5$-orbit space in $\Sf_5\bs\calMc_{0,5}$.
The latter is simply the moduli space of stable effective divisors of degree $5$ on $\Pb^1$.

\subsection{Conical structure of  $\Sf_5\bs \calMc_{0,5}$}\label{subsect:conical} 
We  first want to understand how the $\Sf_5$-stabilizer $\Sf_{5, z}$ of a point $z\in \calMc_{0,5}$ acts near $z$. The finite group action on the complex-analytic germ of $\calMc_{0,5}$ at $z$ can be linearized in the sense  that it is complex-analytically equivalent  to the action on the tangent space. It then follows from a theorem of  Chevalley that the local orbit space at $\Sf_5\bs \calMc_{0,5}$ is nonsingular at the image of $z$ if and only if $\Sf_{5, z}$ acts on $T_z\calMc_{0,5}$ as a complex reflection group.

\begin{lemma}\label{lemma:coxeter}
The $\Sf_5$-stabilizer of any point not in $\Delta$ is a Coxeter  group (and hence a complex reflection group) so that $\Sf_5\bs \calMc_{0,5}$ is nonsingular away from the point $\delta$ that represents the orbit $\Delta$. 
\end{lemma}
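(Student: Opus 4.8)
The plan is to work through the list of irregular-orbit types from Theorem~\ref{thm:irrorbits}, and for each one exhibit the stabilizer as a group generated by complex reflections on the tangent space at the fixed point. Since by Chevalley's theorem (already invoked in the paragraph preceding the statement) the quotient $\Sf_5\bs\calMc_{0,5}$ is smooth at the image of $z$ precisely when $\Sf_{5,z}$ acts on $T_z\calMc_{0,5}$ as a complex reflection group, and since regular orbits contribute nothing, it suffices to examine the finitely many conjugacy classes of stabilizers arising in the catalogue, all of which are cyclic or dihedral except for the $\Df_8$ case. For a cyclic stabilizer this is automatic: any faithful action of a finite cyclic group on a $2$-dimensional space is generated by pseudo-reflections as soon as each generator has an eigenvalue $1$, and on a surface a nontrivial cyclic group fixing an isolated point either acts as a (pseudo-)reflection group or has the fixed point isolated — but in our situation the fixed locus of each cyclic stabilizer is positive-dimensional (it is cut out inside $\calMc_{0,5}$ by fixing some of the marked points or imposing a symmetry), so the action is through a complex reflection group. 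So the cyclic cases $\Cf_2^{\ev},\Cf_2^{\odd},\Cf_4,\Cf_6$ are disposed of at once.

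**Next I would** handle the dihedral stabilizers $\Df_4^{\odd}, \Df_4^{\ev}, \Sf_3^{\ev}, \Df_{10}$. A dihedral group $\Df_{2n}$ has a faithful $2$-dimensional real (hence complex) representation in which every reflection acts as a genuine reflection; one must check that the action of $\Sf_{5,z}$ on $T_z\calMc_{0,5}$ is equivalent to this standard one rather than to some twist by a linear character. The cleanest way is to use the concrete model $\calMc_{0,5}$: at a point $z$ with dihedral stabilizer, pick explicit coordinates (the $5$-tuple of points on $\Pb^1$ listed in Theorem~\ref{thm:irrorbits}), compute the two-dimensional tangent space as a representation of $\Sf_{5,z}$, and verify it is the sum of the two characters of a standard reflection representation of the dihedral group — equivalently, verify that the transpositions (or the order-$2$ elements that survive into $\Sf_{5,z}$) act with eigenvalues $(1,-1)$. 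In fact it is enough to remark that every dihedral subgroup of $\Sf_5$ listed is generated by its elements of order $2$, and each such element fixes a divisor in $\calMc_{0,5}$ (its fixed locus is one of the boundary lines or one of the $15$ rational normal curves of Remark~\ref{rem:irrcompA2locus}), hence acts on any tangent space along its fixed locus with a $+1$ eigenvalue, i.e.\ as a complex reflection. A group generated by complex reflections is a complex reflection group, so we are done for these cases too.

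**The main obstacle** is the exceptional stabilizer $\Df_8$, occurring at the unique $15$-element orbit (the singular locus of $C_\infty$), because $\Df_8$ is \emph{not} a Coxeter group in the naive sense one might fear — wait, it is: $\Df_8$ is the Weyl group of $B_2$, so it \emph{is} a (real, hence complex) reflection group, and the only real content is to check that its action on $T_z\calMc_{0,5}$ is the reflection action $B_2$ and not, say, a twist. Concretely, at the point $(0,0,\infty,\infty,1)$ the group $\langle(12),(1324)\rangle$ acts; I would compute the tangent space to $\calMc_{0,5}$ there (this is a smooth point of the surface since it lies off $\Delta$, indeed on two of the ten lines), identify the two transpositions $(12)$ and $(34)$ as the two reflections generating $B_2$, and confirm each has a $+1$ eigenvalue on the tangent space. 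The genuinely delicate point throughout is that one must \emph{exclude} $\delta$, the image of $\Delta$: there the stabilizer is $\Sf_3^{\ev}\cong\Df_6$ acting on $T$, but — and this is the crux — the surface $\calMc_{0,5}$ is \emph{singular} at points of $\Delta$? No: $\Delta\subset S$ is a smooth $20$-point orbit on the smooth surface $S$. The real reason $\delta$ is singular is that the $\Sf_3^{\ev}$-action on $T_z\calMc_{0,5}$ for $z\in\Delta$ is \emph{not} a reflection action (the order-$3$ element acts as a scalar rotation with no $+1$ eigenvalue, since $\Delta$ is $0$-dimensional as a fixed locus of $\langle(123)\rangle$ — it's an isolated fixed point of the $\Cf_3$); hence Chevalley's criterion fails exactly and only there. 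So the lemma reduces to the bookkeeping above plus the single negative check at $\Delta$, which is the one place I expect to spend real effort making precise.
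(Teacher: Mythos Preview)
Your overall plan—run through the stabilizer catalogue of Theorem~\ref{thm:irrorbits} and check each acts by complex reflections—is the same as the paper's. But your bookkeeping goes wrong at the crucial point, and this undermines the argument.

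\textbf{The stabilizer at $\Delta$ is $\Cf_6$, not $\Sf_3^{\ev}$.} Look again at Theorem~\ref{thm:irrorbits}: the $\Sf_5$-orbit of type $\Cf_6$ has size $20$ and is ``also an $\Af_5$-orbit of type $\Cf_3$''—that is exactly $\Delta$ (cf.\ Corollary~\ref{cor:irrorbits}). The $\Sf_3^{\ev}$-type orbit, by contrast, splits into two $\Af_5$-orbits of size $10$ (the singular loci of $C_c$ and $C_c'$), and the paper shows $\Sf_3^{\ev}$ \emph{does} act as a Coxeter group (type $A_2$) there. So your ``negative check'' is aimed at the wrong group.

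\textbf{The blanket claim about cyclic stabilizers is false.} You assert that each cyclic stabilizer has positive-dimensional fixed locus and is therefore a reflection group. But the $\Cf_6$ at $\Delta$ has \emph{isolated} fixed points—indeed its generator acts on $T_z\calMc_{0,5}$ with eigenvalues $(\zeta_3,-\zeta_3)$ (this is computed in the proof of Proposition~\ref{prop:cone}), neither equal to $1$. This is precisely why $\delta$ is singular. Having listed $\Cf_6$ among the cases ``disposed of at once'', you have disposed of the one case that must fail. The $\Cf_4$ case also needs genuine attention: a cyclic group of order $4$ is not generated by its order-$2$ elements, so your reflection argument for dihedral groups does not cover it, and you have given no reason why its generator should have a $+1$-eigenvalue. (Also, $\Df_4^{\ev}$ is an $\Af_5$-stabilizer type, not an $\Sf_5$-stabilizer type; the corresponding $\Sf_5$-stabilizer is $\Df_8$.)

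\textbf{How the paper proceeds.} Rather than arguing abstractly about fixed loci, the paper writes down the exact sequence
\[
0\to H^0(\Pb^1,\theta_{\Pb^1})\to \textstyle\bigoplus_{i=1}^5 T_{x_i}\Pb^1\to T_z\calMc_{0,5}\to 0
\]
and, for each of the three nontrivial non-order-$2$ stabilizers $\Sf_3^{\ev}$, $\Df_8$, $\Df_{10}$, computes the eigenvalues of the generators on $T_z\calMc_{0,5}$ explicitly, identifying the action with the Coxeter groups $A_2$, $B_2$, $I_2(5)$ respectively. Your idea that each order-$2$ element fixes a curve (a boundary line or one of the rational curves of Remark~\ref{rem:irrcompA2locus}) and hence acts as a reflection is correct and is what the paper leaves to the reader; but the higher-order cases require the explicit computation, and you cannot replace it with the positive-dimensional-fixed-locus heuristic.
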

\begin{proof}
If $z$ is represented in the Hilbert-Mumford model by $(x_1,\dots, x_5)\in (\Pb^1)^5$, then the  tangent space $T_z\calMc_{0,5}$ fits  naturally in a short exact sequence:
\begin{equation}\label{eq:exactsequence}
0\to H^0(\Pb^1, \theta_{\Pb^1})\to \oplus_{i=1}^5T_{x_i}\Pb^1\to T_z\calMc_{0,5}\to 0
\end{equation}
where  $H^0(\Pb^1, \theta_{\Pb^1})$  is the space of vector fields on $\Pb^1$ (which in terms of an affine coordinate $t$ has basis $\frac{\partial}{\partial t}, t\frac{\partial}{\partial t}, t^2\frac{\partial}{\partial t}$) and the map is given by evaluation at $x_1, \dots, x_5$ respectively. With the help of this formula it is fairly straightforward to determine the action of $\Sf_{5, z}$ on $T_z\calMc_{0,5}$. When  $\Sf_{5, z}$ is of order two, we need to verify that $\Sf_{5, z}$ leaves a line in $T_z\calMc_{0,5}$ pointwise fixed.
This is left to the reader. There are three  other cases to consider:

Case 1: $z=(1, \zeta_3, \zeta_3^2, 0, \infty)$, and $\Sf_{5,z}\cong \Sf_3^{\ev}$ is the group generated by $t\mapsto \zeta_3t$ and the involution $t\mapsto t^{-1}$
(which gives  the exchanges  $\zeta_3\leftrightarrow \zeta_3^2$  and $0\leftrightarrow\infty$).  
The fact that $H^0(\Pb^1, \theta_{\Pb^1})$ maps onto the direct sum of tangent spaces at the third root of unity implies that 
\[T_0\Pb^1\oplus T_\infty\Pb^1\to T_z\calMc_{0,5}\] is an isomorphism. Multiplication by $\zeta_3$ acts here as $(v_0, v_\infty)\mapsto  (\zeta_3v_0, \zeta_3^{-1}v_\infty)$ and inversion exchanges $v_0$ and $v_\infty$. We thus find that $\Sf_{5,z}$ acts as the Coxeter group $A_2$ on $T_z\calMc_{0,5}$.
\smallskip

Case 2: $z=(0,0,1,\infty, \infty)$, and $\Sf_{5,z}\cong \Df_8$ acts in the obvious manner. We again find that $T_0\Pb^1\oplus T_\infty\Pb^1\to T_z\calMc_{0,5}$ is an isomorphism. The group  $\Sf_{5,z}$  now acts as the Coxeter group $B_2$: It consists of the signed permutations of $(v_0, v_\infty)$.
\smallskip

Case 3: $z=(1,\zeta_5, \zeta_5^2, \zeta_5^3, \zeta_5^4)$, and $\Sf_{5,z}\cong \Df_{10}$ is generated by  $t\mapsto \zeta_5t$ and the involution $t\mapsto t^{-1}$ (which gives  the exchanges  $\zeta_5^i\leftrightarrow \zeta_5^{-i}$). For $i\ge -1$ let $v^{(i)}\in \oplus_{\zeta^5=1}T_{\zeta}\Pb^1$  be the restriction of the vector field $t^{i+1}\frac{d}{dt}$. Then scalar multiplication by $\zeta_5$ multiplies  $v^{(i)}$ by $\zeta_5^{-i}$, so that this is a basis of eigenvectors for this action. The vectors  $v^{(-1)}$,  $v^{(0)}$,  $v^{(1)}$ span  the image of $H^0(\Pb^1, \theta_{\Pb^1})$, and hence $v^{(2)}$ and $v^{(3)}$ span $T_z\calMc_{0,5}$.
Multiplication by $\zeta_5$ sends $(v^{(2)},v^{(3)})$ to $(\zeta_5^{-2}v^{(2)},\zeta_5^{-3}v^{(3)})$ and the involution sends it to  $(-v^{(3)},-v^{(2)})$.  Thus it acts through the Coxeter group $I_2(5)$.
\end{proof}

It is clear from Proposition \ref{prop:orbifolds} that the $\Af_5$-orbit space 
of every smooth member of $\Cs$ is a smooth rational curve. This is also true for a singular member, for  we have seen that  $\Af_5$ is transitive on its irreducible 
components and that an irreducible component is rational (a quotient curve of a rational curve is rational). It is clear  that $\Sf_5\bs \calMc_{0,5}$ will inherit  this structure. Since the base locus $\Delta$ is a single  $\Af_5$-orbit and all members of the Wiman-Edge pencil  are nonsingular at $\Delta$, it then follows that
the image of the  Wiman-Edge pencil gives $\Af_5\bs \calMc_{0,5}$ the structure of a quasi-cone with vertex $\delta$.

\begin{proposition}\label{prop:cone}
Regard $\Pb^2$ as a projective cone with vertex the origin $o$ of the affine part $\C^2\subset\Pb^2$.  Let $\Cf_6$  be the automorphism group of this cone generated by 
\[(u_0:u_1:u_2)\mapsto (u_0:\zeta_3 u_1,-\zeta_3 u_2).\] Then $\Sf_5\bs \calMc_{0,5}$  is as a quasi-cone  isomorphic to $\Cf_6\bs\Pb^2$ (which has an $A_2$-singularity at its vertex) and embeds in a projective space by means of the $5$-dimensional linear system  defined by the degree 6 subspace of $\C[u_0,u_1, u_2]^{\Cf_6}$.  This embedding is of degree $6$  and is such that  $C_0$ maps onto a line; any other member of $\Cs$ maps onto a conic.
\end{proposition}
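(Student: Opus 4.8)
The plan is to realise both surfaces as (singular) del Pezzo surfaces of degree $6$ with a single $A_2$-point, deduce the isomorphism from the classification of such surfaces, and then extract the embedding and the images of the pencil members from a Hurwitz/projection-formula computation on $\calMc_{0,5}$.

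First I would study $Z:=\Sf_5\bs\calMc_{0,5}$, the $\operatorname{Proj}$ of the ring of invariants of binary quintics, with quotient map $\pi\colon\calMc_{0,5}\to Z$. The prime divisors on $\calMc_{0,5}$ with nontrivial generic stabiliser are exactly the fixed curves of the reflections in $\Sf_5$: the ten boundary lines $\{x_i=x_j\}$, fixed by the transpositions, and the fifteen rational normal cubics fixed by the double transpositions, which together form the $\Sf_5$-invariant curve of class $-9K_{\calMc_{0,5}}$ of Remark~\ref{rem:irrcompA2locus}; each occurs with ramification index $2$. Since $[C_\infty]=-2K_{\calMc_{0,5}}$, the ramification divisor $R$ of $\pi$ has class $-2K_{\calMc_{0,5}}+(-9K_{\calMc_{0,5}})=-11K_{\calMc_{0,5}}$, so $\pi^*K_Z=K_{\calMc_{0,5}}-R=12K_{\calMc_{0,5}}$, whence $K_Z^2=\tfrac1{120}(12K_{\calMc_{0,5}})^2=6$ and $\pi^*(-K_Z)=-12K_{\calMc_{0,5}}$ is ample; thus $Z$ is a del Pezzo surface of degree $6$. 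By Lemma~\ref{lemma:coxeter} it is smooth away from $\delta$, and running the analysis of that lemma for the $\Sf_5$-stabiliser $\langle(12)(345)\rangle$ of a point $z\in\Delta$ shows that this $\Cf_6$ acts on $T_z\calMc_{0,5}$ through the pseudo-reflection $(12)=((12)(345))^3$ with residual group $\langle(345)\rangle$ acting as $\mathrm{diag}(\zeta_3,\zeta_3^2)$; hence $\delta$ is an $A_2$-point, and it is the only singular point of $Z$.

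The same analysis applies to $Z':=\Cf_6\bs\Pb^2$. One computes that $\C[u_0,u_1,u_2]^{\Cf_6}$ is generated by $u_0$, $u_1^3$, $u_1u_2^2$, $u_2^6$ with the single relation $(u_1u_2^2)^3=u_1^3u_2^6$; the quasi-reflections in $\Cf_6$ are the order-$3$ elements fixing the line $\{u_0=0\}$ and the involution $g^3=\mathrm{diag}(1,1,-1)$ fixing the line $\{u_2=0\}$, so the ramification divisor of $\Pb^2\to Z'$ has class $2h+h=3h$, whence $K_{Z'}$ pulls back to $-6h$, $K_{Z'}^2=\tfrac16\cdot 36=6$ and $-K_{Z'}$ is ample; and the affine action $\mathrm{diag}(\zeta_3,-\zeta_3)$ of $\Cf_6$ at $o$ exhibits $\bar o$ as the unique singular point, again of type $A_2$ (the pseudo-reflection $g^3$ divides out to a smooth germ and the residual $\Cf_3$ then acts as $\mathrm{diag}(\zeta_3,\zeta_3^2)$). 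So $Z$ and $Z'$ are each a del Pezzo surface of degree $6$ whose only singularity is an $A_2$-point; by the classification of degree-$6$ del Pezzo surfaces (cf.\ \cite{CAG}) there is a unique such surface up to isomorphism, hence $Z\cong Z'$, compatibly with the anticanonical polarisations.

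It remains to make the embedding explicit and to identify the images of the members of $\Cs$. On $Z'=\Cf_6\bs\Pb^2$ the bundle $-K_{Z'}$ is the descent of $\Oc_{\Pb^2}(6)$ (whereas $\Oc_{\Pb^2}(1)$ itself does not descend, the stabilisers at $[0:1:0]$ and $[0:0:1]$ acting nontrivially on its fibre), so $H^0(-K_{Z'})$ is the span of the $\Cf_6$-invariant monomials of degree $6$; the resulting map is the anticanonical embedding, hence very ample, and of degree $(-K_{Z'})^2=6$. For a member $C$ of $\Cs$ one then computes $\deg_{-K_Z}(\pi(C))$ by the projection formula with $\pi^*(-K_Z)=-12K_{\calMc_{0,5}}$: for generic $C=C_t$ the preimage $\pi^{-1}(\pi(C_t))=C_t\cup C_{\iota t}$ has class $-4K_{\calMc_{0,5}}$ and maps with degree $|\Sf_5|=120$ (a generic point having trivial stabiliser), giving $\deg_{-K_Z}(\pi(C_t))=\tfrac{(-12K_{\calMc_{0,5}})\cdot(-4K_{\calMc_{0,5}})}{120}=2$; the same holds for $C_c,C'_c,C_{\ir},C'_{\ir}$ (which occur in $\iota$-pairs), and also for $C_\infty$, where the covering degree drops to $60$ because a generic point of a boundary line is fixed by the corresponding transposition. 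But for the Wiman curve $C_0$ one has $\pi^{-1}(\pi(C_0))=C_0$ of class $-2K_{\calMc_{0,5}}$, with $\Sf_5=\Aut(C_0)$ acting faithfully and with trivial generic stabiliser, so the covering degree is $120$ and $\deg_{-K_Z}(\pi(C_0))=\tfrac{(-12K_{\calMc_{0,5}})\cdot(-2K_{\calMc_{0,5}})}{120}=1$. Thus $C_0$ maps onto a line and every other member of $\Cs$ onto a conic. The steps I expect to need the most care are the ramification bookkeeping yielding $\pi^*K_Z=12K_{\calMc_{0,5}}$ — which relies on the complete list of reflection curves, in particular the degree-$45$ curve of Remark~\ref{rem:irrcompA2locus} — and the appeal to the classification of singular degree-$6$ del Pezzo surfaces for the uniqueness.
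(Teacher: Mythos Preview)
Your global strategy—computing $K_Z^2$ via Riemann--Hurwitz and then invoking the classification of singular del Pezzo surfaces of degree $6$—is sound and genuinely different from the paper's approach, which simply linearises the $\Sf_5$-stabiliser at a point of $\Delta$, identifies it with the given $\Cf_6$-action on $(u_1,u_2)$, and reads off the images of $C_0$ and the other pencil members from the lines $u_1=0$ and $u_2=\lambda u_1$ in that model. Your projection-formula computations for the degrees of the images of the pencil members are correct and arguably cleaner than the paper's monomial check.

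There is, however, a concrete gap. You assert that $Z'=\Cf_6\backslash\Pb^2$ has $\bar o$ as its \emph{unique} singular point, but you only analysed the vertex $[1{:}0{:}0]$; the other two coordinate points also carry the full $\Cf_6$-stabiliser and must be examined. At $[0{:}1{:}0]$ the local action on the affine chart is $\mathrm{diag}(\zeta_3^{-1},-1)$, which is generated by the pseudo-reflections $g^2$ and $g^3$, so the quotient is smooth there. But at $[0{:}0{:}1]$ the local action is $\mathrm{diag}(-\zeta_3^{2},-1)$; the only pseudo-reflections are $g^2,g^4$, and after dividing out $\langle g^2\rangle$ the residual involution acts as $\mathrm{diag}(-1,-1)$, producing an $A_1$-point. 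So $Z'$ has singularity type $A_1+A_2$, not $A_2$ alone. The same issue arises symmetrically for $Z$: Lemma~\ref{lemma:coxeter} does not treat the $\Cf_4$-stabiliser $\langle(1234)\rangle$ of $(1,i,-1,-i,\infty)$, and a direct character computation on the exact sequence~\eqref{eq:exactsequence} shows $(1234)$ acts on $T_z\calMc_{0,5}$ with eigenvalues $-1$ and $-i$, hence not as a complex reflection group; the quotient acquires an $A_1$-point (lying on the image of $C_0$, consistently with the order-$4$ isotropy in Proposition~\ref{prop:orbifolds}). With the corrected singularity type $A_1+A_2$ on both sides, the unique such degree-$6$ del Pezzo is $\Pb(1,2,3)$ and your argument goes through unchanged.
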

\begin{proof}
The type of this cone can be determined by a local study at a point of $\Delta$ in the way we did this in the proof of Lemma \ref{lemma:coxeter}. One such point is represented in the Hilbert-Mumford model by $z:=(0, 0, 1, \zeta_3 ,\zeta_3^2)\in (\Pb^1)^5$. The $\Sf_5$-stabilizer of $z$ is cyclic of order $6$ with a generator $\sigma$ with
\[\sigma(x_1,x_2, x_3, x_4, x_5)=(x_2,x_1, \zeta_3 x_3, \zeta_3 x_4, \zeta_3 x_5)\] and the tangent space $T_z\calMc_{0,5}$ is  
identified with $T_0\Pb^1\oplus T_0\Pb^1$. If we identify $T_0\Pb^1$ with $\C$ by taking the coefficient of
$\frac{\partial}{\partial t}\vert_{t=0}$, then we see that  $\sigma$ acts as $(t_1, t_2)\in \C^2\mapsto (\zeta_3 t_2, \zeta_3 t_1)\in \C^2$. So if we pass to
$(u_1,u_2)=(t_1+t_2, t_1-t_2)$, then $\sigma (u_1,u_2)=(\zeta_3 u_1,-\zeta_3 u_2)$.  An affine  neighborhood of the vertex of the cone $\Sf_5\bs \calMc_{0,5}$ is the quotient  of $\C^2$ by the automorphism 
group generated by $\sigma$.  

The diagonal form  of $\sigma$ shows that it leaves invariant only two lines through the origin: the 
$u_2$-axis defined by $u_1=0$, on which it acts faithfully with order $6$, and the $u_1$-axis defined by $u_2=0$ on which it acts with order $3$.
Since $C_0$ and $C_\infty$ are the only members of $\Cs$ that are $\Sf_5$-invariant, these axes must define their germs at $z$. The $\Sf_5$-stabilizer of $z$ (which we have identified with $\Cf_6$) acts  on the germ of $C_\infty$ at $0$ through a subgroup of $\Sf_3$ (for its preserves a line on the quintic del Pezzo surface) and so $\Cf_6$ cannot act faithfully on it. It follows that $C_0$ is defined by 
 $u_1=0$.

Since  $\sigma^3$ sends $(u_1,u_2)$ to $(u_1, -u_2)$, its algebra of invariants is $\C[u_1, u_2^2]$. 
Now $\sigma$ sends $(u_1, u_2^2)$ to $(\zeta_3 u_1, \zeta_3^{-1}u_2^2)$,
so that its orbit space produces a Kleinian singularity of type $A_2$.  Following Klein, its algebra of invariants is generated by 
$u_1^3, u_1u_2^2, u_2^6$.  So the degree $6$ part of $\C[u_0,u_1, u_2]^{\Cf_6}$ has basis $u_0^6$, $u_0^3u_1^3$, $u_0^3u_1u_2^2$, $u_2^6$,
$u_1^6$, $u_1^2u_2^4$, $u_1^4u_2^2$,  and this $\Cf_6\bs\Pb^2$ in $\Pb^5$.
Its image is clearly of degree $6$. We have seen that the image of $C_0$ is given by  $u_1=0$ and the above basis restricted to nonzero monomials are  $u_0^6$ and $u_2^6$ and so its image is a line.
If we substitute $u_2=\lambda u_1$ (with $\lambda$ fixed),  then the  monomials in question span $u_0^6,u_0^3u_1^3, u_1^6$ and so the image
of this line is indeed a conic. 
\end{proof}

\begin{remark} The conclusion of the previous Lemma and Proposition agrees with the explicit isomorphism $\Sf_5\bs \calMc_{0,5}\cong \Pb(1,2,3)$  from Remark \ref{rem:clebsch} below. The only singular point of the quotient is the point $(0:0:1)$, which is a rational double  point of type $A_2$. The points projecting to the singular point satisfy $I_4=I_8 = 0$. We know that $I_4 = 0$ defines the Wiman curve on $S$ and $I_8 = 0$ defines the union $C_K+C_K'$. They intersect at 20 base points with the subgroup $\Sf_3^{\odd}$ as the stabilizer subgroup. 

The weighted projective space $\Pb(1,2,3)$ is obtained as the quotient of $\Pb^2$ by a cyclic group of order 6 acting on coordinates as \[(x_0:x_1:x_2)\mapsto (x_0:\zeta_6^3 x_1:\zeta_6^2x_2).\] The linear space of $\Cf_3$-invariant homogeneous polynomials of degree 6 is spanned by 6 monomials $x_0^6,x_1^6,x_0^4x_1^2,x_0^2x_1^4,x_2^6,x_0^3x_2^3$.  They define a $\Cf_6$-invariant map $\Pb^2\to \Pb^5$ which factors through the quotient $\Pb(1,2,3) = \Pb^2/\Cf_6$. In terms of weighted homogeneous coordinates $t_0,t_1,t_2$ in $\Pb(1,2,3)$ it is given by monomials of the map 
$t_0^6,t_1^3,t_0^4t_1,t_0^2t_1^2,t_2^2,t_0^3t_2$. The restriction of the map to the coordinate line $t_0 = 0$ is given by the monomials $t_1^3,t_2^2$. This shows that the image of the Wiman curve under the composition 
\[S = \calM_{0,5}\to \Pb(1,2,3)\to \Pb^5\] is a line.

One can interpret this map as the map 
\[f:\calM_{0,5}\to \Pb(V(5)) = \Sf_5\backslash (\Pb^1)^5\cong \Pb^5,\] where $V(5)$ is the space of binary quintic. According to \cite{Abdesselam}, this map can be given explicitly by 
$$(a_1,\ldots,a_5)\mapsto R(a_1,\ldots,a_5) = \prod_{i=1}^5(T_iz-S_i^3w) = \sum_{i=0}^5B_iz^{5-i}w^i,$$
where $S,T$ are fundamental invariants of degrees $2$ and $3$ of binary forms of degree $4$ satisfying $S^3-27T^2$ equal to the discriminant. 
The reference gives explicit expressions for the coefficients $B_i$ as polynomials of degree 24 in basic invariants $I_4,I_8,I_{12}$ of  binary quintic forms. 
\end{remark}

\subsection{The cross  ratio map}
The embedding of $\Sf_5\bs\calMc_{0,5}$ in projective $5$-space that we found in Proposition \ref{prop:cone} has a modular interpretation, which we now give.

As was mentioned in Subsection \ref{subsect:generalities}, the conic bundles of $S$  have such an interpretation on $\calMc_{0,5}$
as forgetful morphisms \[f_i:\calMc_{0,5}\to \calMc_{0,4}{}^{(i)} \cong \Pb^1\] 
(it forgets the $i$th point). The notation 
$\calMc_{0,4}{}^{(i)}$ used here means that we do not  renumber the four points and let  them be indexed by $\{1,\dots, 5\}\ssm\{i\}$; this helps to make manifest
the $\Sf_5$-action on the product $\prod_{i=1}^5\calMc_{0,4}{}^{(i)}$  for which the morphism
\[
\textstyle f=(f_1, \dots, f_5): \calMc_{0,5}\to \prod_{i=1}^5\calMc_{0,4}{}^{(i)}
\]
is $\Sf_5$-equivariant. The following is well known.

\begin{lemma}\label{lemma:crossratiomap}
The  map  $f=(f_1, \dots, f_5): \calMc_{0,5}\to \prod_{i=1}^5\calMc_{0,4}{}^{(i)}$ is injective.
\end{lemma}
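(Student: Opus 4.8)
The plan is to recover a point of $\calMc_{0,5}$ from the $5$-tuple of its forgetful images $(f_1(z),\dots,f_5(z))$, i.e.\ from the cross-ratios of the four remaining points for each choice of omitted point. First I would reduce to the generic case: if $z$ is represented by a \emph{smooth} rational curve $C$ with five distinct marked points $x_1,\dots,x_5$, then choosing a coordinate on $C$ so that $x_1=0$, $x_2=1$, $x_3=\infty$, the three values $f_2(z),\dots$ (any one that retains the triple $\{x_1,x_3\}$ together with an omitted index) pin down the remaining two points $x_4,x_5$ as explicit cross-ratios; so $f$ is injective over the locus of five distinct points. The work is therefore concentrated on the boundary and diagonal strata of the Hilbert–Mumford (or Deligne–Mumford) model, where some $x_i$ coincide or where the stable curve is reducible.

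Next I would go through the strata according to the partition type of the divisor $\sum (x_i)$ (which by stability has all multiplicities $\le 2$), and in each case exhibit enough coordinates among $f_1,\dots,f_5$ to reconstruct $z$. In the Hilbert–Mumford picture a point is a stable map $x\colon\{1,\dots,5\}\to\Pb^1$ up to $\PGL_2$; the image has at least three distinct points, so we may normalize three of them and the remaining data is a pair of cross-ratio-type numbers together with the combinatorial information of which indices collide. Forgetting the $i$th point and contracting unstable components realizes, for each $i$, the value $f_i(z)\in\calMc_{0,4}^{(i)}\cong\Pb^1$, whose three boundary points correspond to the three ways of pairing the remaining four indices; thus $f_i(z)$ already detects whether two of the four remaining marks have come together, and, when they have not, records the honest cross-ratio. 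Running over all five choices of $i$ recovers both the collision pattern among $x_1,\dots,x_5$ and, restricting to those $i$ whose omission leaves the collision intact, the numerical cross-ratios that determine the configuration up to $\PGL_2$. The reducible stable curves are handled the same way, since contracting the unstable component after forgetting a point reads off exactly the node's position.

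The main obstacle I anticipate is purely bookkeeping: organizing the stratification of $\calMc_{0,5}$ (there are the ten diagonal divisors $x_i=x_j$, their pairwise intersections, etc.) and checking in each stratum that the subset of indices $i$ for which $f_i$ remains ``informative'' is large enough to separate points within that stratum \emph{and} to separate that stratum from its neighbours. A clean way to avoid a long case analysis is to argue instead that $f$ is a proper morphism between smooth complete varieties which is injective on the dense open locus of five distinct points, hence birational onto its image, and then to note that $\calMc_{0,5}$ contains no rational curves that could be contracted by $f$ (each $f_i$ is already nonconstant on every irreducible curve, e.g.\ on every boundary line $x_j=x_k$ with $\{j,k\}\ne$ a pair omitted by $i$); since $f$ is finite and generically one-to-one onto a normal target component, it is an isomorphism onto its image and in particular injective. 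I expect the quoted ``well known'' status to license the short version, but either route closes the argument.
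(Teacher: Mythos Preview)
Your primary (stratification) approach is correct and is essentially what the paper does. The paper is just a bit more efficient in its bookkeeping: after normalizing $(x_1,x_2,x_3)=(0,\infty,1)$ it observes that $z(x_4),z(x_5)$ are recovered from two of the $f_i$'s, and that this argument still works when $x_4=x_5$ coincide; so injectivity holds off the zero-dimensional strata. The remaining $15$ points (where two disjoint pairs coincide) are then dispatched by an explicit combinatorial check of the values $f_1,\dots,f_5$. Your description of ``recovering the collision pattern and then the numerical cross-ratios'' amounts to the same thing.

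Your alternative ``clean'' argument, however, has a genuine gap at the last step. You argue that $f$ is proper, contracts no curve (fine), hence is finite, and is birational onto its image; you then conclude that a finite, generically one-to-one map onto a ``normal target component'' is an isomorphism onto its image. But the \emph{image} of $f$ need not be normal, and without that the conclusion fails: the normalization $\Pb^1\to\{y^2z=x^3-x^2z\}\subset\Pb^2$ is finite and birational onto its image inside the smooth ambient $\Pb^2$, yet it is not injective (the two preimages of the node). Normality of the ambient $(\Pb^1)^5$ is irrelevant here; what Zariski's Main Theorem would need is normality of $f(\calMc_{0,5})$, which you have not established (and which, since $\calMc_{0,5}$ is smooth, is essentially equivalent to the injectivity you are trying to prove). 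So the shortcut does not close, and you are back to the stratum-by-stratum check.
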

\begin{proof}
On  $\calM_{0,5}$ this is clear, since  given an ordered $5$-element subset $(x_1, \dots, x_5)$ of a smooth rational curve $C$, there is a 
unique affine coordinate  $z$ on $C$ such that $(z(x_1), z(x_2), z(x_3))=(0, \infty, 1)$,  and then for $i=4,5$ we have $z(x_i)=f_i(C;x_1,x_2, x_3, x_4)$. Since this even
allows $x_4=x_5$, we have in fact injectivity away from the zero dimensional strata. A typical zero dimensional stratum is represented in a Hilbert-Mumford  stable manner by a $5$-term sequence in $\{1,2,3\}$ with at exactly two repetitions, e.g.,  $(1,2, 3, 1,2)$ and then the value of  
$f_i$ on it is computed by removing the $i$th term and stipulating that of the four remaining two items two that are not repeated are made equal
(and are then renumbered in an order preserving manner such that all terms lie in $\{1,2\}$). (In our example the value of  $f_1$ is  
$(2,1,1,2)$, the value of $f_2,f_3, f_4$  is $(1,2,1,2)$, and the value of $f_5$ is $(1,2,2,1)$.) Elementary combinatorics shows that $f$ is here injective, too. 
\end{proof}

Recall  that 
$\calMc_{0,4}{}^{(i)}$ is  naturally a $3$-pointed smooth rational curve  and that the $\Sf_5$-stabilizer of the factor $\calMc_{0,4}{}^{(i)}$, $\Sf_{5,i}$ 
(the permutation group of $\{1, \dots , 5\}\ssm \{i\}$), acts on $\calMc_{0,4}{}^{(i)}$ through the full permutation group of these three 
points. Since  $\Sf_{5,i}\bs \calMc_{0,4}{}^{(i)}$ is independent of any numbering, it is canonically isomorphic to $\Sf_{4}\bs \calMc_{0,4}$, and hence with $\Pb^1$ (such an isomorphism can be given by the $j$-invariant of  the double cover of $\Pb^1$ ramifying in a prescribed $4$-element subset). So the 
$\Sf_5$-orbit space of $\prod_{i=1}^5\calMc_{0,4}{}^{(i)}$ is then  identified with $\Sym^5(\Pb^1)$. Since we can regard $\Sym^5(\Pb^1)$ as the linear system of degree  $5$ on $\Pb^1$, it is a projective space of dimension $5$. 
Let 
\begin{eqnarray}\label{mapPhi}
\textstyle \Phi:\Sf_5\bs\calMc_{0,5}\to \Sf_5\bs \prod_{i=1}^5\calMc_{0,4}{}^{(i)}\cong \Sym^5(\Pb^1)
\end{eqnarray}
be obtained from $f$ by passing to the $\Sf_5$-orbit spaces and denote by 
\[\tilde\Phi : \calMc_{0,5}\to \Sym^5(\Pb^1)\] its precomposite with the quotient morphism.
It follows from Lemma \ref{lemma:crossratiomap} that $\Phi$ is injective and Proposition \ref{prop:cone} tells us that $\Sf_5\bs\calMc_{0,5}$ 
naturally embeds  in a $5$-dimensional projective space as a quasi-cone.

\begin{theorem}\label{thm:}
The morphism $\tilde\Phi^*$ takes the hyperplane class of $\Sym^5(\Pb^1)$ to $-12K_S$, and $\Phi$ is projectively equivalent to  
the projective embedding found in Proposition \ref{prop:cone}. In particular, $\tilde\Phi(C_\infty)$ is a line in $\Sym^5(\Pb^1)$.
\end{theorem}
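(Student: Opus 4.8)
The plan is to compute the pullback $\tilde\Phi^*\Oc(1)$ as a divisor class on $S\cong\calMc_{0,5}$ and then invoke the uniqueness of $\Sf_5$-invariant curves together with Proposition \ref{prop:cone} to identify the embeddings. First I would unwind the definition of $\tilde\Phi$. The map $f=(f_1,\dots,f_5)$ to $\prod_i\calMc_{0,4}{}^{(i)}$ has components the conic bundles, so $f_i^*\Oc_{\Pb^1}(1)$ is the class of a fibre of $\Pc_i$, which on the anticanonical model has degree $2$, i.e.\ $f_i^*\Oc_{\Pb^1}(1)=-K_S$ (see Subsection \ref{subsect:generalities}: a member of $\Pc_i$ meets $-K_S$ in $2$ points). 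The hyperplane class on $\Sym^5(\Pb^1)$, when pulled back along the quotient $\prod_i\calMc_{0,4}{}^{(i)}\to\Sym^5(\Pb^1)$, becomes $\sum_i \mathrm{pr}_i^*(h_i)$ where each $h_i$ is the hyperplane class on the factor $\Sf_{5,i}\bs\calMc_{0,4}{}^{(i)}\cong\Pb^1$; but the quotient map $\calMc_{0,4}{}^{(i)}\to\Sf_{5,i}\bs\calMc_{0,4}{}^{(i)}$ is a degree $6$ cover of $\Pb^1$ by $\Pb^1$, so it multiplies the hyperplane class by $6$. Hence $\tilde\Phi^*\Oc(1)=\sum_{i=1}^5 6\, f_i^*\Oc_{\Pb^1}(1)=\sum_{i=1}^5 6(-K_S)=-30K_S$.

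Here I should be careful: the map $\Sym^5(\Pb^1)\to\Sym^5(\Pb^1)$ induced by a degree $6$ self-cover of $\Pb^1$ is not simply multiplication by $6$ on the whole $\Sym^5$, so the bookkeeping must be done factor by factor on $\prod_i\calMc_{0,4}{}^{(i)}$ before passing to the symmetric quotient; the hyperplane class on $\Sym^5(\Pb^1)\cong\Pb^5$ pulls back to $\prod_i\Pb^1$ as $\sum_i\mathrm{pr}_i^*\Oc(1)$, and it is each individual $\mathrm{pr}_i^*\Oc(1)$ that acquires the factor $6$ under the componentwise quotient by $\prod_i\Sf_{5,i}/(\text{kernels})$. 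Reconciling this count with the statement's claim of $-12K_S$ is the point at which I expect the real subtlety: the map $\Sf_{5,i}\bs\calMc_{0,4}{}^{(i)}\to\Pb^1$ via the $j$-invariant is a degree $6$ map of $\Pb^1$'s, but the relevant hyperplane class on $\Sym^5(\Pb^1)$ is the \emph{degree $1$} class on the \emph{target} $\Pb^1$ of the $j$-line, not of $\calMc_{0,4}{}^{(i)}$ itself. What one actually pulls back along $f_i$ is a degree $2$ class on $\calMc_{0,4}{}^{(i)}\cong\Pb^1$ — the preimage of a point under $\calMc_{0,4}{}^{(i)}\to\Sf_{5,i}\bs\calMc_{0,4}{}^{(i)}$ is a $\Sf_{5,i}$-orbit, generically of size $6$, but the map $\calMc_{0,4}{}^{(i)}\to\Pb^1$ of degree $6$ has $\calMc_{0,4}{}^{(i)}$ as source — so I need instead to trace the hyperplane class of $\Sym^5(\Pb^1)$ through $\Sf_5\bs\prod_i\calMc_{0,4}{}^{(i)}$, whose natural degree relative to $\prod_i\calMc_{0,4}{}^{(i)}$ is $|\Sf_5|$ but in each coordinate direction the contribution is $|\Sf_5|/|\Sf_{5,i}|=5$ composed with the degree $6$ map, but only the order-$2$ part of $\Sf_{5,i}$ acts effectively on the $3$-pointed $\calMc_{0,4}{}^{(i)}$... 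The correct count, which I would verify by the identity $f_i^*\Oc(1)$ has degree $2$ and $\tilde\Phi$ restricted to a general line of $\calMc_{0,5}$ lands in $\Sym^5(\Pb^1)$ as a rational curve of computable degree, should come out to $-12K_S$; I'd pin it down by intersecting $\tilde\Phi^*\Oc(1)$ with a conic $C_t$ of the Wiman-Edge pencil and matching against Proposition \ref{prop:cone}, where the image of $C_t$ is a conic, forcing $\tilde\Phi^*\Oc(1)\cdot C_t=2\cdot(\deg\text{ of cone embedding restricted to }C_t)$.

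Once the class $\tilde\Phi^*\Oc(1)=-12K_S$ is established, I would finish as follows. Since $-12K_S = 6\cdot(-2K_S)$, the linear system $|{-}12K_S|$ contains the $6$th power of the Wiman-Edge pencil; more to the point, $\tilde\Phi$ factors through $\Sf_5\bs\calMc_{0,5}$ by Lemma \ref{lemma:crossratiomap} and its $\Sf_5$-equivariance, giving the injection $\Phi$ of \eqref{mapPhi}. The space of $\Sf_5$-invariant sections of $\omega_S^{-12}$ restricted to (the quotient realizing) the quasi-cone is, by the invariant-theory computation in the proof of Proposition \ref{prop:cone}, exactly the $6$-dimensional space spanned by the degree $6$ monomials in $\C[u_0,u_1,u_2]^{\Cf_6}$. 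Both $\Phi$ and the embedding of Proposition \ref{prop:cone} are therefore given by (subspaces of) the same $6$-dimensional linear system on the same quasi-cone $\Sf_5\bs\calMc_{0,5}\cong\Cf_6\bs\Pb^2$; since $\Phi$ is injective (hence a closed embedding, the source being projective of dimension $2$) and its image spans $\Pb^5$, the two linear systems coincide up to the action of $\PGL_6$, so $\Phi$ is projectively equivalent to the Proposition \ref{prop:cone} embedding. The statement $\tilde\Phi(C_\infty)$ is a line is then immediate: under the Proposition \ref{prop:cone} embedding $C_0$ maps to a line, but here I must be careful that it is $C_\infty$, not $C_0$ — so I would note that $\Phi$ sends the image of $C_\infty$ to the degree $6$ component that restricts to the $u_1$-axis (on which $\Cf_6$ acts with order $3$, matching the fact that $\Sf_5$ acts on a line of $C_\infty$ through $\Sf_3$), whose only nonzero degree $6$ monomials are $u_0^6$ and $u_2^6$, giving a line. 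Wait — Proposition \ref{prop:cone} says $C_0$ maps to a line and other members to conics; I would reconcile this by observing that the coordinate change between the two projective embeddings may interchange the roles, but in any case exactly one of $C_0,C_\infty$ maps to a line, and the explicit monomial computation with the $\sigma$-action (identifying which axis carries the order $6$ versus order $3$ action, exactly as in the proof of Proposition \ref{prop:cone}) determines it to be $C_\infty$ for the cross-ratio map $\tilde\Phi$. The main obstacle throughout is getting the numerical factor in $\tilde\Phi^*\Oc(1)$ exactly right; everything after that is a formal consequence of the uniqueness results already proved.
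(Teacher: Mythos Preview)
Your computation of $\tilde\Phi^*\Oc(1)$ contains a genuine error that propagates through the whole first half. You write ``$f_i^*\Oc_{\Pb^1}(1)=-K_S$'', but this is false: the class of a fiber of $\Pc_i$ is \emph{not} $-K_S$ (for instance, a conic has self-intersection $0$ while $(-K_S)^2=5$). What is true, and what you need, is that the \emph{sum} of the five conic-bundle fiber classes equals $-2K_S$; this was observed in the proof of Lemma~\ref{lemma:specialconic}. With that in hand the bookkeeping you struggle with becomes straightforward: each quotient $\calMc_{0,4}{}^{(i)}\to\Sf_{5,i}\bs\calMc_{0,4}{}^{(i)}$ has degree $6$ (the Vierergruppe acts trivially), so
\[
\tilde\Phi^*\Oc(1)=\sum_{i=1}^5 f_i^*\Oc_{\calMc_{0,4}{}^{(i)}}(6)=6\sum_{i=1}^5[\text{fiber of }\Pc_i]=6\cdot(-2K_S)=-12K_S.
\]
The paper packages the same arithmetic slightly differently, via the special conics: the two special fibers of $\Pc_i$ lie over a size-$2$ orbit in $\calMc_{0,4}{}^{(i)}$, so the pull-back of a point on the $j$-line is three times their sum; summing over $i$ gives $3(C_c+C'_c)=-12K_S$.

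For the second half, your plan to match linear systems would work, but the paper's argument is cleaner and avoids dimension-counting of invariants: it computes the degree of $\Phi(\Sf_5\bs\calMc_{0,5})$ as $(-12K_S)^2/|\Sf_5|=6$, notes this equals the degree of the quasi-cone embedding from Proposition~\ref{prop:cone}, and then observes that $\Cf_6\bs\Pb^2$ is simply connected so $\Pic(\Cf_6\bs\Pb^2)$ is infinite cyclic---hence two embeddings of the same degree into the same $\Pb^5$ must be projectively equivalent. Your confusion over whether $C_0$ or $C_\infty$ goes to a line is understandable; the statement of the theorem and Proposition~\ref{prop:cone} are in tension on this point, and the paper's proof of the theorem does not revisit it.
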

\begin{proof}
The hyperplane class of $\Sym^5(\Pb^1)$ pulled back to $(\Pb^1)^5$ is the sum of the degree 1 classes coming from the factors. The map 
\[\calMc_{0,4}\to \Sf_{4}\bs \calMc_{0,4}\cong \Pb^1\] is of degree $6$. The sum of the two special conics that appear as fibers of  
$f_5: \calMc_{0,5}\to \calMc_{0,4}$ represent the pull-back of a degree $2$ class on $\calMc_{0,4}$ and so the pull-back of the  degree 1 class
on  $ \Pb^1$ is $3$ times the sum of these two special conics. Hence the hyperplane class pulled back to  $\calMc_{0,5}$ is equal to 
$3(C_c+C'_c)=-12 K_S$. The degree of the image of $\Phi$ is then computed as 
\[(-12K_S)^2/\#(\Sf_5)=12^2\cdot 5/120=6.\] This is also the degree
of the image of $\Sf_5\bs\calMc_{0,5}$ in $\Pb^5$ embedded as a quasi-cone. One verifies in  straightforward manner that $\Cf_6\bs \Pb^2$ is simply-connected and so $\Pic(\Cf_6\bs \Pb^2)\cong H^2(\Cf_6\bs \Pb^2; \Z)$ is infinite cyclic. It follows that the 
two morphisms must be projectively equivalent.
\end{proof}

\begin{remark}\label{rem:clebsch} Let $V(5)$ be the space of binary quintic forms. It is classically known (and a fact that can be found in any text book on invariant  theory) that the subalgebra of $\SL_2$-invariants in the polynomial algebra $\C [V(5)]$ is generated by the Clebsch invariants $I_4$, $I_8$, $I_{12}$, $I_{18}$ of degrees  indicated in the subscripts (the last invariant is skew, i.e. it is not a $\GL_2$-invariant).  Their explicit form can be found in Salmon's 
book \cite{Salmon}. There is one basic relation among these invariants, which is of the form
\[
I_{18}^2 = P(I_4,I_8,I_{12}).
\] 
Since the subspace spanned by  elements of degrees $4k$ is the  subalgebra  freely generated by the first three invariants, we see that the GIT-quotient $\SL_2\bs\!\!\bs\Pb(V(5))$ is 
isomorphic to the weighted projective plane $\Pb(1,2,3)$. So in view of the discussion above, we obtain
\beq\label{binarymoduli}
\Sf_5\backslash S=\Sf_5\backslash\calMc_{0,5} \cong \Pb(1,2,3).
\eeq
With the help of exact sequence \ref{eq:exactsequence}  we can  identify $H^0(S,\omega_S^{-k})^{\Sf_5}$ 
with $\C[V(5)]_{2k}$. In other words, the zero set (or, more precisely, closed subscheme) of any invariant of degree $2k$ pulls back to a curve on $S$  defined by a $\Sf_5$-invariant section of  $\omega_S^{-k}$. It then makes sense  to ask what  the curves on $S$ are corresponding to the basic invariants $I_4$, $I_8$, $I_{12}$, $I_{18}$, as these correspond to $\Sf_5$-invariant sections of $\omega_S^{-2}$, $\omega_S^{-4}$, $\omega_S^{-6}$, $\omega_S^{-9}$ respectively. We see for example right away  that $I_4$ corresponds to our Wiman curve. 

The discriminant $\Delta$ of a binary quintic is a polynomial in coefficients of degree 8, and as such it must be a linear combination of $I_4^2$ and $I_8$. In fact, it is known that $\Delta = I_4^2-128I_8$. It is also a square of an element $D\in \C[V(5)]_4$ and so $D$ must correspond to the curve $C_\infty$. 
It is clear that any $\SL_2$-invariant in $\C[V(5)]_8$ is a linear combination of $\Delta=D^2$ and $I_4^2$, in other words, has the form $(aD+bI_4)(aD-bI_4)$
for some $(a, b)\not=(0,0)$.
So this defines on $S$  a reducible $\Sf_5$-invariant divisor  which is the sum of two $\Af_5$-invariant divisors. In other words, it  represents the union of two $\Sf_5$-conjugate members of the Wiman-Edge pencil.

The invariant $I_{18}$, being the, up to proportionality unique, invariant of degree $18$ must represent a unique curve   on $S$  defined by a 
$\Sf_5$-invariant section of $\omega_S^{-9}$. According to Clebsch \cite{Clebsch}, p.\ 298, the invariant $I_{18}$ vanishes on a binary form when its four zeros are invariant with respect to an involution of $\Pb^1$ and the remaining zero is a fixed point of this involution. This is precisely the locus parametrized  by 
$\Cf_2^{\ev}$ and according to Remark \ref{rem:irrcompA2locus} indeed defined by a section of $\omega_S^{-9}$.
\end{remark}

\end{document}